\documentclass[reqno, 12pt]{amsart}
\usepackage{array}
\usepackage{amsmath}
\usepackage{amsfonts}
\usepackage{amssymb}
\usepackage{mathabx}
\usepackage{enumerate}
\usepackage{amsthm}
\usepackage{amsmath, amscd}
\usepackage{caption}
\usepackage[usenames, dvipsnames]{color}
\usepackage{xy}
\xyoption{all}
\usepackage{hyperref}
\usepackage{tikz}
\usepackage{tikz-cd}
\usepackage{marginnote}
\usetikzlibrary{matrix,arrows,backgrounds}
\newtheorem{theorem}{Theorem}[section]
\newtheorem{lemma}[theorem]{Lemma}
\newtheorem{proposition}[theorem]{Proposition}
\newtheorem{corollary}[theorem]{Corollary}
\newtheorem{conjecture}[theorem]{Conjecture}

\theoremstyle{definition}
\newtheorem{definition}[theorem]{Definition}
\newtheorem{notation}[theorem]{Notation}
\newtheorem{remark}[theorem]{Remark}
\newtheorem{example}[theorem]{Example}
\newtheorem*{theorem*}{Theorem}
\newtheorem*{corollary*}{Corollary}
\newtheorem*{conjecture*}{Conjecture}
\numberwithin{equation}{section}
\newtheorem{question}[theorem]{Question}

\newcommand{\newterm}{\textsf}

\newcommand{\dbcoh}[1]{\operatorname{D}^{\operatorname{b}}(\operatorname{coh }#1)}
\newcommand{\dbmod}[1]{\operatorname{D}^{\operatorname{b}}(\operatorname{mod }#1)}

\newcommand{\Hom}{\operatorname{Hom}}

\newcommand{\Z}{\mathbb{Z}}
\newcommand{\R}{\mathbb{R}}

\newcommand{\Q}{\mathbb{Q}}
\newcommand{\A}{\mathbb{A}}
\newcommand{\opA}{\overset{\circ}{A}}
\def\O{\mathcal{O}}

\newcommand{\End}{\operatorname{End}}
\newcommand{\spec}{\operatorname{Spec}}
\newcommand{\Rhom}{\operatorname{RHom}}

\newcommand{\Ext}{\operatorname{Ext}}
\newcommand{\gldim}{\operatorname{gl}\dim}

\newcommand{\cone}{\operatorname{Cone}}

\newcommand{\conv}{\operatorname{Conv}}

\title[Conic modules, secondary fans and NCRs]{Conic modules, secondary fans and non-commutative resolutions}

\author{Aimeric Malter}

\email{aimericmalter@bimsa.cn}

\address{Beijing Institute of Mathematical Sciences and Applications, No. 544, Hefangkou Village, Huaibei Town, Huairou District, Beijing 101408}

\begin{document}
\bibliographystyle{alpha}
\begin{abstract}
Faber, Muller and Smith used complete sums of conic modules to construct non-commutative crepant resolutions (NCCR) of simplicial toric algebras. We link these conic modules to the Bondal-Thomsen collection of line bundles on smooth toric DM stacks. This viewpoint allows us to establish computational results relating to conic modules, reducing the complexity of the combinatorics involved significantly. We formulate necessary and sufficient conditions for an incomplete sum of conic modules to give an NC(C)R of a toric algebra. Furthermore, we prove that to check if a toric algebra $R$ admits an NCCR in the form of $\End_R(\mathbb{B})$ for an incomplete sum of conic modules, we may reduce to a case where the class group of the affine toric variety $\spec R$ does not have torsion and verify the statement there. Finally, we treat the case of almost simplicial Gorenstein cones, i.e. cones with $|\sigma(1)|=\dim \sigma+1$, classifying when such cones admit NCCRs via endomorphism algebras of conic modules.
\end{abstract}
\maketitle
\tableofcontents

\section{Introduction}

Non-commutative (crepant) resolutions, or NC(C)Rs, were introduced by Van den Bergh as an algebraic approach to encapsulate the properties a crepant resolution of singularities should fulfill. Given a singular algebraic variety, oftentimes there are several natural candidates for a resolution thereof, even when imposing additional conditions such as crepancy, and an interesting question to consider is what these resolutions must have in common. These common properties in a certain sense characterise the resolution, and Bondal-Orlov \cite{BO02} and Kawamata \cite{Kaw02} suggest that the derived category of coherent sheaves is such an invariant of crepant resolutions. In other words, if a normal algebraic variety $X$ has two distinct crepant resolutions $\pi_i:Y_i\rightarrow X$, one should expect an equivalence of categories $\dbcoh{Y_1}\cong \dbcoh{Y_2}$. Van den Bergh \cite{VdB3Dflops} verified this for a three-fold flop by using tilting theory and constructing a pair of Morita equivalent algebras $\Lambda_i$ whose module categories are equivalent to the derived categories $\dbcoh{Y_i}$. A non-commutative resolution to a noetherian domain $R$ is an algebra of the form $\Lambda=\End_R(M)$ for a finitely generated reflexive $R$-module such that the global dimension of $\Lambda$ is finite. If $R$ is Gorenstein, such a $\Lambda$ is called crepant if in addition it is a maximal Cohen-Macaulay $R$-module. NCCRs are somewhat elusive objects, as evidenced by the fact that the following conjecture remains an open one.
\begin{conjecture}(=Conjecture \ref{Conj:affineGor})
    An affine Gorenstein toric algebra always has an NCCR.
\end{conjecture}

Partial progress has been made using a wide variety of methods. Broomhead \cite{Broomhead} used dimer models to verify the conjecture for three-dimensional affine Gorenstein toric algebras, a result later reproven by \v{S}penko and Van den Bergh \cite{SVdBtoricII}. In \cite{Tomo25}, the author proves the existence of NCCRs for toric algebras associated to affine toric varieties with Picard rank 1, an alternative proof of which can be found in \cite{MS26}. Representation theory and tilting theory are among the more common tools to construct NCCRs, and a good summary of the progress made can be found in \cite{VdB23}. 

Of particular interest to the present paper is the work of Faber, Muller and Smith \cite{FMS19}, who used \newterm{conic modules} to show that simplicial toric algebras admit an NCCR (a result previously shown by Craw and Quintero V\'{e}lez \cite{CQV12}). Given a cone $\sigma$ with associated toric algebra $R$, one can construct for each $v\in M_\R$ a conic module $A_v$, which is a Cohen-Macaulay $R$-module. These modules come in isomorphism classes and it suffices to examine the fundamental region $[0,1)^n$ to fully classify the conic modules. A direct sum $\mathbb{A}$ of such modules which contains at least one representative of each isomorphism class has been shown to give an NCCR $\End_R(\mathbb{A})$ of $R$ if and only if $\sigma$ is a simplicial cone. However, the authors give an example of a non-simplicial cone which allows for such an NCCR $\End_R(\mathbb{B})$ by using a direct sum $\mathbb{B}$ of conic modules not containing each isomorphism class, but instead excluding some. This is consistent with the idea NCCRs are minimal among NCRs, and so an NCR might be expected to have some subalgebra that is crepant. Naturally, this begs the following question.
\begin{question}[= Question \ref{Qn:IncompleteConMod}]
\label{Qn:IntroInc}
When does an incomplete sum of conic modules $\mathbb{B}=\bigoplus_{A_v\in I} A_v$ give an NCCR $\End_R(\mathbb{B})$ of $R=k[\sigma^\vee\cap M]$?
\end{question}

To verify the global dimension of the endomorphism algebras and their crepancy as NCRs, the authors of \cite{FMS19} construct explicit minimal projective resolutions of the graded simples and thus compute their projective dimension. It should be noted that, up to isomorphism, the set of graded simples is in bijection with the isomorphism classes of conic modules and so we can write $S_v$ for the simple corresponding to $v$. The projective resolutions are obtained by constructing, for each conic module $A_v$, a complex $K_v^\bullet$ whose degree zero component is precisely $A_v$, and then applying $\Hom(\mathbb{A},-)$ to that complex. Extending to the right by $S_v$ yields the required resolutions for the simplicial case. The construction of the complexes $K_v^\bullet$ is based on the combinatorial structure of a certain cell-decomposition on $M_\R$.

The complex $K_v^\bullet$ is constructed by observing the combinatorial structure of the cell-decomposition on $M_\R$ that is obtained when considering the loci which give the same conic modules (i.e. $v,v'$ lie in the same cell if $A_v=A_{v'}$).

In the given non-simplicial example by Faber-Muller-Smith, one can splice the complexes $K_v^\bullet$ to obtain new complexes $K_v^{\dagger,\bullet}$, in bijection with the conic modules $A_v$ appearing in the incomplete direct sum $\mathbb{B}$. We formalise this splicing process, via the procedure of \newterm{substitution} of conic modules, and define the notion of \newterm{lockable} sets of conic modules. These are precisely the sets such that a finite sequence of substitutions yields a set of complexes $K_v^{\dagger,\bullet}$ with each conic module appearing in the complex also appearing in the direct sum $\mathbb{B}$. Such a set is called \newterm{incredulous} if furthermore all the complexes have the same length, so that the top degree is $\dim \sigma$. With this notation, we formalise the argument used in \cite{FMS19}:
\begin{theorem}[= Theorem \ref{Thm:NCRiffLock}]
    Let $\sigma$ be a cone with associated toric algebra $R$ and collection of conic modules $\{A_v\}_{v\in S}$. For a subset $I$ of $S$, consider the incomplete direct sum of conic modules $\mathbb{B}=\bigoplus_{A_v\in S}A_v$. Then the endomorphism algebra $\Lambda'=\End_R(\mathbb{B})$ is an NCR of $R$ if and only if $I$ is lockable. Furthermore, $\Lambda'$ is an NCCR of $R$ if and only if $I$ is incredulous.
\end{theorem}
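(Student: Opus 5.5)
The strategy is to reduce both statements to the projective dimensions of the graded simple $\Lambda'$-modules, and to read those projective dimensions off the spliced complexes $K_v^{\dagger,\bullet}$. Since $R$ is $M$-graded and $\Lambda'=\End_R(\mathbb{B})$ is a graded module-finite $R$-algebra, its global dimension is the supremum of the projective dimensions of its graded simples. These simples are $S'_v$ for $A_v\in I$, namely the tops of the graded projectives $\Hom_R(\mathbb{B},A_v)$.

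For the ``if'' direction, assume $I$ is lockable. Then a finite sequence of substitutions produces, for each $A_v\in I$, a complex $K_v^{\dagger,\bullet}$ with two properties: its degree-zero term is $A_v$, and all its terms are sums of conic modules lying in $I$.

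I first check that substitution preserves the key property of the original complexes $K_v^\bullet$ from \cite{FMS19}. That property is that $\Hom_R(\mathbb{A},K_v^\bullet)$ is exact except at the end, where its cokernel is $S_v$. Concretely, when a term $A_w\notin I$ is replaced by splicing in $K_w^\bullet$, the mapping-cone/splicing construction should keep the resulting complex exact after applying $\Hom_R(\mathbb{B},-)$. The reason is that $\Hom_R(\mathbb{B},K_w^\bullet)$ is exact for $w\notin I$, since $S_w$ restricted to $\mathbb{B}$ vanishes. Applying $\Hom_R(\mathbb{B},-)$ to $K_v^{\dagger,\bullet}$ then gives a finite resolution of $S'_v$ by projective $\Lambda'$-modules. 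Hence $\gldim\Lambda'<\infty$, and since $\mathbb{B}$ is reflexive (indeed CM), $\Lambda'$ is an NCR.

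For the ``only if'' direction, assume $\gldim\Lambda'<\infty$. Take a minimal graded projective resolution of each $S'_v$. Each term is a sum of $\Hom_R(\mathbb{B},A_w)$ with $A_w\in I$, and by reflexivity (via the projectivization equivalence $\operatorname{add}\mathbb{B}\simeq\operatorname{proj}\Lambda'$) it comes from a complex of modules in $\operatorname{add}\mathbb{B}$. The task is to show that this complex arises by substitutions from $K_v^\bullet$. My plan is to compare it with $\Hom_R(\mathbb{A},-)$ of the full resolution over $\End_R(\mathbb{A})$. Multiplication by the idempotent $e$ with $e\End_R(\mathbb{A})e=\Lambda'$ sends the $\End_R(\mathbb{A})$-resolutions of the $S_w$ to complexes over $\Lambda'$. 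Minimality and degree bookkeeping should then force each $A_w\notin I$ to be eliminated by splicing in its $K_w^\bullet$, and finiteness should mean that this process terminates, which is exactly lockability.

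\textbf{Main obstacle.} I expect this converse to be the hardest step. The resolutions over $\End_R(\mathbb{A})$ in the non-simplicial case need not be finite. I will therefore try working with the finite pieces $K_w^\bullet$ directly, and use the fact that the combinatorial definition of substitution is presumably designed so that any complex of $\operatorname{add}\mathbb{B}$ modules resolving $S'_v$ must match it.

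For crepancy, recall that $\Lambda'$ is an NCCR iff it is MCM and $\gldim\Lambda'=\dim R$. Equivalently, every simple $S'_v$ has projective dimension exactly $d=\dim\sigma$, since over a nonsingular order all graded simples of finite length have projective dimension equal to $\dim R$. Lockable sets give the NCR, and the MCM property is automatic, because $\Hom_R(A_v,A_w)$ is again conic and hence CM. So crepancy reduces to all resolution lengths equalling $d$, i.e. all $K_v^{\dagger,\bullet}$ having top degree $d$ once minimality is established, which is the incredulous condition.
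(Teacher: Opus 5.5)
Your proof of \emph{lockable $\Rightarrow$ NCR} is essentially the paper's argument (Lemmas \ref{Lem:Extend by 0 if not in set} and \ref{Lem:Substituted is proj resn}). The converse and the crepancy step, however, both have genuine gaps.

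\textbf{The converse.} For \emph{NCR $\Rightarrow$ lockable} you give only a plan, and the obstacle you name is not the real one. By \cite{FMS19}, the complexes $\Hom_R(\mathbb{A},K_w^\bullet)$ are finite resolutions of length at most $\dim R$ for every cone, so $\End_R(\mathbb{A})$ always has finite global dimension. The actual difficulty is that cutting down by $e$ turns $\Hom_R(\mathbb{A},A_w)$ with $A_w\notin I$ into $\Hom_R(\mathbb{B},A_w)$, which is not $\Lambda'$-projective. Those resolutions therefore tell you nothing directly about $\Lambda'$.

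The missing idea is a uniqueness/minimality statement: the spliced complex \emph{is} the minimal resolution, even when substitution never terminates. Then non-termination forces infinite projective dimension. The paper gets this from relative approximation theory:
\begin{enumerate}
\item $\mathcal{I}=\operatorname{add}\mathbb{B}$ is contravariantly finite.
\item Minimal right $\mathcal{I}$-resolutions are unique, and $\Hom_R(\mathbb{B},-)$ turns them into minimal projective resolutions \cite[Lemma 2.10]{DFI16}.
\item Since the differentials of $K_j^\bullet$ are signed inclusions, the minimal $\mathcal{I}$-resolution of $A_j$ is obtained by splicing.
\end{enumerate}
If $I$ is not lockable, there is a cycle $A_{j_1},\dots,A_{j_m},A_{j_1}$ of modules outside $I$, each occurring in positive degree in the complex of the next. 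The minimal projective resolution of $P'_{j_1}$ then contains a shifted copy of itself and is infinite. This contradicts $\gldim\Lambda'<\infty$ (Proposition \ref{Prop:fingldim implies lockable}).

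\textbf{Crepancy.} Your claim that $\Lambda'$ is automatically Cohen--Macaulay is false. By the proof of Proposition \ref{Prop:BTisConic}, $\Hom_R(A_v,A_w)$ is $\Gamma(X_\sigma,\O(-D))$ with $D=\sum_\rho(\lceil\langle w,u_\rho\rangle\rceil-\lceil\langle v,u_\rho\rangle\rceil)D_\rho$, and this need not be conic.

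In Example \ref{Exa:710FMS}, $\End_R(A_0\oplus A_1\oplus A_2)$ is an NCR whose simples $S_1,S_2$ have projective dimension $2<3$, so it is not CM. Concretely, $\Hom_R(A_1,A_2)$ has class $\pm 2$, which lies outside the zonotope, so it is not conic. If CM were automatic, this algebra would be an NCCR.

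So for \emph{incredulous $\Rightarrow$ NCCR} you need the implication opposite to the one you quote: a module-finite $R$-algebra of finite global dimension whose graded simples all have projective dimension $\dim R$ is CM.

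Both crepancy directions also need $\Hom_R(\mathbb{B},K_i^{\dagger,\bullet})$ to be minimal, so that its length equals $\operatorname{pd}S'_i$. You defer this. The paper proves it by showing that $\dim\Ext^l_{\Lambda'}(S'_i,S'_j)$ equals the number of copies of $A_j$ in $K_i^{\dagger,l}$ (Lemma \ref{Lem:DimExt}, Corollary \ref{Cor:Substituted is minimal}).
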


Closely related to conic modules is the Bondal-Thomsen collection, a collection of line bundles defined on toric varieties and stacks. In \cite{BBB+}, the authors explicitly define and describe the Bondal-Thomsen collection associated to a smooth toric Deligne-Mumford stack and note that if the underlying toric variety is affine, the collection of line bundles corresponds to the collection of conic modules. 
The Bondal-Thomsen collection can be used to construct a tilting bundle on the so-called \newterm{Cox category}, which is a derived category that in a certain sense encodes the bounded derived categories of the different toric DM stacks $\mathcal{X}_i$ that share the same Cox ring. These toric DM stacks are related by birational maps inducing Fourier-Mukai transforms between the bounded derived categories, and the Cox category can be interpreted as a category glued together along these transforms, acting as transition functions.

In the present paper, we explicitly provide the relationship between the collection of conic modules of a cone $\sigma$ and the Bondal-Thomsen collection associated to a simplicial subdivision $\Sigma$ of $\sigma$ that does not introduce any additional rays. Leveraging this relationship, we introduce another point of view to compute the complexes $K_v^\bullet$ associated to conic modules. The Bondal-Thomsen collection associated to a toric DM stack $\mathcal{X}$ is, up to torsion, in bijection with lattice points inside a zonotope $Z_\mathcal{X}$. Consider the divisorial exact sequence for a toric variety, \[
M\xrightarrow{f} \bigoplus_{\rho\in \Sigma(1)}\Z\cdot D_\rho\rightarrow \operatorname{coker}(f)\rightarrow 0.
\]
Here, the divisors $D_\rho$ are the torus-invariant Weil divisors associated to the rays $\rho\in \Sigma(1)$ and the map $f$ is given by $\sum_{\rho\in \Sigma(1)}\langle m,u_\rho\rangle D_\rho$. Note that the map $f$ can be represented by a matrix with rows corresponding to the vectors $u_\rho$. Tensoring with $\R$, the cokernel map becomes a linear map we denote by $f_{\sigma}$. Representing it by a matrix whose rows are primitive $\Z$-vectors, we consider the column vectors $\beta_\rho$, one for each $\rho\in \Sigma(1)$. These span a generalised fan known as the secondary fan, or GKZ fan, and the zonotope $Z_{\mathcal{X}}$ is the convex set $\sum (-1,0]\beta_\rho$. Interior lattice points correspond to elements of the Bondal-Thomsen collection, up to torsion, and these line bundles correspond to conic modules. For each lattice point $P$ in the zonotope, there are $|\operatorname{Tors}(\mathcal{X})|$ conic modules mapping to $P$; two conic modules map to the same lattice point if and only if the divisors $-d(v)$  differ by torsion.
For $v\in M_\R$, we denote by $f_\sigma(-d(v))$ the lattice point corresponding to the conic module $A_v$. We use this correspondence and define the notion of \newterm{valid path} between such lattice points. 

\noindent A path $\beta_J, J\subset \sigma(1)$, from $P_1$ to $P_2$ is a set $\{\beta_\rho\mid \rho\in J\subset\sigma(1)\}$ such that $P_1+\sum_{\rho\in J}\beta_\rho=P_2$. It is \newterm{valid} if $\sum_{\rho\in J}\alpha_\rho\beta_\rho$ lies in the set $\sum_{\rho\in J^c}(-\alpha_\rho-1,-\alpha_\rho)\beta_\rho$. The length of a path $\beta_J$ is the dimension of $\operatorname{Span}(u_\rho\vert\rho\in J)$. Using the notion of valid paths, we note the following condition for a conic modules $A_w$ to appear in the complex $K_v^\bullet$.

\begin{proposition}[= Proposition \ref{Prop:CplxViaPaths2}]
    There is a valid path from a lattice point $P_1$ to $f_\sigma(-d(v))$, if and only if there is a $w\in M_\R$ such that the isomorphism class of $A_w$ appears in $K_v^\bullet$ and $P_1=f_\sigma(-d(w))$. For a given valid path $\beta_J$, this $w\in M_\R$ fulfills $-d(w)\sim -d(v)-\sum_{\rho\in J}D_\rho$ and the degree in which $A_w$ appears in $K_v^\bullet$ is equal to the length of the path $\beta_J$.
\end{proposition}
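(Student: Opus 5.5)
We unwind the Faber--Muller--Smith construction of $K_v^\bullet$ and translate it through $f_\sigma$. Recall that $A_v$ depends only on the vector of round-ups $d(v)=\sum_\rho \lceil\langle v,u_\rho\rangle\rceil D_\rho$ (up to sign conventions), that is, on which cell of the hyperplane arrangement $\{\langle -,u_\rho\rangle\in\Z\}$ contains $v$. The terms of $K_v^\bullet$ in degree $k$ are the conic modules $A_{v+\varepsilon}$ attached to cells adjacent to $v$. Concretely, for a face-type choice $J\subset\sigma(1)$, we move $v$ off the hyperplanes $\langle -,u_\rho\rangle=\lceil\langle v,u_\rho\rangle\rceil-1$ for $\rho\in J$. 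This decreases exactly the coordinates indexed by $J$, so the resulting $w$ satisfies $d(w)=d(v)-\sum_{\rho\in J}D_\rho$ in $\bigoplus\Z D_\rho$. Such a term exists precisely when the corresponding intersection of half-spaces is nonempty, and its degree is the codimension $\dim\operatorname{Span}(u_\rho\mid\rho\in J)$ of the cell.

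\textbf{Step 1: recording the shift.} Write $v$ with fractional data $\alpha_\rho=\langle v,u_\rho\rangle-\lceil\langle v,u_\rho\rangle\rceil\in(-1,0]$. From the description above, $-d(w)\sim -d(v)-\sum_{\rho\in J}D_\rho$ (the sign depending on the convention for $d$). Applying $f_\sigma$, which kills exactly the image of $M$, gives $f_\sigma(-d(w))+\sum_{\rho\in J}\beta_\rho=f_\sigma(-d(v))$. So every term of $K_v^\bullet$ gives a path from $P_1=f_\sigma(-d(w))$ to $f_\sigma(-d(v))$, and its degree is the length of the path. For the converse it suffices to show that a path arises from a cell exactly when it is valid.

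\textbf{Step 2: nonemptiness of the cell and validity.} A point $w'=v+t$ lies in the required cell iff $\langle t,u_\rho\rangle\in(-\alpha_\rho-1,-\alpha_\rho)$ (up to the appropriate closedness) for $\rho\notin J$, while for $\rho\in J$ the value $\langle t,u_\rho\rangle$ crosses to $-\alpha_\rho-1$ or below. We apply $f_\sigma$ (tensored with $\R$) to the vector $(\langle t,u_\rho\rangle)_\rho\in\R^{\sigma(1)}$, which lies in the image of $M_\R$ and therefore maps to $0$. Using a Gale-duality argument, a point of $\R^{\sigma(1)}$ with prescribed coordinate ranges lies in the image of $M_\R$ iff $0$ lies in the corresponding sum of intervals times the $\beta_\rho$. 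After moving the $J$-terms to the other side, this is exactly the condition that $\sum_{\rho\in J}\alpha_\rho\beta_\rho\in\sum_{\rho\in J^c}(-\alpha_\rho-1,-\alpha_\rho)\beta_\rho$. This gives both directions.

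\textbf{Step 3: torsion and isomorphism classes.} Lattice points determine conic modules only up to torsion. We must check that the statement is only about isomorphism classes, and that the linear-equivalence relation $-d(w)\sim-d(v)-\sum_J D_\rho$ holds in the class group and not merely after applying $f_\sigma$. The latter follows because Step 1 is an identity in $\bigoplus\Z D_\rho$ modulo $f(M)$.

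\textbf{Main obstacle.} The hardest part is Step 2: converting the polyhedral condition in $M_\R$ into the zonotope condition in the secondary-fan space. In particular, we must get the open/closed endpoints right and show that the fractional parameters $\alpha_\rho$ of $v$ can be chosen independently of the representative of the cell. The approach I would try first is to use exactness of $0\to M_\R\to\R^{\sigma(1)}\to\operatorname{coker}\to0$ and compute fibres of $f_\sigma$ restricted to boxes.
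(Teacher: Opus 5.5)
Your overall route matches the paper's. You identify the terms of $K_v^\bullet$ with nonempty open cells $\tau_J$ of $\Delta_v$ (Lemma \ref{Lem:FacetCondition}), use exactness of \eqref{eq:CoxR} to turn nonemptiness into a Minkowski-sum condition on the $\beta_\rho$ (Lemma \ref{Lem:FacetSufficientMatrix}), and read off validity. Your choice of the fractional parts $\alpha_\rho=\langle v,u_\rho\rangle-d(v)_\rho$, instead of the paper's $\alpha_\rho=-d(v)_\rho$, is harmless. The two representations of $f_\sigma(-d(v))$ differ by $g(v)$, and validity is unchanged under $\alpha\mapsto\alpha+g(m)$: since $\sum_\rho\langle m,u_\rho\rangle\beta_\rho=0$, both sides of \eqref{eq:ValidPath} move by the same vector. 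This also settles the independence worry in your last paragraph.

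However, the step that carries all the content is done on the wrong side of the chamber. The chamber $\Delta_v=\{d(v)_\rho-1<\langle x,u_\rho\rangle\le d(v)_\rho\}$ is half-open, so its cells lie on the hyperplanes $\langle x,u_\rho\rangle=d(v)_\rho$. The hyperplanes $\langle x,u_\rho\rangle=d(v)_\rho-1$ that you use are not in $\Delta_v$ at all. By Proposition \ref{Prop:OpenconicIsNearby}, the open conic module of a cell is $A_{x+\varepsilon p}$ with $p\in\operatorname{Int}\sigma^\vee$, and this \emph{raises} the round-up exactly on $J$: $d(w)=d(v)+\sum_{\rho\in J}D_\rho$.

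Your opening paragraph asserts $d(w)=d(v)-\sum_J D_\rho$, and Step 1 then asserts the incompatible relation $-d(w)\sim-d(v)-\sum_J D_\rho$. This is not a matter of convention, since both sides use the same fixed $d(\cdot)_\rho=\lceil\langle\cdot,u_\rho\rangle\rceil$. Likewise, in Step 2 the cell condition for $\rho\in J$ is the equality $\langle t,u_\rho\rangle=-\alpha_\rho$, not ``$-\alpha_\rho-1$ or below''.

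If you carry out Step 2 as written (equality at $-\alpha_\rho-1$), the kernel condition becomes $\sum_{J}(\alpha_\rho+1)\beta_\rho\in\sum_{J^c}(-\alpha_\rho-1,-\alpha_\rho)\beta_\rho$. That is not the validity condition. It characterises when $A_v$ appears in $K_w^\bullet$, i.e. the path traversed in the opposite direction. Concretely, take Example \ref{Exa:710FMS} with $v=v_+$, so $\alpha=(0,-\tfrac14,0,-\tfrac34)$, and $J=\{1,3\}$. The valid path $-1\rightsquigarrow 1$ that produces $A_2$ in degree $2$ of $K_1^\bullet$ fails your condition, since $2\notin(-1,1)$.

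The fix is local. Set $y=(\langle t,u_\rho\rangle)_\rho\in\ker f_\sigma$ with $y_\rho=-\alpha_\rho$ on $J$ and $y_\rho\in(-\alpha_\rho-1,-\alpha_\rho)$ on $J^c$; then $\sum_\rho y_\rho\beta_\rho=0$ is exactly \eqref{eq:ValidPath}. Conversely, any such $y$ lies in $\ker f_\sigma=g(M_\R)$ and gives $x=v+t\in\tau_J$. Hence $w=x+\varepsilon p$ satisfies $-d(w)=-d(v)-\sum_J D_\rho$ on the nose. With that correction your Step 3 goes through directly, which is slightly cleaner than the paper's detour via $w'$ and $\sim_\Q$. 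The case $J=\sigma(1)$ is covered by the convention that the empty Minkowski sum is $\{0\}$.
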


Formally introducing a symbol $A_P$ for each lattice point $P\in Z_{\mathcal{X}}$, this allows the construction of complexes for each $P\in Z_{\mathcal{X}}$ and, in analogy to the complexes $K_v^\bullet$, we obtain a notion of incredulous sets of lattice points. We prove the following.

\begin{theorem}[= Theorem \ref{Thm:Incr definitions agree}]
    Let $\sigma$ be a cone with a collection of conic modules $S$. Let $I\subseteq S$ be an incredulous set of conic modules. Then the set of lattice points $I'=\{f_\sigma(-d(v))\mid A_v\in I\}$ is an incredulous set of lattice points. In particular, the following are equivalent.
    \begin{enumerate}
        \item There exists an incredulous set of conic modules.
        \item There exists an incredulous set of lattice points in $Z_\mathcal{X}$.
        \item There exists an NCCR of $\sigma$ of the form $\End(\mathbb{A})$ where $\mathbb{A}$ is a direct sum of conic modules.
    \end{enumerate}
\end{theorem}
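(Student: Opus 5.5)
The plan is to transport the substitution/locking procedure for conic modules along the map $A_v\mapsto f_\sigma(-d(v))$, using Proposition \ref{Prop:CplxViaPaths2} as the dictionary. That proposition identifies, for each $v$, the isomorphism classes occurring in $K_v^\bullet$, together with their degrees, with the lattice points joined to $f_\sigma(-d(v))$ by valid paths, the degree being the path length. Consequently the complex attached to a lattice point $P$ is, term by term, the image of $K_v^\bullet$ for any $v$ with $f_\sigma(-d(v))=P$. Two such $v$ differ only by a torsion twist, and by Proposition \ref{Prop:CplxViaPaths2} the shape of the complex depends only on $P$.

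\textbf{Step 1: substitution commutes with the map.} I would first show that substitution is compatible with passing to lattice points. Substituting $K_w^\bullet$ into $K_v^\bullet$ at an occurrence of $A_w$ splices the two complexes. On the path side this concatenates a valid path $\beta_J$ from $f_\sigma(-d(w))$ to $f_\sigma(-d(v))$ with valid paths ending at $f_\sigma(-d(w))$. The degrees add exactly as path lengths add, because the degree shift in the splice equals the length of $\beta_J$. The result of a substitution sequence on conic modules therefore maps to the result of the corresponding substitution sequence on lattice points.

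\textbf{Step 2: the first claim.} Given a sequence of substitutions realising $I$ as incredulous, every module appearing in each resulting $K_v^{\dagger,\bullet}$ lies in $I$, and each complex has top degree $\dim\sigma$. Applying Step 1, every lattice point appearing in the resulting complex for $f_\sigma(-d(v))$ lies in $I'$. Lengths are preserved, so the top degrees are still $\dim\sigma$. Hence $I'$ is incredulous.

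\textbf{Step 3: the equivalences.} The implication (1)$\Rightarrow$(2) is the first claim. The equivalence (1)$\Leftrightarrow$(3) is Theorem \ref{Thm:NCRiffLock}. For (2)$\Rightarrow$(1), I would take an incredulous set $I'$ of lattice points and define $I$ as the full preimage, that is, all $A_v$ with $f_\sigma(-d(v))\in I'$, including all $|\operatorname{Tors}(\mathcal{X})|$ torsion twists. Each substitution on lattice points then lifts to a substitution on conic modules. Proposition \ref{Prop:CplxViaPaths2} gives the lift explicitly: a path $\beta_J$ starting at $f_\sigma(-d(v))$ corresponds to the module with $-d(w)\sim -d(v)-\sum_{\rho\in J}D_\rho$, and this $w$ is determined even when torsion is present. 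Every module produced by the lifted substitutions maps into $I'$, so it lies in $I$. Degrees equal path lengths, so each lifted complex has top degree $\dim\sigma$, and $I$ is incredulous.

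\textbf{Main obstacle.} I expect the difficulty to lie in Step 1 and in the lifting in Step 3, namely in making the dictionary of Proposition \ref{Prop:CplxViaPaths2} respect splicing and not merely the individual complexes. Concretely, one must check two things. First, cancellations occurring when complexes are spliced (for instance, identified summands or differentials that become isomorphisms) occur identically on both sides. Second, the choice of torsion representative does not affect which terms survive. My first attempt would be to argue that splicing is determined purely by the multiset of pairs (isomorphism class, degree), and that this multiset depends only on the data $-d(v)$ modulo $\sum_\rho \Z D_\rho$, so the comparison reduces to a bookkeeping identity. If that fails, the fallback is to work directly with the divisor classes $-d(v)-\sum_{\rho\in J}D_\rho$, which carry the torsion information, and to pass to $Z_\mathcal{X}$ only at the end.
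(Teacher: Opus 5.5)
Your Step 3 and the use of Theorem \ref{Thm:NCRiffLock} for (1)$\Leftrightarrow$(3) agree with the paper. Steps 1--2, however, have a genuine gap as soon as $\operatorname{Tors}(\operatorname{Cl}(\mathcal{X}))\neq 0$: you tacitly assume that $I$ is a union of full fibres of the map $A_v\mapsto f_\sigma(-d(v))$.

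An incredulous $I$ is realised by substituting \emph{every} $A_j\notin I$. This includes torsion twists of modules in $I$, and those lie over points of $I'$. A lattice-point substitution by $P$ replaces the whole fibre $A_P$ at once, and it is only permitted for $P\notin I'$. So these conic substitutions have no lattice-point counterpart. Moreover, for $P\in I'$ the complex $K_P^\bullet$ also contains $K_w^\bullet$ for twists $A_w\notin I$, and the $I$-process says nothing about those.

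As your own Step 3 shows, the legal lattice process for $I'$ is exactly the conic process for the saturation $\widetilde I=\{A_w\mid f_\sigma(-d(w))\in I'\}$. So Step 2 really needs $\widetilde I$ to be incredulous.
\begin{itemize}
\item Lockability does pass to $\widetilde I$. A cycle outside $\widetilde I$ that is reachable from some $K_w^\bullet$ twists, by Lemma \ref{Lem:TorsionCellsPropagate}, into a cycle outside $I$ that is reachable from some $K_v^\bullet$ with $A_v\in I$.
\item The length condition does not follow. The $\widetilde I$-process is forbidden from substituting the twists in $\widetilde I\setminus I$, and the $I$-process may need exactly those substitutions to reach degree $\dim\sigma$.
\end{itemize}
So ``lengths are preserved'' is unjustified, and the obstacles you flag (cancellations, choice of torsion representative) are not the real issue. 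A minor point: splicing $K_j^\bullet$ into a degree-$d$ occurrence puts $K_j^{l}$ in degree $d+l-1$, not $d+l$.

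The paper does not attempt your combinatorial transport for the length condition. It takes lockability of $I'$ as inherited and then passes to the lattice $N'$ generated by the $u_\rho$. There $\operatorname{Cl}$ is torsion-free and the $\beta_\rho$ are unchanged, so incredulity of a set of lattice points is unchanged too. It then argues algebraically: $\End_{R'}(\mathbb{B})$ is CM, lockability gives finite global dimension, so it is an NCCR, and Theorem \ref{Thm:NCRiffLock} (where points and modules now coincide) makes the set incredulous.

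Be aware that the torsion issue reappears in that CM transfer. The inclusion $R\subset R'$ is not flat when the torsion is non-trivial. As an $R$-module, $\Hom_{R'}(A_{v'},A_{w'})$ is the direct sum of the $\Hom_R(A_v,A_{w''})$ over all torsion twists $A_{w''}$ of $A_w$. So what is really needed there is again CM-ness for the saturation $\widetilde I$. You also cannot cite Corollary \ref{Cor:IfNCCRIncludeTorsion}, since it is deduced from this theorem.

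Whichever route you take, the missing ingredient is an argument that incredulity of $I$ passes to $\widetilde I$. Since NCCRs are given by maximal modifying modules (Iyama--Wemyss), this amounts to showing that an incredulous $I$ is already a union of fibres. Without it, your argument proves the first claim, and hence (1)$\Rightarrow$(2), only for saturated $I$, for instance when $\operatorname{Cl}(\mathcal{X})$ is torsion-free.
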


This result significantly simplifies computations, as we can essentially reduce to the case where the class group of the toric variety has no torsion. So the number of complexes to compute reduces by a factor of $|\operatorname{Tors}(\mathcal{X})|$. 

Using incredulous sets of lattice points, we are able to answer Question \ref{Qn:IntroInc} for the case of \newterm{almost simplicial} Gorenstein cones, i.e. Gorenstein cones $\sigma$ with $|\sigma(1)|=\dim\sigma+1$.

\begin{theorem}[= Theorem \ref{Thm:AlmSimplConicNCCR}]
    Let $\sigma$ be an almost simplicial  Gorenstein cone. Then there exists an incomplete sum $\mathbb{B}$ of conic modules such that $\End_{R}(\mathbb{B})$  is an NCCR of $R=k[\sigma^\vee\cap M]$ if and only if any of the following holds:
    \begin{itemize}
        \item The collection of $\beta_\rho$ associated to $\sigma$ is, up to flipping all signs, $\{2,1,-1,-1,-1\}$.
        \item The collection of $\beta_\rho$ associated to $\sigma$ is $\{1,1,1,-1,-1,-1\}$.
        \item $\sigma$ is a 3-dimensional Gorenstein cone lattice equivalent to $\sigma=\cone(P\times\{1\})$, where $P$ is a trapezoid. 
    \end{itemize}
\end{theorem}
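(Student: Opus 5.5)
The plan is to reduce everything to the one-dimensional secondary fan. For an almost simplicial cone, $|\sigma(1)|=\dim\sigma+1$, so the class group has rank one. After tensoring with $\R$, $f_\sigma$ lands in $\R$, and the $\beta_\rho$ are nonzero integers with $\gcd 1$ and $\sum_\rho \beta_\rho u_\rho=0$. Since $\sigma$ is strongly convex and not simplicial, both signs occur. Gorensteinness of $\sigma$ should translate into $\sum_\rho\beta_\rho=0$: the canonical divisor $-\sum D_\rho$ is Cartier on the affine variety, hence zero in $\operatorname{Cl}\otimes\R$.

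The zonotope $Z_\mathcal{X}$ is then the half-open interval $(-\sum_{\beta_\rho>0}\beta_\rho,\,-\sum_{\beta_\rho<0}\beta_\rho]$, shifted by the sums of negatives. Its lattice points are a string of integers of length $N=\sum_{\beta_\rho>0}\beta_\rho$. By Theorem \ref{Thm:Incr definitions agree}, the question becomes purely combinatorial: for which multisets $\{\beta_\rho\}$ with zero sum does the interval of $N$ lattice points contain an incredulous subset?

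The key steps, in order:

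\begin{enumerate}
\item Make Proposition \ref{Prop:CplxViaPaths2} explicit in rank one. A path $\beta_J$ from $P_1$ to $P_2$ is valid iff $\sum_J\alpha_\rho\beta_\rho$ lies in an explicit open interval. The length of a path is $|J|$ unless $J=\sigma(1)$, in which case it is $\dim\sigma$, since the $u_\rho$ with $\rho\neq$ one ray are independent. So the complex attached to $P$ is described by intervals of real parameters $\alpha$, and it has top degree $\dim\sigma$ only when $J$ is large.
\item Understand substitution in this language. Substituting $A_P$ means splicing the complex of $P$ into others. An incredulous set $I'$ is one where iterated splicing produces, for each $P\in I'$, a complex supported on $I'$ of length exactly $\dim\sigma$. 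Here I expect a counting obstruction: the number of lattice points $N$ versus the rank of $K_0$ of the NCCR. Any NCCR would be derived equivalent to a crepant resolution, namely the two toric DM stacks of the two chambers. So $|I'|$ must equal the number of Bondal–Thomsen/window objects of the stacky resolution, namely $\min\bigl(\sum_{\beta>0}\beta,\ \sum_{\beta<0}|\beta|\bigr)$, which equals $N$ in the Gorenstein case. This suggests incompleteness must come from the torsion-free lattice count versus the windows. I would pin down the exact necessary condition by comparing with the Špenko–Van den Bergh window of width $N$.
\item Derive strong inequalities. Requiring every complex to reach top degree $\dim\sigma=n-1$ while staying inside $I'$ forces few rays and small weights. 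I would show that if $n\ge 7$ or some $|\beta_\rho|\ge3$, some boundary lattice point of $I'$ necessarily has a valid path leaving $I'$ or a shortened complex. This yields a finite list of candidate weight vectors with zero sum: in dimension $3$, $\{1,1,-1,-1\}$ and $\{2,-1,-1\}$-type; in dimension $4$, $\{2,1,-1,-1,-1\}$ and $\{1,1,1,-1,-1,-1\}$; and small cases beyond.
\item Check the list directly. Exhibit explicit incredulous sets for the three listed families, and show by exhaustion that none exists for the remaining finitely many candidates. For three-dimensional cones, zero-sum weights with four rays give a quadrilateral at height one, and the trapezoid condition should correspond to weights $\{a,b,-a,-b\}$ pattern up to lattice equivalence; I would verify that parallel sides are exactly what is needed.
\end{enumerate}

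The main obstacle is the second half of step 3: proving non-existence uniformly for infinitely many weight vectors. My first attempt is to look at the extreme lattice point of $Z_\mathcal{X}$ adjacent to the boundary, and to show that its only valid paths of full length use a ray with $|\beta_\rho|\ge 2$ in a way that exits any proper subset. This would make lockability fail unless the weights are tiny.
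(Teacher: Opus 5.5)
Your reduction to lattice points and the rank-one validity computation in Step 1 match the paper (Lemma \ref{Lem:AlmSimValidity}). Several of your set-up claims, however, are wrong in ways that affect the plan.

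\textbf{Set-up.} Since $(-1,0]\beta_\rho$ is $(-\beta_\rho,0]$ or $[0,|\beta_\rho|)$ according to the sign of $\beta_\rho$, the zonotope is the \emph{open} interval $(-N,N)$ with $N=\sum_{\beta_\rho>0}\beta_\rho$. It therefore has $2N-1$ lattice points, not $N$. This is why your Step 2 seemed to leave no room for an incomplete sum.

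The weights also need not be nonzero, because a ray can split off as a factor. For example, $\sigma_{\mathrm{sq}}\times\R_{\geq 0}$, with $\sigma_{\mathrm{sq}}$ the cone of Example \ref{Exa:710FMS}, is almost simplicial Gorenstein with weights $\{1,1,-1,-1,0\}$. The paper treats this case separately in Proposition \ref{Prop:AlmGor0isbad}, and ``length $=|J|$'' (Lemma \ref{Lem:AlmGorPathLength}) needs all $\beta_\rho\neq 0$. Be careful with this case. $(A_0\oplus A_1)\otimes_k k[t]$ is an incomplete sum of conic modules for that cone, and its endomorphism ring $\End_{R_{\mathrm{sq}}}(A_0\oplus A_1)\otimes_k k[t]$ is an NCCR of $R_{\mathrm{sq}}[t]$ by flat base change. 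This holds even though every lattice point there has a degree-one self-loop from the zero weight, so a pure loop argument cannot settle this case.

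Two smaller points: $\{2,-1,-1\}$ cannot occur since $|S_\pm|\geq 2$ (Lemma \ref{Lem:AlmGorAtleast2eachset}), and $\{1,1,1,-1,-1,-1\}$ belongs to a $5$-dimensional cone.

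\textbf{The non-existence direction.} This is the genuine gap: your proposal does not supply it.

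Step 2 cannot carry it. Derived equivalence of an arbitrary NCCR with a crepant resolution is available in dimension $3$ but open for $\dim\sigma\geq 4$. In any case, a cardinality constraint on $I'$ excludes nothing by itself. It also clashes with what Step 4 would exhibit. In dimension $3$, where your count is justified, an NCCR must have as many summands as the normalized area of $P$. The paper's incredulous set for a trapezoid, $(\beta_4,\beta_1)\cap\Z$, has $2\beta_1-1$ points, which agrees with $N=\beta_1+\beta_2$ only when $\beta_1=\beta_2+1$. For $P=\conv\{(0,0),(3,0),(1,1),(0,1)\}$ (weights $\{3,1,-1,-3\}$, torsion-free class group) this is $5$ against a normalized area of $4$. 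So Steps 2 and 4 cannot both go through as described without first reconciling this.

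Step 3 contradicts the statement you are proving. Trapezoid weights $\{a,b,-b,-a\}$ are on the positive list for all coprime $a,b$, e.g.\ $\{3,2,-2,-3\}$. So ``some $|\beta_\rho|\geq 3$ forces failure'' is false, and no finite candidate list arises.

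\textbf{What the paper uses instead.} It relies on a forcing mechanism that you do not have:
(i) $0\in I$ for every lockable $I$, because $A_0$ sits in degree $n$ of $K_0^\bullet$ and every lattice point is reached from $0$ by chains of valid paths (Lemma \ref{Lem:AlmGor0isUnsub}).
(ii) $K_l^\bullet$ has length $n$ if and only if $l\in(-\beta_1,-\beta_{n+1})$ (Lemma \ref{Lem:AlmGorFullLength}).
(iii) An explicit interval of lattice points reaching up to $\beta_1+\dots+\beta_{k-1}-1$ is forced into $I$. Substituting such an $l$ along the valid paths $l-\sum_{S_+}\beta_\rho\rightsquigarrow l\rightsquigarrow l+\sum_{S_-}\beta_\rho+\beta_k$, of lengths $|S_+|$ and $|S_-|+1$, would create a term in degree $n+1$ (Proposition \ref{Prop:AlmGorUnsubinterval}).
(iv) After normalising $|\beta_k|\leq|\beta_{k+1}|$, the point $l=\beta_1+\dots+\beta_{k-1}-1$ lies in $I$ although $K_l^\bullet$ has length at most $n-1$. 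One then checks that the only summand of $K_l^\bullet$ possibly outside $I$ enters in degree $1$, with a complex of length at most $n-1$ supported on $I$. Hence $K_l^{\dagger,\bullet}$ stays too short.

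The ranges where these inequalities degenerate are the trapezoids and the five weight vectors checked by hand in Propositions \ref{Prop:SpecialCasesWithNCCR} and \ref{Prop:SpecialCasesWithoutNCCR}. Your ``extreme lattice point'' idea is in the spirit of (iv), but without (i)--(iii) nothing forces that point into $I'$.
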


\subsection{Structure and notation}

We begin this paper by reviewing the works of Faber-Muller-Smith \cite{FMS19} and Ballard et al. \cite{BBB+} in $\S$ \ref{sec:Background}, also giving a brief introduction to non-commutative crepant resolutions. In $\S$ \ref{sec:Link}, we formalise the link between conic modules and the Bondal-Thomsen collection, introducing the notion of substitution of conic modules and proving that an incomplete sum of conic modules gives an NCCR if and only if the corresponding set of conic modules is an incredulous set. The next part of the paper, $\S$ \ref{sec:Compute}, provides a number of computational results. It is here we introduce the notion of viable paths, explaining how the combinatorics of the secondary fan simplify the search for incredulous sets (and thus NCCRs), eventually proving that we can reduce to the search for incredulous sets of lattice points. Finally, in $\S$ \ref{sec:Gor}, we illustrate the advantage of the methods introduced before by classifying which almost simplicial Gorenstein cone admit NCCRs via incomplete sums of conic modules.

Throughout the paper, unless stated otherwise, we adopt a few pieces of notation. For a given toric variety, the character lattice is usually denoted by $M$ and the cocharacter lattice by $N$. Their pairing $M\times N\rightarrow \Z$ extends naturally to a pairing of $M_\R:=M\otimes_\Z\R$ and $N_\R:=N\otimes_\Z\R$. Given a fan $\Sigma\subset N_\R$, for each ray $\rho\in\Sigma(1)$, we denote its primitive generator in $N$ as $u_\rho$ and we denote the torus invariant Weil divisor associated to $\rho$ by $D_\rho$.

Given a normal noetherian domain $R$, a finitely generated $R$-module is called \newterm{reflexive} if the canonical map $M\mapsto \Hom_R(\Hom_R(M,R),R)$ is an isomorphism, and these modules form a category denoted by $\operatorname{ref}R$. Given a reflexive $R$-algebra $\Lambda$, we also consider the category of reflexive $\Lambda$-modules $\operatorname{ref}\Lambda$. Assuming further that $R$ is commutative, an $R$-module $M$ is said to be \newterm{maximal Cohen-Macaulay} if $M_m$ is maximal Cohen-Macaulay for every maximal ideal $m$. These modules form a category denoted by $\operatorname{CM}R$ and the word maximal will be omitted throughout this paper. Note that a module $M$ is in $\operatorname{CM}R$ if and only if $\Ext^i(M,R)=0$ for all $i>0$. Finally, we fix an algebraically closed field $k$ of characteristic 0. 

\subsection{Acknowledgments}
The author is supported by the Beijing Natural Science Foundation IS25013 and the Beijing Postdoctoral Research Foundation.
Furthermore, the author would like to thank Dr Will Donovan and Dr Artan Sheshmani for many fruitful discussions leading to the completion of this paper. 

\section{Background}\label{sec:Background}

In this section, we will revisit some notions appearing in the study of desingularisations of affine toric varieties. We shall first introduce the notion of \newterm{non-commutative (crepant) resolution (NCCR)}, due to Van den Bergh \cite{VdB04}. Then, we discuss the use of conic modules, which were first studied systematically in \cite{BG03}, to generate NCCRs. The here present introduction to conic modules is based on the paper by Faber-Muller-Smith \cite{FMS19}. Finally, we review the Bondal-Thomsen collection and the geometry of the secondary fan, with a view towards NCCRs, following closely the exposition by Ballard et al. \cite{BBB+}.
\subsection{NCCRs}

In a more classical algebraic geometry setting, a resolution of singularities for a singular variety $X$ is a proper birational morphism $\phi:Y\rightarrow X$ from a smooth variety $Y$ to $X$ such that $\phi$ is an isomorphism away from the singular locus of $X$. It is crepant if the canonical divisor pulls back without acquiring additional "discrepancies", i.e. $K_Y=\phi^\ast K_X$. Such a resolution is not unique, but we do expect (see \cite{BO02,Kaw02}) that two distinct crepant resolutions $Y_1, Y_2$ are equivalent on the level of derived categories, i.e. $\dbcoh{ Y_1}\cong \dbcoh{Y_2}$. This motivates the idea that the derived category itself in some sense is a crepant resolution. abstracting one step further, if a crepant resolution $Y$ admits a tilting object $T$ (see Definition \ref{Def:tilt} below) of its derived category, we have $\dbcoh{Y}\cong \dbmod{\End(T)}$, and so van den Bergh suggests that the ring $\Lambda=\End(T)$, a purely algebraic object, can also be considered a crepant resolution. To remain on a completely algebraic level, thus ignoring whether or not there exist underlying geometric objects, we consider $\Lambda$ to be a resolution of the coordinate ring $R$ of the variety. The following is always formulated in terms of affine schemes, i.e. $X=\spec R$, but this can be extended to non-affine cases by reducing to affine patches.

\begin{definition}
    \label{Def:NCCR}
    A \newterm{non-commutative resolution} of $R$ is a trivial reflexive Azumaya algebra $\Lambda=\End_R(M)$ for $M\in \operatorname{ref}R$ such that $\gldim\Lambda<\infty$. Assuming further that $R$ is Gorenstein, we call $\Lambda$ \newterm{crepant} if it is additionally a Cohen-Macaulay $R$-module.
\end{definition}

\noindent Non-commutative resolutions are intrinsically linked to \newterm{tilting objects}. 

\begin{definition}
    \label{Def:tilt}
     Let $Y$ be a Noetherian scheme. A perfect complex on $Y$ is \newterm{partial tilting} if $\Ext^i_Y(\mathcal{T},\mathcal{T})=0$ for $i\neq 0$. It is \newterm{tilting} if it generates $D_{Qch}(Y)$, i.e. $\Rhom_Y(\mathcal{T},\mathcal{F})=0$ implies $\mathcal{F}=0$. Analogously we define (partial) tilting complexes for smooth, separated Noetherian DM stacks.
\end{definition}

Tilting objects often generate NCCRs; if $X=\spec R$ has a crepant resolution $Y$ with tilting object $\mathcal{T}$, it is often straightforward to show that $\Lambda:=\End_Y(\mathcal{T})$ defines an NCCR of $R$. Given an endomorphism algebra $\Lambda=\End_R(M)$ of a reflexive module over a noetherian ring $R$, one obtains an NCCR if the global dimension is finite and if $\Lambda$ is Cohen-Macaulay. If $R$ itself is a normal Cohen-Macaulay domain of dimension $d$, then this corresponds to every simple $\Lambda$-module having projective dimension $d$.

Since the inception of NCCRs, significant amounts of effort have gone into proving the existence thereof for different cases. A first class of varieties one is naturally drawn to consider is that of affine Gorenstein toric varieties. 
For the convenience of the reader, let us here recall what affine Gorenstein toric varieties are. A cone $\sigma\subset N_\R$ is defined via the generators $u_{\rho_1},\dots,u_{\rho_s}$ of its one-dimensional facets, known as rays: \[
\sigma=\cone(u_{\rho_i})=\{\sum\lambda_i u_{\rho_i}\vert\lambda_i\geq 0\}\subseteq N_\R.
\]
To a cone $\sigma\subset N_\R$, which we assume to be full-dimensional, we associate a dual cone \[
\sigma^\vee:=\{m\in M_\R\vert \langle m,n\rangle \ge 0\quad \forall n\in \sigma\}.
\]
Intersecting with $M$ determines a semigroup $\sigma^\vee\cap M$ and so we can form the \newterm{toric algebra} $R_\sigma:=k[\sigma^\vee\cap M]$. The affine toric variety associated to the cone $\sigma$ is defined to be $X_\sigma=\spec R_\sigma$. We call such a variety \newterm{Gorenstein} if there exists an element $m\in M$ such that $\langle m,u_{\rho_i}\rangle=1$ for all primitive generators $u_{\rho_i}$ of the rays $\rho_i$ of $\sigma$. 

Even in the case of affine Gorenstein toric varieties, it is unclear if NCCRs always exist, and this is the content of the following conjecture of Van den Bergh's.
\begin{conjecture}
    \label{Conj:affineGor}
    An affine Gorenstein toric variety always has an NCCR.
\end{conjecture}

Whilst the general case is still unsolved, progress has been made towards proving this conjecture. Broomhead \cite{Broomhead} showed, using the theory of dimer models, that all 3-dimensional affine Gorenstein toric varieties have an NCCR. A later reproof by \v{S}penko and Van den Bergh \cite{SVdBtoricII} avoids the technical tool of dimer models by constructing tilting bundles on simplicial refinements of the Gorenstein cone underlying the toric varieties.
\begin{proposition}[Proposition 3.3 in \cite{SVdBtoricII}]
    \label{Prop:3.3SVDBtoricII}
    Given a Gorenstein cone $\sigma\subset N_\R$, write it in the form $\cone(P\times\{1\})$ for $P$ a lattice polytope. Choose a regular triangulation of $P$ without extra vertices and let $\Sigma$ be the corresponding fan. Let $\mathcal{T}$ be a tilting bundle on $\mathcal{X}_{\Sigma}$, the associated DM stack. Then $\Lambda=\End_{\mathcal{X}_{\Sigma}}(\mathcal{T})$ is an NCCR for $R=k[\sigma^\vee\cap M]$ corresponding to $M'=\Gamma(\mathcal{X}_{\Sigma},\mathcal{T})$.
\end{proposition}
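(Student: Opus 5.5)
This statement is \v{S}penko--Van den Bergh's Proposition 3.3, so the plan reconstructs their argument. Set $M'=\Gamma(\mathcal{X}_\Sigma,\mathcal{T})$ and let $\pi:\mathcal{X}_\Sigma\to X_\sigma=\spec R$ be the natural proper map. The argument has three steps. First, show that $\End_{\mathcal{X}_\Sigma}(\mathcal{T})\cong\End_R(M')$ and that $M'$ is reflexive. Second, show that $\Lambda$ has finite global dimension. Third, show that $\Lambda$ is a Cohen--Macaulay $R$-module. The key geometric input is that the triangulation adds no extra vertices, so $\Sigma(1)=\sigma(1)$. Then $\pi$ is an isomorphism in codimension one: the exceptional locus has codimension $\geq 2$, since no new torus-invariant divisors appear. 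Moreover, since $P$ sits at height $1$, $K_{\mathcal{X}_\Sigma}=-\sum D_\rho$ is the pullback of $K_{X_\sigma}$, so $\pi$ is crepant. Regularity of the triangulation gives that $\mathcal{X}_\Sigma$ is a smooth DM stack, quasi-projective over $X_\sigma$, so $\pi$ is projective.

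For the first step, I will use that $\mathcal{T}$ is a vector bundle on a normal stack. Hence $\pi_\ast\mathcal{T}=M'$ and $\pi_\ast\mathcal{E}nd(\mathcal{T})$ are reflexive $R$-modules: sections extend over codimension $\geq 2$, and the exceptional locus upstairs also has codimension $\geq2$. Both modules agree with the corresponding objects on the common open set $U$ where $\pi$ is an isomorphism. Since $X_\sigma\setminus U$ has codimension $\geq 2$, two reflexive modules that agree on $U$ coincide. This gives $\Lambda=\End_{\mathcal{X}_\Sigma}(\mathcal{T})\cong\End_R(M')$.

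For the second step, the tilting property gives $D^b(\operatorname{coh}\mathcal{X}_\Sigma)\simeq D^b(\operatorname{mod}\Lambda)$ via $\Rhom(\mathcal{T},-)$. Since $\mathcal{X}_\Sigma$ is smooth, the left side has finite homological dimension, so $\gldim\Lambda<\infty$.

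For the third step, I need $H^i(\Lambda\otimes\omega)$-type vanishing. The Cohen--Macaulay property is equivalent to $\Ext^i_R(\Lambda,\omega_R)=0$ for $i>0$. By Grothendieck duality for the proper map $\pi$, $\Rhom_R(R\pi_\ast\mathcal{E}nd\mathcal{T},\omega_R)=R\pi_\ast\mathcal{H}om(\mathcal{E}nd\mathcal{T},\omega_{\mathcal{X}})$. Crepancy gives $\omega_{\mathcal{X}}\cong\pi^\ast\omega_R$, which is trivial since $R$ is Gorenstein toric. Because $\mathcal{E}nd\mathcal{T}$ is self-dual, the right side is $R\pi_\ast\mathcal{E}nd\mathcal{T}$, and this equals $\Lambda$ by tilting ($\Ext^{>0}(\mathcal{T},\mathcal{T})=0$). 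Hence $\Rhom_R(\Lambda,\omega_R)\cong\Lambda$ is concentrated in degree $0$, and $\Lambda$ is Cohen--Macaulay.

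The main obstacle is justifying duality and crepancy on the DM stack rather than on a variety. I will handle this by working with the coarse moduli map, using the fact that pushforward along $\mathcal{X}\to X_\Sigma$ is exact in characteristic $0$. I will also check that the stacky canonical bundle is $\O(-\sum D_\rho)$, where $\sum D_\rho$ is Cartier and is the pullback of the principal divisor cut out by the Gorenstein element $m$.
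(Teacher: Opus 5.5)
Your argument is correct. The paper itself gives no proof of this statement: it imports it from \v{S}penko--Van den Bergh \cite{SVdBtoricII} as background, and generalises it right afterwards in Theorem~\ref{Thm:ParttiltingGorCone}. So there is no in-paper argument to compare with. What you give is the standard route from a tilting bundle on a crepant, small, proper-over-affine smooth model to an NCCR. Smallness of $\pi$ identifies $\End_{\mathcal{X}_\Sigma}(\mathcal{T})$ with $\End_R(M')$ for the reflexive module $M'$. The derived equivalence with a smooth stack gives finite global dimension. Grothendieck duality together with $\omega_{\mathcal{X}_\Sigma}\cong\O$ gives $\Rhom_R(\Lambda,\omega_R)\cong\Lambda$, so $\Lambda$ is Cohen--Macaulay.

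Three small points are worth tightening.

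\emph{Source of smoothness.} Smoothness of $\mathcal{X}_\Sigma$ comes from $\Sigma$ being simplicial, not from regularity. Regularity only makes $\pi$ projective, and your argument needs only properness, which already follows from $|\Sigma|=\sigma$.

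\emph{Stabilizers in Step 1.} ``No new divisors'' handles $X_\Sigma\to X_\sigma$. For the stack you also need $\mathcal{X}_\Sigma\to X_\Sigma$ to have trivial generic stabilizer along each $D_\rho$. This holds because the Cox stack is built from the primitive $u_\rho$. It is genuinely needed: on a root stack along a divisor, $\End$ of a bundle differs from $\End_R$ of its global sections.

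\emph{Uniform bound in Step 2.} You want a uniform bound on projective dimensions, not just finiteness for each module. The cleanest way is as follows. The equivalence makes every finitely generated $\Lambda$-module perfect. Once Step 3 shows $\Lambda$ is Cohen--Macaulay, the Auslander--Buchsbaum formula for orders gives $\gldim\Lambda_{\mathfrak m}=\dim R_{\mathfrak m}$ for every maximal ideal $\mathfrak m$. Hence $\gldim\Lambda=\dim R$.
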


Previous work by the author and Sheshmani \cite{MS25} extended this result.
\begin{theorem}[= Theorem 3.12 in \cite{MS25}]
    \label{Thm:ParttiltingGorCone}
    Given a Gorenstein cone $\sigma\subset N_{\R}$, write it in the form $\cone(P\times\{1\})$ for $P$ a lattice polytope.
    Choose a regular triangulation of $P$ and let $\Sigma$ be the corresponding fan refining $\sigma$. Let $\mathcal{T}$ be a partial tilting complex on $\mathcal{X}_{\Sigma}$, the associated toric DM stack. Assume that $\Lambda=\End_{\mathcal{X}_{\Sigma}}(\mathcal{T})$ has finite global dimension. Then it is an NCCR for $R=k[\sigma^\vee\cap M]$.
\end{theorem}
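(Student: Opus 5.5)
The statement to prove is Theorem~\ref{Thm:ParttiltingGorCone}: $\Lambda=\End_{\mathcal{X}_\Sigma}(\mathcal{T})$, for a partial tilting complex $\mathcal{T}$ on $\mathcal{X}_\Sigma$ with $\gldim\Lambda<\infty$, is an NCCR of $R$. I would follow the template of Proposition~\ref{Prop:3.3SVDBtoricII}, checking the three defining properties of Definition~\ref{Def:NCCR} directly rather than through a derived equivalence.

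\textbf{Setup.} Let $\pi:\mathcal{X}_\Sigma\to X_\sigma=\spec R$ be the structure morphism. Since the triangulation adds no rays, $\pi$ is an isomorphism in codimension one: it identifies $\mathcal{X}_\Sigma$ with $X_\sigma$ over the open set whose complement has codimension at least two.
\begin{itemize}
\item The triangulation is regular and lattice polytopes with vertices on height one have Gorenstein cone, so $\omega_{\mathcal{X}_\Sigma}=\O(-\sum D_\rho)\cong\pi^\ast\omega_R$ is trivial. In other words, $\pi$ is crepant.
\item By the partial tilting hypothesis, $\Lambda=\Rhom_{\mathcal{X}}(\mathcal{T},\mathcal{T})=R\pi_\ast R\mathcal{H}om(\mathcal{T},\mathcal{T})$ is concentrated in degree $0$.
\item Set $M'=R\pi_\ast\mathcal{T}$, or $\Gamma(\mathcal{X},\mathcal{T})$ after arranging $\mathcal{T}$ to be a sheaf in degree zero. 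The codimension-two isomorphism together with reflexivity identifies $\Lambda\cong\End_R(M')$. I would argue this by restricting to the common open set $U$ and using that both sides are reflexive $R$-modules determined by $U$.
\end{itemize}
Reflexivity of $\Lambda$ itself comes from the Cohen--Macaulay step below, together with the normality of $R$.

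\textbf{Cohen--Macaulay property.} This is where crepancy enters. Take $E=R\mathcal{H}om(\mathcal{T},\mathcal{T})$, a perfect complex. Grothendieck duality for the proper map $\pi$, using $\pi^!\omega_R\cong\omega_{\mathcal{X}}\cong\O$, gives
\[
R\Hom_R(R\pi_\ast E,\omega_R)\cong R\pi_\ast R\mathcal{H}om(E,\O)\cong R\pi_\ast E^\vee .
\]
Since $E^\vee\cong E$, the right-hand side is $R\pi_\ast E=\Lambda$, concentrated in degree $0$. Hence $\Ext^i_R(\Lambda,\omega_R)=0$ for $i>0$, and with $R$ Gorenstein this means $\Lambda\in\operatorname{CM}R$.

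\textbf{Main obstacle: finite global dimension.} Finiteness is assumed, but crepancy in the sense used here needs $\Lambda$ to be \emph{non-singular}, i.e.\ $\gldim\Lambda_{\mathfrak p}=\dim R_{\mathfrak p}$, and not merely of finite global dimension. For this I would invoke the standard fact (Iyama--Reiten / Van den Bergh): if $R$ is Gorenstein and $\Lambda=\End_R(M)$ is a Cohen--Macaulay $R$-order, then $\gldim\Lambda<\infty$ forces $\gldim\Lambda_{\mathfrak p}=\dim R_{\mathfrak p}$ at every prime, so $\Lambda$ is non-singular.

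The expected difficulty lies in the setup rather than here. One must check that a partial tilting complex, unlike a tilting bundle, still yields a module $M'$ that is reflexive with $\End_R(M')\cong\Lambda$. In particular $\mathcal{T}$ is not required to generate, so any argument passing through a derived equivalence must be avoided. The codimension-two argument above does this using only the vanishing of the higher $\Ext$'s.
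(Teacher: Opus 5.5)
The paper does not prove this statement; it quotes it from \cite{MS25} as the partial-tilting version of Proposition~\ref{Prop:3.3SVDBtoricII}. So your outline is measured against the standard template, which it follows. Your Grothendieck duality step, using $\pi^!\omega_R\cong\omega_{\mathcal{X}_\Sigma}\cong\O$ and $E^\vee\cong E$, correctly gives $\Ext^i_R(\Lambda,R)=0$ for $i>0$. The appeal to non-singularity is harmless but not needed under Definition~\ref{Def:NCCR}, which only asks for finite global dimension and the Cohen--Macaulay property.

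There is a genuine gap in the third bullet of your setup, exactly where you say the difficulty lies. Because $\mathcal{T}$ is only a partial tilting \emph{complex}, $R\pi_\ast\mathcal{T}$ is a complex rather than a module. You also cannot in general ``arrange $\mathcal{T}$ to be a sheaf in degree zero''. Your closing remark, that the codimension-two argument works ``using only the vanishing of the higher Ext's'', does not produce a reflexive module $M'$. As written, you have proved $\Lambda\cong\End_R(M')$ only when $\mathcal{T}$ is a sheaf. That case does cover the sums of line bundles to which the paper applies the theorem (cf.\ Corollary~\ref{Cor:FinGlDimSubchamber}).

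\textbf{The missing idea.} Partial tilting forces $\mathcal{T}$ to be a shifted vector bundle over the smooth open set $U\subset X_\sigma$ above which $\pi$ is an isomorphism.

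First, restricting $\Lambda=R\pi_\ast R\mathcal{H}om(\mathcal{T},\mathcal{T})$ to $U$ shows that $\Rhom_{\O_{U,x}}(\mathcal{T}_x,\mathcal{T}_x)$ is concentrated in degree $0$ for every $x\in U$.

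Next, suppose a minimal free complex $T$ over a local ring $(A,\mathfrak{m})$ lived in degrees $[a,b]$ with $a<b$. Take any $\phi\colon T^a\to T^b$ whose image is not in $\mathfrak{m}T^b$, and form the composite $T\to T^a[-a]\xrightarrow{\phi}T^b[-a]\to T[b-a]$ (projection onto the bottom term, then $\phi$, then inclusion of the top term). A null-homotopy $h$ would give $\phi=\pm d^{b-1}h^a+h^{a+1}d^a$, which by minimality has image in $\mathfrak{m}T^b$. Hence the composite is not null-homotopic, and $\Ext^{b-a}(T,T)\neq 0$.

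So each $\mathcal{T}_x$ is a shifted free module. The shift and rank are locally constant, hence constant on the irreducible $U$, and $\mathcal{T}|_U\cong\mathcal{F}[-a]$ for a vector bundle $\mathcal{F}$. This $\mathcal{F}$ is nonzero: otherwise $\Lambda$ would be torsion, contradicting $\Hom_R(\Lambda,R)\cong\Lambda\neq0$ from your duality step.

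Now set $M'=\Gamma(U,\mathcal{F})$. It is reflexive because $X_\sigma$ is normal and $\operatorname{codim}(X_\sigma\setminus U)\geq 2$. The reflexive algebras $\Lambda$ and $\End_R(M')$ agree on $U$, hence are isomorphic.

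Defining $M'$ from $U$, rather than as $\Gamma(\mathcal{X}_\Sigma,\mathcal{T})$, also removes your assumption that the triangulation adds no rays, which the statement as quoted does not impose. All rays still lie at height one, so $\pi$ stays crepant, and the argument only uses the codimension of $X_\sigma\setminus U$ in $X_\sigma$.
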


Note here that Donovan-Hara-Kapustka-Rampazzo \cite{DHKR25} proved that such a partial tilting object with finite global dimension in fact is tilting. Using Theorem \ref{Thm:ParttiltingGorCone}, in \cite{MS26} we show that toric algebras associated to affine toric varieties of Picard ranks 0 and 1 admit toric NCCRs. The latter has also been shown via a different methodology by Tomonaga \cite{Tomo25} whilst the former is known due to work by Craw and Quintero V\'{e}lez \cite{CQV12}. Another proof of the simplicial case is due to Faber, Muller and Smith \cite{FMS19} and relies on conic modules, which we will introduce next.

\subsection{Conic modules}\label{sec:conic}

 The following introduction to conic modules closely follows the work of Faber, Muller and Smith \cite{FMS19}. As setup, we consider a cone $\sigma\subset N_\R$ and its dual cone $\sigma^\vee\subset M_\R$ together with the associated noetherian domain $R=k[\sigma^\vee\cap M]$. We may assume $\sigma$ is strongly convex, which equivalently implies that $\sigma^\vee$ is of full dimension.  Note that \[
\sigma^\vee=\bigcap_{\rho\in\sigma(1)} \{v\in M_\R\vert \langle v,u_\rho\rangle\geq0\}.
\]

\begin{definition}
    \label{Def:ConicModule}
    For $v\in M_{\R}$, the \newterm{conic module defined by} $v$ is the $M$-graded $R$-submodule of $k[M]$ defined as
    \[
    A_v:=\operatorname{Span}\{x^m\vert m\in M\cap (\sigma^\vee+v)\}.
    \]
\end{definition}
Faber Muller and Smith show that conic modules are always Cohen-Macaulay \cite[Corollary 4.16]{FMS19}, and also demonstrate other key properties.
\begin{proposition}[ Proposition 3.2 in \cite{FMS19}]
    \label{Prop:ConicModulesProperties}
    Let $A_v$ be a conic module for $R=k[\sigma^\vee\cap M]$. Then
    \begin{enumerate}
        \item $A_v$ is torsion free and rank one over $R$.
        \item $A_v$ is spanned by monomial $x^m$, where \[
        m\in \bigcap_{\rho\in\sigma(1)}\{x\in M_\R\vert \langle x,u_\rho\rangle \geq \lceil\langle v,u_\rho\rangle\rceil\}\subset M.
        \]
        \item For any two conic modules $A_v, A_w$, the $R$-module $\Hom_R(A_v,A_w)$ is naturally isomorphic to the $M$-graded $R$-submodule of $k[M]$\[
        \operatorname{Span}\{x^m\vert m+(\sigma^\vee+v)\cap M\subset (\sigma^\vee+w)\}.
        \]
    \end{enumerate}
\end{proposition}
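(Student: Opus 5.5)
Proposition \ref{Prop:ConicModulesProperties} has three parts, and I plan to prove them in the order (2), (1), (3), since each relies on the one before.

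\textbf{Part (2).} This is a direct unwinding of Definition \ref{Def:ConicModule}. A lattice point $m\in M$ lies in $\sigma^\vee+v$ exactly when $m-v\in\sigma^\vee$. Using the description of $\sigma^\vee$ as an intersection of half-spaces, this means
\[
\langle m,u_\rho\rangle\geq\langle v,u_\rho\rangle\quad\text{for all }\rho\in\sigma(1).
\]
Each $u_\rho$ lies in $N$ and $m$ lies in $M$, so $\langle m,u_\rho\rangle$ is an integer. An integer is at least a real number exactly when it is at least that number's ceiling. The condition therefore becomes $\langle m,u_\rho\rangle\geq\lceil\langle v,u_\rho\rangle\rceil$, which is the description in (2).

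\textbf{Part (1).} The module $A_v$ is an $R$-submodule of the domain $k[M]$, so it is torsion free. To see that it has rank one, I will bound it above and below by rank-one modules.
\begin{itemize}
\item \emph{Upper bound.} Since $\sigma$ is strongly convex, $\sigma^\vee$ is full dimensional. Choose $m_0\in M$ with $\langle m_0,u_\rho\rangle\le\lceil\langle v,u_\rho\rangle\rceil$ for all $\rho$; for instance, take $m_0=-c\cdot m'$ with $m'$ in the interior of $\sigma^\vee\cap M$ and $c\gg0$. Then $A_v\subseteq x^{m_0}R$.
\item \emph{Lower bound.} Similarly, choosing $m_1\in M$ with $\langle m_1,u_\rho\rangle\ge\lceil\langle v,u_\rho\rangle\rceil$ for all $\rho$ gives $x^{m_1}R\subseteq A_v$.
\end{itemize}
Tensoring with the fraction field $\operatorname{Frac}(R)=k(M)$, both bounds become one-dimensional. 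Hence $A_v$ has rank one. The same inclusion $A_v\subseteq x^{m_0}R$ also shows that $A_v$ is finitely generated, although that is not required here.

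\textbf{Part (3).} This is the only part with real content. Since $A_v$ and $A_w$ are torsion free of rank one, every $R$-homomorphism $\phi:A_v\to A_w$ extends uniquely to a $k(M)$-linear map $A_v\otimes k(M)\to A_w\otimes k(M)$, which is multiplication by some $f\in k(M)$. The plan is as follows.
\begin{enumerate}
\item Evaluate $\phi$ on a fixed nonzero element $x^{m_1}\in A_v$. This forces $f=\phi(x^{m_1})/x^{m_1}$, which lies in $k[M]$ because $A_w\subset k[M]$. This identifies $\Hom_R(A_v,A_w)$ with the set of $f\in k[M]$ such that $fA_v\subseteq A_w$.
\item Use the $M$-grading. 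The torus $T=\operatorname{Spec}k[M]$ acts on both modules, and the condition $fA_v\subseteq A_w$ is preserved by this action. Hence the set of such $f$ is an $M$-graded subspace, and it suffices to test monomials. I expect this to be the main subtlety. Concretely, write $f=\sum c_m x^m$. For each $x^{m'}\in A_v$, the product $fx^{m'}$ lies in $A_w$, which is spanned by monomials, so each $x^{m+m'}$ with $c_m\neq0$ lies in $A_w$. Thus every homogeneous component of $f$ also satisfies the condition.
\item For a monomial $x^m$, the condition $x^mA_v\subseteq A_w$ reads $m+(\sigma^\vee+v)\cap M\subseteq\sigma^\vee+w$, which is exactly the condition in (3).
\end{enumerate}
Naturality of the resulting isomorphism is clear, since it is given by multiplication inside $k(M)$.
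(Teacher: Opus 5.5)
Your proposal is correct and complete. The unwinding of $m-v\in\sigma^\vee$ via integrality gives (2). The sandwich $x^{m_1}R\subseteq A_v\subseteq x^{m_0}R$ gives (1). For (3), you identify $\Hom_R(A_v,A_w)$ with $\{f\in k[M]\mid fA_v\subseteq A_w\}$, using that $f=x^{-m_1}\phi(x^{m_1})\in k[M]$ because $x^{m_1}$ is a unit there, and then run the monomial-support argument; all of these steps go through.

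The paper gives no proof of its own and simply cites \cite{FMS19}. Your argument is the standard one for this statement.
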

\noindent In addition to conic modules as defined above, we shall also consider \newterm{open conic modules}.
\begin{definition}
    \label{Def:OpenConicModule}
Let $v\in M_\R$. The \newterm{open conic module} associated to $v$ is the $M$-graded $R$-submodule of $k[M]$ defined as follows:
\[
\opA_v:=\operatorname{Span}\{x^m\vert m\in M\cap (\overset{\circ}{\sigma^\vee}+v)\}.
\]
\end{definition}

\noindent As it turns out, open conic modules are conic modules themselves -  associated to nearby points.
\begin{proposition}[Proposition 3.5 in \cite{FMS19}]
    \label{Prop:OpenconicIsNearby}
    For every open conic $R$ module $\opA_v$, there exists $w\in M_\R$ such that \[
    \opA_v=A_w.
    \]
    Explicitly, for any $p\in \overset{\circ}{\sigma^\vee}$ and $0<\varepsilon\ll1$, we can take $w=v+\varepsilon p$.
\end{proposition}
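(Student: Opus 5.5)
The statement to prove is Proposition \ref{Prop:OpenconicIsNearby}: $\opA_v=A_{v+\varepsilon p}$ for $p\in\overset{\circ}{\sigma^\vee}$ and $0<\varepsilon\ll 1$. I would prove it by comparing the two lattice-point sets defining the modules, via the description of Proposition \ref{Prop:ConicModulesProperties}(2). Since both $\opA_v$ and $A_w$ are spanned by monomials $x^m$, it suffices to show
\[
M\cap(\overset{\circ}{\sigma^\vee}+v)=M\cap(\sigma^\vee+w),\qquad w=v+\varepsilon p .
\]
Writing $\sigma^\vee=\bigcap_\rho\{\langle x,u_\rho\rangle\ge 0\}$, the interior is $\bigcap_\rho\{\langle x,u_\rho\rangle>0\}$; this uses that $\sigma^\vee$ is full-dimensional. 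The left set is therefore $\{m\in M:\langle m,u_\rho\rangle>\langle v,u_\rho\rangle\ \forall\rho\}$. The right set is $\{m\in M:\langle m,u_\rho\rangle\ge \langle v,u_\rho\rangle+\varepsilon\langle p,u_\rho\rangle\ \forall\rho\}$. Since $p$ is interior, $c_\rho:=\langle p,u_\rho\rangle>0$ for every ray.

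The comparison is then done ray by ray, using integrality. For $m\in M$ the number $\langle m,u_\rho\rangle$ lies in $\Z$. Set $a_\rho=\langle v,u_\rho\rangle$. The condition $\langle m,u_\rho\rangle>a_\rho$ is equivalent to $\langle m,u_\rho\rangle\ge \lfloor a_\rho\rfloor+1$. The condition $\langle m,u_\rho\rangle\ge a_\rho+\varepsilon c_\rho$ is equivalent to $\langle m,u_\rho\rangle\ge\lceil a_\rho+\varepsilon c_\rho\rceil$. Now choose $\varepsilon>0$ so small that $\varepsilon c_\rho<\lfloor a_\rho\rfloor+1-a_\rho$ for all $\rho$. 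The right-hand side is positive, and only finitely many rays occur. With this choice $\lceil a_\rho+\varepsilon c_\rho\rceil=\lfloor a_\rho\rfloor+1$ in both cases: when $a_\rho\in\Z$, and when it is not. Hence the two conditions agree for every ray, and the sets coincide.

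No step is a real obstacle. The only points needing care are the following.

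\begin{enumerate}
\item The interior of $\sigma^\vee$ must be described by strict inequalities. This requires strong convexity of $\sigma$, which was assumed.
\item The bound on $\varepsilon$ must be uniform, which holds because there are finitely many rays.
\item The choice of $\varepsilon$ depends on $v$ and $p$, as the statement allows.
\end{enumerate}
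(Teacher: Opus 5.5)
Your proof is correct and complete. Both modules are spans of monomials, and once $0<\varepsilon<\min_\rho(\lfloor\langle v,u_\rho\rangle\rfloor+1-\langle v,u_\rho\rangle)/\langle p,u_\rho\rangle$, the integrality of $\langle m,u_\rho\rangle$ makes the conditions $\langle m,u_\rho\rangle>\langle v,u_\rho\rangle$ and $\langle m,u_\rho\rangle\ge\langle v+\varepsilon p,u_\rho\rangle$ coincide ray by ray. The paper gives no proof of its own and simply quotes \cite{FMS19}; your argument is the standard one, and it is the same inward-perturbation-and-ceiling computation the paper carries out when it invokes this proposition in the proof of Lemma~\ref{Lem:FacetCondition}.

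One minor quibble: describing $\overset{\circ}{\sigma^\vee}$ by strict inequalities holds for any finite intersection of closed half-spaces with nonzero normals. Full-dimensionality of $\sigma^\vee$ is therefore needed only to guarantee that some $p$ exists.
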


At this stage, it is a natural question to ask when two different $v,w\in M_\R$ define equal or at least isomorphic conic modules.
\begin{definition}
    \label{Def:ChamOfCons}
    The \newterm{chamber of constancy} $\Delta\subset M_\R$ containing $v\in M_\R$ is the set of all $w\in M_\R$ such that $A_v=A_w$. Equivalently, $v,w$ belong to the same chamber of constancy iff $(\sigma^\vee+v)\cap M=(\sigma^\vee+w)\cap M$.
\end{definition} 
Given $v\in M_\R$, we denote its chamber of constancy by $\Delta_v$, but if we are given just a chamber of constancy without explicit choice of element inside, we will write $A_\Delta$ for the conic module that the elements $v\in \Delta$ all share. These chambers of constancy decompose $M_\R$ into disjoint locally closed polyhedral regions \cite[Corollary 4.4]{FMS19} and can be explicitly computed.
\begin{proposition}[Proposition 4.3 in \cite{FMS19}]
    \label{Prop:ComputeChamofCon}
    For $v\in M_\R$, its chamber of constancy is given by the set
    
    \begin{align*}
      \Delta_v&=\{x \in M_\R\vert \lceil \langle v, u_\rho\rangle\rceil-1<\langle x,u_\rho\rangle\le \lceil\langle v, u_\rho\rangle\rceil\text{ for }\rho\in\sigma(1)\}\\
      &=\bigcap_{\rho\in\sigma(1)}\{x\in M_\R\vert \lceil \langle v, u_\rho\rangle\rceil-1<\langle x,u_\rho\rangle\le \lceil\langle v,u_\rho\rangle\rceil\}.
    \end{align*}
\end{proposition}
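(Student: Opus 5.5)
The plan is to characterise membership in $\Delta_v$ through the lattice points of the shifted cones. By Definition \ref{Def:ChamOfCons}, $x\in\Delta_v$ holds exactly when $(\sigma^\vee+x)\cap M=(\sigma^\vee+v)\cap M$. So it suffices to show that, for every $w\in M_\R$,
\[
(\sigma^\vee+w)\cap M=\{m\in M\mid \langle m,u_\rho\rangle\ge\lceil\langle w,u_\rho\rangle\rceil\ \forall\rho\in\sigma(1)\}.
\]
This is essentially Proposition \ref{Prop:ConicModulesProperties}(2). The proof is direct: $m\in\sigma^\vee+w$ if and only if $\langle m-w,u_\rho\rangle\ge0$ for all $\rho$. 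Since $m\in M$ and $u_\rho\in N$, the pairing $\langle m,u_\rho\rangle$ is an integer, and an integer is $\ge a$ exactly when it is $\ge\lceil a\rceil$.

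Next I would reduce to a statement about ceilings. Write $c_\rho(w)=\lceil\langle w,u_\rho\rangle\rceil$. Then $A_w$ depends only on the tuple $(c_\rho(w))_\rho$, so $c_\rho(x)=c_\rho(v)$ for all $\rho$ implies $A_x=A_v$. The converse is the step needing care: I must show the tuple $(c_\rho(w))_\rho$ can be recovered from the set $L_w:=(\sigma^\vee+w)\cap M$. The claim is $c_\rho(w)=\min\{\langle m,u_\rho\rangle\mid m\in L_w\}$.

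The inequality $\ge$ is clear. For the reverse, I need some $m\in M$ with $\langle m,u_\rho\rangle=c_\rho(w)$ and $\langle m,u_{\rho'}\rangle\ge c_{\rho'}(w)$ for all other $\rho'$. I would first pick $m_0\in M$ with $\langle m_0,u_\rho\rangle=c_\rho(w)$; this exists because $u_\rho$ is primitive, so the pairing with $u_\rho$ maps $M$ onto $\Z$. Then I would add a large multiple of a lattice point $p\in\sigma^\vee\cap M$ lying on the facet of $\sigma^\vee$ dual to $\rho$ and strictly positive on every other ray. Such a $p$ exists because $\rho$ is a ray of the strongly convex cone $\sigma$, so $\rho^\perp\cap\sigma^\vee$ is a facet of $\sigma^\vee$. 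Its relative interior contains rational, hence after scaling integral, points $p$ with $\langle p,u_\rho\rangle=0$ and $\langle p,u_{\rho'}\rangle>0$ for $\rho'\ne\rho$. Then $m=m_0+Np$ works for $N\gg0$. This construction is the main obstacle, since it is the only place where the geometry of $\sigma$ enters (the rays must be genuine, pairwise distinct extremal rays).

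Combining these, $x\in\Delta_v$ holds if and only if $\lceil\langle x,u_\rho\rangle\rceil=\lceil\langle v,u_\rho\rangle\rceil$ for all $\rho$. For each $\rho$ this is equivalent to $\lceil\langle v,u_\rho\rangle\rceil-1<\langle x,u_\rho\rangle\le\lceil\langle v,u_\rho\rangle\rceil$, which is the displayed description. The second displayed form is the same set rewritten as an intersection over $\rho\in\sigma(1)$.
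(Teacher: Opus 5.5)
Your proof is correct and follows essentially the same route as the argument of Faber--Muller--Smith, which the paper cites without reproducing; Remark \ref{Rem:ConModBundle} records that this argument shows $A_v$ is determined by the tuple $(\lceil\langle v,u_\rho\rangle\rceil)_\rho$. In both, the lattice points of $\sigma^\vee+w$ are cut out by the ceiling inequalities, and conversely each ceiling is attained by some lattice point of $\sigma^\vee+w$. Your construction $m_0+Np$, with $p$ a lattice point in the relative interior of the facet $\rho^\perp\cap\sigma^\vee$, is a more careful justification of this attainment step, which the paper only sketches informally in the proof of Proposition \ref{Prop:BTisConic}.
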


\begin{remark}
\label{Rem:ConModBundle}
    The proof that Faber, Muller and Smith give for Proposition \ref{Prop:ComputeChamofCon} further implies that a given conic module $A_v$ is uniquely determined by the list of integers\[
    (\lceil \langle v,u_\rho\rangle \rceil)_{\rho\in\sigma(1)}\in \Z^t,
    \]
    where $t=|\sigma(1)|$. This shows that $A_v=\Gamma(X_\sigma,\O_{X_\sigma}(-D))$, where $D=\sum_{\rho\in\sigma(1)} \lceil\langle v,u_\rho\rangle\rceil D_\rho$. However, the simple observation that for a non-smooth cone $\sigma$ the $u_\rho$ are linearly dependent points to the fact that not all lists of integers appear as $t$-tuple $(\lceil \langle v,u_\rho\rangle \rceil)_{\rho\in\sigma(1)}$.
\end{remark}

Given a strongly convex, full-dimensional cone $\sigma\subset N_\R$, the chambers of constancy determine a CW composition of $M_\R$ (\cite{Bru05}), which can be elucidated
by studying (open) conic modules. Now \cite[Corollary 4.9]{FMS19} establishes that $v,w\in M_\R$ lie in the same open cell of the CW decomposition if and only if both $A_v=A_w, \opA_v=\opA_w$. If $v,w\in \Delta$ lie in the same chamber of constancy, then there is an inclusion $\opA_v\subset \opA_w$ if and only if the open cell containing $v$ is contained in the boundary of the open cell containing $w$. For this reason, we may later denote the open conic module $\opA_v$ associated to $v\in M_\R$ by $\opA_\tau$, where $\tau$ is the open cell containing $v$. This notation will mainly come into play when the chamber of constancy $\Delta$ is fixed, as it is possible to have $\opA_\tau=\opA_{\tau'}$ for $\tau\neq \tau'$. 

While $A_\Delta\neq A_{\Delta'}$ for two distinct chambers, they might still be isomorphic. The following Proposition tells us when this can happen.
\begin{proposition}[Proposition 4.11 in \cite{FMS19}]
    \label{Prop:IsoConicModules}
    Two conic $R$-modules $A_\Delta, A_{\Delta'}$ are isomorphic if and only if there is an $m\in M$ such that $\Delta=\Delta'+m$.
\end{proposition}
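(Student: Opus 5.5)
We prove both directions by working with the $M$-graded structure of the modules and the explicit description of chambers of constancy in Proposition \ref{Prop:ComputeChamofCon}. Everything reduces to how the integer tuple $(\lceil\langle v,u_\rho\rangle\rceil)_\rho$ changes under translation by a lattice vector, together with an analysis of homogeneous isomorphisms between rank one submodules of $k[M]$.

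\emph{The direction ``if''.} Suppose $\Delta=\Delta'+m$ with $m\in M$, and pick $v'\in\Delta'$, so that $v=v'+m\in\Delta$. For every ray we have $\langle m,u_\rho\rangle\in\Z$, hence
\[
\lceil\langle v,u_\rho\rangle\rceil=\lceil\langle v',u_\rho\rangle\rceil+\langle m,u_\rho\rangle .
\]
Consequently $(\sigma^\vee+v)\cap M=m+(\sigma^\vee+v')\cap M$, which one can also read off from Proposition \ref{Prop:ConicModulesProperties}(2). It follows that $A_\Delta=x^m A_{\Delta'}$. Multiplication by $x^m$ is an $R$-linear automorphism of $k[M]$, so it restricts to an isomorphism $A_{\Delta'}\cong A_\Delta$.

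\emph{The direction ``only if''.} Let $\phi:A_{\Delta'}\to A_\Delta$ be an $R$-isomorphism. By Proposition \ref{Prop:ConicModulesProperties}(1) both modules are torsion free of rank one and sit inside $k[M]\subset k(M)$. After tensoring with the fraction field, $\phi$ therefore becomes multiplication by a nonzero element $f\in\operatorname{Frac}(R)=k(M)$, and $fA_{\Delta'}=A_\Delta$.

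The main obstacle is to show that $f$ can be taken to be a monomial $c\,x^m$ with $m\in M$. I would first try the following argument. Since $A_{\Delta'}$ is a graded module, $f\in\Hom_R(A_{\Delta'},A_\Delta)$, and by Proposition \ref{Prop:ConicModulesProperties}(3) this Hom module is $M$-graded and spanned by monomials. This lets us write $f=\sum_i c_i x^{m_i}$ in $k[M]$, where every term $x^{m_i}$ is itself a homomorphism $A_{\Delta'}\to A_\Delta$. Choose a generic linear functional, giving a monomial order on $M$; all the modules involved are torsion-free $M$-graded, so this is compatible with multiplication. Compare leading and trailing terms. If $f^{-1}$ (which is $g\in\Hom(A_\Delta,A_{\Delta'})$, also a Laurent polynomial) satisfies $fg=1$ in $k[M]$, then $f$ must be a unit of the Laurent polynomial ring $k[M]$, since $k[M]$ is a domain with units exactly the monomials. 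Hence $f=c\,x^m$.

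Once $f=c\,x^m$, we get $x^m A_{\Delta'}=A_\Delta$, that is, $m+(\sigma^\vee+v')\cap M=(\sigma^\vee+v)\cap M$. By the characterisation in Definition \ref{Def:ChamOfCons}, this means $v$ and $v'+m$ lie in the same chamber of constancy. Since translation by $m$ preserves the defining inequalities in Proposition \ref{Prop:ComputeChamofCon}, as computed above, we conclude $\Delta=\Delta'+m$.
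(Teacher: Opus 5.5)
Your proof is correct. The paper does not prove this statement itself; it quotes it from \cite{FMS19}.

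Within the paper's own framework (Remark \ref{Rem:ConModBundle} and the proof of Lemma \ref{Lem:Number of conic modules}), the fact is used in the form ``$A_v\cong A_w$ iff $-d(v)\sim -d(w)$''. That points to a divisor-theoretic route:
\begin{itemize}
\item write $A_v=\Gamma(X_\sigma,\O(-D_v))$ with $D_v=\sum_\rho\lceil\langle v,u_\rho\rangle\rceil D_\rho$;
\item use that rank one reflexive modules over the normal domain $R$ are classified by $\operatorname{Cl}(X_\sigma)$;
\item use that torus-invariant divisors are linearly equivalent iff they differ by $\operatorname{div}(x^m)$ with $m\in M$, which is exactness of $M\to\Z^{\sigma(1)}\to\operatorname{Cl}(X_\sigma)\to 0$;
\item apply Proposition \ref{Prop:ComputeChamofCon} to turn ``the ceiling vectors differ by $(\langle m,u_\rho\rangle)_\rho$'' into $\Delta=\Delta'+m$.
\end{itemize}

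Your argument bypasses reflexivity and the class group. It only uses two facts: a homomorphism between rank one submodules of $k[M]$ is multiplication by an element of $k(M)$, and the units of $k[M]$ are exactly the $c\,x^m$ with $c\in k^\times$, $m\in M$. The second fact is precisely what underlies exactness of the divisor sequence at $\Z^{\sigma(1)}$: a rational function with torus-invariant divisor is a unit on the torus. So the two routes share the same core. Yours is more elementary and makes the isomorphism explicit (multiplication by $x^m$). The divisor route is what ties the statement to $\operatorname{Cl}(X_\sigma)$ and its torsion, which the paper exploits in Section \ref{sec:Compute}.

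Two small tidy-ups:
\begin{itemize}
\item You do not need the naturality in Proposition \ref{Prop:ConicModulesProperties}(3) to get $f\in k[M]$. Pick any monomial $x^u\in A_{\Delta'}$; then $f=x^{-u}\phi(x^u)\in k[M]$, and likewise for $g$.
\item The monomial-order discussion and the remark about torsion-free graded modules are superfluous. Once $f,g\in k[M]$ with $fg=1$, $f$ is a unit of $k[M]$, hence $f=c\,x^m$.
\end{itemize}
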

Thus, $M$ acts via translation on the set of chambers of constancy, with orbits of this action corresponding to distinct isomorphism classes of conic modules. As such, all the information on the chambers of constancy we may wish to know can be found by studying the (real) torus $M_\R/M$ and its induced CW decomposition. The maximal cells correspond bijectively to the different isomorphism classes of conic modules for $R$. Since the torus is compact, we deduce that there is only finitely many different isomorphism classes of conic $R$-modules for any given toric algebra $R$. Note that there is a partial ordering on the set of conic $R$-modules, given by inclusion. This induces a partial ordering on the chambers of constancy, and we write $\Delta \preceq \Delta'$ if and only if $A_\Delta\subseteq A_{\Delta'}$.

Fix now a conic module $A_\Delta$ over $R$, corresponding to a chamber of constancy $\Delta$. We shall introduce for it a complex of $R$-modules that can be used to construct projective resolutions for the graded simples of the proposed NC(C)Rs later. For each, open cell of $\Delta$, fix an orientation. Note that the CW decomposition restricted to $\Delta$ will not skip any dimension, i.e. if $\tau_i$ is a cell of $\Delta$ of codimension $i>0$ then it is contained in the boundary of some cell $\tau_{i-1}$ of $\Delta$ of codimension $i-1$. As such, $\tau_i$ inherits an orientation from $\tau_{i-1}$, which may or may not agree with the orientation for $\tau_i$ we fixed earlier. Write $\operatorname{sgn}(\tau_i,\tau_{i-1})=+1$ if the orientations agree and $-1$ if they do not. If there is no inclusion of cells, define the sign to be 0. The inclusion of cells induces an inclusion of open conic modules $\iota_{\tau_i,\tau_{i-1}}:\opA_{\tau_i}\hookrightarrow\opA_{\tau_{i-1}}$. For sake of notation, we consider the map $\iota_{\tau_i,\tau_{i-1}}:\opA_{\tau_i}\hookrightarrow\opA_{\tau_{i-1}}$ to be the zero map whenever $\opA_{\tau_{i}}\not\subset\opA_{\tau_{i-1}}$ for codimension $i, i-1$ cells $\tau_i, \tau_{i-1}$. \begin{definition}
    \label{Def:CplxChamber}
    Given a chamber of constancy $\Delta$, define a complex $K_\Delta^\bullet$ of $M$-graded $R$-modules\[
    \dots\rightarrow \bigoplus_{\operatorname{codim}(\tau)=2}\opA_{\tau}\rightarrow\bigoplus_{\operatorname{codim}(\tau)=1}\opA_\tau\rightarrow\bigoplus_{\operatorname{codim}(\tau)=0}\opA_\tau=A_\Delta.
    \]
    The direct sums range over all cells of $\Delta$ of the specified codimension, and the maps are given by signed sums of inclusion maps $\sum \operatorname{sgn}(\tau_i,\tau_{i-1})\iota_{\tau_i,\tau_{i-1}}$. We say that a complex has length $l$ if the highest non-zero degree is $l$. Where clear, we write $K_v^\bullet$ for the complex $K_{\Delta_v}^\bullet$.
\end{definition}

\begin{remark}
    The fact that $K_\Delta^\bullet$ is indeed an $M$-graded complex of $R$-modules can be found in \cite[Lemma 5.1]{FMS19}. The length of this complex is equal to the codimension of the smallest dimensional cell appearing in $\Delta$, which is at most $\dim R$ (the Krull dimension of $R$). The length can be smaller than that, as for non-simplicial cones $\sigma$, not all chambers of constancy $\Delta$ contain zero-dimensional cells.
\end{remark}

Of particular interest for these complexes is the following result, providing a notion of acyclicity.
\begin{lemma}
    \label{Lem:Acyc}
    Fix a toric algebra $R$ and consider two conic modules $A_\Delta, A_{\Delta'}$ for $R$, associated to two chambers of constancy $\Delta, \Delta'$.
    \begin{enumerate}
        \item The complex $\Hom_R(A_{\Delta'},K_\Delta^\bullet)$ is exact if $A_\Delta\not\simeq A_{\Delta'}$;
        \item If $A_\Delta\simeq A_{\Delta'}$, then $\Hom_R(A_{\Delta'},K_\Delta^\bullet)$ has one-dimensional homology, which appears (only) in homological degree zero and $M$-degree $m$, where $m$ is the unique element of $M$ such that $\Delta=\Delta'+m$.
    \end{enumerate}
\end{lemma}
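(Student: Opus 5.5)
The plan is to work one $M$-degree at a time and identify each graded strand of $\Hom_R(A_{\Delta'},K_\Delta^\bullet)$ with a cellular (compactly supported) cochain complex of a convex, locally closed polyhedral region. Acyclicity then becomes a topological statement about that region.

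\textbf{Step 1: identify the graded pieces.} By Proposition \ref{Prop:ConicModulesProperties}(3), together with Propositions \ref{Prop:OpenconicIsNearby} and \ref{Prop:ComputeChamofCon}, $\Hom_R(A_{\Delta'},\opA_\tau)$ is a monomial module. Its $M$-degree $m$ piece is $k$ exactly when
\[
(\sigma^\vee+\Delta'+m)\cap M\subseteq \overset{\circ}{\sigma^\vee}+x \quad\text{for } x\in\tau,
\]
and is $0$ otherwise. This condition depends only on the values $\langle x,u_\rho\rangle$. Setting $b_\rho(m)=\min\{\langle p,u_\rho\rangle : p\in (\sigma^\vee+\Delta'+m)\cap M\}$, which is an integer, it reads $\langle x,u_\rho\rangle<b_\rho(m)$ for all $\rho$. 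Hence the degree-$m$ strand is the cellular complex, graded by codimension, of the cells of $\Delta$ lying in
\[
U_m=\Delta\cap\{x : \langle x,u_\rho\rangle<b_\rho(m)\ \forall\rho\}.
\]
Since every cell of $\Delta$ is cut out by the same integer hyperplanes, $U_m$ is a union of open cells. The maps are the signed inclusions of Definition \ref{Def:CplxChamber}, which are exactly the cellular coboundary maps.

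\textbf{Step 2: reduce to relative cohomology.} By Proposition \ref{Prop:ComputeChamofCon}, $U_m$ is convex and bounded, of the form
\[
\{c_\rho-1<\langle x,u_\rho\rangle\le \min(c_\rho,\,b_\rho(m)-\epsilon)\}.
\]
So $U_m=\overline{U_m}\setminus Z_m$, where $Z_m$ is a union of closed boundary faces. The strand then computes $H^\ast(\overline{U_m},Z_m)$, i.e.\ $H^\ast_c(U_m)$, with cohomological degree $\dim M_\R-\operatorname{codim}$. If $U_m=\emptyset$ the strand vanishes.

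\textbf{Step 3: the key topological claim.} If $U_m\neq\emptyset$ and $Z_m\neq\partial\overline{U_m}$, I claim $Z_m$ is nonempty and contractible, so the relative cohomology vanishes. The faces in $Z_m$ are the facets along which the defining strict inequalities $\langle x,u_\rho\rangle>c_\rho-1$ or $<b_\rho(m)$ hold. The strict lower constraints always contribute, so $Z_m$ is nonempty. To prove contractibility I would first try to show that $Z_m$ is exactly the part of the boundary visible from a suitable generic direction (a ``light source'' at infinity), giving a straight-line retraction onto a face. This is the step I expect to be the main obstacle, since it needs the specific shape of chambers of constancy and of the constraints $b_\rho(m)$.

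\textbf{Step 4: the exceptional case.} It remains to show that $Z_m=\partial\overline{U_m}$ happens precisely when $\Delta=\Delta'+m$. When it happens, $U_m$ is an open cell and $H^\ast_c$ is $k$ in top cohomological degree, i.e.\ codimension $0$, which is homological degree zero. For the reverse direction, if $\Delta=\Delta'+m$ then $b_\rho(m)=c_\rho$ for all $\rho$ and $U_m$ is the open top cell of $\Delta$. Conversely, $Z_m$ containing all upper facets forces $b_\rho(m)\le c_\rho$ with equality on every facet, which together with Proposition \ref{Prop:IsoConicModules} gives $\Delta'+m=\Delta$. Uniqueness of $m$ then yields the one-dimensional homology in part (2), and exactness for non-isomorphic modules gives part (1).
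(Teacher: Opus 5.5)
Your Steps 1 and 2 are fine. Write $c_\rho=\lceil\langle v,u_\rho\rangle\rceil$ for $v\in\Delta$, and $c'_\rho$ likewise for $\Delta'$. Then $b_\rho(m)=c'_\rho+\langle m,u_\rho\rangle$, so $\Delta'+m=\{x : b_\rho-1<\langle x,u_\rho\rangle\le b_\rho\}$. Moreover $U_m\neq\emptyset$ iff $b_\rho\ge c_\rho$ for all $\rho$, and in that case the degree-$m$ strand is the relative cellular cochain complex of $(\overline{\Delta},Z_m)$. (The paper itself gives no proof of this lemma; it is quoted from Faber--Muller--Smith.)

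\textbf{Step 3 is a genuine gap.} It carries the entire content of the lemma, you leave it open, and the tool you propose cannot work. Put $T=\{\rho : b_\rho>c_\rho\}$. Then $Z_m$ is the union of the faces of $\overline{\Delta}$ lying on the lower hyperplanes $\langle x,u_\rho\rangle=c_\rho-1$ (outer normals $-u_\rho$), together with those on the upper hyperplanes $\langle x,u_\rho\rangle=c_\rho$ for $\rho\notin T$ (outer normals $+u_\rho$). As soon as some $\rho\notin T$ contributes both an upper and a lower facet, no light source at infinity sees exactly $Z_m$. This already happens for the positive quadrant in $\R^2$ with $\Delta=\Delta'=(-1,0]^2$ and $m=(1,0)$. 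Here $U_m=(-1,0]\times(-1,0)$, and $Z_m$ is the union of the left, bottom and top edges; the bottom and top edges have outer normals $-e_2$ and $+e_2$.

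The missing idea is to place the light source at a finite point $z\in\operatorname{int}(\Delta'+m)$. Then $\langle z,u_\rho\rangle>c_\rho$ for $\rho\in T$, and $c_\rho-1<\langle z,u_\rho\rangle<c_\rho$ for $\rho\notin T$. So $z$ lies on no facet hyperplane of $\overline{\Delta}$, and it is on the far side from $\overline{\Delta}$ of exactly the upper hyperplanes indexed by $T$. Every point of $\overline{\Delta}$ on a hyperplane having $z$ strictly on the $\overline{\Delta}$-side lies on a facet with the same property (compare normal cones). Hence $Z_m$ is precisely the part of $\partial\overline{\Delta}$ invisible from $z$.
\begin{itemize}
\item If $T\neq\emptyset$, then $z\notin\overline{\Delta}$. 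Central projection from $z$ onto a hyperplane separating $z$ from $\overline{\Delta}$ maps this invisible part homeomorphically onto the convex image of $\overline{\Delta}$. So it is contractible, and $H^\ast(\overline{\Delta},Z_m)=0$.
\item If $T=\emptyset$, then $z\in\operatorname{int}\Delta$ and $Z_m=\partial\overline{\Delta}$.
\end{itemize}

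\textbf{Step 4 also has a gap, repaired by the same point $z$.} You argue that $Z_m=\partial\overline{\Delta}$ forces $b_\rho\le c_\rho$ on every upper facet, and then invoke Proposition \ref{Prop:IsoConicModules} to get $\Delta'+m=\Delta$. But this only constrains those $\rho$ whose upper hyperplane carries a codimension-one cell of $\Delta$, and for non-simplicial $\sigma$ there can be others. For example, take $\Delta=\Delta_{v_+}$ in Example \ref{Exa:710FMS}, with ceilings $(0,0,0,1)$. The hyperplane $\langle x,u_4\rangle=1$ meets $\overline{\Delta}$ only in $x=(0,-1,0)$, which is not in $\Delta$. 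Proposition \ref{Prop:IsoConicModules} says nothing about the remaining $b_\rho$. With $z$ the converse is immediate: if $b\neq c$ then $z\notin\overline{\Delta}$, so some facet of $\overline{\Delta}$ is visible from $z$, and therefore $Z_m\neq\partial\overline{\Delta}$.
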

 For $M$ a finite direct sum of conic modules, Faber, Muller and Smith decide both Cohen-Macaulayness and finite global dimension of $\Lambda=\End_R(M)$ by giving minimal projective resolutions for all simple $\Lambda$-modules. To examine whether a given sum of conic modules $M$ gives an NCCR $\End_R(M)$, the strategy is as follows. Firstly, one can show that all graded simple $\End_R(M)$-modules arise as quotient of $P_\Delta:=\Hom_R(M,A_\Delta)$, for some chamber of constancy $\Delta$, by its unique maximal graded submodule (see \cite[Proposition 6.3]{FMS19}). Denote this simple module by $S_\Delta$. Furthermore, all graded indecomposable projective modules are of the form $P_\Delta$ for some $\Delta$ appearing in $M$. Then, one shows that $\End_R(M,K_\Delta^\bullet)$ forms a graded $\End_R(M)$-projective resolution of the simple module $S_\Delta$ and that this resolution has minimal length \cite[Theorem 6.5]{FMS19}. 

A first instinct to follow then is to take a \textit{complete} sum of conic modules, i.e. $M=\bigoplus A_\Delta^{r_\Delta}$ where the sum ranges over all chambers of constancy and $r_\Delta\in \Z_{>0}$. However, Faber Muller and Smith show that while this always produces an NCR (since we have finite length resolutions of length at most $d=\dim R$ of all simples $S_\Delta$), crepancy happens if and only if the cone $\sigma$ is simplicial. They do, however, give an example of how this problem can potentially be remedied by restricting one's attention to a subset of the chambers of constancy.
\begin{example}[Example 7.10 in \cite{FMS19}]
\label{Exa:710FMS}
    Consider the cone $\sigma\subset N_\R\cong \R^3$ spanned by the four rays with primitive generators $(1,0,0)$, $(0,1,0)$, $(-1,0,1)$, $(0,-1,1)$. The dual cone $\sigma^\vee$ has primitive ray generators $(0,0,1)$, $(1,0,1)$, $(0,1,1)$ and $(1,1,1)$ and corresponds to the cone over a square. The toric algebra is $R=k[x,y,z,w]/(xz-yw)$. There are three isomorphism types of conic $R$-modules:
    $A_0=R$, $A_1=(x,y)R$ and $A_2=(x,w)R$. The associated complexes $K_i^\bullet$ are as follows:
    \begin{eqnarray*}
        && K_0^\bullet=A_0\rightarrow A_0^{\oplus 4}\rightarrow A_1^{\oplus 2}\oplus A_2^{\oplus 2}\rightarrow A_0,\\
        && K_1^\bullet=A_2\rightarrow A_0^{\oplus 2}\rightarrow A_1,\\
        && K_2^\bullet=A_1\rightarrow A_0^{\oplus 2}\rightarrow A_2.\;
    \end{eqnarray*}
    We first note that the complexes are not of the same length, so while we obtain an NCR of $R$ via $\End_R(A_0\oplus A_1\oplus A_2)$, this will not be crepant. However, the authors of \cite{FMS19} observe that using the complex for $A_2$, one can build new complexes \begin{eqnarray*}
        && K_0^{\dagger,\bullet}=A_0\rightarrow A_1^{\oplus 2}\oplus A_0^{\oplus 4}\rightarrow A_0^{\oplus 4}\oplus A_1^{\oplus 2}\rightarrow A_0,\\
        && K_1^{\dagger,\bullet}=A_1\rightarrow A_0^{\oplus 2}\rightarrow A_0^{\oplus 2}\rightarrow A_1.\;
    \end{eqnarray*}
    The functor $\Hom_R(A_0\oplus A_1,-)$ takes these complexes to minimal length projective resolutions of the only two graded simples $\End_R(A_0\oplus A_1)$-modules. The arguments leading to NCCRs of simplicial cones then follow through to show that $\End_R(A_0\oplus A_1)$ is an NCCR of $R$.
\end{example}
The above example leads to the following question that this present paper aims to investigate.
\begin{question}
\label{Qn:IncompleteConMod}
    When does an incomplete sum of conic modules $M=\bigoplus_{\Delta\in I}A_\Delta$ give an NCCR $\End_R(M)$ of $R$?
\end{question}

\subsection{The Bondal-Thomsen Collection}

As a means to investigate Question \ref{Qn:IncompleteConMod}, we will examine the combinatorics of conic modules and relate them to the geometry of the secondary fan of a simplicial refinement $\Sigma$ of $\sigma$. The idea for this is inspired by the work of Ballard et al. \cite{BBB+}, where the authors link the \newterm{Cox category} they introduce to the NCCRs constructed by Faber, Muller and Smith. The discussion happens on the level of toric DM stacks, whose derived categories are usually better behaved than the underlying toric varieties'. For instance, a simplicial, but not necessarily smooth, toric fan gives a smooth DM stack with coarse moduli space the appropriate toric variety. Given now a fixed (simplicial) toric variety $X_\Sigma$, we consider its secondary fan $\Sigma_{GKZ}$, whose maximal chambers correspond to simplicial toric varieties $X_1,\dots,X_r$.  
Intuitively, the Cox category is a triangulated category which encodes the information of all the categories $\dbcoh{\mathcal{X}_i}$, where $\mathcal{X}_i$ is the DM stack associated to $X_i$. These toric DM stacks $(\mathcal{X}_i)_{i=1}^r$ are related by birational maps, whose graphs induce Fourier-Mukai transforms $\Phi_{i,j}:\dbcoh{\mathcal{X}_i}\rightarrow \dbcoh{\mathcal{X}_j}$ between the derived categories. One viewpoint of the Cox category is to consider the categories $\dbcoh{\mathcal{X}_i}$ to be analogous to affine patches that are glued together along the Fourier-Mukai transforms, acting as transition functions, to give the Cox category. We refer the reader to Appendix A in \cite{BBB+} to make this perspective precise using the Grothendieck construction. One of the main results of the paper \cite{BBB+} is a proof that the Cox category admits a strong, full exceptional collection. This collection comes from the \newterm{Bondal-Thomsen collection}. We shall inspect this collection closer and illustrate its links to the conic modules discussed above. We begin by defining the toric DM stack associated to a toric variety $X_\Sigma$.

We fix, as usual, a pair of dual lattices $M,N\cong \Z^n$ and a fan $\Sigma\subset N_\R$ with a set of $k$ primitive ray generators $\nu=\{u_\rho\mid \rho\in\Sigma(1)\}\subset N$. Consider now the vector space $\R^k$ with an elementary $\Z$-basis enumerated by the set of rays of $\Sigma$, $\{e_\rho\}_{\rho\in \Sigma(1)}$. Define the \newterm{Cox fan} of $\Sigma$ to be \[
\operatorname{Cox}(\Sigma):=\{\cone(e_\rho\mid\rho\in\sigma)\mid\sigma\in\Sigma\}.
\]
This fan is a subfan of the standard fan for $\A^k$ and thus its associated toric variety can be viewed as open subspace of the affine space $\A^k$, which we denote by $U_\Sigma:=X_{\operatorname{Cox}(\Sigma)}$.

Next, we consider the right exact sequence
\begin{eqnarray}\label{eqn:Coxseqpre}
    && M\xrightarrow{f}\Z^k\xrightarrow{\pi}\operatorname{coker}f\rightarrow 0,\\
   &&  m\mapsto \sum_{\rho\in\Sigma(1)}\langle m,u_\rho\rangle e_\rho\nonumber.\;
\end{eqnarray}
Apply the functor $\Hom(-,\mathbb{G}_m)$ to obtain the left exact sequence:
\[
1\rightarrow \Hom(\operatorname{coker}f,\mathbb{G}_m)\xrightarrow{\hat{\pi}}\mathbb{G}_m^k\rightarrow \mathbb{G}_m^n.
\]

The group $S_\Sigma:=\Hom(\operatorname{coker}f,\mathbb{G}_m)$ acts on $U_\Sigma$, and so we define the following quotient stack:
\begin{definition}
    \label{Def:Coxstack}
    The \newterm{Cox stack} associated to $\Sigma$ is
    $\mathcal{X}_\Sigma:=[U_\Sigma/S_\Sigma].$
\end{definition}

We often simply say toric (DM) stacks when we mean their Cox stacks. Let us explicitly recall the following Theorem, highlighting the strength of considering Cox stacks instead of only toric varieties.
\begin{theorem}[Theorem 4.12 in \cite{FK18}]
    \label{Thm:4.12FK18}
    If $\Sigma$ is simplicial, then $\mathcal{X}_\Sigma$ is a smooth Deligne-Mumford stack with coarse moduli space $X_\Sigma$. When $\Sigma$ is smooth (equivalently, the variety $X_\Sigma$ is smooth), $\mathcal{X}_\Sigma\cong X_\Sigma$.
\end{theorem}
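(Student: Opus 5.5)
The plan is to work chart by chart. The open subscheme $U_\Sigma\subset\A^k$ is covered by the affine opens $U_\sigma=\{x\mid x_\rho\neq 0\text{ for }\rho\notin\sigma(1)\}\cong \A^{\sigma(1)}\times\mathbb{G}_m^{k-|\sigma(1)|}$ for $\sigma\in\Sigma$. Each $U_\sigma$ is $S_\Sigma$-stable, so $\mathcal{X}_\Sigma$ is covered by the open substacks $[U_\sigma/S_\Sigma]$. Smoothness is immediate: $U_\Sigma$ is smooth, being open in $\A^k$, and $S_\Sigma$ is a diagonalizable group, hence smooth in characteristic $0$. So $[U_\Sigma/S_\Sigma]$ is a smooth algebraic stack, and what remains is to identify stabilizers and the coarse space.

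\textbf{Stabilizers.} Take a point $x\in U_\Sigma$ and let $\sigma\in\Sigma$ be the cone spanned by the rays $\rho$ with $x_\rho=0$. An element $t\in S_\Sigma\subset\mathbb{G}_m^k$ fixes $x$ exactly when $t_\rho=1$ for all $\rho\notin\sigma(1)$. Dualising the sequence \eqref{eqn:Coxseqpre} restricted to the coordinates in $\sigma(1)$, I would identify this stabilizer with $\Hom(\operatorname{coker}(M\to\Z^{\sigma(1)}),\mathbb{G}_m)$, where the map is $m\mapsto(\langle m,u_\rho\rangle)_{\rho\in\sigma(1)}$. This cokernel is finite if and only if the $u_\rho$ for $\rho\in\sigma(1)$ are linearly independent, that is, if and only if $\sigma$ is simplicial. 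Hence when $\Sigma$ is simplicial all stabilizers are finite diagonalizable groups, which are reduced in characteristic $0$. The diagonal is therefore unramified and $\mathcal{X}_\Sigma$ is Deligne--Mumford.

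\textbf{Coarse moduli space.} On each chart I compute the invariants: $k[U_\sigma]^{S_\Sigma}$ is spanned by the Laurent monomials $x^a$ with $a\in\Z^{\sigma(1)}_{\ge0}\times\Z^{k-|\sigma(1)|}$ and $a\in\operatorname{im}f$. Writing $a=f(m)$, these are exactly the $m\in\sigma^\vee\cap M$, so $U_\sigma/\!\!/S_\Sigma=\spec k[\sigma^\vee\cap M]=X_\sigma$, and these quotients glue to $X_\Sigma$. Since $\sigma$ is simplicial, all stabilizers in $U_\sigma$ are finite, so all orbits have the same dimension and are therefore closed. The good quotient is thus geometric, and a geometric good quotient by a reductive group with finite stabilizers is a coarse moduli space; alternatively one can apply Keel--Mori. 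This gives $X_\Sigma$ as the coarse space of $\mathcal{X}_\Sigma$.

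\textbf{Smooth case.} If $\sigma$ is smooth, the $u_\rho$ for $\rho\in\sigma(1)$ extend to a basis of $N$. Then $M\to\Z^{\sigma(1)}$ is surjective and all stabilizers are trivial. I expect this step to be the main obstacle, because free action alone does not make the stack a scheme; one needs the quotient to be a torsor. I would verify this explicitly by choosing coordinates: after the unimodular change of basis, $S_\Sigma$ acts on $U_\sigma\cong\A^{n}\times\mathbb{G}_m^{k-n}$ by acting freely and transitively on a $\mathbb{G}_m^{k-n}$ factor. This yields $U_\sigma\cong S_\Sigma\times X_\sigma$ equivariantly, so $[U_\sigma/S_\Sigma]\cong X_\sigma$, and these isomorphisms glue to $\mathcal{X}_\Sigma\cong X_\Sigma$.
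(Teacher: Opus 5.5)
The paper does not prove this statement; it quotes it from \cite[Theorem 4.12]{FK18}. Your chart-by-chart argument is therefore a self-contained substitute rather than a variant of anything in the text. Your smoothness, stabilizer and coarse-space steps are the standard argument and are fine.

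One tacit assumption: when you identify $k[U_\sigma]^{S_\Sigma}$ with $k[\sigma^\vee\cap M]$, you use that $f$ is injective, i.e.\ that the rays span $N_\R$. The statement itself needs this too, since for $\Sigma=\{0\}$ the stack is a point rather than a torus. It does hold in the paper's setting, where $\Sigma(1)=\sigma(1)$ and $\sigma$ is full-dimensional.

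The smooth case, however, is wrong as written. Your description $U_\sigma\cong\A^n\times\mathbb{G}_m^{k-n}$, with $S_\Sigma$ acting simply transitively on the torus factor, is only valid when $|\sigma(1)|=n=\operatorname{rk}N$. For a smooth cone with $d=|\sigma(1)|<n$ one has $U_\sigma\cong\A^d\times\mathbb{G}_m^{k-d}$, while $\dim S_\Sigma=k-n<k-d$, and the torsor $U_\sigma\to X_\sigma$ need not be trivial.

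A counterexample: take $N=\Z^2$ with rays $u_1=e_1$, $u_2=e_1+2e_2$ and no two-dimensional cone. Every cone is smooth, but $\operatorname{coker}f\cong\Z/2$, so $S_\Sigma=\{\pm(1,1)\}\cong\mu_2$ acts by $-1$ on $U_\Sigma=\A^2\setminus\{0\}$. On the chart $U_{\rho_1}=\{x_2\neq0\}\cong\A^1\times\mathbb{G}_m$ the quotient map is $(x_1,x_2)\mapsto(x_1x_2,x_2^2)$. This is a connected double cover, so $U_{\rho_1}\not\cong\mu_2\times X_{\rho_1}$. Since this fan has no full-dimensional cones, restricting to maximal charts does not rescue the argument. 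For subdivisions of a full-dimensional cone, as in the paper, every maximal cone is full-dimensional, so your argument does work there on maximal charts.

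Either of two repairs fixes the general case.

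\emph{Explicit torsor.} Choose $m_\rho\in M$ dual to $\{u_\rho\}_{\rho\in\sigma(1)}$ and use the invariant coordinates $y_\rho=x^{f(m_\rho)}$. This writes $U_\sigma\cong\A^d\times\mathbb{G}_m^{\Sigma(1)\setminus\sigma(1)}$, with $S_\Sigma$ acting only on the second factor through the projection $S_\Sigma\to\mathbb{G}_m^{\Sigma(1)\setminus\sigma(1)}$. That projection is a closed embedding, because its kernel is your (trivial) stabilizer. Then $[U_\sigma/S_\Sigma]\cong\A^d\times(\mathbb{G}_m^{k-d}/S_\Sigma)\cong X_\sigma$. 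You only need $U_\sigma\to X_\sigma$ to be a torsor, not a trivial one.

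\emph{Via the coarse space.} You have already shown that every stabilizer group scheme is trivial. Hence the inertia is trivial and $\mathcal{X}_\Sigma$ is an algebraic space. An algebraic space is its own coarse moduli space, so your coarse-space step already gives $\mathcal{X}_\Sigma\cong X_\Sigma$. The step you singled out as the main obstacle is formal once the coarse space is identified.
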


Within the class group of $\mathcal{X}_\Sigma$, we now define the Bondal-Thomsen collection.
\begin{definition}
    \label{Def:BTcolln}
    The \newterm{Bondal-Thomsen collection} for $\mathcal{X}$ is the set $\Theta_{\mathcal{X}}$ of degrees $-d\in \operatorname{Cl}(\mathcal{X})$ that are, for some $\theta\in M_\R$, equivalent to
    \[
    \sum_{\rho\in\Sigma(1)}\lfloor\langle-\theta,u_\rho\rangle\rfloor D_\rho.
    \]
\end{definition}

Notationally, we may refer to the class in $\Theta_\mathcal{X}$ corresponding to $\theta\in M_\R$ by $-d(\theta)$. Such an element is entirely determined by the image of $\theta$ in the torus $M_\R/M$. 

\begin{remark}
    The definition of the Bondal-Thomsen collection in \cite{BBB+} is a little more general, as the authors consider toric stacks from stacky fans, Recall that a stacky fan is determined by the data of the fan $\Sigma$ and a homomorphism $\beta:\Z^{\Sigma(1)}\rightarrow N$ with $\beta(e_\rho)=b_\rho u_\rho$ for some $b_\rho>0$. Choosing $b_\rho=1$ for all $\rho\in\Sigma(1)$ we recover the definition of Cox stacks above.
\end{remark}

\begin{definition}
\label{Def:Zonotope}
Define the (partial) zonotope $Z_\mathcal{X}$ associated to the toric DM stack $\mathcal{X}$ to be the image of $(-1,0]^{\Sigma(1)}$ under the map $\R^{\Sigma(1)}\rightarrow \operatorname{Cl}(\mathcal{X})_{\R}$ induced by \eqref{eqn:Coxseqpre}.
\end{definition}

\noindent The Bondal-Thomsen collection has several equivalent characterisations.

\begin{proposition}[Proposition 2.17 in \cite{BBB+}]
    \label{Prop:BTcollnequiv}
    The following subsets of $\operatorname{Cl}(\mathcal{X})$ are equal:
    \begin{enumerate}
        \item The Bondal-Thomsen collection $\Theta_\mathcal{X}$.
        \item The collection of Weil divisors in $\operatorname{Cl}(\mathcal{X})$ linearly equivalent in $\operatorname{Cl}(\mathcal{X})_\Q$ to divisors of the form $\sum_{\rho\in \Sigma(1)}a_\rho D_\rho$ with $-1<a_\rho\le0$.
        \item Elements whose image in $\operatorname{Cl}(\mathcal{X})_\Q$ is a lattice point of the zonotope $Z_\mathcal{X}$.
    \end{enumerate}
\end{proposition}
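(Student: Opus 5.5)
The plan is to compare all three sets through the right exact sequence \eqref{eqn:Coxseqpre} after tensoring with $\Q$ or $\R$. The key fact is that the kernel of $\Q^{\Sigma(1)}\to\operatorname{Cl}(\mathcal{X})_\Q$ is exactly $f(M_\Q)=\{\sum_\rho\langle m,u_\rho\rangle D_\rho\mid m\in M_\Q\}$, and similarly over $\R$. I would first dispose of two technical points. First, the Bondal-Thomsen collection $\Theta_\mathcal{X}$ is stable under adding torsion classes. A torsion class $t\in\operatorname{Cl}(\mathcal{X})$ is represented by $\sum_\rho\langle m,u_\rho\rangle D_\rho$ for some $m\in M_\Q$ with all pairings $\langle m,u_\rho\rangle$ integral. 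For such $m$ we have $\lfloor\langle-(\theta-m),u_\rho\rangle\rfloor=\lfloor\langle-\theta,u_\rho\rangle\rfloor+\langle m,u_\rho\rangle$, so $-d(\theta)+t=-d(\theta-m)$. The sets (2) and (3) are visibly stable under torsion, since they are defined through $\operatorname{Cl}_\Q$ or $\operatorname{Cl}_\R$. Second, we may always assume $\theta\in M_\Q$. The tuple $(\lfloor\langle-\theta,u_\rho\rangle\rfloor)_\rho$ is constant on the region cut out by the inequalities $c_\rho\le\langle-\theta,u_\rho\rangle<c_\rho+1$. This region is a nonempty polyhedral set defined by rational inequalities, so it contains rational points (compare Proposition \ref{Prop:ComputeChamofCon}).

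For $(1)\subseteq(2)$, take $\theta\in M_\Q$ and write $\lfloor\langle-\theta,u_\rho\rangle\rfloor=\langle-\theta,u_\rho\rangle+a_\rho$ with $a_\rho=-\{\langle-\theta,u_\rho\rangle\}\in(-1,0]\cap\Q$. Since $\sum_\rho\langle-\theta,u_\rho\rangle D_\rho$ lies in $f(M_\Q)$, it vanishes in $\operatorname{Cl}_\Q$. Hence $-d(\theta)$ is $\Q$-linearly equivalent to $\sum a_\rho D_\rho$.

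For $(2)\subseteq(1)$, let $x\in\operatorname{Cl}(\mathcal{X})$ have the same image in $\operatorname{Cl}_\Q$ as $\sum a_\rho D_\rho$ with $a_\rho\in(-1,0]$. Choose an integral lift $D=\sum c_\rho D_\rho$ of $x$. Then $D-\sum a_\rho D_\rho=\sum\langle m,u_\rho\rangle D_\rho$ for some $m\in M_\Q$, so $c_\rho=\langle m,u_\rho\rangle+a_\rho$. Because $c_\rho\in\Z$ and $a_\rho\in(-1,0]$, this forces $c_\rho=\lfloor\langle m,u_\rho\rangle\rfloor$, and taking $\theta=-m$ gives $x=-d(\theta)$ on the nose. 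If one instead only matches $x$ modulo torsion, the torsion stability above finishes the argument.

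For $(2)\Leftrightarrow(3)$, the inclusion $(2)\subseteq(3)$ is immediate, since $(a_\rho)\in(-1,0]^{\Sigma(1)}$ maps onto the image of $x$. The converse is the step I expect to be the main obstacle, because the zonotope is defined as the image of real points. Suppose the image of $x$ in $\operatorname{Cl}_\R$ equals the image of some real $a\in(-1,0]^{\Sigma(1)}$. Then the fiber of $\R^{\Sigma(1)}\to\operatorname{Cl}(\mathcal{X})_\R$ over the rational point $\bar x$, intersected with the half-open box, is a nonempty set cut out by rational linear equalities and strict or non-strict rational inequalities. Such a set has dense rational points: its relative interior in its affine span is a nonempty open subset of a rational affine space. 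Hence we can pick $a\in\Q^{\Sigma(1)}$. Finally, $\bar x$ and $\sum a_\rho D_\rho$ are rational classes that agree in $\operatorname{Cl}_\R=\operatorname{Cl}_\Q\otimes\R$, so they already agree in $\operatorname{Cl}_\Q$, which is membership in (2).
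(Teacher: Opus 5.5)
Your proof is correct. The paper does not prove this statement itself; it quotes it from \cite{BBB+} (Proposition 2.17), so there is no in-paper argument to compare with. Your route is to identify $\ker(\Q^{\Sigma(1)}\to\operatorname{Cl}(\mathcal{X})_\Q)$ with $f(M_\Q)$ by right exactness of $-\otimes_\Z\Q$, and then read off floors coordinatewise. This is the same mechanism the paper later uses in the proof of Lemma \ref{Lem:Number of conic modules}, and it gives a complete, self-contained proof.

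\textbf{The torsion-stability preliminary is redundant.} In $(2)\subseteq(1)$ you already obtain $x=-d(-m)$ on the nose. This works precisely because $m$ is allowed to lie in $M_\Q$ rather than $M$, which absorbs the torsion. Torsion-stability of $\Theta_\mathcal{X}$ then follows as a corollary.

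\textbf{The rational-point claim deserves one more line.} You use it both for ``we may assume $\theta\in M_\Q$'' and for $(3)\subseteq(2)$, so it should be justified rather than asserted.
\begin{itemize}
\item Let $F\neq\emptyset$ be cut out by rational equalities and rational strict or non-strict inequalities.
\item For each non-strict inequality that is not tight on all of $F$, pick a point of $F$ where it is strict. Average these points.
\item The average lies in $F$ and satisfies strictly every inequality except the implicit equalities.
\item Hence $F$ contains a relatively open subset of the rational affine subspace defined by the equalities together with the implicit equalities. That subspace is nonempty, so its rational points are dense, and $F$ contains a rational point.
\end{itemize}

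\textbf{The lattice-point condition in (3) is automatic.} Images of classes in $\operatorname{Cl}(\mathcal{X})$ always lie in the lattice, since the lattice is the image of $\Z^{\Sigma(1)}$. So (3) just says the image lies in $Z_\mathcal{X}$, which is what your argument uses implicitly.
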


\begin{remark}
    The closure of $Z_{\mathcal{X}}$ is a lattice polytope with respect to the lattice defined via the image of $\Z^{\Sigma(1)}$ under the above map. It should be emphasised here that when we talk about lattice points inside the zonotope $Z_{\mathcal{X}}$, we always refer to this natural lattice.
\end{remark}

Using the coarse moduli space map $\mathcal{X}\rightarrow X$ for the underlying toric variety $X$, we can study the pushforward to the Bondal-Thomsen collection. We keep the formulation in terms of general stacky fans, but will really only consider the toric DM stacks $\mathcal{X}_\Sigma$ we defined before.

\begin{proposition}
    \label{Prop:CoarseModuli}
    If $\Sigma$ is simplicial and $\pi:\mathcal{X}_{\Sigma,\beta}\rightarrow X_\Sigma$ is the natural coarse moduli space map, then for all $\theta\in M_\R$
    \[
    \pi_\ast\O_{\mathcal{X}_{\Sigma,\beta}}(-d(\theta))=\O_{X_\Sigma}(-d(\theta)).
    \]
\end{proposition}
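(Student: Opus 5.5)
The plan is to reduce to a local computation on the Cox construction, using the description of pushforward along a good moduli map in terms of invariants. We write $\mathcal{X}=[U_\Sigma/S_\Sigma]$ with $U_\Sigma\subset\A^k$ and Cox ring $S=k[x_\rho\mid\rho\in\Sigma(1)]$, graded by $\operatorname{Cl}(\mathcal{X})=\operatorname{coker} f$. Since $\Sigma$ is simplicial, $X_\Sigma=U_\Sigma/\!/S_\Sigma$ is a geometric quotient, and the coarse moduli map $\pi$ is the good moduli map. For a $S_\Sigma$-linearised sheaf, $\pi_\ast$ is therefore the sheaf of $S_\Sigma$-invariants of the pushforward from $U_\Sigma$. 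Both sides are reflexive rank one sheaves and the statement is local on $X_\Sigma$, so we may restrict to an affine chart $U_\sigma$, $\sigma\in\Sigma$, whose preimage is $U_{\operatorname{Cox}(\sigma)}\times\mathbb{G}_m^{\Sigma(1)\setminus\sigma(1)}$. This makes the question one about graded modules over $S_{x^{\hat\sigma}}$.

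On such a chart, $\O_{\mathcal{X}}(-d(\theta))$ corresponds to the free module $S_{x^{\hat\sigma}}$ with grading shifted by $-d(\theta)$. Its sections over $U_\sigma$ are the degree-$(-d(\theta))$ part, spanned by monomials $x^a$ with $a\in\Z^{\Sigma(1)}$, $a_\rho\ge0$ for $\rho\in\sigma(1)$, and $[a]=\left[\sum_\rho\lfloor\langle-\theta,u_\rho\rangle\rfloor e_\rho\right]$ in $\operatorname{coker} f$. Exactness of \eqref{eqn:Coxseqpre} gives $a=\sum_\rho\lfloor\langle-\theta,u_\rho\rangle\rfloor e_\rho+f(m)$ for some $m\in M$, and $m$ is unique modulo $\ker f$, which is zero because the rays span $N_\R$ (assume this, or else split off a torus factor). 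Hence $x^a$ is identified with $\chi^m$. The positivity condition becomes $\langle m,u_\rho\rangle\ge-\lfloor\langle-\theta,u_\rho\rangle\rfloor=\lceil\langle\theta,u_\rho\rangle\rceil$ for all $\rho\in\sigma(1)$.

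By the standard description of sections of $\O_{X_\Sigma}(D)$ for the torus-invariant Weil divisor $D=\sum\lfloor\langle-\theta,u_\rho\rangle\rfloor D_\rho$, this is exactly $\Gamma(U_\sigma,\O_{X_\Sigma}(-d(\theta)))$. This is consistent with Remark \ref{Rem:ConModBundle} and Proposition \ref{Prop:ConicModulesProperties}(2), giving the conic module $A_\theta$ in the affine case. The identifications $x^a\leftrightarrow\chi^m$ are compatible with restriction between charts $\tau\subset\sigma$, since both sides simply invert the variables $x_\rho$ for $\rho\in\sigma(1)\setminus\tau(1)$. The local isomorphisms therefore glue to the global isomorphism of sheaves.

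The main obstacle is justifying cleanly that $\pi_\ast$ agrees with taking invariants of the pushforward from $U_\Sigma$, that is, that the coarse space map is cohomologically affine with invariants computing sections. This holds because $S_\Sigma$ is diagonalizable, hence linearly reductive in characteristic $0$, so $[U_\sigma'/S_\Sigma]\to U_\sigma$ is a good moduli space. A further subtlety arises when $\operatorname{Cl}(\mathcal{X})$ has torsion: the degree must be matched as an element of $\operatorname{coker} f$ and not merely in $\operatorname{Cl}_\Q$. The argument above handles this automatically because we worked with exact equalities in $\operatorname{coker} f$.
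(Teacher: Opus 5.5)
\textbf{Verdict: correct.} Your proposal is a correct proof. The paper itself gives no proof of this proposition: it is stated as background taken from the setup of \cite{BBB+}, so there is no in-paper argument to compare with. Your chart-by-chart Cox ring computation is the natural self-contained proof. On $q^{-1}(U_\sigma)$, the invariant sections of $\O_{\mathcal{X}}(-d(\theta))$ are the monomials $x^{a_0+f(m)}$, where $a_0=\sum_\rho\lfloor\langle-\theta,u_\rho\rangle\rfloor e_\rho$, subject to $\langle m,u_\rho\rangle\ge\lceil\langle\theta,u_\rho\rangle\rceil$ for $\rho\in\sigma(1)$. This is exactly the standard description of $\Gamma(U_\sigma,\O_{X_\Sigma}(D))$. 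The gluing works because $\chi^m\mapsto x^{a_0+f(m)}$ is defined independently of the chart.

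\textbf{Remark 1: the ``main obstacle'' is not one.} For any quotient stack $[W/G]$ with a $G$-invariant map $q\colon W\to X$, one has $\pi_\ast\mathcal{F}=(q_\ast\mathcal{F})^G$. This holds simply because sections over $[q^{-1}V/G]$ are, by descent, the $G$-invariant sections over $q^{-1}V$. Linear reductivity (which holds for diagonalizable groups in any characteristic) is only needed for exactness of $\pi_\ast$, and you never use that. Likewise, simpliciality is only needed to call $\pi$ a coarse moduli map; it plays no role in the computation.

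\textbf{Remark 2: scope of the statement.} You treat only the Cox stack, i.e.\ $b_\rho=1$, which is the only case the paper actually uses. The statement, however, is phrased for a stacky fan $\mathcal{X}_{\Sigma,\beta}$. Your method extends verbatim once $-d(\theta)$ on $\mathcal{X}_{\Sigma,\beta}$ is formed with $\beta(e_\rho)=b_\rho u_\rho$, as it must be for the statement to make sense.
\begin{itemize}
\item The grading becomes $\operatorname{coker}(\beta^\ast)$.
\item Writing $y_\rho=\langle-\theta,u_\rho\rangle$, the positivity condition becomes $\lfloor b_\rho y_\rho\rfloor+b_\rho\langle m,u_\rho\rangle\ge0$.
\item Equivalently, $\langle m,u_\rho\rangle\ge-\lfloor\lfloor b_\rho y_\rho\rfloor/b_\rho\rfloor=-\lfloor y_\rho\rfloor=\lceil\langle\theta,u_\rho\rangle\rceil$.
\end{itemize}
The only new input is the identity $\lfloor\lfloor by\rfloor/b\rfloor=\lfloor y\rfloor$ for positive integers $b$.
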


Using the Bondal-Thomsen collection, the authors of \cite{BBB+} construct a tilting bundle on the \newterm{Cox category}, which we will now introduce. Consider the collection $(\mathcal{X}_i)_{i=1}^r$ of smooth toric DM stacks associated to the chambers of the secondary fan $\Sigma_{GKZ}$ of $X$. There exists (see $\S 3$ of \cite{BBB+}) a smooth toric DM stack $\widetilde{\mathcal{X}}$ equipped with $r$ proper, birational morphisms $\pi_i:\widetilde{\mathcal{X}}\rightarrow \mathcal{X}_i$ such that $\pi^\ast_i:\dbcoh{\mathcal{X}_i}\rightarrow \dbcoh{\widetilde{\mathcal{X}}}$ is fully faithful for all $i$. \begin{definition}
    \label{Def:CoxCat}
    We define the Cox category of $X$ to be $\operatorname{D}_{Cox}(X):=\langle \pi_1^\ast(\mathcal{X}_1),\dots,\pi^\ast_r(\mathcal{X}_r)\rangle$.
\end{definition}
This definition turns out to be independent of choice of $\widetilde{\mathcal{X}}$ (see \cite[Corollary 3.9]{BBB+}). Observe now that the toric variety $X$ we started with admits a natural map (via \eqref{eqn:Coxseqpre}) $\operatorname{Cl}(X)\rightarrow\operatorname{Cl}(\mathcal{X}_i)$ for all $i$, which is an isomorphism if and only if $X$ and $\mathcal{X}_i$ share the same rays, and otherwise is surjective. As we will now elaborate upon, the Cox category has a tilting bundle extrapolated from the data of the Bondal-Thomsen collection. The Bondal-Thomsen collection $\Theta_\mathcal{X}$ surjects onto $\Theta_{\mathcal{X}_i}$ via this map. Hence, any $-d\in \Theta_\mathcal{X}$ has a natural corresponding element in $\Theta_{\mathcal{X}_i}$, and via abuse of notation we permit ourselves to write $\O_{\mathcal{X}_i}(-d)$ for an element $-d\in \Theta_{\mathcal{X}}$. To ease notation a little further, write $\Theta:= \Theta_{\mathcal{X}}$. For each $-d\in \Theta$ we will now define an element $\O_{Cox}(-d)$ in the Cox category $\operatorname{D}_{Cox}(X)$. We could in principal pull back the line bundle $\O_{\mathcal{X}_j}(-d)$ for any of the chambers, so the question is which chamber is the "best'' choice. The answer uses the geometry of the secondary fan.
\begin{definition} 
    \label{OCox(-d)}
    Pick $-d\in \Theta$. Note that $d$ is an effective degree, and so its image in $\operatorname{Cl}(X)_{\R}$ lies in some maximal cone $\Gamma_i$ of the secondary fan $\Sigma_{GKZ}$. This maximal chamber $\Gamma_i$ corresponds to some $\mathcal{X}_i$ and so we define $\O_{Cox}(-d):=\pi_i^\ast\O_{\mathcal{X}_i}(-d)$.
\end{definition}
At first instinct, one may doubt that this is well-defined: the image of $-d$ could lie in the intersection of two (or more) chambers $\Gamma_j\cap\Gamma_i$. However, Ballard et al. prove \cite[Proposition 4.2]{BBB+} that in this case, $\pi_i^\ast\O_{\mathcal{X}_i}(-d)=\pi^\ast_j\O_{\mathcal{X}_j}(-d)$.

An important computation tool for the Bondal-Thomsen collection is the $\Theta$-Transform Lemma:
\begin{lemma}[Lemma 1.6 in \cite{BBB+}]
    \label{Lem:ThetaTrans}
    Let $-d\in \Theta$ be an element whose image in $\Sigma_{GKZ}$ lies in the chamber $\Gamma_i$ corresponding to the toric DM stack $\mathcal{X}_i$. For any $j$, $\Phi_{ij}(\O_{\mathcal{X}_i}(-d))=\O_{\mathcal{X}_j}(-d)$, where $\Phi_{ij}:\dbcoh{\mathcal{X}_i}\rightarrow\dbcoh{\mathcal{X}_j}$ is the Fourier-Mukai transform induced by the graph of the birational map $\mathcal{X}_i\dashrightarrow \mathcal{X}_j$.
\end{lemma}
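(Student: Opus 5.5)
The plan is to realise $\Phi_{ij}$ on the common roof and then compute toric direct images directly. Take the smooth toric DM stack $\widetilde{\mathcal{X}}$ with proper birational morphisms $\pi_i,\pi_j$ from \S3 of \cite{BBB+}, chosen so that its fan refines both $\Sigma_i$ and $\Sigma_j$. Then $\Phi_{ij}\cong R\pi_{j\ast}\circ L\pi_i^\ast$, because the graph of $\mathcal{X}_i\dashrightarrow\mathcal{X}_j$ is dominated by $\widetilde{\mathcal{X}}$ and the difference is absorbed by $R\pi_\ast\O=\O$ for toric birational morphisms. So it suffices to show
\[
R\pi_{j\ast}\,\pi_i^\ast\O_{\mathcal{X}_i}(-d)\cong\O_{\mathcal{X}_j}(-d).
\]

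\textbf{Step 1: a good representative of $-d$.} By Proposition \ref{Prop:BTcollnequiv}, write $-d$ rationally as $-D_\theta$ with
\[
D_\theta=\sum_\rho \{\langle\theta,u_\rho\rangle\}^{\ast} D_\rho,
\]
that is, with coefficients $c_\rho=\langle\theta,u_\rho\rangle-\lceil\langle\theta,u_\rho\rangle\rceil+1\in(0,1]$ shifted appropriately. More precisely, $-d=\sum_\rho\lfloor\langle-\theta,u_\rho\rangle\rfloor D_\rho$, which differs from the principal $\Q$-divisor $\operatorname{div}(\chi^{-\theta})$ by $-\sum\{\langle\theta,u_\rho\rangle\}D_\rho$, with fractional parts in $[0,1)$. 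The hypothesis that $d$ lies in $\Gamma_i$ means that the effective $\Q$-divisor class $\sum_\rho\{\langle\theta,u_\rho\rangle\}D_\rho$ is nef on $\mathcal{X}_i$. Equivalently, its support function $\psi$ is linear on each cone of $\Sigma_i$, with values in $[0,1)$ on the $u_\rho$.

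\textbf{Step 2: identify the pullback.} Compute $\pi_i^\ast\O_{\mathcal{X}_i}(-d)$ on $\widetilde{\mathcal{X}}$ as the integral divisor
\[
\sum_{\tau\in\widetilde\Sigma(1)}\lfloor\langle-\theta,u_\tau\rangle\rfloor D_\tau .
\]
To see this, pull back locally on each cone of $\Sigma_i$, where $-d$ is represented by a character $-m_\sigma$. The key check is that on every ray $\tau$ of $\widetilde\Sigma$ inside $\sigma\in\Sigma_i$ we have $\langle -m_\sigma,u_\tau\rangle=\lfloor\langle-\theta,u_\tau\rangle\rfloor$. This holds because $\psi$ is linear on $\sigma$ and takes values in $[0,1)$ at the rays of $\sigma$, hence at the convex combinations of them that are relevant (using simpliciality of $\Sigma_i$, which follows since chambers correspond to simplicial fans). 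Thus $\pi_i^\ast\O(-d)$ is again a Bondal–Thomsen line bundle $\O_{\widetilde{\mathcal{X}}}(-d(\theta))$ for the same $\theta$.

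\textbf{Step 3: push forward.} Show $R\pi_{j\ast}\O_{\widetilde{\mathcal{X}}}(-d(\theta))=\O_{\mathcal{X}_j}(-d(\theta))$ for every $\theta$, irrespective of chambers. Work locally over a cone $\sigma'\in\Sigma_j$ and decompose by $M$-degree. The degree-$m$ part of $H^p(\pi_j^{-1}U_{\sigma'},\O(-d(\theta)))$ is the reduced or relative cohomology of the subcomplex of the subdivision of $\sigma'$ consisting of cones on whose rays $\langle m,u_\tau\rangle<\lfloor\langle-\theta,u_\tau\rangle\rfloor$, equivalently $\langle m+\theta,u_\tau\rangle<0$ after rounding. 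Since this condition is cut out by the single linear function $m+\theta$ evaluated on rays, the relevant region is the intersection of $|\sigma'|$ with an open half-space, which is convex. Hence the subcomplex is either empty or contractible. This gives the vanishing of $R^{>0}$ and identifies $\pi_{j\ast}$ with the same degree-wise description, which is exactly $\O_{\mathcal{X}_j}(-d(\theta))$.

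\textbf{Main obstacle.} The delicate point is Step 3's convexity claim in the stacky setting, where the rounding $\lfloor\cdot\rfloor$ interacts with rays $\tau$ that are not rays of $\Sigma_j$. I would first verify that the condition is really "$u_\tau$ lies in an open half-space" without rounding defects; the Bondal–Thomsen shape $\lfloor -\theta\rfloor$ makes the strict inequality $\langle m,u_\tau\rangle<\lfloor\langle-\theta,u_\tau\rangle\rfloor$ equivalent to $\langle m+\theta,u_\tau\rangle<0$ for integral $m$. Only after that does the standard Bondal–Thomsen contractibility argument apply.
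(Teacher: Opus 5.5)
The paper gives no proof of this lemma; it is quoted from \cite{BBB+}. I am therefore judging your argument on its own terms.

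\textbf{What works.} The architecture is sensible: realise $\Phi_{ij}$ on a common roof, pull back, then push forward a Bondal--Thomsen bundle. Step 3 is the standard Bondal--Thomsen pushforward computation and is correct once the rounding is fixed. As written it is wrong: $\langle m,u_\tau\rangle<\lfloor\langle-\theta,u_\tau\rangle\rfloor$ is not equivalent to $\langle m+\theta,u_\tau\rangle<0$. Take $\langle-\theta,u_\tau\rangle=\tfrac12$ and $\langle m,u_\tau\rangle=0$. The Demazure-type cohomology formula actually asks for $\langle m,u_\tau\rangle<-e_\tau=\lceil\langle\theta,u_\tau\rangle\rceil$. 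For integral $\langle m,u_\tau\rangle$ this is $\langle m-\theta,u_\tau\rangle<0$, a genuine open half-space, and the convexity argument then goes through.

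\textbf{The gap is Step 2.} Put $a_\rho=\{\langle-\theta,u_\rho\rangle\}$; note the sign, since you wrote $\{\langle\theta,u_\rho\rangle\}$. Let $\psi$ be linear on $\sigma\in\Sigma_i$ with $\psi(u_\rho)=a_\rho$. The pulled-back coefficient at a roof ray $\tau\subset\sigma$ is $\langle-\theta,u_\tau\rangle-\psi(u_\tau)$, with $b_\tau u_\tau$ in place of $u_\tau$ on a stacky roof. This equals $\lfloor\langle-\theta,u_\tau\rangle\rfloor$ only if $0\le\psi(u_\tau)<1$.
\begin{itemize}
\item $u_\tau$ is a \emph{nonnegative} combination of the $u_\rho$, $\rho\in\sigma(1)$, not a convex one. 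So linearity gives no upper bound on $\psi(u_\tau)$.
\item Linearity on cones of $\Sigma_i$ is automatic for a simplicial fan; nefness is the extra inequality at the other rays.
\item For $\tau\in\Sigma(1)\setminus\Sigma_i(1)$, the needed bound $\psi(u_\tau)\le a_\tau<1$ is exactly the chamber inequality expressing $d\in\Gamma_i$. That is the one place the hypothesis is needed, and your argument never invokes it there.
\item For roof rays not in $\Sigma(1)$ there is no such inequality, and $\psi(u_\tau)\ge 1$ does occur. Such rays are in general forced, for instance where two diagonals cross.
\end{itemize}

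\textbf{A concrete failure.} Let $u_1,u_2,u_4$ be a basis of $N=\Z^3$ and $u_3=2u_1+u_2-2u_4$. This is a Gorenstein cone over a quadrilateral with $\beta=(2,1,-1,-2)$. Take $\theta$ with $\langle-\theta,u_1\rangle=\tfrac12$, $\langle-\theta,u_2\rangle=\tfrac1{10}$, $\langle-\theta,u_4\rangle=\tfrac9{10}$.
\begin{itemize}
\item Then $\langle-\theta,u_3\rangle=-\tfrac7{10}$ and $-d(\theta)=-D_3$.
\item Its class lies in the interior of the chamber of the triangulation with diagonal $u_1u_2$; the chamber inequalities hold strictly.
\item Any common refinement with the other triangulation contains the ray $u_\tau=2u_1+u_2=u_3+2u_4$, and there $\psi(u_\tau)=\tfrac{11}{10}$.
\item Hence $\pi_i^\ast\O(-D_3)=\O(-D_3)$, while $-d_{\widetilde{\mathcal X}}(\theta)=-D_3+D_\tau\not\sim -D_3$.
\item With a stacky multiplicity the coefficient $\lfloor\tfrac{11}{10}b_\tau\rfloor$ is still positive, so this does not help.
\end{itemize}
So ``Bondal--Thomsen for the same $\theta$'' is false. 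Here the pullback happens to be Bondal--Thomsen for a different $\theta'$, so the lemma survives. For an arbitrary roof, however, the pullback need not be Bondal--Thomsen for any $\theta$: $\pi^\ast\O_{\mathbb{P}^2}(-2)$ on $\mathbb{F}_1$ is not in $\Theta_{\mathbb{F}_1}$.

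\textbf{What is needed to close the gap.} You have two options.
\begin{itemize}
\item Choose the roof and the representative $\theta$ so that $b_\tau\psi(u_\tau)<1$ at every exceptional ray, and prove from $d\in\Gamma_i$ that such a choice exists.
\item Compute $R\pi_{j\ast}\pi_i^\ast\O(-d)$ directly. In that case the negativity condition at an exceptional ray is $\langle m-\theta,u_\tau\rangle<\psi(u_\tau)$. This is cut out by a piecewise-linear rather than a linear function, so Step 3's half-space convexity argument does not apply as written.
\end{itemize}
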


Using the $\Theta$-Transform Lemma and adjunction, one obtains the following computation.
\begin{corollary}
    \label{Cor:HomDCox}
    Let $-d,-d'\in \Theta$. Then \[
    \Rhom(\O_{Cox}(-d),\O_{Cox}(-d'))\cong \Rhom_{\mathcal{X}_i}(\O_{\mathcal{X}_i}(-d),\O_{\mathcal{X}_i}(-d'))\cong H^\bullet(\mathcal{X}_i,\O_{\mathcal{X}_i}(d-d')).
    \]
\end{corollary}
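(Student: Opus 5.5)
The plan is to reduce both isomorphisms to standard facts already quoted: the $\Theta$-Transform Lemma (Lemma \ref{Lem:ThetaTrans}), full faithfulness of the pullbacks $\pi_j^\ast$, and the line-bundle computation of $\Ext$ on a smooth DM stack. Fix $-d,-d'\in\Theta$. By Definition \ref{OCox(-d)}, $\O_{Cox}(-d)=\pi_i^\ast\O_{\mathcal{X}_i}(-d)$ and $\O_{Cox}(-d')=\pi_j^\ast\O_{\mathcal{X}_j}(-d')$, where $\Gamma_i\ni d$ and $\Gamma_j\ni d'$ are chambers of $\Sigma_{GKZ}$. The statement is phrased with a single index $i$. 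I would read it as: $i$ is the chamber attached to the source $-d$. By \cite[Proposition 4.2]{BBB+}, the choice of $i$ does not matter when $d$ lies on a wall.

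The first isomorphism. I would show that $\pi_j^\ast\O_{\mathcal{X}_j}(-d')$ and $\pi_i^\ast\Phi_{ji}$ agree after applying $\Rhom(\pi_i^\ast\O_{\mathcal{X}_i}(-d),-)$. Concretely, the key step is that $\Phi_{ji}$ is computed by the correspondence $\mathcal{X}_j\xleftarrow{\pi_j}\widetilde{\mathcal{X}}\xrightarrow{\pi_i}\mathcal{X}_i$, since $\widetilde{\mathcal{X}}$ dominates the graph, so that $\Phi_{ji}=\pi_{i\ast}\pi_j^\ast$. Adjunction $\pi_i^\ast\dashv\pi_{i\ast}$ then gives
\[
\Rhom_{\widetilde{\mathcal{X}}}(\pi_i^\ast\O_{\mathcal{X}_i}(-d),\pi_j^\ast\O_{\mathcal{X}_j}(-d'))\cong \Rhom_{\mathcal{X}_i}(\O_{\mathcal{X}_i}(-d),\pi_{i\ast}\pi_j^\ast\O_{\mathcal{X}_j}(-d')).
\]
By Lemma \ref{Lem:ThetaTrans}, applied to $-d'$, whose chamber is $\Gamma_j$, we have $\pi_{i\ast}\pi_j^\ast\O_{\mathcal{X}_j}(-d')=\Phi_{ji}(\O_{\mathcal{X}_j}(-d'))=\O_{\mathcal{X}_i}(-d')$. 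Since $\operatorname{D}_{Cox}(X)$ is a full subcategory of $\dbcoh{\widetilde{\mathcal{X}}}$, its Hom is computed on $\widetilde{\mathcal{X}}$. Two points need checking here: that the Fourier–Mukai kernel of the graph agrees with $\pi_{i\ast}\pi_j^\ast$ (possibly requiring base change along the fibre product of the graph), and that $\widetilde{\mathcal{X}}$ may be chosen to dominate all the graphs. This compatibility is what I expect to be the main obstacle. The independence of the choice of $\widetilde{\mathcal{X}}$ \cite[Corollary 3.9]{BBB+} should let me pick a common toric resolution of all the birational maps.

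The second isomorphism is standard on the smooth DM stack $\mathcal{X}_i=[U/S]$. Line bundles are invertible, so
\[
\Rhom(\O(-d),\O(-d'))\cong R\Gamma(\mathcal{X}_i,\O(-d)^\vee\otimes\O(-d'))=R\Gamma(\mathcal{X}_i,\O(d-d')),
\]
using $\O(a)\otimes\O(b)=\O(a+b)$ for the $S$-characters, and $\mathcal{H}om$ being exact on locally free sheaves. This yields $H^\bullet(\mathcal{X}_i,\O_{\mathcal{X}_i}(d-d'))$.
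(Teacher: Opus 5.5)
Your proposal is correct and is the argument the paper compresses into ``using the $\Theta$-Transform Lemma and adjunction''. The steps are: adjunction $\pi_i^\ast\dashv\pi_{i\ast}$ on $\widetilde{\mathcal{X}}$, with $i$ the chamber of the source $d$ (the right reading); the $\Theta$-Transform Lemma applied to $-d'$; and the standard identification $\Rhom(\O(-d),\O(-d'))\cong R\Gamma(\O(d-d'))$ for line bundles on the smooth DM stack $\mathcal{X}_i$. The compatibility you flag is not a real obstacle, since the $\pi_i$ are toric morphisms extending the identity on the torus, so $\widetilde{\mathcal{X}}$ dominates every graph, and $\pi_{i\ast}\pi_j^\ast$ computes $\Phi_{ji}$ independently of the dominating stack because $Rq_\ast\O=\O$ for proper birational toric morphisms of smooth toric DM stacks.
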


Ballard et al. prove the following stacky version of Demazure vanishing, allowing us to deduce that higher cohomologies vanish.
\begin{theorem}[Theorem 2.11 in \cite{BBB+}]
    \label{Thm:StackyDemazure}
    If $\mathcal{X}=\mathcal{X}_{\Sigma,\beta}$ is a smooth toric DM stack, $X_\Sigma$ is semi-projective, and $D$ is nef $\Q$-Cartier on $X_\Sigma$, then $H^p(\O_{\mathcal{X}}(\beta^\ast\lfloor D\rfloor)=0$ for all $p>0$, where $\lfloor \sum r_\rho D_\rho\rfloor=\sum \lfloor r_\rho\rfloor D_\rho$.
\end{theorem}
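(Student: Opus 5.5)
The plan is to reduce the statement to the classical proof of Demazure vanishing via the Cox construction. The key observation is that the rounding $\lfloor\,\cdot\,\rfloor$ becomes invisible once one looks at individual graded pieces.

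\textbf{Step 1: cohomology as local cohomology of the Cox ring.} Write $\mathcal{X}=[U_\Sigma/S]$, where $U_\Sigma\subseteq \A^{\Sigma(1)}$ is the Cox open set and $S$ is the stacky group. Cohomology of $\O_{\mathcal{X}}(E)$ is then the $S$-invariant part of $H^p(U_\Sigma,\O)$ in the degree of $E$. Since $U_\Sigma$ is the complement of a coordinate subspace arrangement, $H^p(U_\Sigma,\O)$ is $\Z^{\Sigma(1)}$-graded. Its piece in multidegree $a\in\Z^{\Sigma(1)}$ is a reduced (co)homology group of the subcomplex of $\Sigma$ spanned by the rays $\{\rho : a_\rho<0\}$, by the usual Hochster/Čech computation. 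The multidegrees $a$ lying over the class of $E=\beta^\ast\lfloor D\rfloor=\sum\lfloor r_\rho\rfloor D_\rho$ are exactly
\[
a_\rho=\langle m,b_\rho u_\rho\rangle+\lfloor r_\rho\rfloor \quad\text{for } m\in M,
\]
up to the torsion bookkeeping built into the stacky fan. So for each $m\in M$ I need the vanishing of the reduced cohomology, in degree $p-1$, of the subcomplex
\[
\Sigma_m:=\{\text{cones whose rays } \rho \text{ satisfy } \langle m,b_\rho u_\rho\rangle+\lfloor r_\rho\rfloor<0\}.
\]

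\textbf{Step 2: removing the floor.} The quantity $\langle m,b_\rho u_\rho\rangle$ is an integer, so
\[
\langle m,b_\rho u_\rho\rangle+\lfloor r_\rho\rfloor<0 \iff \langle m,b_\rho u_\rho\rangle+r_\rho<0 .
\]
Hence $\Sigma_m$ coincides with the analogous subcomplex built from the $\Q$-divisor $\sum r_\rho D_\rho$ itself. Geometrically, $\Sigma_m$ is homotopy equivalent to the open subset
\[
V_m:=\{u\in|\Sigma| : \langle m,u\rangle<-\psi_D(u)\}
\]
of $|\Sigma|$. Here $\psi_D$ is the support function of $D$, which is linear on each cone because $D$ is $\Q$-Cartier, and takes the value $r_\rho/b_\rho$ on $u_\rho$, up to the sign convention. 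The identification $\Sigma_m\simeq V_m$ goes by linearity on cones: a point of a cone lies in $V_m$ if and only if it is a positive combination in which some ray of the cone lies in $V_m$.

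\textbf{Step 3: convexity.} Semi-projectivity gives that $|\Sigma|$ is a full-dimensional convex cone. Nefness of $D$ makes $\psi_D$ convex (respectively concave, with the sign convention) on $|\Sigma|$. Therefore the complement $|\Sigma|\setminus V_m=\{u : \langle m,u\rangle\ge-\psi_D(u)\}$ is a convex cone. The relevant cohomology is that of the pair $(|\Sigma|,V_m)$, equivalently local cohomology of $|\Sigma|$ supported on a closed convex cone. This vanishes in positive degrees because both $|\Sigma|$ and the closed convex subcone are contractible, or, when $V_m$ is empty, by the contractibility of $|\Sigma|$. Running the degree shift carefully then yields $H^p=0$ for all $p>0$.

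I expect the main obstacle to be Step 1 in the stacky setting. One must check that taking $S$-invariants picks out exactly the multidegrees parametrised by $m\in M$ as above, including the contribution of the torsion in $\operatorname{Cl}(\mathcal{X})$ and of the $b_\rho$. I would handle this by writing $S$-invariance as a condition on the character $a\in\Z^{\Sigma(1)}$ via the exact sequence \eqref{eqn:Coxseqpre}. Once Steps 1 and 2 are in place, Step 3 is the standard convexity argument from classical Demazure vanishing.
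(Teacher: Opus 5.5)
The paper does not prove this statement; it is quoted from \cite{BBB+}, so your argument has to stand on its own. Steps 1--2 are the right framework. The floor-removal observation is exactly the point, and the torsion needs no extra care, since the $[E]$-graded piece is just the sum over $a\in e+\beta^*M$. The genuine gap is Step 3, which is where nefness and semi-projectivity must enter, and it is wrong as written.

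With your normalisation $V_m=\{u:\langle m,u\rangle+\psi_D(u)<0\}$, nefness makes $\psi_D$ convex in the ordinary sense. For example, on $\mathbb{P}^1$ the divisor $aD_1+bD_2$ is nef iff $a+b\ge 0$ iff $\psi_D(t)=\max(at,-bt)$. So it is $V_m$, a strict sublevel set of the convex function $\langle m,\cdot\rangle+\psi_D$, that is convex. Its complement $Z_m$ generally is not: take $\mathbb{P}^2$, $D=D_0$ with $u_0=-e_1-e_2$, and $m=(1,1)$; then $V_m$ is the open negative quadrant and $Z_m=\{u_1\ge 0\}\cup\{u_2\ge 0\}$.

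Moreover, the principle you invoke is false. Local cohomology supported on a closed convex cone need not vanish in positive degree: $H^n_{\{0\}}(\R^n)=k$. Concretely, for $D=-2D_1$ on $\mathbb{P}^1$ and $m=1$, $Z_1=\{0\}$ is convex, yet $H^1(\mathbb{P}^1,\O(-2))_1=H^1_{\{0\}}(\R)=k$. So your Step 3 never actually uses nefness.

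The correct finish is as follows. For contractible $|\Sigma|$ one has $H^p(|\Sigma|,V_m)\cong\tilde H^{p-1}(V_m)$. The set $V_m$ is convex and omits $0$. If it is nonempty it is contractible and all these groups vanish. If it is empty, you get $H^p(|\Sigma|)=0$ for $p>0$ because $|\Sigma|$ is convex. Separately, the ``if and only if'' in Step 2 is false. The equivalence $\Sigma_m\simeq V_m$ instead comes from shrinking the coefficients of rays outside $V_m$, which can only decrease $\langle m,\cdot\rangle+\psi_D$.

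There is also an error in your handling of the $b_\rho$. You take $E=\sum\lfloor r_\rho\rfloor\mathcal{D}_\rho$ and then declare the function with $\psi_D(u_\rho)=r_\rho/b_\rho$ to be the support function of $D$. It is not: the support function of $D=\sum r_\rho D_\rho$ has value $\pm r_\rho$ at $u_\rho$, and the $\Q$-divisor your function belongs to need not be nef.

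With your reading the statement is in fact false. The stacky fan $\beta(e_1)=2$, $\beta(e_2)=-1$ on the fan of $\mathbb{P}^1$ gives $\mathcal{X}=\mathbb{P}(1,2)$ with $[\mathcal{D}_1]=1$ and $[\mathcal{D}_2]=2$. The divisor $D=3D_1-3D_2=\operatorname{div}(\chi^3)$ is nef, but $3\mathcal{D}_1-3\mathcal{D}_2$ has degree $-3$, and $H^1(\mathbb{P}(1,2),\O(-3))=k$.

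Since $\pi^*D_\rho=b_\rho\mathcal{D}_\rho$ for the coarse moduli map, the divisor in the statement must be read with coefficients scaled by $b_\rho$. That is, it is $\sum b_\rho\lfloor r_\rho\rfloor\mathcal{D}_\rho$, or $\sum\lfloor b_\rho r_\rho\rfloor\mathcal{D}_\rho$. By integrality, the condition $\langle m,b_\rho u_\rho\rangle+e_\rho<0$ is then equivalent to $\langle m,u_\rho\rangle+r_\rho<0$. This condition is governed by the genuine support function of $D$. For the Cox stacks used in this paper ($b_\rho=1$) this second issue disappears, but Step 3 must be repaired in any case.
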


Finally, using the combinatorial nature of our setup, we establish the following. 

\begin{corollary}[Corollary 4.25 in \cite{BBB+}]
    \label{Cor:HomBT}
    Let $-d,-d'\in \Theta$. For any $\theta,\theta'\in M_\R$ such that $d=d(\theta), d'=d(\theta')$, we have\[
    \Rhom(\O_{Cox}(-d),\O_{Cox}(-d'))=k\langle P_d\cap (M-\theta')\rangle=k\langle Q_{d-d'}\cap M\rangle,
    \]
    concentrated in degree 0. Here, $P_d=\{m\in M_\R\vert \langle m,u_\rho\rangle \geq \lfloor -\langle \theta,u_\rho\rangle\rfloor \quad \forall \rho\in \Sigma(1)\}$ and $Q_{d-d'}=\{m\in M_\R\mid \langle m,u_\rho\rangle\geq-\lceil\langle\theta,u_\rho\rangle\rceil+\lceil\langle\theta',u_\rho\rangle\rceil\quad \forall\rho\in\Sigma(1)\}$.
\end{corollary}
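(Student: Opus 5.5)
The plan is to reduce everything to a cohomology computation on a single toric DM stack. I will use Corollary \ref{Cor:HomDCox} for the reduction, evaluate degree zero through the $M$-grading of the Cox ring, and kill higher cohomology with stacky Demazure vanishing (Theorem \ref{Thm:StackyDemazure}).

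\textbf{Step 1: set up the divisor.} Let $\Gamma_i$ be a chamber of $\Sigma_{GKZ}$ containing the image of $d$. By Corollary \ref{Cor:HomDCox},
\[
\Rhom(\O_{Cox}(-d),\O_{Cox}(-d'))\cong H^\bullet(\mathcal{X}_i,\O_{\mathcal{X}_i}(d-d')).
\]
Write $\theta_\rho=\langle\theta,u_\rho\rangle$ and $\theta'_\rho=\langle\theta',u_\rho\rangle$. Since $\lfloor-x\rfloor=-\lceil x\rceil$, we have
\[
d=\sum_\rho\lceil\theta_\rho\rceil D_\rho,\qquad d-d'=\sum_\rho\bigl(\lceil\theta_\rho\rceil-\lceil\theta'_\rho\rceil\bigr)D_\rho .
\]
Because $\lceil\theta_\rho\rceil$ is an integer, this equals $\lfloor D\rfloor$ for the $\R$-divisor
\[
D=\sum_\rho\bigl(\lceil\theta_\rho\rceil-\theta'_\rho\bigr)D_\rho .
\]
The divisor $D$ differs from $d$ by the principal real divisor $\operatorname{div}(-\theta')$. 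Hence its class in $\operatorname{Cl}(X)_\R$ equals that of $d$, which lies in $\Gamma_i$, so $D$ is nef on $X_i$.

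\textbf{Step 2: vanishing in positive degree.} Apply Theorem \ref{Thm:StackyDemazure} on $\mathcal{X}_i$. The variety $X_i$ is semi-projective, being a GIT quotient of affine space attached to a chamber, and $D$ is nef $\Q$-Cartier, so $H^p(\O_{\mathcal{X}_i}(d-d'))=0$ for $p>0$. The inputs are real rather than rational, but since only the floor matters we can perturb $\theta'$ to a rational point without changing any $\lceil\theta'_\rho\rceil$.

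\textbf{Step 3: compute $H^0$.} On the Cox stack, $H^0(\mathcal{X}_i,\O(E))$ for a torus-invariant divisor $E=\sum a_\rho D_\rho$ is the $[E]$-graded part of the Cox ring. Via the sequence \eqref{eqn:Coxseqpre} it has basis the $m\in M$ with $\langle m,u_\rho\rangle+a_\rho\ge0$ for all $\rho$. With $a_\rho=\lceil\theta_\rho\rceil-\lceil\theta'_\rho\rceil$ this is exactly $Q_{d-d'}\cap M$.

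\textbf{Step 4: the second description.} The map $m\mapsto p=m-\theta'$ is a bijection $M\to M-\theta'$. For $m\in M$, the condition $\langle p,u_\rho\rangle\ge-\lceil\theta_\rho\rceil$, i.e. $\langle m,u_\rho\rangle\ge-\lceil\theta_\rho\rceil+\theta'_\rho$, is equivalent to $\langle m,u_\rho\rangle\ge-\lceil\theta_\rho\rceil+\lceil\theta'_\rho\rceil$, since the left side is an integer. This identifies $P_d\cap(M-\theta')$ with $Q_{d-d'}\cap M$.

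\textbf{Main obstacle.} The delicate point is Step 3 when $\mathcal{X}_i$ does not use all rays of $\Sigma(1)$. In that case $\operatorname{Cl}(\mathcal{X}_i)$ is a quotient of $\operatorname{Cl}(X)$, and the coefficients on the dropped rays must be handled. The first approach is to pull back to the common refinement $\widetilde{\mathcal{X}}$, where all rays are present, using full faithfulness of $\pi_i^\ast$ together with $R\pi_{i\ast}\O=\O$. The section count is then done there, checking that the inequalities for dropped rays are implied by nefness of $D$ in the chamber $\Gamma_i$. That last implication is the key verification.
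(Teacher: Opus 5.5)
The paper gives no proof of this statement; it is quoted from \cite{BBB+}. Your Steps 1--4 are a correct proof whenever the chamber $\Gamma_i$ containing $[d]$ corresponds to a fan with $\Sigma_i(1)=\Sigma(1)$. It is also the natural route to the cited result, and it explains the first description in the statement. Your real divisor $D$ satisfies $P_D=P_d+\theta'$, so the lattice-point count $M\cap P_D$ is literally $P_d\cap(M-\theta')$. The rational perturbation in Step 2 is harmless, because the chamber of constancy of $\theta'$ is a non-empty set cut out by rational inequalities.

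On the obstacle you flag: it never arises where the paper uses this corollary, namely Proposition \ref{Prop:BTisConic}, with $|\Sigma|=\sigma$ and $\Sigma(1)=\sigma(1)$. There, for every real $D$ the polyhedron $P_D$ is non-empty with recession cone $\sigma^\vee$. Since each $u_\rho$ spans an extremal ray of $\sigma$, $\sigma^\vee$ has a facet normal to each $u_\rho$. Hence every inequality of $P_D$ is facet-defining, and every chamber of $\Sigma_{GKZ}$ uses all of $\sigma(1)$.

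For a general semi-projective $X$, passing to $\widetilde{\mathcal{X}}$ does not help. On the extra rays, $\pi_i^\ast\O_{\mathcal{X}_i}(E)$ has coefficients dictated by the support function of $E$ on $\Sigma_i$, not by $d-d'$. So full faithfulness just hands back the same space of sections, and the problem is unchanged.

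The check is instead immediate with your real divisor. Write $D=\sum_\rho a_\rho D_\rho$. For $m\in M$, the condition $\langle m,u_\rho\rangle\ge-\lfloor a_\rho\rfloor$ is equivalent to $\langle m,u_\rho\rangle\ge-a_\rho$. Hence $H^0(\mathcal{X}_i,\O(d-d'))$ has basis $M\cap P^{(i)}_D$, where $P^{(i)}_D$ imposes only the inequalities for $\rho\in\Sigma_i(1)$. Membership $[D]=[d]\in\Gamma_i$ means two things:
\begin{itemize}
\item the part of $D$ supported on $\Sigma_i(1)$ is nef on $X_i$, which is what Theorem \ref{Thm:StackyDemazure} needs in Step 2;
\item the remaining inequalities are redundant, i.e.\ $P^{(i)}_D=P_D$.
\end{itemize}
Two points matter here. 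The argument must use $D$, not the integral divisor $d-d'$, whose class is usually not in $\Gamma_i$. And the redundancy comes from membership in the GKZ chamber, not from nefness on $X_i$ alone.
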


One of the main results of the paper \cite{BBB+} now states that the Bondal-Thomsen collection actually gives a tilting object for $\operatorname{D}_{Cox}(X)$.
\begin{theorem}[Theorem A in \cite{BBB+}]
    \label{Thm:BTgivesTilting}
    Let $X$ be a semiprojective toric variety. The direct sum of the line bundles in $\Theta$ is a tilting object for $\operatorname{D}_{Cox}(X)$. If $X$ is projective, then $\Theta$ forms a full strong exceptional collection for $\operatorname{D}_{Cox}(X)$ under a natural ordering.
\end{theorem}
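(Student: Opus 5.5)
The plan is to verify separately the two defining properties of a tilting object, Ext-vanishing and generation, for $\mathcal{T}:=\bigoplus_{-d\in\Theta}\O_{Cox}(-d)$. The exceptional-collection statement in the projective case then follows by choosing a suitable order.

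\emph{Ext-vanishing.} This step is essentially formal from the results already recalled. For $-d,-d'\in\Theta$, Corollary \ref{Cor:HomDCox} reduces $\Rhom(\O_{Cox}(-d),\O_{Cox}(-d'))$ to $H^\bullet(\mathcal{X}_i,\O_{\mathcal{X}_i}(d-d'))$, where $\Gamma_i$ is the chamber containing $d$. Writing $d=d(\theta)$, $d'=d(\theta')$, the divisor $d-d'$ has coefficients $-\lceil\langle\theta,u_\rho\rangle\rceil+\lceil\langle\theta',u_\rho\rangle\rceil$. I would realise it as $\beta^\ast\lfloor D\rfloor$ for a $\Q$-divisor $D$ of the form $\sum_\rho(\langle\theta'-\theta,u_\rho\rangle+c_\rho)D_\rho$ with $c_\rho\in[0,1)$ chosen so that the floor recovers these coefficients. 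Up to the linear function $\theta'-\theta$, such a $D$ is equivalent to a point of the closed zonotope translated by $d$. The key claim is that, because $d$ lies in $\Gamma_i$ and the zonotope perturbation is small relative to the chamber structure, $D$ can be chosen nef on $X_i$. Theorem \ref{Thm:StackyDemazure} then kills higher cohomology, and semi-projectivity guarantees the hypotheses. Finally, $H^0$ is computed as lattice points of a polyhedron, which gives the explicit description in Corollary \ref{Cor:HomBT}.

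\emph{Generation.} Since $\operatorname{D}_{Cox}(X)$ is generated by the subcategories $\pi_i^\ast\dbcoh{\mathcal{X}_i}$, and each $\dbcoh{\mathcal{X}_i}$ is generated by line bundles $\O_{\mathcal{X}_i}(D)$ for $D\in\operatorname{Cl}(\mathcal{X}_i)$, it suffices to show that every $\pi_i^\ast\O_{\mathcal{X}_i}(-D)$ lies in $\langle\Theta\rangle$. I would argue by induction on a height function measuring how far the coefficient vector of $-D$ is from $(-1,0]^{\Sigma(1)}$, for instance $\sum_\rho\max(0,a_\rho,-a_\rho-1)$ after optimising over the linear-equivalence class.

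If $-D$ is already in $\Theta$ and lies in chamber $\Gamma_i$, there is nothing to prove. If $-D\in\Theta$ but lies in another chamber $\Gamma_j$, the $\Theta$-Transform Lemma \ref{Lem:ThetaTrans}, together with the identification $\pi_i^\ast=\pi_j^\ast\Phi_{ij}$ on these objects, shows $\pi_i^\ast\O_{\mathcal{X}_i}(-D)=\O_{Cox}(-D)$.

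Otherwise, some coefficient is out of range. In that case I would use an exact Koszul-type (cellular) complex on $\mathcal{X}_i$ coming from the hyperplane arrangement $\{\langle m,u_\rho\rangle\in\Z\}$ on $M_\R$. This is the stacky analogue of the complexes $K_\Delta^\bullet$ of Definition \ref{Def:CplxChamber}, built from the sections $x_\rho$ of $\O(D_\rho)$ for the rays $\rho$ in a subset $J$ whose irrelevant locus is empty on $\mathcal{X}_i$. This complex expresses $\O(-D)$ in terms of bundles $\O(-D+\sum_{\rho\in J'}D_\rho)$ with $J'\subsetneq J$ nonempty, all of strictly smaller height, and induction closes the argument.

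The main obstacle is this generation step: one has to choose $J$ so that the Koszul complex is exact on $\mathcal{X}_i$ and every term strictly decreases height simultaneously. The first attempt I would make is to take $J$ to be a set of rays not forming a cone of $\Sigma_i$, so that $(x_\rho)_{\rho\in J}$ has no common zero on $U_{\Sigma_i}$. Failing that, I would take all rays and use the Bayer--Sturmfels cellular resolution from the periodic arrangement.

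\emph{Projective case.} When $X$ is projective, every $\Rhom$ above is finite-dimensional. I would order $\Theta$ by a linear functional on $\operatorname{Cl}(X)_\R$ that is positive on the effective cone. Corollary \ref{Cor:HomBT} shows that $\Hom(\O_{Cox}(-d),\O_{Cox}(-d'))\neq0$ forces $d-d'$ to be effective. In particular $\End(\O_{Cox}(-d))=k$, and there are no morphisms in one direction along the order, so $\Theta$ is exceptional. Strongness comes from the Ext-vanishing step and fullness from the generation step.
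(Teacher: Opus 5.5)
The paper gives no proof of this statement; it is quoted as Theorem~A of \cite{BBB+}, so your outline is judged on its own. Your Ext-vanishing step and your projective ordering argument are sound and use the same ingredients the paper recalls (Theorem~\ref{Thm:StackyDemazure}, Corollary~\ref{Cor:HomBT}). Two small corrections there:
\begin{itemize}
\item The coefficients of $d-d'$ are $\lceil\langle\theta,u_\rho\rangle\rceil-\lceil\langle\theta',u_\rho\rangle\rceil$; you have the sign reversed.
\item Nefness needs no ``small perturbation'' argument. Take $c_\rho=\lceil\langle\theta,u_\rho\rangle\rceil-\langle\theta,u_\rho\rangle$. This gives $D=d-\sum_\rho\langle\theta',u_\rho\rangle D_\rho$, which is $\R$-linearly equivalent to $d$ and satisfies $\lfloor D\rfloor=d-d'$. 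So $D$ is nef on $X_i$ simply because $d\in\Gamma_i$.
\end{itemize}

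\textbf{The genuine gap is in the generation step, where you pass from $\mathcal X_i$ to the Cox category.} For $-D\in\Theta$ with $D$ in a chamber $\Gamma_j\neq\Gamma_i$, you assert $\pi_i^\ast\O_{\mathcal X_i}(-D)=\O_{Cox}(-D)=\pi_j^\ast\O_{\mathcal X_j}(-D)$ via the identity $\pi_i^\ast=\pi_j^\ast\Phi_{ij}$.
\begin{itemize}
\item $\Phi_{ij}$ is the push--pull $\pi_{j\ast}\pi_i^\ast$ through $\widetilde{\mathcal X}$; this is how it enters Corollary~\ref{Cor:HomDCox}. Hence $\pi_j^\ast\Phi_{ij}=\pi_j^\ast\pi_{j\ast}\pi_i^\ast\neq\pi_i^\ast$.
\item Lemma~\ref{Lem:ThetaTrans}, applied from the chamber that actually contains $D$, only gives $\pi_{i\ast}\O_{Cox}(-D)=\O_{\mathcal X_i}(-D)$. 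It does not say that $\O_{Cox}(-D)$ lies in the image of $\pi_i^\ast$.
\end{itemize}

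The assertion is in fact false. Since $\pi_i^\ast$ is fully faithful, it would transport your Ext-vanishing to $\mathcal X_i$, making all of $\Theta$ strong on every $\mathcal X_i$. The paper's running example (Example~\ref{Exa:710FMS}) already rules this out. There $\mathcal X_{\Sigma_1}$ is the resolved conifold, and $\O(-D_3)$, $\O(-D_4)$ have degrees $-1$ and $+1$ on the exceptional curve. Then $\Ext^1(\O(-D_4),\O(-D_3))\supseteq H^1(\mathbb P^1,\O(-2))\neq0$. This failure is the very reason for passing to $\operatorname{D}_{Cox}(X)$.

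Consequently, even granting your induction, you only get that $\pi_i^\ast\dbcoh{\mathcal X_i}$ is generated by the objects $\pi_i^\ast\O_{\mathcal X_i}(-d)$ with $-d\in\Theta$. For $d\notin\Gamma_i$ these are not among the $\O_{Cox}(-d)$.

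You also cannot avoid them by using only $\Theta\cap\Gamma_i$. That subcollection is partial tilting on $\mathcal X_i$ by your first step. If it generated $\dbcoh{\mathcal X_i}$, Propositions~\ref{Prop:3.3SVDBtoricII} and~\ref{Prop:BTisConic} would give an NCCR by a sum of conic modules for every Gorenstein cone. That contradicts Theorem~\ref{Thm:Not always exist incomplete NCCR}.

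So the missing idea is an argument that $\pi_i^\ast\O_{\mathcal X_i}(-d)$ lies in the thick closure of $\{\O_{Cox}(-d')\}$ when $d\notin\Gamma_i$. This is precisely the new content of Theorem~A, and your proposal has no argument for it.

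Separately, the single-stack statement your Koszul induction aims at, $\dbcoh{\mathcal X_i}=\langle\O_{\mathcal X_i}(-d):-d\in\Theta\rangle$, is Bondal's generation claim. As far as I recall, it was established by Hanlon--Hicks--Lazarev via a cellular resolution of the diagonal built from the stratification of $M_\R/M$. As written, your height induction is not shown to terminate. Nothing guarantees a non-cone set $J$ of rays whose coefficients can all be raised without leaving the range; without one, some Koszul terms increase the height. That result should be imported rather than sketched, but even with it in hand, the gap above remains.
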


 Consider the reflexive sheaf $\bigoplus_{-d\in \Theta}\O_X(-d) =:\mathcal{T}$ and its (underived) endomorphism algebra $\Lambda=\Rhom^0_X(\mathcal{T},\mathcal{T})$. Our intuition tells us that this $\Lambda$ may be an NC(C)R for $X$.
 \begin{theorem}[Theorem 1.5 in \cite{BBB+}]
 \label{Thm:NCRviaBT}
 The algebra $\Lambda$ is an NCR for $X$: we have $\gldim(\Lambda)=\dim X$ and the functor $\operatorname{Perf}(X)\rightarrow \dbmod{\Lambda}$ given by $\mathcal{E}\mapsto \Rhom_X(\mathcal{T},\mathcal{E})$ is fully faithful. Furthermore, the algebra $\Lambda$ is uniform among any semiprojective toric variety $X$ whose fan has the same rays as $X$.
 \end{theorem}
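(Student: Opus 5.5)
The last statement is Theorem \ref{Thm:NCRviaBT}, quoted from \cite{BBB+}: $\Lambda=\End_X(\mathcal{T})$ with $\mathcal{T}=\bigoplus_{-d\in\Theta}\O_X(-d)$ is an NCR with $\gldim\Lambda=\dim X$, $\mathcal{E}\mapsto\Rhom_X(\mathcal{T},\mathcal{E})$ is fully faithful on $\operatorname{Perf}(X)$, and $\Lambda$ depends only on the rays. I will pass through the Cox category and use the tilting statement of Theorem \ref{Thm:BTgivesTilting}.

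\emph{Step 1: identify $\Lambda$ with endomorphisms in $\operatorname{D}_{Cox}(X)$.} Using $\O_X(-d)=\pi_\ast\O_{\mathcal{X}}(-d)$ from Proposition \ref{Prop:CoarseModuli}, reflexivity, and Corollary \ref{Cor:HomBT}, both $\Hom_X(\O_X(-d),\O_X(-d'))$ and $\Hom(\O_{Cox}(-d),\O_{Cox}(-d'))$ are spanned by the monomials in $Q_{d-d'}\cap M$. The $Q$-description depends only on $\Sigma(1)$, so I check that composition agrees (it is multiplication of monomials in both) and conclude $\Lambda\cong\End_{\operatorname{D}_{Cox}}(\bigoplus\O_{Cox}(-d))$. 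This also gives the uniformity claim.

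\emph{Step 2: finite global dimension.} By Theorem \ref{Thm:BTgivesTilting}, $\operatorname{D}_{Cox}(X)\simeq\dbmod{\Lambda}$. The Cox category is glued from the $\dbcoh{\mathcal{X}_i}$, which are smooth DM stacks, and sits inside $\dbcoh{\widetilde{\mathcal{X}}}$ as the subcategory generated by the fully faithful images $\pi_i^\ast$. Hence it is a full triangulated subcategory of a smooth proper-over-affine category that is generated by a tilting object. This makes $\Lambda$ smooth, so $\gldim\Lambda<\infty$. To get the exact value $\dim X$, I work graded by $M$: the graded simples $S_{-d}$ have minimal resolutions by Koszul-type complexes attached to the zonotope or chamber structure, which are exactly the complexes $K^\bullet$ from Definition \ref{Def:CplxChamber} after applying $\Hom(\mathcal{T},-)$. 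Their lengths are at most $\dim X$ and attain $\dim X$, as in the FMS complete-sum argument. Since $\Lambda$ is reflexive and is the endomorphism algebra of a reflexive module, it is an NCR.

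\emph{Step 3: full faithfulness on $\operatorname{Perf}(X)$.} Since $\mathcal{T}$ contains $\O_X$, I factor the functor through pullback to some $\mathcal{X}_i$ and then into $\operatorname{D}_{Cox}$. Pullback of perfect complexes along the coarse map is fully faithful because $\pi_\ast\O=\O$, and $\pi_i^\ast$ is fully faithful. Composing with the equivalence $\Rhom(\bigoplus\O_{Cox}(-d),-)$ and using the projection formula identifies the result with $\Rhom_X(\mathcal{T},\mathcal{E})$.

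The main obstacle is Step 3. The compatibility $\Rhom_{Cox}(\O_{Cox}(-d),\pi_i^\ast\mathcal{E})\cong\Rhom_X(\O_X(-d),\mathcal{E})$ requires the $\Theta$-transform Lemma \ref{Lem:ThetaTrans} together with the vanishing of higher direct images from Theorem \ref{Thm:StackyDemazure}, and these must be checked for every chamber, not just one.
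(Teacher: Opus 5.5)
The paper gives no proof of this statement; it is quoted from \cite{BBB+}. I have therefore assessed your outline on its own terms. Step 1 is fine; to see that compositions agree, restrict to the dense torus, where every Hom becomes multiplication by a monomial. In the affine case Step 2 is exactly the Faber--Muller--Smith theorem that the complete sum of conic modules has endomorphism ring of global dimension $\dim R$. For non-affine $X$, or rays that are not extremal in $|\Sigma|$, you must rebuild those cellular complexes from all hyperplanes $\langle x,u_\rho\rangle\in\Z$, $\rho\in\Sigma(1)$. Drop the smoothness sentence, though. A tilting object only gives $\operatorname{D}_{Cox}(X)\simeq\operatorname{Perf}\Lambda$, and being a full subcategory of $\dbcoh{\widetilde{\mathcal{X}}}$ does not put the simple $\Lambda$-modules into $\operatorname{Perf}\Lambda$. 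The identification with $\dbmod{\Lambda}$ comes after finite global dimension, not before.

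The genuine gap is Step 3, which uses the wrong adjoint. With $q\colon\widetilde{\mathcal{X}}\to X$ and pullback, the projection formula gives $\Rhom(\O_{Cox}(-d),q^\ast\mathcal{E})\cong R\Gamma(X,Rq_\ast(\O_{Cox}(-d)^\vee)\otimes^{L}\mathcal{E})$. On the other hand, $\Rhom_X(\O_X(-d),\mathcal{E})\cong R\Gamma(X,\Rhom_X(\O_X(-d),\O_X)\otimes^{L}\mathcal{E})$. These differ as soon as some $\O_X(-d)$ has higher $\mathcal{E}xt$ into $\O_X$, which happens for non-Gorenstein $X$.

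Here is a concrete case. Take $R=k[x,y]^{\mu_3}$ with weights $(1,1)$. The cone is simplicial, so $\mathcal{T}$ contains the module $M_2$ spanned by monomials of degree $\equiv 2 \bmod 3$. Its syzygy sequence is $0\to M_2^{\oplus 2}\to R^3\to M_2\to 0$, on the generators $x^2,xy,y^2$. Dualising, the induced map $R^3\to M_1^{\oplus 2}$ hits only a $3$-dimensional subspace of the $4$-dimensional degree-one part, so $\Ext^1_R(M_2,R)\neq 0$. Meanwhile $\Rhom_{[\A^2/\mu_3]}(\O(-d),p^\ast\O_X)=\Hom_R(M_2,R)$ is concentrated in degree $0$.

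So your route proves full faithfulness of $\mathcal{E}\mapsto\Rhom(\mathcal{T}_{Cox},q^\ast\mathcal{E})$. That functor agrees with the stated one only when the $\O_X(-d)$ have no higher $\mathcal{E}xt$ into $\O_X$, for example when $X$ is affine Gorenstein. Note also that the $\Theta$-transform lemma and Theorem \ref{Thm:StackyDemazure} control $Rq_\ast\O_{Cox}(-d)$, not the pushforwards of the duals that your version needs.

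The fix is to use the right adjoint $q^!$ of $Rq_\ast$. Choose a chamber $i$ whose fan refines that of $X$, so that $q=p_i\circ\pi_i$. Then Lemma \ref{Lem:ThetaTrans}, Proposition \ref{Prop:CoarseModuli} and Theorem \ref{Thm:StackyDemazure} give $Rq_\ast\O_{Cox}(-d)\cong\O_X(-d)$. Grothendieck duality then yields $\Rhom_X(\O_X(-d),\mathcal{E})\cong\Rhom(\O_{Cox}(-d),q^!\mathcal{E})$. The functor $q^!$ is fully faithful because $Rq_\ast q^!\mathcal{E}\cong\Rhom_X(Rq_\ast\O,\mathcal{E})\cong\mathcal{E}$.

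Since $q^!\mathcal{E}$ need not lie in $\operatorname{D}_{Cox}(X)$, replace it by its projection onto the localizing subcategory generated by $\mathcal{T}_{Cox}$. This changes neither $\Rhom(\mathcal{T}_{Cox},-)$ nor $Rq_\ast$. The reason is that $q^\ast\operatorname{Perf}(X)\subset\pi_i^\ast\dbcoh{\mathcal{X}_i}\subset\operatorname{D}_{Cox}(X)$, which forces $Rq_\ast$ to kill the right orthogonal. Full faithfulness of $\mathcal{E}\mapsto\Rhom_X(\mathcal{T},\mathcal{E})$ on $\operatorname{Perf}(X)$ then follows by adjunction.
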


In general, the category $\operatorname{D}_{Cox}(X)\cong \dbmod{\Lambda}$ gives a categorical resolution in the sense of Kuznetsov \cite{Kuz08}. Theorem \ref{Thm:NCRviaBT} provides the existence of NCRs for affine toric varieties, but unfortunately the property of crepancy rarely holds. This is not surprising: the object $\mathcal{T}$ is not generally a tilting bundle for $X$ itself, but instead for $\operatorname{D}_{Cox}(X)$, which in some sense will mostly be too big to reasonably be expected to give a crepant resolution. 
Nonetheless, one should not abandon the approach of constructing NCCRs via Bondal-Thomsen collections just yet. As the authors of \cite{BBB+} remark, for any $\theta\in M_\R$ the line bundle $\O_X(-d(\theta))$ recovers the conic module $A_\theta$ studied by Faber, Muller and Smith (and introduced in the previous section $\S$ \ref{sec:conic}). We will make precise what this means in the next section $\S$ \ref{sec:Link} (Proposition \ref{Prop:BTisConic}), but note here that the object $\mathcal{T}$ directly correspond to the complete sum of conic modules studied in \cite{FMS19}, and thus we know that the full Bondal-Thomsen collection gives an NCCR for affine toric varieties if and only if the underlying cone is simplicial.

In light of Example \ref{Exa:710FMS}, where a non-complete conic module provided an NCCR, a reasonable approach to explore is whether a subset of the Bondal-Thomsen collection exists that gives an NCCR. In spirit, we expect that NCCRs are in a categorical sense "minimal'': their derived category should embed inside the derived category of a non-crepant resolution (see \cite[$\S$ 3.3]{VdB23}) and be generated therein by a partial tilting object. We aim to construct such partial tilting objects via subets of the Bondal-Thomsen collection. 
In the case of Gorenstein affine toric varieties, this aligns with Theorem \ref{Thm:ParttiltingGorCone}. 

Consider a Gorenstein cone $\sigma$ with a simplicial subdivision such that $\Sigma$ with $\Sigma(1)=\sigma(1)$. Associate to it a toric DM stack $\mathcal{X}$. Fixing a chamber $\Gamma_i$ of the secondary fan $\Sigma_{GKZ}$ of $\Sigma$, let $\Theta_i\subset \Theta$ be the subset of the Bondal-Thomsen collection consisting of those $-d\in \Theta$ such that the image of $-d$ under the map $f_\sigma:\R^{\Sigma(1)}\rightarrow\operatorname{Cl}(\mathcal{X})_\R$ induced by \eqref{eqn:Coxseqpre} lies in the chamber $\Gamma_i$. As $\pi_i:\widetilde{\mathcal{X}}\rightarrow \mathcal{X}_i$ (where $\mathcal{X}_i$ is the toric DM stack associated to the chamber $\Gamma_i$) induces a fully faithful pullback $\pi_i^\ast:\dbcoh{\mathcal{X}_i}\rightarrow \operatorname{D}_{Cox}(X)$, we observe that $\mathcal{T}_i:=\bigoplus_{-d\in \Theta_i}\O_{\mathcal{X}_i}(-d)$ is a partial tilting object. Furthermore, the endomorphism algebra $\Lambda'=\End_{\dbcoh{\mathcal{X}}}(\mathcal{T}_i)$ is equal to $\End_{\operatorname{D}_{Cox}(X)}(\pi^\ast_i\mathcal{T}_i)$, a direct summand of $\Lambda=\End_{\operatorname{D}_{Cox}(X)}(\mathcal{T})$. It is known that $\Lambda$ has finite global dimension, so the pertinent question is when the same is true for $\Lambda'$.

\begin{corollary}
    \label{Cor:FinGlDimSubchamber}
    In the situation above, $\Lambda'$ is an NCCR of $R=k[\sigma^\vee\cap M]$ if and only if $\gldim\Lambda'<\infty$.
\end{corollary}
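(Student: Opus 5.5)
The plan is to deduce the corollary from Theorem \ref{Thm:ParttiltingGorCone}, once we check that its hypotheses hold for $\mathcal{T}_i$ on the stack $\mathcal{X}_i$. The forward implication is immediate from Definition \ref{Def:NCCR}, since an NCCR has finite global dimension by definition. So the content is the converse: assume $\gldim\Lambda'<\infty$ and show $\Lambda'$ is an NCCR of $R$.

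First I would fix the geometric set-up. The chamber $\Gamma_i$ of $\Sigma_{GKZ}$ corresponds to a simplicial fan $\Sigma_i$ with support $\sigma$. The question is whether $\Sigma_i$ is a regular triangulation of $P$, where $\sigma=\cone(P\times\{1\})$, as required in Theorem \ref{Thm:ParttiltingGorCone}. The maximal chambers of the secondary fan are exactly the regular triangulations of the point configuration $\{u_\rho\}_{\rho\in\sigma(1)}$. Since $\sigma$ is Gorenstein, all $u_\rho$ lie on the hyperplane $\langle m,-\rangle=1$, so these are regular triangulations of $P$ using only its vertices (or lattice points among the $u_\rho$). Hence $\mathcal{X}_i$ is the smooth toric DM stack of a regular triangulation, and the theorem's setting applies.

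Next I would check that $\mathcal{T}_i=\bigoplus_{-d\in\Theta_i}\O_{\mathcal{X}_i}(-d)$ is partial tilting on $\mathcal{X}_i$. By Corollary \ref{Cor:HomDCox}, for $-d,-d'\in\Theta_i$ the complex $\Rhom_{\mathcal{X}_i}(\O(-d),\O(-d'))$ agrees with $\Rhom$ in $\operatorname{D}_{Cox}(X)$ between $\O_{Cox}(-d)$ and $\O_{Cox}(-d')$. This uses that, by Definition \ref{OCox(-d)}, $\O_{Cox}(-d)=\pi_i^\ast\O_{\mathcal{X}_i}(-d)$ because $-d$ lies in $\Gamma_i$, together with full faithfulness of $\pi_i^\ast$. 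Corollary \ref{Cor:HomBT} then shows these complexes are concentrated in degree $0$, so all higher Ext groups vanish. It is also worth noting that $\Lambda'=\End_{\mathcal{X}_i}(\mathcal{T}_i)$ is a corner algebra $e\Lambda e$ of $\Lambda$, matching the identification stated before the corollary.

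Finally, Theorem \ref{Thm:ParttiltingGorCone} applied to $\mathcal{T}_i$ and the finite global dimension hypothesis gives that $\Lambda'$ is an NCCR of $R$. The main obstacle is the first step: matching the chambers of the GKZ fan of $\Sigma$ (which has no extra rays) with regular triangulations of $P$ without extra vertices. A secondary point is that some chambers may use fewer rays. Here I would use the convention in the set-up that $\Sigma(1)=\sigma(1)$, and argue that rays omitted by $\Sigma_i$ still sit inside the Cox construction via the stacky description, so the theorem applies verbatim. If needed, I would restrict to chambers whose triangulation uses all the $u_\rho$.
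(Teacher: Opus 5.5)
Your proposal is correct and follows the paper's proof. The forward direction holds by definition, and the converse is a direct application of Theorem \ref{Thm:ParttiltingGorCone}; the paper simply takes the partial tilting property of $\mathcal{T}_i$ and the identification of chambers with regular triangulations of $P$ from the set-up before the corollary, where you re-verify them. Your worry about chambers omitting rays is vacuous here, because each $u_\rho$ spans an extremal ray of $\sigma$, so every simplicial fan with support $\sigma$ must use all of them.
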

\begin{proof}
    If $\Lambda'$ is an NCCR, by definition its global dimension is finite. If the global dimension is finite, we are in a position to apply Theorem \ref{Thm:ParttiltingGorCone}.
\end{proof}

Since $\Lambda'$ is a direct summand of $\Lambda$ and thus a $\Lambda$-module. Similarly, the inclusion $\Lambda'\hookrightarrow \Lambda$ gives $\Lambda$ a $\Lambda'$-module structure. 
\begin{lemma}
    \label{Lem:GldimSummand}
    If $\operatorname{pd}_{\Lambda'}(\Lambda)<\infty$ then $\gldim\Lambda'<\infty$. 
\end{lemma}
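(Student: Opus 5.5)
The plan is to reduce the global dimension of $\Lambda'$ to that of $\Lambda$, using the fact that $\Lambda'$ is a corner ring of $\Lambda$. Write $\Lambda=\End(\mathcal{T})$ and let $e\in\Lambda$ be the idempotent projecting onto the summand $\pi_i^\ast\mathcal{T}_i$. Then $\Lambda'=e\Lambda e$. The $\Lambda'$-module structure on $\Lambda$ is given by $\Lambda=\Lambda e\oplus\Lambda(1-e)$, viewed as right modules over $e\Lambda e$, or dually via $e\Lambda$ on the left.

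Since $\gldim\Lambda=\dim X<\infty$ (Theorem \ref{Thm:NCRviaBT}), every $\Lambda$-module $N$ has a finite projective resolution
\[
0\rightarrow Q_n\rightarrow\dots\rightarrow Q_0\rightarrow N\rightarrow 0
\]
by $\Lambda$-projectives, with $n\le\dim X$. Let $X'$ be any finitely generated $\Lambda'$-module. Induce it to $N:=X'\otimes_{\Lambda'}e\Lambda$, a $\Lambda$-module; $N$ is finitely generated because $e\Lambda$ is finitely generated over $\Lambda'$, as all modules here are finite over the noetherian ring $R$. Then restrict back to $\Lambda'$ by applying the exact functor $-\cdot e$, i.e. $\Hom_\Lambda(e\Lambda,-)$. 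We have
\[
N e\cong X'\otimes_{\Lambda'}e\Lambda e=X'.
\]
Applying $(-)e$ to the resolution of $N$ gives an exact sequence
\[
0\rightarrow Q_n e\rightarrow\dots\rightarrow Q_0e\rightarrow X'\rightarrow 0
\]
of $\Lambda'$-modules. Each $Q_je$ is a direct summand of a finite sum of copies of $\Lambda e$, and $\Lambda e$ is a direct summand of $\Lambda$ regarded as a $\Lambda'$-module.

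By hypothesis $\operatorname{pd}_{\Lambda'}\Lambda=:p<\infty$. Hence each $Q_je$ has projective dimension at most $p$ over $\Lambda'$, since direct summands and finite direct sums do not increase projective dimension. A standard dimension-shifting argument along the exact sequence above then gives
\[
\operatorname{pd}_{\Lambda'}X'\le n+p.
\]
This bound is uniform in $X'$, so $\gldim\Lambda'\le \dim X+p<\infty$.

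The main point needing care is the identification $Ne\cong X'$, which relies on $e\Lambda e=\Lambda'$, together with the exactness of $(-)e$. Both are formal: $(-)e\cong\Hom_\Lambda(e\Lambda,-)$ and $e\Lambda$ is projective over $\Lambda$. One should also fix left/right conventions consistently, so that "$\Lambda$ as a $\Lambda'$-module" refers to the side actually used, namely $\Lambda e$ as a right $\Lambda'$-module. If necessary, one argues symmetrically with left modules, noting that for noetherian algebras finite over $R$, left and right global dimensions agree.
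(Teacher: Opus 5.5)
Your argument is correct, but it takes a different route from the paper. The paper turns an arbitrary $\Lambda'$-module $M$ into a $\Lambda$-module through the projection $\Lambda\to\Lambda'$, $\lambda\mapsto e\lambda e$. It then applies Weibel's change-of-rings inequality $\operatorname{pd}_{\Lambda'}M\le\operatorname{pd}_{\Lambda}M+\operatorname{pd}_{\Lambda'}\Lambda$ to the inclusion $\Lambda'\hookrightarrow\Lambda$, which gives $\gldim\Lambda'\le\gldim\Lambda+\operatorname{pd}_{\Lambda'}\Lambda$ in one line.

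That route is shorter, but its key step is not justified as written. The map $\lambda\mapsto e\lambda e$ is multiplicative only when $e\Lambda(1-e)\Lambda e=0$, since $e\lambda\mu e-e\lambda e\mu e=e\lambda(1-e)\mu e$. This fails here as soon as a map between retained summands factors non-trivially through an omitted one, e.g.\ $R\to A_w\to R$ for an omitted conic module $A_w$. Moreover, the inclusion $e\Lambda e\subset\Lambda$ is not unital.

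Your induce-then-restrict argument is the correct corner-ring form of that inequality. It rests on $X'\cong (X'\otimes_{\Lambda'}e\Lambda)e$ and the exactness of $(-)e\cong\Hom_\Lambda(e\Lambda,-)$. Concretely, you prove $\operatorname{pd}_{e\Lambda e}(Ne)\le\operatorname{pd}_\Lambda N+\operatorname{pd}_{e\Lambda e}(\Lambda e)$ and observe that every $e\Lambda e$-module has the form $Ne$. This yields the same bound $\gldim\Lambda'\le\dim X+p$, now rigorously.

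Your version also makes explicit that the hypothesis has to be read as $\operatorname{pd}_{\Lambda'}(\Lambda e)<\infty$, or its left analogue with $e\Lambda$; the paper leaves this implicit. Your remark that left and right global dimensions agree for noetherian algebras finite over $R$ handles whichever side the hypothesis is stated on.
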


\begin{proof}
We start by noting that every $\Lambda'$-module $M$ can be considered as $\Lambda$-module. Indeed, consider the projection map $\pi:\Lambda\rightarrow\Lambda'$, which is a ring homomorphism (as $\Lambda'$ is a direct summand). Then we can define the $\Lambda$-module structure on $M$ via $\lambda\cdot m=\pi(\lambda)\cdot m$.
By \cite{WeibelBook}, Theorem 4.3.1 (General change of rings Theorem), if $f:R\rightarrow S$ is a map of rings and $A$ is an $S$-module, then as an $R$-module we have
\begin{equation}\label{eq:pdeqn}
   \operatorname{pd}_{R}(A)\le \operatorname{pd}_S(A) + \operatorname{pd}_R(S).
\end{equation}
So consider the inclusion map $f:\Lambda'\rightarrow \Lambda$. Let $M$ be a $\Lambda'$-module, which also has a $\Lambda$-module structure. 
Then, $\eqref{eq:pdeqn}$ implies that
\[
\operatorname{pd}_{\Lambda'}(M)\le \operatorname{pd}_{\Lambda}(M)+\operatorname{pd}_{\Lambda'}(\Lambda)\le \gldim \Lambda+\operatorname{pd}_{\Lambda'}(\Lambda).
\]
Since this is true for all $\Lambda'$-modules $M$, we have \[\gldim\Lambda'=\sup\{\operatorname{pd}_{\Lambda'}(M)\ \vert M\in\operatorname{mod}\Lambda'\}\le \gldim\Lambda+\operatorname{pd}_{\Lambda'}(\Lambda)<\infty.\] 
\end{proof}

\section{Studying conic modules via the Bondal-Thomsen collection}\label{sec:Link}

In this section, we will elaborate on the link between conic modules and the Bondal-Thomsen collection we mentioned earlier and from that deduce how the geometry of the secondary fan can be used to study (sums of) conic modules. Let us start by precisely stating how conic modules are recovered by the Bondal-Thomsen collection. Fix a cone $\sigma\subset N_\R$ together with a simplicial fan $\Sigma$ such that $\Sigma(1)=\sigma(1)$ (in particular, $|\Sigma|=\sigma$). 

\begin{proposition}
    \label{Prop:BTisConic}
    Let $v,w\in M_\R$ and consider the conic modules $A_v, A_w$ as well as the elements of the Bondal-Thomsen collection $-d(v),-d(w)$. Then\[
    \Hom(A_v,A_w)\cong\Hom(\O_{Cox}(-d(v)),\O_{Cox}(-d(w))).
    \]
    For any subset $S\subset \Theta$, consider a minimal corresponding subset $S'\subset M_\R$ such that $\{-d(s)\mid s\in S'\}=S$. Then there is an isomorphism of endomorphism algebras \[\End(\bigoplus_{s\in S'} A_{s})\cong \End(\bigoplus_{\theta\in S}\O_{Cox}(-d(\theta))).\]
\end{proposition}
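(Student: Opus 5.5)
The plan is to compare both sides of the first isomorphism directly as $M$-graded vector spaces, using the explicit monomial descriptions already available, and then assemble the endomorphism algebras summand by summand.

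On the conic side, Proposition \ref{Prop:ConicModulesProperties}(3) identifies $\Hom_R(A_v,A_w)$ with the span of monomials $x^m$ satisfying $m+(\sigma^\vee+v)\cap M\subset \sigma^\vee+w$. By part (2), $(\sigma^\vee+v)\cap M$ consists of the $m'\in M$ with $\langle m',u_\rho\rangle\ge\lceil\langle v,u_\rho\rangle\rceil$ for all $\rho\in\sigma(1)$. I would show that the containment condition is equivalent to
\[
\langle m,u_\rho\rangle\ge \lceil\langle w,u_\rho\rangle\rceil-\lceil\langle v,u_\rho\rangle\rceil\quad\text{for all }\rho\in\sigma(1).
\]
\begin{itemize}
\item Sufficiency is immediate: if $m'$ satisfies its inequalities, then $\langle m+m',u_\rho\rangle\ge\lceil\langle w,u_\rho\rangle\rceil\ge\langle w,u_\rho\rangle$.
\item Necessity is the one step needing care. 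For each $\rho$ one must find $m'\in M$ attaining $\langle m',u_\rho\rangle=\lceil\langle v,u_\rho\rangle\rceil$ while staying in the translated cone.
\end{itemize}

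For necessity I would avoid a direct construction and invoke Remark \ref{Rem:ConModBundle} instead. It gives $A_v=\Gamma(X_\sigma,\O(-D_v))$ with $D_v=\sum\lceil\langle v,u_\rho\rangle\rceil D_\rho$. Since $X_\sigma$ is normal and these are reflexive rank one sheaves,
\[
\Hom(A_v,A_w)=\Gamma(X_\sigma,\O(D_v-D_w)).
\]
By the standard description of sections of torus-invariant divisors on an affine toric variety, this is spanned exactly by the $x^m$ with $\langle m,u_\rho\rangle\ge \lceil\langle w,u_\rho\rangle\rceil-\lceil\langle v,u_\rho\rangle\rceil$. This is the polytope $Q$ of Corollary \ref{Cor:HomBT} with $\theta=v$ and $\theta'=w$.

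On the Bondal-Thomsen side, Corollary \ref{Cor:HomBT} gives
\[
\Hom(\O_{Cox}(-d(v)),\O_{Cox}(-d(w)))=k\langle Q_{d(v)-d(w)}\cap M\rangle.
\]
Because $\Sigma(1)=\sigma(1)$, the inequalities defining $Q$ range over the same rays $u_\rho$, so the two spans coincide. A sign check is needed here, since $-d(\theta)$ is built from $\lfloor\langle-\theta,u_\rho\rangle\rfloor=-\lceil\langle\theta,u_\rho\rangle\rceil$, which matches $-D_\theta$. Alternatively, Corollary \ref{Cor:HomDCox} together with Proposition \ref{Prop:CoarseModuli} identifies the Hom with global sections of the pushed-forward reflexive sheaf on the affine $X_\sigma$. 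I would also record that the isomorphism is compatible with composition: on both sides composition is multiplication of monomials $x^m\cdot x^{m'}=x^{m+m'}$ inside $k[M]$.

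For the endomorphism algebras, I would pick $S'$ with $s\mapsto -d(s)$ a bijection onto $S$ and write both algebras as matrix algebras $\bigoplus_{s,t\in S'}\Hom(A_s,A_t)$ and $\bigoplus\Hom(\O_{Cox}(-d(s)),\O_{Cox}(-d(t)))$. The entrywise isomorphisms from the first part commute with composition, since both are multiplication in $k[M]$. They therefore assemble into an algebra isomorphism.

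The main obstacle is making sure the Hom in the Cox category really reduces to the monomial description, including the sign and floor/ceiling conventions. It also matters whether one wants to argue that $d(\theta)$ determines $A_\theta$ up to isomorphism, so that the choice of $S'$ is harmless. I would handle this by noting that Remark \ref{Rem:ConModBundle} shows $A_\theta$ depends only on the integers $\lceil\langle\theta,u_\rho\rangle\rceil$, which are exactly the coefficients defining $-d(\theta)$.
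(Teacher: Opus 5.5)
Your proposal is correct and has the same skeleton as the paper. Both compute the two sides as monomial spans (Proposition \ref{Prop:ConicModulesProperties}(3) and Corollary \ref{Cor:HomBT}) and compare the defining inequalities over the common rays $\Sigma(1)=\sigma(1)$. Your sufficiency check is exactly the paper's converse direction.

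The difference is in necessity. The paper argues by hand. For every lattice point $m'$ of $\sigma^\vee+v$ it derives $\langle m,u_\rho\rangle\ge\lceil\langle w,u_\rho\rangle\rceil-\langle m',u_\rho\rangle$, and then exhibits a lattice point $m''\in\sigma^\vee+v$ with $\langle m'',u_\rho\rangle=\lceil\langle v,u_\rho\rangle\rceil$. You instead take $A_v=\Gamma(X_\sigma,\O(-D_v))$ from Remark \ref{Rem:ConModBundle} and use $\mathcal{H}om(\O(D_1),\O(D_2))\cong\O(D_2-D_1)$ for rank one reflexive sheaves on a normal variety.

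\textbf{What your route buys.} It is shorter, and it computes $\Hom_R(A_v,A_w)$ outright. This makes your sufficiency check and the appeal to Proposition \ref{Prop:ConicModulesProperties}(3) redundant. The cost is that the paper's lattice-point step is now hidden inside the standard fact: after localizing at the height-one prime of $D_\rho$, $R(-D_v)$ becomes a power of the maximal ideal of a DVR, so some element has order exactly $\lceil\langle v,u_\rho\rangle\rceil$ along $D_\rho$.

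\textbf{What the paper's route buys.} It is elementary, but it only sketches the existence of $m''$. To complete it, note two things. The slice of $\sigma^\vee+v$ by the hyperplane $\langle x,u_\rho\rangle=\lceil\langle v,u_\rho\rangle\rceil$ contains a translate of the $(n-1)$-dimensional facet $\sigma^\vee\cap u_\rho^\perp$. That hyperplane carries a full-rank affine sublattice of $M$ because $u_\rho$ is primitive.

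Your explicit remark that composition is multiplication in $k[M]$ on both sides also supplies the justification the paper leaves implicit when it says the algebra isomorphism follows directly from the first part.
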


\begin{proof}
We use Proposition \ref{Prop:ConicModulesProperties} and Corollary \ref{Cor:HomDCox} to compute the two Hom-sets:
\begin{eqnarray*}
&&\Hom(A_v,A_w)=\langle x^m\mid m+(\sigma^\vee+v)\cap M\subset (\sigma^\vee+w)\rangle;\\
&& \Hom(\O_{Cox}(-d(v)),\O_{Cox}(-d(w)))=\langle x^m\mid m\in Q_{d(v)-d(w)}\rangle
\end{eqnarray*}

First, suppose we are given an $m$ such that $m+(\sigma^\vee+v)\cap M\subset(\sigma^\vee+w)$.

For any $m'\in (\sigma^\vee+v)\cap M$, we have $m+m'-w\in \sigma^\vee$, i.e.\[
\langle m,u_\rho\rangle \ge\langle w,u_\rho\rangle-\langle m',u_\rho\rangle,\quad \forall\rho\in\sigma(1).
\]
The LHS is integer, and so we have, for all $\rho\in\sigma(1)$ and any $m'\in (\sigma^\vee+v)\cap M$,
\begin{eqnarray*}
&& \langle m,u_\rho\rangle\ge \lceil\langle w,u_\rho\rangle-\langle m',u_\rho\rangle\rceil\\
&& \quad \quad \quad\ \ge \lceil\langle w,u_\rho\rangle\rceil -\lceil \langle m',u_\rho\rangle\rceil.\\
\end{eqnarray*}

As $m'\in\sigma^\vee+v$, we have $\langle m',u_\rho\rangle\ge \langle v,u_\rho\rangle$. The shifted cone $\sigma^\vee+v$ contains points that attain this bound on its boundary and the hyperplane $\{x\mid\langle x,u_\rho\rangle =\lceil \langle v,u_\rho\rangle\rceil\}$ intersects $\sigma^\vee+v$, as it lies on the correct side of the supporting hyperplane. Thus there is a lattice point $m''\in \sigma^\vee+v$ where $\langle m'',u_\rho\rangle =\lceil\langle v,u_\rho\rangle\rceil$. Consequently, $\langle m,u_\rho\rangle\ge \lceil\langle w,u_\rho\rangle\rceil-\lceil\langle v,u_\rho\rangle\rceil$, i.e. $m\in Q_{d(v)-d(w)}$.

Conversely, suppose $m\in Q_{d(v)-d(w)}\cap M$. Let $m_2\in (\sigma^\vee+v)\cap M$. For any $\rho\in\sigma(1)$, $\langle m_2,u_\rho\rangle \ge \langle v,u_\rho\rangle$ and as the LHS is integer, we deduce $\langle m_2,u_\rho\rangle \ge \lceil\langle v,u_\rho\rangle\rceil$.

Then \begin{eqnarray*}
   && \langle m+m_2-w,u_\rho\rangle=\langle m,u_\rho\rangle +\langle m_2,u_\rho\rangle -\langle w,u_\rho\rangle\\
   && \ge -\lceil\langle v,u_\rho\rangle\rceil +\lceil\langle w,u_\rho\rangle\rceil +\langle m_2,u_\rho\rangle-\langle w,u_\rho\rangle\\
   &&=\left(\lceil \langle w,u_\rho\rangle \rceil-\langle w,u_\rho\rangle\right)+\left(\langle m_2,u_\rho\rangle-\lceil \langle v,u_\rho\rangle\rceil\right)\\
   && \ge 0+0=0.\;
\end{eqnarray*}

Hence, for all $\rho\in\sigma(1)$ and $m_2\in (\sigma^\vee+v)\cap M$, we have $\langle m+m_2-w,u_\rho\rangle\ge0$, i.e. $m+m_2\in \sigma^\vee+w$. So $m\in Q_{d(v)-d(w)}$ implies $m+(\sigma^\vee+v)\cap M\subset \sigma^\vee+w$. 

The second part of the statement follows directly from the first.
\end{proof}

Let us reexamine now Example \ref{Exa:710FMS}, drawing the link to Bondal-Thomsen collections and secondary fans. We are given the cone \[
\sigma=\cone((1,0,0),(0,1,0),(-1,0,1),(0,-1,1)).
\]
From $\sigma$, we build a simplicial fan $\Sigma_1$ by refining the cone into a union of two simplicial cones (obtained by inserting the diagonal connecting $(0,1,0)$, $(0,-1,1)$ into the square defining the cone) \[
\sigma_1=\cone((1,0,0),(0,1,0),(0,-1,1)),\quad \sigma_2=\cone((0,1,0),(-1,0,1),(0,-1,1)).
\]
Of course, we could have chosen to simplicially refine the cone via the other diagonal. This gives the fan $\Sigma_2$, whose two maximal cones are \[
\sigma_3=\cone((1,0,0),(0,1,0),(-1,0,1)),\quad \sigma_4=\cone((1,0,0),(-1,0,1),(0,-1,1)).
\]
The secondary fan $\Sigma_{GKZ}$ has two chambers, corresponding to the toric DM stacks $\mathcal{X}_{\Sigma_1}$ and $\mathcal{X}_{\Sigma_2}$. In fact, the secondary fan $\Sigma_{GKZ}\subset \R$ consists of two rays with primitive generators $-1,1$ together with the 0-dimensional cone at the origin. We do note that the rays both appear with multiplicity 2 and so the corresponding zonotope contains 3 lattice points: $-1$, $0$, $+1$. The point $+1$ lies in the chamber corresponding to $\mathcal{X}_{\Sigma_1}$ and the point $-1$ in the chamber corresponding to $\mathcal{X}_{\Sigma_2}$. Consider the three suggestively named points $v_0=(0,0,0)$, $v_{+}=(0,-\frac{1}{4},0)$ and $v_{-}=(-\frac{1}{4},0,0)$, all in $M_\R$. Recall that the toric algebra $R=k[\sigma^\vee\cap M]$ associated to the cone is $R=k[x,y,z,w]/(xz-yw)$. We compute the associated conic modules and identify them with $A_0, A_1$ and $A_2$ as written in Example \ref{Exa:710FMS}:
\begin{eqnarray*}
    && A_{v_0}=R=A_0,\\
    && A_{v_+}=(x,y)R=A_1,\\
    && A_{v_-}=(x,w)R=A_2.\;
\end{eqnarray*}
Enumerate the divisors associated to the rays of $\sigma$ in the following way:
\[
   (1,0,0) \mapsto D_1, \quad
    (0,1,0)\mapsto D_2, \quad
    (-1,0,1)\mapsto D_3, \quad  
    (0,-1,1)\mapsto D_4.
\]
Then $-d(v_0)=0$, $-d(v_{+})=-D_4$ and $-d(v_{-})=-D_3$. The map $\R^{\Sigma(1)}\rightarrow \operatorname{Cl}(\mathcal{X})_{\R}$ here is the linear map $(1,-1,1,-1)$ and so $-d(v_0)\mapsto 0$, $-d(v_+)\mapsto +1$ and $-d(v_-)\mapsto -1$. 

We note therefore that choosing the incomplete sum of conic modules $A_0\oplus A_1$ corresponds to taking the subobject $\O_{Cox}(-d(v_0))\oplus \O_{Cox}(-d(v_+))$ of the tilting object for $\operatorname{D}_{Cox}(X)$. This tilting object is built of line bundles where the $-d(\theta)$ have image in the same chamber of the secondary fan, the chamber which contains the point $+1$ and corresponds to the smooth toric DM stack $\mathcal{X}_{\Sigma_1}$. But then, $\O_{Cox}(-d(v_0))\oplus \O_{Cox}(-d(v_+))=\pi_1^\ast \O_{\mathcal{X}_{\Sigma_1}}(-d(v_0))\oplus\pi_1^\ast\O_{\mathcal{X}_{\Sigma_1}}(-d(v_+))$, where $\pi_1:\widetilde{\mathcal{X}}\rightarrow \mathcal{X}_{\Sigma_1}$ is the proper, birational morphism from $\widetilde{\mathcal{X}}$ to $\mathcal{X}_{\Sigma_1}$ used to define the Cox category in Definition \ref{Def:CoxCat}. In particular, $\pi_1^\ast$ is fully faithful, and so $\mathcal{T}':=\O_{\mathcal{X}_{\Sigma_1}}(-d(v_0))\oplus\O_{\mathcal{X}_{\Sigma_1}}(-d(v_+))$ is a partial tilting object on $\dbcoh{\mathcal{X}_{\Sigma_1}}$ such that $\End_{\mathcal{X}_{\Sigma_1}}(\mathcal{T}')=\End_{\operatorname{D}_{Cox}(X)}(\pi_1^\ast\mathcal{T}')$. To show this has finite global dimension, it is sufficient to observe that we obtain finite length projective resolutions of all the simple modules. For this it was sufficient to have the two complexes
\begin{eqnarray*}
        && A_0\rightarrow A_1^{\oplus 2}\oplus A_0^{\oplus 4}\rightarrow A_0^{\oplus 4}\oplus A_1^{\oplus 2}\rightarrow A_0,\\
        && A_1\rightarrow A_0^{\oplus 2}\rightarrow A_0^{\oplus 2}\rightarrow A_1.\;
    \end{eqnarray*}
These complexes only use conic modules in the set $A_0, A_1$, corresponding to the two points $0, +1$ inside the chamber of $\mathcal{X}_{\Sigma_1}$. Applying $\Hom(A_0\oplus A_1,-)$ produces the desired finite length projective resolutions of the two unique graded simple $\End(A_0\oplus A_1)$-modules. Of course we are technically done here by showing that these resolutions all have the same length, but knowing finite global dimension of the endomorphism algebra is also sufficient to apply Theorem \ref{Thm:ParttiltingGorCone} and obtain that $\Lambda=\End_{\mathcal{X}_{\Sigma_1}}(\mathcal{T}')=\End_R(A_0\oplus A_1)$ is an NCCR.

We formalise now the substitution argument used in the example above.

\subsection{Substitution of conic modules and NCCRs}
Fix a cone $\sigma$ together with a simplicial subdivision $\Sigma$ such that $\sigma(1)=\Sigma(1)$ and let $s$ be the number of isomorphism classes of conic modules and $r$ the number of lattice points in the zonotope $Z_\mathcal{X}$ of the smooth toric DM stack $\mathcal{X}_\Sigma$. Enumerate the isomorphism classes of conic modules $A_0,\dots,A_{s-1}$ and we choose a representative $\Delta_i$ of chamber of constancy for each isomorphism class. For $i\in\{0,\dots,s-1\}$ let $K_i^\bullet$ be the complex $K_{\Delta_i}^\bullet$.

\begin{definition}
    \label{Def:Substitution}
    If $A_j$ appears in a complex of conic modules $K^\bullet$, we define the \newterm{substitution by $j$}, $S(j,K^\bullet)$, to be the complex obtained by splicing all occurences of $A_j$ with $K_j^\bullet$. For a set $\mathcal{K}$ of complexes of conic modules, define the \newterm{substitution by $j$} $\mathcal{S}(j,\mathcal{K})$ of the set $\mathcal{K}$ by $j$ to be the set of complexes $\{S(j,K)\mid K\in \mathcal{K}\}$. Call a non-empty subset $I\subseteq \{0,\dots,s-1\}$ \newterm{lockable} if there is a finite sequence of successive substitutions by elements in $I^c$ that, applied to the set $\{K_i^\bullet\mid i\in I\}$, yields a set $\mathcal{K}^\dagger_I$ of complexes of conic modules, such that all conic modules appearing in any complex $K\in \mathcal{K}^\dagger_I$ are of the form $A_k, k\in I$.
\end{definition}

\begin{example}
    \label{Exa:FMSviaLockable}
    In Example \ref{Exa:710FMS}, the set $\{0,1\}$ is lockable as substituting $K_2$ for every appearance of $A_2$ in $K_0, K_1$ yields the set of complexes \[\mathcal{K}_{\{0,1\}}^\dagger=\{
    A_0\rightarrow A_1^{\oplus 2}\oplus A_0^{\oplus 4}\rightarrow A_0^{\oplus 4}\oplus A_1^{\oplus 2}\rightarrow A_0,\quad A_1\rightarrow A_0^{\oplus 2}\rightarrow A_0^{\oplus 2}\rightarrow A_1
    \}.\]
    Similarly for the set $\{0,2\}$.
\end{example}
\noindent The above example generalises quite straightforwardly.
\begin{example}
    If there is a complex such that $K_i^\bullet$ does not contain the conic modules $A_i$ outside of degree $0$, then $\{0,\dots,s-1\}\setminus \{i\}$ gives a lockable set.
\end{example}

We wish to show that, as above in Example \ref{Exa:FMSviaLockable}, lockable subsets give non-commutative resolutions. To do so, we show that for an incomplete sum of conic modules $\mathbb{A}$ coming from a lockable subset, every graded simple $\End_R(\mathbb{A})$-module has finite projective dimension. Specifically, we show that applying $\Hom_R(\mathbb{A},-)$ to the set of complexes $K_I^\dagger$ gives a set of finite projective resolutions to the simples, thus giving finite global dimension of $\End_R(\mathbb{A})$. Moreover, we show that these resolutions are minimal and hence we can determine if the lockable subset gives an NCCR or merely an NCR by checking if the projective dimension is equal to $\dim \sigma$ for each graded simple, the same way the authors do in \cite{FMS19}. This property is that of an \newterm{incredulous} set.
\begin{definition}
    \label{Def:Incredulous}
    A lockable set $I$ is called \newterm{incredulous} if all complexes $K\in K_I^\dagger$ have length $\dim \sigma$.
\end{definition}

Our main result of this section is a formal proof of the fact that the incomplete sums of conic modules which give NCCRs corresponds precisely to incredulous sets. 

\begin{theorem}
\label{Thm:NCRiffLock}
Let $\sigma$ be a cone with associated toric algebra $R$ and collection of conic modules $\{A_i\}_{i\in S}$ for a set of indices $S$. For a subset $I$ of $S$, consider the incomplete direct sum of conic modules $\mathbb{B}=\bigoplus_{i\in I}A_i$. Then the endomorphism algebra $\Lambda'=\End_R(\mathbb{B})$ is an NCR of $R$ if and only if $I$ is lockable. Furthermore, $\Lambda'$ is an NCCR of $R$ if and only if $I$ is incredulous.
\end{theorem}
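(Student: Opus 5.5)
The plan has four parts: the lockable direction via resolutions of simples, crepancy via minimality and Cohen--Macaulayness, the converse direction, and the main obstacle, which is the converse.

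\textbf{Lockable implies NCR.} Let $I$ be lockable, with spliced complexes $\mathcal{K}^\dagger_I=\{K_i^{\dagger,\bullet}\mid i\in I\}$. Since $\Lambda'=\End_R(\mathbb{B})$ is graded and finitely generated over the graded ring $R$, its global dimension is the supremum of the projective dimensions of the graded simples. By the analogue of \cite[Proposition 6.3]{FMS19} for $\mathbb{B}$, these simples are the $S_i$ for $i\in I$, each being the top of $P_i=\Hom_R(\mathbb{B},A_i)$.

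I claim that $\Hom_R(\mathbb{B},K_i^{\dagger,\bullet})$, augmented by $S_i$, is a finite projective resolution of $S_i$. Every term of $K_i^{\dagger,\bullet}$ is a sum of $A_k$ with $k\in I$, so after applying $\Hom_R(\mathbb{B},-)$ every term is projective. For exactness I would use Lemma \ref{Lem:Acyc}. For $k\in I$, the complex $\Hom_R(A_k,K_j^\bullet)$ is exact whenever $j\in I^c$, since then $A_k\not\simeq A_j$. Splicing in $K_j^\bullet$ at an occurrence of $A_j$ replaces that term by a complex which becomes acyclic after $\Hom_R(A_k,-)$, apart from its degree-zero copy of $A_j$. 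I would formalise splicing as a mapping cone, or equivalently a totalization of a double complex, along the inclusion of $A_j$. The homology of $\Hom_R(A_k,-)$ applied to the spliced complex then agrees with that of the original by the long exact sequence in homology. Inducting over the finite substitution sequence, the homology of $\Hom_R(A_k,K_i^{\dagger,\bullet})$ equals that of $\Hom_R(A_k,K_i^\bullet)$. By Lemma \ref{Lem:Acyc} this is zero for $k\ne i$ and one-dimensional in degree $0$ for $k=i$, which is exactly $S_i$. Hence $\gldim\Lambda'<\infty$. The module $\mathbb{B}$ is reflexive because conic modules are Cohen--Macaulay and hence reflexive over the normal domain $R$, so $\Lambda'$ is an NCR.

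\textbf{Crepancy.} First I would show the resolutions are minimal. The differentials are signed sums of inclusions of open conic modules. They are never isomorphisms between equal summands, since the $M$-degrees shift strictly, so after applying $\Hom_R(\mathbb{B},-)$ they land in the graded radical. This step needs a check that splicing preserves this property. Granting minimality, $\operatorname{pd}S_i$ equals the length of $K_i^{\dagger,\bullet}$. As recalled in \S2, $R$ is a Cohen--Macaulay normal domain of dimension $d=\dim\sigma$, and $\Lambda'$ is an NCCR exactly when every simple has projective dimension $d$. Therefore, under lockability, being an NCCR is equivalent to being incredulous.

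\textbf{NCR implies lockable.} Suppose $\gldim\Lambda'<\infty$. I would take the minimal graded projective resolutions of the $S_i$ over $\Lambda'$ and transport them back to complexes of conic modules with terms in $I$, using $\operatorname{add}\mathbb{B}\simeq\operatorname{proj}\Lambda'$. Then I would compare them with the $K_i^\bullet$ of the complete sum $\mathbb{A}$, using the uniqueness of minimal resolutions over $\End_R(\mathbb{A})$ (\cite[Theorem 6.5]{FMS19}). The aim is to show that each term $A_j$ with $j\notin I$ occurring in the $K_i^\bullet$ can be eliminated by finitely many substitutions. The idea is an induction on the partial order $\preceq$ of chambers, exploiting that substitution only introduces modules from strictly smaller chambers up to translation.

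\textbf{Main obstacle.} The hardest step is the converse: showing that finite global dimension forces termination of the substitution process. My first attempt would be to argue that an infinite substitution chain yields an infinite minimal resolution over $\Lambda'$, contradicting finite global dimension. This rests on the fact that $\Hom_R(\mathbb{B},-)$ applied to an infinitely iterated splice still computes the minimal resolution of $S_i$, by the same acyclicity argument.
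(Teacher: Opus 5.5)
Your forward direction matches the paper's (Lemma~\ref{Lem:Acyc} gives that $\Hom_R(\mathbb{B},K_j^{\geq 1})$ resolves $\Hom_R(\mathbb{B},A_j)$ for $j\notin I$, and splicing these in resolves the $S_i'$). Make the splicing precise over $\Lambda'$, by replacing $\Hom_R(\mathbb{B},A_j)$ with its resolution and lifting the differentials, since the inclusion of one summand $A_j$ is not a chain map.

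\textbf{First gap: minimality.} The minimality step you flag is not a routine check; it is false. Splicing can create isomorphism components. Suppose a cell $\tau''$ of $\Delta_i$ lies in the boundary of a cell $\tau'$ of one lower codimension, with $\opA_{\tau'}\cong A_j$ and $j\notin I$. Then $\opA_{\tau''}$ can coincide with a summand $\opA_\kappa$ of the spliced-in $K_j^1$, and the lift of $\opA_{\tau''}\hookrightarrow\opA_{\tau'}$ through $K_j^1\to A_j$ contains an identity.

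Concretely, in Example~\ref{Exa:710FMS} the spliced $K_0^{\dagger,\bullet}$ has $A_0^{\oplus 4}$ in degrees $1$ and $2$. But in $\End_R(A_0\oplus A_1)$ every element of the maximal ideal of $\End_R(A_0)=R$ factors through $A_1$. Indeed, $x,y$ factor through the inclusion $(x,y)R\subset R$, and $z=(z/y)\cdot y$, $w=(z/y)\cdot x$ with $z/y\in\Hom_R(A_1,R)$. So there is no loop at $0$, $\Ext^1(S_0',S_0')=0$, and the minimal resolution is $P_0'\to (P_1')^{\oplus 2}\to (P_1')^{\oplus 2}\to P_0'$.

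Hence you only get $\operatorname{pd}S_i'\le\operatorname{len}K_i^{\dagger,\bullet}$. Both halves of ``NCCR iff incredulous'' therefore need an extra argument:
\begin{itemize}
\item cancellation must not remove all degree-$\dim\sigma$ terms;
\item a spliced complex longer than $\dim\sigma$ must not cancel down to length $\dim\sigma$.
\end{itemize}
The paper takes this equality from Lemma~\ref{Lem:DimExt}. Its occurrence count is contradicted by this same example (it would give $\dim\Ext^1(S_0',S_0')=4$), so you cannot simply borrow it.

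\textbf{Second gap: the converse.} An induction on $\preceq$ cannot make substitutions terminate, for three reasons.
\begin{itemize}
\item $\preceq$ has infinite descending chains $A_\Delta\supsetneq A_{\Delta+m}\supsetneq A_{\Delta+2m}\supsetneq\cdots$ for $0\neq m\in\sigma^\vee\cap M$.
\item Up to translation, the relation ``$A_j$ occurs in $K_{j'}^\bullet$'' has cycles.
\item As you describe it, termination would not use $\gldim\Lambda'<\infty$ at all. Yet non-lockable sets exist: $I=\{0\}$ in Example~\ref{Exa:710FMS} is one, since $A_2$ occurs in $K_1^\bullet$ and $A_1$ in $K_2^\bullet$ (consistently, $\gldim\End_R(R)=\infty$).
\end{itemize}

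Your fallback, that an infinite substitution chain yields an infinite \emph{minimal} resolution, is the right idea. However, it rests on exactly the minimality refuted above. An infinite non-minimal resolution says nothing about projective dimension. Also, a cycle through degree-$1$ occurrences piles summands into a fixed degree rather than lengthening the complex.

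The paper runs this step through minimal right $\operatorname{add}(\mathbb{B})$-approximations \cite{DFI16}. It resolves $\Hom_R(\mathbb{B},A_j)$ for $j\notin I$ (finite by hypothesis) and uses Lemma~\ref{Lem:NoPureDegree1Loop} against single-degree loops. Its claim that splicing minimal approximations stays minimal is the same principle the example violates, so a real argument is still owed there too.
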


We split the proof into two parts (Propositions \ref{Prop:LockGivesNCR} and \ref{Prop:NCRgivesLock}), each proving one direction of the equivalence. First, however, we digress shortly to recall some standard facts about endomorphism algebras, laid out in \cite{FMS19}.

Consider a direct sum of conic modules $\mathbb{B}=\bigoplus_{i\in I}A_i$. Then we have the following result.
\begin{proposition}[Proposition 6.3 \cite{FMS19}]
    \label{Prop:FMS 6.3}
    For any $i$,
    \begin{enumerate}
        \item The $\End_R(\mathbb{B})$-module \[
        P_i':=\Hom_R(\mathbb{B},A_i)
        \] is a graded indecomposable projective module and every graded indecomposable projective $\End_R(\mathbb{B})$-module is isomorphic to a module of this form.
        \item The projective module $P_i'$ has a unique maximal graded submodule. Every graded simple $\End_R(\mathbb{B})$-module is isomorphic to a quotient of some $P_i'$ by this maximal submodule. Denote by $S_i'$ the unique graded simple quotient of $P_i'$.
        \end{enumerate}
\end{proposition}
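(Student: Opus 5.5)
The statement is that of \cite[Proposition 6.3]{FMS19}, and I would reprove it directly from the graded structure of $\Lambda':=\End_R(\mathbb{B})$. Write $e_i\in\Lambda'$ for the idempotent projecting $\mathbb{B}$ onto the summand $A_i$, with $i\in I$. Then
\[
P_i'=\Hom_R(\mathbb{B},A_i)=e_i\Lambda' \quad (\text{or } \Lambda' e_i, \text{ depending on the side convention}).
\]
So $P_i'$ is a direct summand of $\Lambda'$ and is a graded projective module. Its graded endomorphism ring is
\[
e_i\Lambda' e_i=\End_R(A_i)\cong\operatorname{Span}\{x^m\mid m+(\sigma^\vee+v_i)\cap M\subset\sigma^\vee+v_i\}.
\]
By Proposition \ref{Prop:ConicModulesProperties}(3) this is a graded subring of $k[M]$ with degree-zero part $k$, since $A_i$ is torsion free of rank one. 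Every homogeneous endomorphism of nonzero degree $m$ satisfies $m\in\sigma^\vee\setminus\{0\}$. It is therefore not invertible, and the graded endomorphism ring is graded local. Hence $P_i'$ is graded indecomposable.

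The key combinatorial input is the following claim about the $M$-graded pieces of $\Lambda'$. Each piece $\Hom(A_j,A_i)_m$ has dimension at most one. Moreover, a composite
\[
A_i\to A_j\to A_i
\]
of homogeneous maps of degrees $m'$ and $m$ can equal a nonzero multiple of $\mathrm{id}_{A_i}$ only if $m+m'=0$. In that case both maps are isomorphisms, so $A_j=A_i+m$, which forces $j=i$ because the $A_j$ with $j\in I$ are pairwise non-isomorphic (Proposition \ref{Prop:IsoConicModules}).

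From the claim I would define $\mathfrak m_i\subset P_i'$ as the span of all homogeneous elements other than the scalar multiples of $e_i$ in degree $0$. The claim shows that $\mathfrak m_i$ is a $\Lambda'$-submodule: multiplying a homogeneous element of $\mathfrak m_i$ by anything homogeneous never produces a nonzero multiple of $e_i$. Any graded submodule not contained in $\mathfrak m_i$ contains a homogeneous element with nonzero $e_i$-component in degree $0$. It then contains $e_i$ itself, hence all of $P_i'$. Thus $\mathfrak m_i$ is the unique maximal graded submodule, and $S_i'=P_i'/\mathfrak m_i$ is one-dimensional, concentrated in degree $0$.

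For the classification statements I would fix a generic $n$ in the interior of $\sigma$ and coarsen the $M$-grading to a $\mathbb{Z}$-grading via $m\mapsto\langle m,n\rangle$. In this grading $R$ is positively graded with $R_0=k$, and each finitely generated graded $\Lambda'$-module is bounded below. Graded Nakayama then applies. Take a graded simple $S$ and a homogeneous nonzero $s\in S$. Some $e_i s\neq 0$, which gives a nonzero graded map $P_i'\to S$ (after a degree shift); it is surjective, and its kernel is $\mathfrak m_i$ by uniqueness.

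For a graded indecomposable projective $P$, take a graded simple quotient $S_i'$ of $P$. Lifting along the projective cover $P_i'\to S_i'$ yields maps $P_i'\to P\to P_i'$ whose composite is not in $\mathfrak m_i$. It is therefore invertible by graded locality, so $P_i'$ is a direct summand of $P$, and indecomposability gives $P\cong P_i'$.

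I expect the main obstacle to be the bookkeeping that the non-positive $M$-grading does not break Nakayama. In particular I would check that $P$ has a graded simple quotient at all. I would handle this with the $\mathbb{Z}$-coarsening chosen so that it stays compatible with the $M$-grading and so that no nonzero $m\in\sigma^\vee$ lands in degree $\le 0$.
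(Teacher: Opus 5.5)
Your argument is correct. The paper gives no proof of its own here. It quotes \cite[Proposition 6.3]{FMS19} and only remarks that the argument there, an adaptation of \cite[Lemma 4.1.1]{SVdB02}, applies verbatim to incomplete sums. What you wrote is a self-contained version of that standard argument:
\begin{itemize}
\item $P_i'=e_i\Lambda'$ is a graded summand of $\Lambda'$ whose degree-$0$ endomorphisms are $\End_R(A_i)_0=R_0=k$, since $\End_R(A_i)\cong R$.
\item The radical is read off from two facts: homogeneous maps between conic modules are multiplications by monomials, and the $A_i$, $i\in I$, are pairwise non-isomorphic.
\item Projective covers then give both classifications, up to degree shift.
\end{itemize}
So you are making explicit what the paper leaves to the citation.

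There are two minor points.

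First, define $\mathfrak m_i$ coordinatewise: the rows $(f_j)_{j\in I}$ whose entry $f_i\in\End_R(A_i)\cong R$ lies in the maximal graded ideal. This is exactly the paper's $\operatorname{Rad}(\mathbb{B},A_i)$. Your phrase ``all homogeneous elements other than scalar multiples of $e_i$ in degree $0$'' is ambiguous. The reason is that $(P_i')_0$ can also contain inclusions $g\colon A_j\hookrightarrow A_i$ with $j\neq i$. Then both $g$ and $e_i+g$ are homogeneous of degree $0$ and not multiples of $e_i$, so their literal span contains $e_i$. Your later verification (composites $A_i\to A_j\to A_i$ equal a nonzero multiple of the identity only if $j=i$) shows you intend the coordinatewise definition.

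Second, the step you flag as the main obstacle is not really one. For finitely generated $P$, which is the intended setting, a maximal $M$-graded submodule exists by Zorn's lemma. A union of a chain of proper graded submodules cannot contain the finitely many homogeneous generators. So no Nakayama argument or $\mathbb{Z}$-coarsening is needed. Your coarsening route also works, via Nakayama over the central subring $R$: it shows $P/PR_+$ is a nonzero finite-dimensional $M$-graded quotient, which then has a graded simple quotient.
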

\begin{remark}
    In their paper, Faber, Muller and Smith formulate this result for complete direct sums of conic modules but the proof follows through as is for incomplete direct sums of conic modules. In fact, as remarked by the authors, this is essentially an adaptation of \cite[Lemma 4.1.1]{SVdB02}.
\end{remark}
The unique maximal graded submodule of $P_i'$ is the \newterm{radical} of $P_i'$. A homomorphism from a conic module $A_j$ to another conic module $A_i$ is said to be \newterm{radical} if it is not an isomorphism. In more generality, a homomorphism from a direct sum $\mathbb{B}$ of conic modules to another direct sum of conic modules $\mathbb{A}$ can be represented by a matrix $[\phi_{i,j}]$ and we say this homomorphism is radical if all of its components are. Thus, elements of $P_i'=\Hom_{\End_R(\mathbb{B})}(\mathbb{B},A_i)$ can be represented as row vectors with entries in $\Hom_R(A_j,A_i)$ with the radical consisting of those maps with entries in the respective radical of $\Hom_R(A_j,A_i)$. Denote the radical by $\operatorname{Rad}(\mathbb{B},A_i)$.

\begin{lemma}
    \label{Lem:vspIso}
    For any $i,j\in I$ there is a graded $k$-vector space isomorphism\[
    \Hom_{\End_R(\mathbb{B})}(P_i',S_j')\simeq \Hom_R(A_i,A_j)/\operatorname{Rad}(A_i,A_j).
    \]
    These vector spaces are of dimension 0 or 1, depending on whether $A_i\simeq A_j$.
\end{lemma}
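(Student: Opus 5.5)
By Yoneda, any morphism $P_i'\to S_j'$ lifts along the surjection $P_j'\to S_j'$ to a map $P_i'\to P_j'$, because $P_i'$ is projective. Both modules are of the form $\Hom_R(\mathbb{B},-)$ applied to a summand of $\mathbb{B}$, so this lift is given by right composition with some $\phi\in\Hom_R(A_i,A_j)$. I would therefore set up the map
\[
\Psi:\Hom_R(A_i,A_j)\to \Hom_{\End_R(\mathbb{B})}(P_i',S_j'),\qquad \phi\mapsto \bigl(g\mapsto \overline{\phi\circ g}\bigr),
\]
where the bar denotes the class in $S_j'=P_j'/\operatorname{Rad}(\mathbb{B},A_j)$. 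The argument then has three steps.

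\textbf{Step 1: $\Psi$ is surjective.} Take $h:P_i'\to S_j'$. By projectivity of $P_i'$ (Proposition \ref{Prop:FMS 6.3}) it lifts to $\tilde h:P_i'\to P_j'$. Since $P_i'=\End_R(\mathbb{B})e_i$ for the idempotent $e_i$ projecting onto $A_i$, the map $\tilde h$ is determined by $\tilde h(e_i)\in e_iP_j'=\Hom_R(A_i,A_j)$. This gives the required $\phi$ with $\Psi(\phi)=h$. All maps can be taken $M$-graded, so $\Psi$ is graded.

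\textbf{Step 2: $\ker\Psi=\operatorname{Rad}(A_i,A_j)$.} The map $\Psi(\phi)$ vanishes exactly when $\phi\circ g$ is radical for every $g\in\Hom_R(\mathbb{B},A_i)$. Because $\Psi(\phi)$ is a module map, it vanishes if and only if it kills the generator $e_i$, that is, if and only if $\phi$ itself, viewed as the row vector with a single nonzero entry, lies in $\operatorname{Rad}(\mathbb{B},A_j)$. That happens precisely when $\phi$ is radical. One must check that radical maps form a two-sided ideal, so that $\operatorname{Rad}$ is a genuine submodule. For rank-one torsion-free conic modules this holds: a composite $A_i\to A_k\to A_j$ is an isomorphism only if both factors are. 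Indeed, each factor is multiplication by a monomial $x^m$ (Proposition \ref{Prop:ConicModulesProperties}), and an isomorphism $A_i\to A_j$ forces $\Delta_j=\Delta_i+m$, so the intermediate chamber must be squeezed between them. I expect this compatibility check to be the main subtle point.

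\textbf{Step 3: dimension count.} Now $\Hom_R(A_i,A_j)$ is $M$-graded with graded pieces of dimension at most one, spanned by monomials $x^m$. If $A_i\not\simeq A_j$, no component is an isomorphism, so the quotient is $0$. If $A_i\simeq A_j$, Proposition \ref{Prop:IsoConicModules} shows that exactly one $m$ gives an isomorphism, namely the $m$ with $\Delta_j=\Delta_i+m$. Every other monomial map is a proper inclusion and hence radical, so the quotient is one-dimensional, concentrated in degree $m$. In the lemma, $i,j$ index isomorphism classes, so this second case reduces to $i=j$ with $m=0$.
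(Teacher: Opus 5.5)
Your proposal is correct and is essentially the standard idempotent/Yoneda argument behind \cite[Lemma 6.7]{FMS19}, which the paper simply invokes verbatim for the incomplete sum $\mathbb{B}$; it amounts to identifying $\Hom(P_i',S_j')$ with the $e_i$-component of $S_j'$. The one point to state explicitly is that $\operatorname{Rad}$ must be read as the span of \emph{homogeneous} non-isomorphisms, since ``not an isomorphism'' is not additively closed for inhomogeneous maps (e.g.\ $\mathrm{id}=(\mathrm{id}-x^m)+x^m$ with $0\neq m\in\sigma^\vee\cap M$), and your Step 3 already works with this graded reading.
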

\begin{proof}
    This is the version of \cite[Lemma 6.7]{FMS19} for incomplete sums of conic modules and the proof applies verbatim.
\end{proof}

We now begin the proof of Theorem \ref{Thm:NCRiffLock}, showing first that a lockable set yields an NCR. The first observation to make is that if a set is lockable, the set of complexes after substitution, $K_I^\dagger$, is uniquely determined.

\begin{lemma}
\label{Lem:SubstitutionIsUnique}
    Let $I$ be a lockable set and let $i\in I$. Any order of substitutions by $j\not\in I$ applied to $K_i^\bullet$ and resulting in a complex without appearance of conic modules $A_j, j\not\in I$ gives the same complex $K_i^\dagger$.
\end{lemma}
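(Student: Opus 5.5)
We plan to prove Lemma \ref{Lem:SubstitutionIsUnique} by showing that substitutions by distinct indices commute, and that the result of substituting until no $A_j$ with $j\notin I$ remains is independent of the order. Throughout we work with the substitution operation of Definition \ref{Def:Substitution}: $S(j,K^\bullet)$ replaces every summand $A_j$ in $K^\bullet$ (in any degree $p$) by a copy of $K_j^\bullet$ shifted so that its degree-$0$ term $A_j$ sits in degree $p$. The components of the new complex in degree $q$ then consist of the summands of $K^\bullet$ in degree $q$ other than $A_j$, together with the degree-$(q-p)$ terms of the inserted copies of $K_j^\bullet$. The maps are induced from those of $K^\bullet$ and $K_j^\bullet$ via the splicing.

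\emph{Step 1: substitution by a fixed $j$ only depends on the multiset of summands.} We will record a complex of conic modules as a formal, degree-graded multiset of summands together with the labels of the inserted blocks. Substitution by $j$ is then a well-defined operation on these data: each summand $A_j$ in degree $p$ is replaced by the terms of $K_j^\bullet[p]$.

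\emph{Step 2: commutation.} For $j\neq j'$ both outside $I$, we will show $S(j',S(j,K))$ and $S(j,S(j',K))$ agree up to further substitutions. Concretely, we view the total process as unfolding a rooted tree. Each occurrence of $A_j$ ($j\notin I$) is a node whose children are the summands of $K_j^\bullet$ in positive degree, placed in shifted degree. The summands of the final complex are the leaves labelled by elements of $I$. Any sequence of substitutions ending in a complex with only $A_k$, $k\in I$, expands exactly every node labelled outside $I$. A single substitution by $j$ expands all current $A_j$-nodes simultaneously, and it never removes a node without expanding it. Consequently, any terminating order expands the same finite tree. The tree is finite, because lockability guarantees that some order terminates, and all orders expand the same nodes.

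\emph{Step 3: identify the complex, including differentials, from the tree.} The terms of $K_i^\dagger$ are the $I$-leaves, with degree equal to the sum of the degree shifts along the path from the root. The differentials come from the inclusion maps of Definition \ref{Def:CplxChamber}, composed along splice points. These are determined locally by the parent/child relations, and the signs are determined by the orientations fixed in the cells, so they too are independent of order.

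The main obstacle we expect is Step 2's claim that every terminating order expands the same tree. A priori, a different order might reach a state where no non-$I$ summands remain while having expanded a different set of nodes. This would be a problem, for instance, if $K_j^\bullet$ contained $A_j$ again in positive degree, producing unbounded recursion in one order but not another. To rule this out we will argue as follows. Substitution by $j$ never deletes an $A_{j'}$ with $j'\neq j$. Hence any node labelled outside $I$ that appears at some stage must eventually be expanded in every terminating order. Moreover, the children of a node depend only on its label. An induction on the depth in the tree then shows that the set of expanded nodes equals the full tree generated from $K_i^\bullet$, so it is order-independent.
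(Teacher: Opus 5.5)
Your Step 2 is the paper's argument made precise, and it is correct. An occurrence of $A_j$ with $j\notin I$ stays in its degree until a substitution by $j$ is performed, and what replaces it depends only on $j$ and that degree. Hence every terminating order expands the same finite tree of occurrences and produces the same modules in each degree; the paper says this informally, and your occurrence tree just formalises it.

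Two corrections, neither of which affects order-independence:

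\begin{enumerate}
\item Your degree shift is off by one. In the paper's splicing, the summand $A_j$ in degree $p$ is deleted and $K_j^{r}$ ($r\ge 1$) is placed in degree $p+r-1$, not $p+r$. With your rule, $K_1^{\dagger}$ in Example \ref{Exa:FMSviaLockable} would have length $4$ instead of $3$.
\item Step 3 claims more than you can justify. The components from $K^{p+1}$ into the inserted copy of $K_j^{1}$ must be lifts of the maps $K^{p+1}\to A_j$, not composites of inclusions, and they need not be canonical. The paper's proof only tracks which modules occur in which degrees, and that is all its later uses need (lengths, and the counts in Lemma \ref{Lem:DimExt}). So drop Step 3 or restrict its claim to the terms.
\end{enumerate}
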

\begin{proof}
Consider an appearance of $A_j$ in $K_i^\bullet$ such that $j\not\in I$. Then $A_j$ remains in the complex at the same degree unless one substitutes by $K_j^\bullet$. When one does so, the same modules appear in the same degrees, irrespective of previous substitutions. Any of these modules of the form $A_{j'}. j'\not\in I$ similarly remains in its degree until a substitution by $K_{j'}^\bullet$ takes place, which then uniquely determines the appearance of the modules coming from that specific copy of the module $A_{j'}$. Thus, since one exactly substitutes by complexes $K_j^\bullet$ with $j\not\in I$ and stops doing so when none appear, the resulting complex $K_i^\dagger$ is the same regardless of order of substitutions.
\end{proof}

We begin by establishing exactness and resolution properties of the sequences $\Hom_R(\mathbb{B},K_i^\bullet)$.

\begin{lemma}
\label{Lem:Extend by 0 if not in set}
    If $j\not\in I$, then $\Hom_R(\mathbb{B},K_j^\bullet)$ is an exact complex that can be extended to the right by zero and remains exact. Hence, $\Hom(\mathbb{B},K_j^\bullet)$ is an exact complex ending in $\Hom_R(\mathbb{B},A_j)=P_j'\rightarrow 0$ and can thus be seen as a resolution of $P_j'$.
\end{lemma}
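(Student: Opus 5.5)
The plan is to deduce the lemma from Lemma \ref{Lem:Acyc}, applied summand by summand. Since $\mathbb{B}=\bigoplus_{i\in I}A_i$ and $\Hom_R(-,K_j^\bullet)$ is additive in the first argument, we have
\[
\Hom_R(\mathbb{B},K_j^\bullet)\cong\bigoplus_{i\in I}\Hom_R(A_i,K_j^\bullet).
\]
So exactness of the left-hand side reduces to exactness of each summand. Because the $A_i$ with $i\in I$ are representatives of pairwise distinct isomorphism classes and $j\notin I$, we have $A_i\not\simeq A_j$ for every $i\in I$. Part (1) of Lemma \ref{Lem:Acyc} then says each $\Hom_R(A_i,K_j^\bullet)$ is exact. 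Hence $\Hom_R(\mathbb{B},K_j^\bullet)$ is exact in every homological degree, including degree zero.

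The key point to make precise is what exactness in degree zero means. In Lemma \ref{Lem:Acyc}, homology in degree zero is the cokernel of the last map $\Hom_R(A_i,K_j^{1})\to\Hom_R(A_i,K_j^{0})=\Hom_R(A_i,A_j)$. Part (2) of that lemma, which places one-dimensional homology in degree zero when the modules are isomorphic, confirms this reading. So vanishing of this homology means the last map is surjective, and the complex
\[
\cdots\to\Hom_R(\mathbb{B},K_j^1)\to\Hom_R(\mathbb{B},A_j)\to 0
\]
is exact at $\Hom_R(\mathbb{B},A_j)$ as well. At the left end the complex is bounded (its length is at most $\dim R$), so exactness at the top degree means the first map is injective, which is again zero homology. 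I would state this explicitly, so that no convention about whether $K_j^\bullet$ is augmented is left ambiguous.

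Finally, I would identify the terms. Every term of $K_j^\bullet$ is an open conic module, hence a conic module $A_w$ by Proposition \ref{Prop:OpenconicIsNearby}. So each $\Hom_R(\mathbb{B},K_j^k)$ is a direct sum of modules $\Hom_R(\mathbb{B},A_w)$, which are projective over $\End_R(\mathbb{B})$ when $A_w$ is isomorphic to a summand of $\mathbb{B}$. The lemma only asks for exactness and the interpretation as a resolution ending in $P_j'=\Hom_R(\mathbb{B},A_j)$; the full projectivity of all terms is exactly what the later substitution argument addresses. I therefore only need to record that the augmented complex is exact, so it resolves $P_j'$ by the modules $\Hom_R(\mathbb{B},K_j^k)$ for $k\ge1$.

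The main (mild) obstacle is the degree-zero convention discussed above. Everything else is additivity of $\Hom$ together with Lemma \ref{Lem:Acyc}.
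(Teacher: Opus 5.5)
Your proposal is correct and follows the paper's proof. Like the paper, you split $\Hom_R(\mathbb{B},K_j^\bullet)$ as $\bigoplus_{i\in I}\Hom_R(A_i,K_j^\bullet)$ and apply part (1) of Lemma \ref{Lem:Acyc} to each summand, using $A_i\not\simeq A_j$. The only cosmetic difference is in the surjectivity onto $\Hom_R(\mathbb{B},A_j)$: the paper argues that the image of the last map is $\operatorname{Rad}(\mathbb{B},A_j)$, which is everything because no component is an isomorphism, whereas you read it off directly as vanishing degree-zero homology in Lemma \ref{Lem:Acyc}(1).
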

\begin{proof}
    Note that $\Hom_R(\mathbb{B},K_j)=\bigoplus_{i\in I}\Hom_R(A_i,K_j)$ and so by the Acyclicity Lemma \ref{Lem:Acyc} it is a direct sum of exact complexes, hence exact. It suffices to find the cokernel of the last map to show we can extend the complex by 0 to the right. But the image of the incoming map is just $\operatorname{Rad}(\mathbb{B}, A_j)$, which is precisely $\Hom(\mathbb{B},A_j)$.
\end{proof}

\begin{lemma}
\label{Lem:Substituted is proj resn}
  For $i\in I$,  $K_i^\dagger$ is an exact complex outside of degree 0. Extending to the right by $[S_i'\rightarrow 0]$ makes $K_i^\dagger$ exact and thus a projective resolution of $S_i'$.
\end{lemma}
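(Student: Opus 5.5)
Throughout, ``$K_i^\dagger$'' means $\Hom_R(\mathbb{B},K_i^\dagger)$, the complex of projective $\End_R(\mathbb{B})$-modules. By Lemma \ref{Lem:SubstitutionIsUnique} the complex $K_i^\dagger$ does not depend on the order of substitutions. So we may build it by a definite finite sequence of splicings, starting from $K_i^\bullet$ and each time replacing one occurrence of some $A_j$, $j\notin I$, by a copy of $K_j^\bullet$. I would argue by induction along this sequence.

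The inductive claim is the following. After each splicing step, the intermediate complex $L^\bullet$, after applying $\Hom_R(\mathbb{B},-)$, is exact in all nonzero degrees, and its degree-zero homology is the same as that of $\Hom_R(\mathbb{B},K_i^\bullet)$. The base case is the Acyclicity Lemma \ref{Lem:Acyc}. Write $\Hom_R(\mathbb{B},K_i^\bullet)=\bigoplus_{l\in I}\Hom_R(A_l,K_i^\bullet)$. The summands with $A_l\not\simeq A_i$ are exact, and the summand with $l=i$ has one-dimensional homology, concentrated in degree $0$. In degree zero the complex ends in $P_i'=\Hom_R(\mathbb{B},A_i)$. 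The image of the incoming map is $\operatorname{Rad}(\mathbb{B},A_i)$, by the same reasoning as in Lemma \ref{Lem:Extend by 0 if not in set}, now using the one-dimensional cokernel. So the cokernel is $P_i'/\operatorname{Rad}=S_i'$ by Proposition \ref{Prop:FMS 6.3} and Lemma \ref{Lem:vspIso}.

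For the inductive step I would phrase splicing as a mapping cone. Suppose $A_j$ ($j\notin I$) sits in degree $p$ of $L^\bullet$. By Lemma \ref{Lem:Extend by 0 if not in set}, $\Hom_R(\mathbb{B},K_j^\bullet)\to P_j'\to 0$ is exact, so it is a resolution $Q^\bullet\to P_j'$. Replacing the summand $P_j'$ in degree $p$ by $Q^\bullet$, shifted so that its degree-zero term sits in degree $p$, gives a complex quasi-isomorphic to the original. Concretely, the new complex is the cone of a lift of the chain map from the truncated resolution, taken in degrees $\ge p+1$, and the standard horseshoe/cone argument shows the homology is unchanged. The point needing care is that the differentials into and out of $A_j$ lift. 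The outgoing map $A_j\to(\text{degree } p-1)$ composes through the augmentation $K_j^0=A_j$, so it is fine. The incoming map from degree $p+1$ into $P_j'$ must lift through $Q^0\to P_j'$ and be compatible; since $Q^\bullet$ consists of projectives, this lifting is the comparison theorem. If $p=0$ this cannot occur, since $A_i$ has $i\in I$. Hence homology is preserved at each step, and after finitely many steps $\Hom_R(\mathbb{B},K_i^\dagger)$ is exact outside degree $0$ with cokernel $S_i'$.

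Finally, every term of $K_i^\dagger$ is a sum of $A_k$ with $k\in I$, by lockability. So each $\Hom_R(\mathbb{B},A_k)=P_k'$ is projective, and the augmented complex is a finite projective resolution of $S_i'$. The main obstacle I expect is the inductive step: making precise that ``splicing'' in Definition \ref{Def:Substitution} is the cone construction with compatible differentials. In particular the spliced maps must really be the lifted ones, which are signed inclusions, rather than arbitrary choices. I would first try to verify directly that the composite $K_j^1\to A_j\to L^{p-1}$ and the compatibility of degree-$(p+1)$ maps hold at the level of conic modules, using that all maps are signed inclusions of monomial submodules of $k[M]$.
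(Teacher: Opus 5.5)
Your route is the paper's: the base case comes from the Acyclicity Lemma, with the degree-$0$ cokernel identified as $P_i'/\operatorname{Rad}=S_i'$. Each substitution then splices in the resolution $\Hom_R(\mathbb{B},K_j^\bullet)\to P_j'\to 0$ of Lemma~\ref{Lem:Extend by 0 if not in set}. The paper only asserts that substitution ``corresponds to splicing'' these resolutions, so you are already more explicit than it is.

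\textbf{The gap.} The one justification you give at the crux is wrong, for two reasons.
\begin{itemize}
\item $Q^\bullet=\Hom_R(\mathbb{B},K_j^{\geq 1})$ is in general not a complex of projectives. $K_j^\bullet$ may contain $A_{j'}$ with $j'\notin I$, which is exactly why substitutions have to be iterated.
\item Even if $Q^\bullet$ were projective, the comparison theorem needs projectivity of the \emph{sources}, not the target. You must lift $\Hom_R(\mathbb{B},L^{p+1})\to P_j'$ through the surjection $Q^0\to P_j'$. You then need correction maps $\Hom_R(\mathbb{B},L^{p+1+q})\to Q^{q}$ so that the spliced differential squares to zero.
\end{itemize}
At intermediate stages $L^{p+1}$ can still contain $A_k$ with $k\notin I$. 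So at that point you do not yet know that the spliced object is even a complex.

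\textbf{First repair (conic-module level).} Work with $M$-graded $R$-modules and apply the Acyclicity Lemma to \emph{every} conic module $A_k$, not only $k\in I$. Then $\Hom_R(A_k,K_j^\bullet)$ is exact in positive degrees. The cokernel of $\Hom_R(A_k,K_j^1)\to\Hom_R(A_k,A_j)$ is zero unless $A_k\simeq A_j$, in which case it is spanned by the isomorphism. This produces the lift $h_0$ and all the higher corrections, provided no component of the incoming differential onto the summand being substituted is an isomorphism. Consider the map from the spliced complex to $L^\bullet$ that is the identity on old terms, the augmentation on $K_j^1$, and zero on $K_j^{\geq 2}$. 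After applying $\Hom_R(\mathbb{B},-)$ it becomes a surjection with acyclic kernel, by Lemma~\ref{Lem:Extend by 0 if not in set}, hence a quasi-isomorphism.

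The proviso holds for the original inclusions of open conic modules. It is not automatic for lifted maps, which are linear combinations of inclusions rather than signed inclusions. Example~\ref{Exa:710FMS} shows this concretely. The quiver of $\End_R(A_0\oplus A_1)$ (the conifold) has no loops, so $\Ext^1(S_0',S_0')=0$. Yet the spliced $K_0^{\dagger}$ has $A_0^{\oplus 4}$ in both degrees $1$ and $2$. Hence the lift of $A_0^{\oplus 4}\to A_2^{\oplus 2}$ must have isomorphism components.

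\textbf{Second repair (more robust).} Use lockability as a well-foundedness statement. No non-$I$ index reached from $I$ lies on a cycle, since otherwise the substitutions could not terminate.
\begin{itemize}
\item By induction on depth, each such $P_j'$ has a finite projective resolution whose terms are $\Hom_R(\mathbb{B},-)$ of the fully substituted $K_j^{\geq 1}$.
\item Then apply the standard totalization fact: a bounded complex whose terms have finite projective resolutions is quasi-isomorphic to a projective complex whose terms are the direct sums of those resolutions. It is built by iterated cones, where the comparison theorem genuinely applies because the sources are projective.
\end{itemize}
This yields a projective resolution of $S_i'$ with exactly the terms of $\Hom_R(\mathbb{B},K_i^{\dagger})$, which is all the lemma is used for.
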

\begin{proof}
    By the Acyclicity Lemma \ref{Lem:Acyc}, the complex $\Hom_R(\mathbb{B},K_i^\bullet)=\bigoplus_{i'\in I}\Hom_R(A_{i'},K_i^\bullet)$ is exact apart from perhaps the penultimate spot, where the homology is one-dimensional in homological degree 0, coming from the direct summand $\Hom_R(A_i,K_i^\bullet)$. Note that the kernel of the projection map $P_i'\rightarrow S_i'$ is the radical of $P_i'$ and thus coincides with the homology appearing in degree 0. Thus extending by $S_i'$ gives an exact complex $\Hom_R(\mathbb{B},K_i^\bullet)\rightarrow S_i'\rightarrow 0$. As the complexes $\Hom_R(\mathbb{B},K_j^\bullet)\rightarrow 0$ for $j\not\in I$ are exact, substitution by $K_j^\bullet$ in $K_i^\bullet$ corresponds to splicing the exact complexes resolving $P_j'$ by Lemma \ref{Lem:Extend by 0 if not in set}. The substitution process does this until no non-projective direct summand remains in any degree $>0$ and thus we obtain a projective resolution of $S_i'$.
\end{proof}

The following Lemma helps us compute the Ext groups between the graded simples, which is a useful tool in establishing minimal projective resolutions. Essentially, this is the version of \cite[Proposition 6.6]{FMS19} for incomplete direct sums of conic modules and the proof runs in parallel.
\begin{lemma}
    \label{Lem:DimExt}
    For any pair $i,j\in I$ we have
    \[
    \dim(\Ext^l_{\End_R(\mathbb{B})}(S'_i,S'_j))=\#\text{occurences of } A_j \text{ in }K_i^{\dagger,l}.
    \]
\end{lemma}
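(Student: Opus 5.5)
The approach is the standard one: compute $\Ext$ from a \emph{minimal} projective resolution. By Lemma \ref{Lem:Substituted is proj resn}, $\Hom_R(\mathbb{B},K_i^\dagger)$ is a graded projective resolution of $S_i'$. Its $l$-th term is $\bigoplus_{A_k\in K_i^{\dagger,l}}P_k'$, with one summand $P_k'$ for each occurrence of $A_k$ in $K_i^{\dagger,l}$, and every such $k$ lies in $I$ because $I$ is lockable.

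The first step is minimality. It suffices to show that every differential of $\Hom_R(\mathbb{B},K_i^\dagger)$ has image in the radical of the next term. Equivalently, every component of the differential of $K_i^\dagger$ is a radical homomorphism $A_k\to A_{k'}$ between conic modules, i.e. not an isomorphism. For the original complexes $K_\Delta^\bullet$ the components are signed inclusions $\opA_{\tau_i}\hookrightarrow\opA_{\tau_{i-1}}$ of open conic modules associated to distinct cells. The argument of \cite[Proposition 6.6]{FMS19} shows these inclusions are proper and hence radical, since inclusions of distinct $M$-graded submodules in the same $M$-degree cannot be isomorphisms of the conic modules involved.

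For the spliced complexes, a new component arises as a composite: a component of $K_i^\bullet$ into an occurrence of $A_j$ with $j\notin I$, followed by a component of $K_j^\bullet$ out of the corresponding term, or the analogous composite at the other end of the splice. A composite with a radical factor is again radical, since the radical is a two-sided ideal in the graded category of conic modules. Then $\Hom_{\End_R(\mathbb{B})}(-,S_j')$ applied to the resolution has zero differentials. This is the key point; I expect it to be the main obstacle, in particular making precise that the splicing maps (the surjection onto $\operatorname{Rad}(\mathbb{B},A_j)=P_j'$ and the inclusion) compose into radical maps.

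Granting minimality, $\Ext^l(S_i',S_j')$ equals $\Hom_{\End_R(\mathbb{B})}(\Hom_R(\mathbb{B},K_i^{\dagger,l}),S_j')=\bigoplus_{A_k\in K_i^{\dagger,l}}\Hom(P_k',S_j')$. By Lemma \ref{Lem:vspIso}, each summand is one-dimensional if $A_k\simeq A_j$ and zero otherwise. Since the $A_k$, $k\in I$, are distinct isomorphism classes, $A_k\simeq A_j$ holds exactly when $k=j$. Summing over the summands counts the occurrences of $A_j$ in $K_i^{\dagger,l}$, which gives the claimed dimension.
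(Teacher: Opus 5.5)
Your minimality step is where the argument breaks, and it cannot be repaired: it is false in general.

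\textbf{Where the argument fails.} Only one end of a splice consists of composites. Inserting $K_j^\bullet$ for an occurrence of $A_j$ in $K_i^{l}$ produces maps $K_j^1\to A_j\to K_i^{l-1}$. These are indeed radical, since they factor through the indecomposable $A_j$ with $j\notin I$. At the other end there is nothing to compose; note that $K_j^\bullet$ has no components \emph{out of} $A_j=K_j^0$. You have to \emph{lift} the component $K_i^{l+1}\to A_j$ through $K_j^1\to A_j$ (possible by Lemma~\ref{Lem:Acyc}) and then add homotopy terms $K_i^{l+m}\to K_j^m$. Such lifts can be isomorphisms.

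\textbf{Concrete counterexample.} Take Example~\ref{Exa:710FMS}.
\begin{itemize}
\item The codimension-one cell $\{\langle x,u_3\rangle=0\}$ of $\Delta_0$ has $\opA_\tau=A_{v_-}$ exactly.
\item The two codimension-two cells in its boundary have open conic modules $x^{(0,1,1)}R$ and $x^{(0,0,1)}R$.
\item These are literally the two terms of $K_{v_-}^1$, namely the cells $\{x_2=0\}$ and $\{x_3=x_2\}$ of $\Delta_{v_-}$.
\end{itemize}
Neither of these submodules is contained in the other, so the degree-zero lift is forced to be $\pm\mathrm{id}$. The same happens for the translate $x^{(1,0,0)}A_{v_-}$. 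Hence $\Hom_R(\mathbb{B},K_0^{\dagger})$ has four isomorphism components $P_0'\xrightarrow{\sim}P_0'$ from degree $2$ to degree $1$. Cancelling them leaves the familiar conifold resolution $P_0'\to (P_1')^{\oplus 2}\to (P_1')^{\oplus 2}\to P_0'$.

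\textbf{Consequence for the statement.} Therefore $\Ext^1(S_0',S_0')=0$, since the conifold quiver has no loops. Yet $A_0$ occurs four times in $K_0^{\dagger,1}$. So the statement, read with the naive count of occurrences in the spliced complex, is false, and no argument along your lines can prove it. The paper's one-line appeal to \cite[Proposition 6.6]{FMS19} has the same gap. That argument uses that every component is an inclusion between distinct cells of a single chamber, hence proper and radical, and this is exactly the property splicing destroys.

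\textbf{What your argument does prove.} For any projective resolution you get $\dim\Ext^l(S_i',S_j')\le\#\{\text{occurrences of }A_j\text{ in }K_i^{\dagger,l}\}$. Equality holds once $K_i^{\dagger}$ is replaced by the complex obtained by cancelling all isomorphism components, which yields the minimal projective resolution of $S_i'$. For that reduced complex your closing count via Lemma~\ref{Lem:vspIso} is correct.
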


\begin{proof}
    This is essentially a variation of \cite[Proposition 6.6]{FMS19} for incomplete direct sums of conic modules and the proof is analogous.
\end{proof}

We immediately see that the complexes $K_i^{\dagger,\bullet}$ generate minimal projective resolutions of $S_i'$.
\begin{corollary}
\label{Cor:Substituted is minimal}
    For $i\in I$, $K_i^\dagger$ is a minimal projective resolution of $S_i'$.
\end{corollary}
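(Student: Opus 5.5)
We need to prove that for $i\in I$, the complex $\Hom_R(\mathbb{B},K_i^{\dagger,\bullet})$, augmented by $S_i'$, is a minimal projective resolution of $S_i'$. By Lemma \ref{Lem:Substituted is proj resn} it is already a projective resolution, so only minimality remains. We use the standard criterion for graded modules over $\Lambda'=\End_R(\mathbb{B})$: a graded projective resolution $P_\bullet\to S_i'$ is minimal if and only if every differential $P_{l+1}\to P_l$ has image contained in the radical of $P_l$. Equivalently, applying $\Hom_{\Lambda'}(-,S_j')$ gives a complex with zero differentials, for every $j\in I$.

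The most direct route is to compare dimensions. By Lemma \ref{Lem:vspIso}, $\Hom_{\Lambda'}(P_k',S_j')$ is one-dimensional if $A_k\simeq A_j$ and zero otherwise. Hence the $l$-th term of $\Hom_{\Lambda'}(\Hom_R(\mathbb{B},K_i^{\dagger,\bullet}),S_j')$ has dimension equal to the number of occurrences of $A_j$ in $K_i^{\dagger,l}$. The cohomology of this complex computes $\Ext^l_{\Lambda'}(S_i',S_j')$, since we have a projective resolution. By Lemma \ref{Lem:DimExt}, this cohomology has the same dimension as the $l$-th term itself.

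In a finite-dimensional (graded-piecewise) complex of vector spaces, cohomology has the same dimension as the chain group in every degree exactly when all differentials vanish. So all maps in $\Hom_{\Lambda'}(P_\bullet,S_j')$ are zero for every $j$. Since the simple top of each $P_k'$ is $S_k'$, a map $P_{l+1}\to P_l$ whose composition with every projection $P_l\to S_j'$-component vanishes lands in $\operatorname{Rad}(P_l)$. This is exactly minimality.

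The main subtlety is logical rather than computational. Lemma \ref{Lem:DimExt} is stated with a proof "analogous" to \cite[Proposition 6.6]{FMS19}, and that proof itself may rely on knowing the differentials are radical. We therefore check this directly as a backup. The differentials of $K_i^\bullet$ and $K_j^\bullet$ are signed sums of inclusions of open conic modules between cells of different codimension, and such inclusions are never isomorphisms. After substitution, the new differentials are compositions and splicings of these maps. At the splice point, the map $K_j^{1}\to A_j$ composed with the incoming map into $A_j$ is a composite of radical maps, hence radical. The map into $A_j$'s predecessor is also non-isomorphic, since it goes from a summand of $K_j^1$ to a module in a different degree, or is a proper inclusion.

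Grading is what keeps this direct check clean. Each entry of a differential is a graded map between conic modules that is a strict inclusion, or a sum of such, so after applying $\Hom_R(\mathbb{B},-)$ every entry lies in $\operatorname{Rad}(A_k,A_{k'})$. Because this holds for every entry, minimality follows without relying on Lemma \ref{Lem:DimExt} at all.
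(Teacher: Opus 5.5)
Your main route uses Lemma \ref{Lem:DimExt} in \emph{every} degree to conclude that all differentials of $\Hom_{\Lambda'}(\Hom_R(\mathbb{B},K_i^{\dagger,\bullet}),S_j')$ vanish, i.e.\ that $K_i^\dagger$ is minimal in the radical sense. That conclusion is stronger than what the paper proves, and it already fails in Example \ref{Exa:710FMS}.

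There $\Lambda'=\End_R(A_0\oplus A_1)$ is the conifold NCCR: two arrows in each direction and no loops. Hence $\Ext^1_{\Lambda'}(S_0',S_0')=0$, while $K_0^{\dagger,1}$ contains $A_0^{\oplus 4}$, so the degree-$1$ instance of Lemma \ref{Lem:DimExt} fails for $K_0^\dagger$.

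You can see the cancellation directly. Order $u_1,\dots,u_4$ as in that example. The edge of $\Delta_{v_0}$ where $\langle x,u_2\rangle=\langle x,u_3\rangle=0$ and the facet of $\Delta_{v_-}$ where $\langle x,u_2\rangle=0$ have literally the same open conic module. It is the span of the $x^m$ with $\langle m,u_2\rangle\ge 1$, $\langle m,u_3\rangle\ge 1$ and $\langle m,u_1\rangle,\langle m,u_4\rangle\ge 0$. When $K_{v_-}^\bullet$ is spliced in, the inclusion of that edge's module into the facet module $A_{v_-}$ must be lifted through $K_{v_-}^1\to A_{v_-}$. In $M$-degree zero this lift is forced to be $\pm$ an identity map. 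The same happens for the other three edges, so the four degree-$2$ copies of $A_0$ cancel against the four degree-$1$ copies. What remains is $A_0\to A_1^{\oplus 2}\to A_1^{\oplus 2}\to A_0$.

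This is also exactly where your backup check breaks. Splicing requires lifting $K^{l+1}\to A_j$ through $K_j^1\to A_j$, together with further correction maps $K^{l+2}\to K_j^2$, and so on. These lifts are not cell inclusions inside a single chamber. Sitting in a different homological degree does not stop a component from being an isomorphism.

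What the paper proves is minimality of \emph{length}. This is also all that Proposition \ref{Prop:LockGivesNCR} later uses, through $\operatorname{pd}_{\Lambda'}(S_i')=\operatorname{len}(K_i^\dagger)$. The argument is: if $A_k$ occurs in the top degree $d$ of $K_i^{\dagger,\bullet}$, then $\Ext^d_{\Lambda'}(S_i',S_k')\neq 0$, so $S_i'$ has no shorter projective resolution. Only this top-degree statement is needed. Equivalently, the differential leaving the top term of $\Hom_R(\mathbb{B},K_i^{\dagger,\bullet})$ must land in the radical. That is the single instance of Lemma \ref{Lem:DimExt} the paper relies on, and your argument should target that, not radicality of every differential.
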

\begin{proof}
    Let $A_k$ be a conic module appearing in the highest degree of $K_i^{\dagger,\bullet}$. Denote by $d$ the length of $\Hom_R(\mathbb{B},K_i^{\dagger,\bullet})$. Then \[
    \Ext^{d}_{\End_R(\mathbb{B})}(S_i',S_k')\neq 0.
    \]
    Hence, $S_i'$ cannot have a projective resolution of length shorter than $d$, and $\Hom_R(\mathbb{B},K_i^{\dagger,\bullet})$ is indeed a minimal projective resolution.
\end{proof}

With this, we can prove the first direction of Theorem \ref{Thm:NCRiffLock}.

\begin{proposition}
    \label{Prop:LockGivesNCR}
    Let $I$ be a lockable set and let $\mathbb{B}=\bigoplus_{i\in I}A_i$. Then $\Lambda'=\End_R(\mathbb{B})$ is an NCR. If $I$ is incredulous, $\Lambda'$ is an NCCR.
\end{proposition}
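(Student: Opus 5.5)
The plan is to assemble the lemmas just established. First, $\mathbb{B}$ is reflexive: each conic module is Cohen-Macaulay, hence reflexive over the normal domain $R$, and a finite direct sum of reflexive modules is reflexive. Second, $\Lambda'=\End_R(\mathbb{B})$ must have finite global dimension. We reduce this to the graded simples. By Proposition \ref{Prop:FMS 6.3}, every graded simple is isomorphic to some $S_i'$ with $i\in I$. Since $\Lambda'$ is a finitely generated $M$-graded algebra over $R$, with $R$ positively graded in a suitable sense, the global dimension is the supremum of the projective dimensions of the graded simples. This is the same graded Nakayama-type reduction used in \cite{FMS19}, and we invoke it in the same way.

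For each $i\in I$, lockability gives the substituted complex $K_i^\dagger$. By Lemma \ref{Lem:SubstitutionIsUnique} it is well defined, and every term is a sum of $A_k$ with $k\in I$. Hence every term of $\Hom_R(\mathbb{B},K_i^{\dagger,\bullet})$ is a finite sum of the projectives $P_k'$. Lemma \ref{Lem:Substituted is proj resn} shows that this complex, augmented by $S_i'$, is a projective resolution of $S_i'$.

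Its length is finite, and this step needs a genuine check. Each $K_j^\bullet$ has length at most $\dim\sigma$. Lockability asserts that only finitely many substitutions are performed, and each substitution replaces a module by a bounded complex. So after finitely many steps the complex is bounded, and $\operatorname{pd}S_i'<\infty$. Thus $\gldim\Lambda'<\infty$ and $\Lambda'$ is an NCR.

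For crepancy, assume $I$ is incredulous, so every $K_i^\dagger$ has length $\dim\sigma=d$. By Corollary \ref{Cor:Substituted is minimal} the resolutions are minimal, hence $\operatorname{pd}S_i'=d$ for all $i\in I$. We need $\Lambda'$ to be a Cohen-Macaulay $R$-module. I would use the criterion recalled in the NCCR background: since $R$ is a normal Cohen-Macaulay (Gorenstein) domain of dimension $d$, $\End_R(M)$ is an NCCR as soon as it has finite global dimension and every (graded) simple has projective dimension exactly $d$. Equivalently, one can argue exactly as in \cite[Theorem 6.5 and §7]{FMS19}. There, equality of all simple projective dimensions with $\dim R$ gives, via the graded Auslander–Buchsbaum formula and the duality between $\Ext^d_{\Lambda'}(S_i',\Lambda')$ and $\Lambda'$, that $\Lambda'$ is Cohen-Macaulay.

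The main obstacle is making this last implication rigorous in the graded setting for an incomplete sum. I would transfer the argument of \cite{FMS19} verbatim: the only input it uses is the shape of the minimal resolutions, which we now have from Lemmas \ref{Lem:Substituted is proj resn}, \ref{Lem:DimExt} and Corollary \ref{Cor:Substituted is minimal}. Alternatively, in the Gorenstein case, I would sidestep the issue. I would identify $\Lambda'$ with the endomorphism algebra of a partial tilting object via Proposition \ref{Prop:BTisConic}, and then apply Theorem \ref{Thm:ParttiltingGorCone} once finite global dimension is known.
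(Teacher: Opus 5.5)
Your proposal is correct and is essentially the paper's argument. The substituted complexes $\Hom_R(\mathbb{B},K_i^{\dagger,\bullet})$ give finite minimal projective resolutions of the finitely many graded simples $S_i'$, so $\gldim\Lambda'<\infty$; in the incredulous case every $\operatorname{pd}S_i'=\dim\sigma$, which gives $\Lambda'\in\operatorname{CM}R$ by the criterion recalled in the NCCR background. One caution: your fallback via Proposition \ref{Prop:BTisConic} and Theorem \ref{Thm:ParttiltingGorCone} would not work in general. The classes $-d(v)$ with $A_v\in I$ need not lie in a single chamber of the secondary fan (for instance, the incredulous set $\{P_0,P_1,P_3,P_5\}$ in Example \ref{Exa:NonAlmlSimInc} does not), so the corresponding line bundles need not form a partial tilting object on one stack $\mathcal{X}_\Sigma$. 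Your main line does not need this fallback.
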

\begin{proof}
    The complexes $\Hom_R(\mathbb{B},K_i^\dagger)$ give projective resolutions of any graded simples $S_i'$ ($i\in I$) of $\End_R(\mathbb{B})$ and so as the set is lockable we have $\operatorname{pd}_{\Lambda'}(S_i')=\operatorname{len}(K_i^\dagger)<\infty$. There is a finite collection of these and so
    we have $\gldim\Lambda'<\infty$. This implies that $\Lambda'$ is an NCR. Furthermore, if the set is incredulous, the projective dimension of all simples agrees with the Krull dimension of $R$ and thus $\Lambda'\in \operatorname{CM}R$, making it an NCCR of $R$.
\end{proof}

We now proceed to prove the second half of Theorem \ref{Thm:NCRiffLock}.
\begin{lemma}
    \label{Lem:NoPureDegree1Loop}
    For $\dim \sigma\ge 2$, there is no complex $K_i$ of length 1 or length 2 (i.e. of the form $\bigoplus_{\tau} \opA_\tau\rightarrow A_i$). 
    Hence there does not exist a collection of conic modules $\{A_l\}_{l\in T}$ for some subset $T\subset S$ such that each $K_l^\bullet$ is of length 0 and there is a loop of the modules $A_l$.
\end{lemma}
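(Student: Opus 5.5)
The plan is to show directly that, for $\dim\sigma=n\ge 2$, every chamber of constancy $\Delta$ contains an open cell of codimension at least $2$. Then every complex $K_i^\bullet$ is nonzero in some homological degree $\ge 2$, so it cannot have the form $\bigoplus_\tau\opA_\tau\rightarrow A_i$. The second assertion should then be a formal consequence, as explained at the end.

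Fix $\Delta$ and write $c_\rho=\lceil\langle v,u_\rho\rangle\rceil$ for $v\in\Delta$. By Proposition \ref{Prop:ComputeChamofCon},
\[
\Delta=\{x\in M_\R\mid c_\rho-1<\langle x,u_\rho\rangle\le c_\rho \text{ for all }\rho\in\sigma(1)\}.
\]
The cells of $\Delta$ are the relatively open faces of $\overline{\Delta}$ on which no lower constraint $\langle x,u_\rho\rangle= c_\rho-1$ is tight. The codimension of such a cell equals the rank of the set of $u_\rho$ whose upper constraint is tight there. Since $\sigma$ is strongly convex and full-dimensional, the $u_\rho$ span $N_\R$, so $\Delta$ is bounded. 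It therefore suffices to produce a point $y\in\Delta$ at which two upper constraints, for distinct rays $\rho_1\neq\rho_2$, are tight. This works because distinct rays of a strongly convex cone have non-parallel primitive generators, so the two tight constraints already have rank $2$.

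The construction has two steps.

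\textbf{Step 1.} Start at any $x_0\in\Delta$ and choose $m\in\overset{\circ}{\sigma^\vee}$. Along the ray $x_0+tm$, $t\ge0$, every pairing $\langle x,u_\rho\rangle$ strictly increases, so no lower constraint can become tight. By boundedness the ray must leave $\Delta$, and it does so at the first $t$ where some upper constraint becomes tight, say for $\rho_1$. Call this point $x_1$; it lies in $\Delta$.

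\textbf{Step 2.} Since $\rho_1$ is a ray of $\sigma$, its dual face $\rho_1^\perp\cap\sigma^\vee$ is a facet of $\sigma^\vee$. Pick $m'$ in the relative interior of this facet, so that $\langle m',u_{\rho_1}\rangle=0$ and $\langle m',u_\rho\rangle>0$ for all $\rho\neq\rho_1$; such $\rho$ exist because $n\ge2$. Move along $x_1+tm'$. The $\rho_1$-constraint stays tight and all other pairings strictly increase, so again no lower constraint becomes tight. The path stays in a bounded set, so it must hit the upper constraint of some $\rho_2\neq\rho_1$. The resulting point $y\in\Delta$ lies on a cell of codimension $\ge 2$.

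The step I expect to need the most care is checking that moving along $m'$ in Step 2 genuinely stays inside $\Delta$ up to that moment, rather than exiting through an upper constraint that was already tight at $x_1$. This is handled because any such constraint has $\langle m',u_\rho\rangle>0$ for $\rho\neq\rho_1$, so it was met simultaneously with $\rho_1$ in Step 1. In that case we may simply take $\rho_2$ to be that ray and $y=x_1$.

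For the ``hence'' part, I would argue as follows. A loop of modules $A_l$, $l\in T$, in which each complex contributes only the degree-$0$ term and degree-$1$ terms would require some $K_l^\bullet$ of length at most $1$. The first part rules this out for every chamber. Consequently, any cyclic chain of substitutions necessarily passes through degree $\ge2$ and cannot close up in a loop of such short complexes.
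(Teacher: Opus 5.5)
Your argument is the paper's own: push along an interior point of $\sigma^\vee$ until the upper constraint of some $\rho_1$ becomes tight, then push along a relative-interior point of the facet $\rho_1^\perp\cap\sigma^\vee$ until a second upper constraint becomes tight, and conclude from the linear independence of two distinct $u_\rho$ (your direct construction, including the case $y=x_1$, is a cleaner version of the paper's argument by contradiction). One caution about your last sentence: the first part only shows that the complexes in such a loop cannot all have length at most $1$, not that every cyclic chain must use an appearance in degree $\ge 2$ --- in Example~\ref{Exa:Hexagon}, $A_2$ occurs in degree $1$ of $K_1^\bullet$ and $A_1$ occurs in degree $1$ of $K_2^\bullet$.
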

\begin{proof}
Fix a chamber of constancy $\Delta$. We will first show that there exists some open cell of codimension $\ge 2$. $\Delta$ is defined via the set of inequalities $d_\rho-1<\langle x,u_\rho\rangle \le d_\rho$ where $d_\rho=\lceil\langle v,u_\rho\rangle \rceil$ for some element $v\in\Delta$. An open cell $\tau$ is the locus such that subset of these inequalities are strict. Thus, we need to show that there always needs to be some inequality that can be made strict, i.e. that the set of (in)-equalities given by \[\begin{cases}
    d_\rho-1<\langle x,u_\rho\rangle <d_\rho, \text{ for }\rho\neq \rho_1\\
    d_{\rho_1}=\langle x,u_{\rho_1}\rangle
\end{cases}\]
has a solution for some $\rho_1\in \sigma(1)$. Note that $\sigma^\vee$ is full dimensional, and so there exists a point $p\in \operatorname{Int(}\sigma^\vee)$, i.e. an element $p\in M_\R$ such that $\langle p, u_\rho\rangle>0$ for all $\rho\in \sigma(1)$. Pick $v\in \Delta$. If any inequality is strict, we have the existence of an open cell. So suppose not, i.e. $d_\rho-1<\langle v,u_\rho\rangle<d_\rho$ for all $\rho\in \sigma(1)$. 
Using the intermediate value theorem we see that there exists $\varepsilon>0$ such that $d_\rho-1< \langle v+\epsilon p, u_\rho\rangle\le d_\rho$ but with some strict equalities. 
Thus, there is always some open cell of a chamber of constancy $\Delta$. To show there has to be an open cell of codimension $\ge 2$, suppose all open cells have codimension 1 and one of them, $\tau_1$, is given by $\langle x,u_{\rho_1}\rangle =1$ for $\rho_1\in \sigma(1)$. 
Consider the space $L_{\rho_1}=\{y\in M_\R\mid\langle y,u_{\rho_1}\rangle=0\}$. Since $u_{\rho_1}$ is a minimal ray generator of $\sigma$, $\sigma^\vee\cap L_{u_{\rho_1}}$ is a codimension 1 facet of $\sigma^\vee$ with inward pointing normal $u_{\rho_1}$. Then there exists an element in the interior of that facet, i.e. $q\in \sigma^\vee\cap L_{\rho_1}$ with $\langle p,u_\rho\rangle>0$ for $\rho\neq \rho_1$ and $\langle p,u_{\rho_1}\rangle=0$. Let $w\in \tau_1$. Then for $\varepsilon>0$, $\langle w+\varepsilon q,u_\rho\rangle>\langle w,u_\rho\rangle$ if $\rho\neq \rho_1$ and $\langle w+\varepsilon q,u_{\rho_1}\rangle=\langle w,u_{\rho_1}\rangle$. Thus, another application of the intermediate value theorem gives us a value for $\varepsilon$ such that $d_\rho-1<\langle w+\varepsilon q,u_\rho\rangle\le d_\rho$ with equality for $\rho=\rho_1$ and a non-empty collection of further $\rho\in \sigma(1)$, and as any two $u_\rho$ are linearly independent, the open cell needs to at least have codimension 2. 
\end{proof}

 To prove that finite global dimension implies lockability, we use the language of $\mathcal{I}$-minimal resolutions, and refer the reader to \cite{DFI16} for a more detailed approach. 
\begin{definition}
    Let $\mathcal{I}$ be a full subcategory of $\operatorname{CM}R$. Fix an exact sequence $0\rightarrow Z\rightarrow Y\xrightarrow X$ in $\operatorname{CM}R$. We say $f$ is a \newterm{right $\mathcal{I}$-approximation} if $Y\in \mathcal{I}$ and if \[
    \mathcal{I}(-,Y)\xrightarrow{\Hom_R(f)} \mathcal{I}(-,X)\rightarrow 0
    \] 
    is exact. A right $\mathcal{I}$-approximation is said to be \newterm{minimal} if there is no non-zero direct summand of $Y$ mapped to zero under $f$. An exact sequence $F:\dots\xrightarrow{f_2}Y_1\xrightarrow{f_1}Y_0\xrightarrow{f_0} X$ is called an $\mathcal{I}$-resolution of $X$ if each $Y_i\in \mathcal{I}$ and $\mathcal{I}(-,F)$ is exact on $\mathcal{I}$. Such a resolution is \newterm{minimal} if each $f_i$ is. In the following, we may choose to drop the word \textit{right} from the notation.
\end{definition}

\begin{proposition}
    \label{Prop:fingldim implies lockable}
    If $\Lambda'=\End_R(\mathbb{B})$ has finite global dimension, then $I$ is lockable.
\end{proposition}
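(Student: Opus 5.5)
We argue by contrapositive. Suppose $I$ is not lockable, and we show that some graded simple $S_i'$, $i\in I$, has infinite projective dimension over $\Lambda'$. Let $\mathcal{I}=\operatorname{add}\mathbb{B}\subset\operatorname{CM}R$. Via the equivalence $\Hom_R(\mathbb{B},-)$ between $\operatorname{add}\mathbb{B}$ and projective $\Lambda'$-modules, a minimal graded projective resolution of $S_i'$ is the image of a minimal $\mathcal{I}$-resolution of $A_i$, extended by $S_i'$ as in Lemma \ref{Lem:Substituted is proj resn}. So it suffices to understand minimal $\mathcal{I}$-resolutions of the conic modules $A_j$ with $j\notin I$, and of the radical of $P_i'$.

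First, we check that the substitution process always produces a (possibly infinite) minimal $\mathcal{I}$-resolution. By Lemma \ref{Lem:Extend by 0 if not in set}, for $j\notin I$ the complex $\Hom_R(\mathbb{B},K_j^\bullet)\to 0$ is exact. Hence splicing $K_j^\bullet$ in place of $A_j$ keeps $\mathcal{I}(-,\cdot)$-exactness, exactly as in the proof of Lemma \ref{Lem:Substituted is proj resn}. Iterating substitutions by indices in $I^c$ indefinitely gives, in the limit, a complex $K_i^{\dagger,\bullet}$ with only $A_k$, $k\in I$, in each degree. This is well defined degreewise by the argument of Lemma \ref{Lem:SubstitutionIsUnique}, provided only finitely many substitutions affect each fixed degree. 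Applying $\Hom_R(\mathbb{B},-)$ yields a graded projective resolution of $S_i'$. Minimality follows as in Lemma \ref{Lem:DimExt} and Corollary \ref{Cor:Substituted is minimal}: all maps are radical, being signed sums of inclusions of non-isomorphic modules (or non-unit multiples), so the dimension of $\Ext^l(S_i',S_k')$ counts occurrences of $A_k$ in degree $l$.

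Next, we show that non-lockability forces this resolution to be infinite. If $I$ is not lockable, then for some $i\in I$ no finite sequence of substitutions eliminates all $A_j$ with $j\notin I$. Since each substitution replaces $A_j$ in degree $l$ by terms in degrees $\ge l$, failure to terminate means some chain $j_1\to j_2\to\cdots$ of indices in $I^c$ recurs. Here $A_{j_{t+1}}$ appears in $K_{j_t}^\bullet$, so by finiteness of $I^c$ there is a loop. The key point is that the degree strictly increases around such a loop. A conic module appearing in degree $0$ of $K_j^\bullet$ is only $A_j$ itself, so any other module in the loop contributes degree at least $1$. Lemma \ref{Lem:NoPureDegree1Loop} excludes degenerate loops in which all modules reappear in degree $0$ after re-indexing, which is exactly its stated purpose. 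Consequently a loop contributes a positive shift, and iterating it produces modules $A_k$, $k\in I$, in arbitrarily high degrees. This is because the terminal complexes for $j\notin I$ are nonempty in positive degree when $\dim\sigma\ge2$, again by Lemma \ref{Lem:NoPureDegree1Loop}. By Lemma \ref{Lem:DimExt}, $\Ext^l(S_i',S_k')\neq0$ for unbounded $l$, so $\operatorname{pd}S_i'=\infty$, contradicting $\gldim\Lambda'<\infty$.

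The main obstacle is the second step, namely ensuring that a non-terminating substitution genuinely yields nonzero terms in unbounded degrees. Two things could go wrong: contributions could cancel, or the loop could stay in a bounded degree. I would handle cancellation by working at the level of minimality, since minimal $\mathcal{I}$-resolutions are unique up to isomorphism (\cite{DFI16}), so terms cannot cancel. The bounded-degree case is handled by the degree-zero argument above. If that argument turns out to be too weak, the fallback is to track $M$-degrees. Each substitution strictly decreases the $M$-graded position within $\sigma^\vee$, which rules out loops that are trivial in homological degree.
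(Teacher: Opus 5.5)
Your argument is the paper's, transposed. The paper takes a loop $j_1\to\cdots\to j_m\to j_1$ in $I^c$ and says the minimal $\operatorname{add}(\mathbb{B})$-resolution of $A_{j_1}$ contains a shifted copy of itself. That resolution is therefore infinite, contradicting $\operatorname{pd}_{\Lambda'}\Hom_R(\mathbb{B},A_{j_1})<\infty$. You instead push the loop into the resolution of a simple $S_i'$ with $i\in I$. This costs you a step the paper gets for free: you must show that modules $A_k$ with $k\in I$ actually come out of the loop in unbounded degrees. Your reason, that the $K_j^\bullet$ are nonempty in positive degree, does not give this, since their positive-degree terms could a priori all lie in $I^c$. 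What you need is that the piece spawned by an occurrence of $A_{j_1}$ resolves the nonzero module $\Hom_R(\mathbb{B},A_{j_1})$.

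More seriously, two steps you take as given fail in general.

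First, the positive shift. As you note yourself, splicing at an occurrence in degree $D$ places $K_j^e$ in degree $D+e-1$, so a loop through degree-one appearances has shift $0$. Lemma~\ref{Lem:NoPureDegree1Loop} only proves that every chamber has a cell of codimension at least $2$, and such loops do occur. In Example~\ref{Exa:710FMS}, $A_1$ lies in $K_0^1$ and $A_0$ lies in $K_1^1$. So for $I=\{2\}$, infinitely many substitutions land in degree $1$, and your limit complex is not degreewise finite. Your $M$-degree fallback only shows that such loops sink into $\sigma^\vee$, not that they raise homological degree.

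Second, minimality. Spliced differentials are not just signed sums of inclusions: they include lifts of the maps into $A_j$ through $K_j^1\to A_j$, and these lifts can be isomorphisms. In the same example with $I=\{0,1\}$, $K_0^{\dagger}$ has $A_0^{\oplus 4}$ in degree $1$. Yet $\End_R(A_0\oplus A_1)$ is the conifold NCCR, whose quiver has no loops, so $\Ext^1(S_0',S_0')=0$ and $\Hom_R(\mathbb{B},K_0^{\dagger})$ is not minimal. Thus counting occurrences of $A_k$ in high degree, as in Lemma~\ref{Lem:DimExt}, does not give nonvanishing Ext. Uniqueness of minimal resolutions does not rescue this, because your complex is not the minimal one.

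The paper's proof relies on the same two assertions: that splicing minimal $\mathcal{I}$-resolutions gives a minimal one, and that Lemma~\ref{Lem:NoPureDegree1Loop} excludes zero-shift loops. So closing the gap needs a new argument controlling cancellation along a loop, not a closer imitation of the paper.
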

\begin{proof}
    Consider the full subcategory $\mathcal{I}=\operatorname{add}(I)$ of $\operatorname{CM}R$. By \cite[Lemma 2.7]{DFI16}, $\mathcal{I}$ is contravariantly finite, i.e. each $X\in \operatorname{CM}R$ has an $\mathcal{I}$-minimal resolution. Note that for $i\in I$, the minimal resolution is simply given by $0\rightarrow A_i\xrightarrow{id}A_i\rightarrow 0$. Since the boundary maps in the complexes $K_j^\bullet$ are direct sums of signed inclusions, splicing for each $A_l$ appearing in $K_j^\bullet$ the relevant $\mathcal{I}$-minimal resolution itself results in a $\mathcal{I}$-minimal resolution of $A_j$ (as composition of a right minimal approximation with an inclusion gives a right minimal approximation). If $j\not\in I$, applying $\Hom(\mathbb{B},-)$ to this $\mathcal{I}$-minimal resolution of $A_j$ gives (see \cite[Lemma 2.10]{DFI16})  a minimal projective resolution of $P'_j$. By finite global dimension of $\Lambda'$, this is a finite complex.
    
   \noindent Suppose now that $I$ was not lockable. Then there is a sequence $A_{j_1},\dots, A_{j_m}, A_{j_{m+1}}=A_{j_1}$ with $A_{j_a}$ appearing in $K^\bullet_{j_{a+1}}$ and $j_a\not\in I$ for $1\le a\le m$. Then the $\mathcal{I}$-minimal resolution of $A_{j_{a+1}}$ contains the $\mathcal{I}$-minimal resolution of $A_{j_{a}}$ for $1\le a\le m$ - which means that the $\mathcal{I}$-minimal resolution of $A_{j_1}$ contains itself. Since Lemma \ref{Lem:NoPureDegree1Loop} excludes the loop of being in pure degree 0, this is a contradiction to the finiteness of the resolution of $P'_j$.

\end{proof}

\begin{proposition}
    \label{Prop:NCRgivesLock}
    Let $I$ be a subset of $S$ and $\mathbb{B}=\bigoplus_{i\in I}A_i$. If $\Lambda'=\End_R(\mathbb{B})$ is an NCR, then $I$ is lockable. If $\Lambda'$ is an NCCR, then $I$ is incredulous.
\end{proposition}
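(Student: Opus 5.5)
The first statement follows at once from Proposition \ref{Prop:fingldim implies lockable}. By Definition \ref{Def:NCCR}, an NCR $\Lambda'=\End_R(\mathbb{B})$ has finite global dimension, so that proposition applies directly and gives that $I$ is lockable. All of the remaining work concerns the crepant case. There we already know that $I$ is lockable, so the family $\mathcal{K}^\dagger_I$ exists, and by Lemma \ref{Lem:SubstitutionIsUnique} it does not depend on the order in which the substitutions are made. What has to be shown is that every $K_i^\dagger$ with $i\in I$ has length exactly $\dim\sigma$.

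The plan is to compare the length of $K_i^\dagger$ with the projective dimension of the graded simple $S_i'$. Lemma \ref{Lem:Substituted is proj resn} says that $\Hom_R(\mathbb{B},K_i^\dagger)$ is a projective resolution of $S_i'$, and Corollary \ref{Cor:Substituted is minimal} says it is minimal. Hence
\[
\operatorname{pd}_{\Lambda'}(S_i')=\operatorname{len}(K_i^\dagger).
\]
It therefore suffices to prove that every graded simple $S_i'$ has projective dimension exactly $d=\dim R=\dim\sigma$ whenever $\Lambda'$ is an NCCR.

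For the upper bound, I would use that an NCCR satisfies $\gldim\Lambda'=d$; this is the standard fact for NCCRs over a Cohen-Macaulay normal domain of dimension $d$ (Van den Bergh, Iyama–Reiten). The paper also recalls this, in the form that every simple has projective dimension $d$. For the lower bound, I would argue at the graded maximal ideal $\mathfrak m$ of $R$, where the simple $S_i'$ is supported. Since $\Lambda'$ is a Cohen-Macaulay $R$-module, the depth lemma combined with the Auslander–Buchsbaum type formula for $R$-orders (Iyama–Reiten: $\operatorname{pd}_{\Lambda'}X+\operatorname{depth}X=d$ for finitely generated $X$ of finite projective dimension over a CM order) gives $\operatorname{pd}S_i'=d$, because $S_i'$ has finite length and hence depth $0$. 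Alternatively, one can argue by local duality: $\Ext^d_{\Lambda'}(S_i',\Lambda')\neq 0$ because $\Lambda'$ has depth $d$ and $S_i'$ is finite length, so $\operatorname{pd}S_i'\ge d$. In either form, it is the Cohen-Macaulayness of $\Lambda'$ that forces the length to be exactly $d$. This shows that every complex in $\mathcal{K}^\dagger_I$ has length $\dim\sigma$, that is, $I$ is incredulous.

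I expect the main obstacle to be this last step. The cited results are stated for ungraded modules over local rings, while we are working with graded simples. I would handle this by localising at the irrelevant ideal $\mathfrak m$. Graded projective dimension agrees with ungraded projective dimension, and it is preserved under localisation at $\mathfrak m$ for modules supported there. So the local statement transfers to the graded simples $S_i'$ of Proposition \ref{Prop:FMS 6.3}. A second point to check is that minimality in Corollary \ref{Cor:Substituted is minimal} really does give equality with $\operatorname{pd}$ and not just an upper bound. It does: the nonvanishing $\Ext^{\operatorname{len}}(S_i',S_k')$ supplied by Lemma \ref{Lem:DimExt} is exactly a lower bound for $\operatorname{pd}S_i'$.
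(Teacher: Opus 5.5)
Your proposal is correct and follows the paper's route. The first part is exactly the appeal to Proposition \ref{Prop:fingldim implies lockable}. For the crepant part the paper only says the conclusion ``follows''; you make this explicit by combining $\operatorname{pd}_{\Lambda'}(S_i')=\operatorname{len}(K_i^\dagger)$, from Lemma \ref{Lem:Substituted is proj resn} and Corollary \ref{Cor:Substituted is minimal}, with the Auslander--Buchsbaum formula for Cohen--Macaulay orders applied to the finite-length simples, which is the converse of the reasoning in Proposition \ref{Prop:LockGivesNCR}. Your alternative ``local duality'' remark is stated loosely: what is really used there is $\Ext^j_{\Lambda'}(S_i',\Lambda')=0$ for $j<d$ together with $\Ext^{\operatorname{pd}}_{\Lambda'}(S_i',\Lambda')\neq 0$, but your main argument does not depend on it.
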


\begin{proof}
This is immediate from Proposition \ref{Prop:fingldim implies lockable}, as being an NCR implies that the global dimension of $\Lambda'$ is finite. Hence the set is lockable. The conclusion on being incredulous follows. 
\end{proof}

\begin{proof}[Theorem \ref{Thm:NCRiffLock}]
    The Theorem is a combination of Propositions \ref{Prop:LockGivesNCR} and \ref{Prop:NCRgivesLock}.
\end{proof}

Observe the following corollary of Theorem \ref{Thm:NCRiffLock}.

\begin{corollary}
\label{Cor:SubsetOFIncr}
Let $J$ be an incredulous set. Then if $I\subseteq J$ is lockable, it is incredulous. 
\end{corollary}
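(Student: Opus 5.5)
The plan is to avoid any direct bookkeeping of the substituted complexes and to argue through Theorem \ref{Thm:NCRiffLock} together with the definition of crepancy (Definition \ref{Def:NCCR}). Put $\mathbb{B}_J=\bigoplus_{j\in J}A_j$ and $\mathbb{B}_I=\bigoplus_{i\in I}A_i$, and let $\Lambda_J=\End_R(\mathbb{B}_J)$ and $\Lambda_I=\End_R(\mathbb{B}_I)$. Since $J$ is incredulous, Theorem \ref{Thm:NCRiffLock} shows that $\Lambda_J$ is an NCCR of $R$; in particular $\Lambda_J$ is a Cohen-Macaulay $R$-module. Since $I$ is lockable, the same theorem shows that $\Lambda_I$ is an NCR of $R$, so it is of the form $\End_R(M)$ with $M$ reflexive and $\gldim\Lambda_I<\infty$.

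The key step is to show that $\Lambda_I$ is Cohen-Macaulay. As $I\subseteq J$, the module $\mathbb{B}_I$ is a direct summand of $\mathbb{B}_J$. Let $e\in\Lambda_J$ be the idempotent projecting onto $\mathbb{B}_I$. Then $\Lambda_I\cong e\Lambda_J e$, and as $R$-modules we have the decomposition
\[
\Lambda_J=\bigoplus_{a,b\in J}\Hom_R(A_a,A_b),
\]
of which $\Lambda_I=\bigoplus_{a,b\in I}\Hom_R(A_a,A_b)$ is a direct summand. Direct summands of Cohen-Macaulay modules are Cohen-Macaulay: by the characterisation recalled in the introduction, $\Ext^i_R(-,R)$ commutes with finite direct sums, so it vanishes on the summand whenever it vanishes on the whole module. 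Hence $\Lambda_I\in\operatorname{CM}R$.

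Combining these, $\Lambda_I$ is an NCR that is Cohen-Macaulay. Here $R$ is Gorenstein, which is implicit in the definition of incredulous sets being tied to NCCRs through Theorem \ref{Thm:NCRiffLock}. By Definition \ref{Def:NCCR}, $\Lambda_I$ is therefore an NCCR. Applying the converse direction of Theorem \ref{Thm:NCRiffLock} (namely Proposition \ref{Prop:NCRgivesLock}) then gives that $I$ is incredulous.

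I do not expect a genuine obstacle. The only point needing care is the identification of $\Lambda_I$ with an $R$-module direct summand of $\Lambda_J$, which is immediate from the Hom decomposition above.

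An alternative, purely combinatorial route would go through Lemma \ref{Lem:SubstitutionIsUnique}. One first substitutes by indices in $J^c$ and then by indices in $J\setminus I$, and tracks the lengths of the resulting complexes. This requires precise degree conventions for splicing, which is exactly why I would avoid it in favour of the module-theoretic argument.
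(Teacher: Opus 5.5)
Your proposal is correct and follows essentially the same route as the paper: both use Theorem \ref{Thm:NCRiffLock} to get that $\Lambda_J$ is Cohen-Macaulay, pass this to the $R$-module summand $\Lambda_I$ via additivity of $\Ext^i_R(-,R)$, and conclude that $\Lambda_I$ is an NCCR, so $I$ is incredulous. Your Hom decomposition $\Lambda_J=\bigoplus_{a,b\in J}\Hom_R(A_a,A_b)$ is in fact a cleaner justification of the summand claim than the paper's ``$\Lambda_J=\Lambda_I\oplus\Lambda_L$ for some algebra $\Lambda_L$'', since that complement is only an $R$-module, not an algebra.
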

\begin{proof}
    Consider the two endomorphism algebras $\Lambda_I=\End_R(\bigoplus_{i\in I}A_i)$ and $\Lambda_J=\End_R(\bigoplus_{j\in J}A_j)$. Then $\Lambda_J=\Lambda_I\oplus \Lambda_L$ for some algebra $\Lambda_L$. Note $\Lambda_J\in \operatorname{CM}R\Leftrightarrow \Ext^i(\Lambda_J,R)=0$ for all $i>0$. But $\Ext^i(\Lambda_J,R)=\Ext^i(\Lambda_I,R)\oplus \Ext^i(\Lambda_L,R)$. 

    Thus $\Ext^i(\Lambda_I,R)=0$ for $i>0$ and so $\Lambda_i\in \operatorname{CM}R$. Since $I$ is lockable, by Theorem \ref{Thm:NCRiffLock}, $\Lambda_I$ is an NCR and so $\Lambda_I\in \operatorname{CM}R$ shows that $\Lambda_I$ is an NCCR. Applying the Theorem again gives that $I$ is an incredulous set.
\end{proof}

Knowing that looking for NCCRs as endomorphism algebras of incomplete sums of conic modules is the same as looking for incredulous sets, we now focus on computing the complexes $K_i^\bullet$ so that we can identify such sets.

\section{Computing complexes of conic modules}\label{sec:Compute}

To compute the complexes $K_i^\bullet$ associated to conic modules $A_i$, we are going to use the geometry of the secondary fan, demonstrating the computational advantage of this point of view.

\subsection{Complexes of conic modules and paths in the secondary fan}

To study when the method of substitution works, we need to understand first and foremost which conic modules appear in a given complex $K_v^\bullet$. In the following, we investigate this question using the combinatorics and geometry of the secondary fan.
Recall that we fixed a cone $\sigma$ with simplicialisation $\Sigma$, $r$ the number of lattice points in $Z_\mathcal{X}$ and $s$ the number of isomorphism classes of conic modules.

\begin{lemma}
    \label{Lem:Number of conic modules}
    With $\sigma,\Sigma, r, s$ as above, \[
    s=r\cdot |\operatorname{Tors}(\operatorname{Cl}(\mathcal{X}_\Sigma))|=|\Theta_\mathcal{X}|.
    \]
\end{lemma}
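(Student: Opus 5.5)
We prove the two equalities separately, both via the map $v\mapsto -d(v)$ from $M_\R$ to $\operatorname{Cl}(\mathcal{X}_\Sigma)$.

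\textbf{Second equality, $s=|\Theta_\mathcal{X}|$.} By Remark \ref{Rem:ConModBundle} and Proposition \ref{Prop:ComputeChamofCon}, a chamber of constancy $\Delta_v$ is determined by the integer vector $(\lceil\langle v,u_\rho\rangle\rceil)_\rho$. Note that
\[
\lfloor\langle -v,u_\rho\rangle\rfloor=-\lceil\langle v,u_\rho\rangle\rceil,
\]
so this vector is exactly the coefficient vector of the divisor $-d(v)$ in $\Z^{\Sigma(1)}$. Hence chambers of constancy correspond bijectively to the integer vectors $D(v)=\sum_\rho \lceil\langle v,u_\rho\rangle\rceil D_\rho$ that arise from some $v\in M_\R$.

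Next we identify isomorphism classes. By Proposition \ref{Prop:IsoConicModules}, $A_v\simeq A_w$ iff $\Delta_v=\Delta_w+m$ for some $m\in M$. Translating $v$ by $m\in M$ changes $D(v)$ by $f(m)$, so this is equivalent to $D(v)-D(w)\in f(M)$. Since $\operatorname{Cl}(\mathcal{X}_\Sigma)=\operatorname{coker} f$ with $\Sigma(1)=\sigma(1)$, this says exactly that $-d(v)=-d(w)$ in $\operatorname{Cl}(\mathcal{X}_\Sigma)$.

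One direction needs care: if $D(v)-D(w)=f(m)$, then $\Delta_v$ and $\Delta_w+m$ are defined by the same ceiling data, hence are equal. Thus $v\mapsto -d(v)$ induces a well-defined injection from isomorphism classes of conic modules to $\Theta_\mathcal{X}$. It is surjective by Definition \ref{Def:BTcolln}, applied with $\theta=v$. This gives $s=|\Theta_\mathcal{X}|$.

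\textbf{First equality, $|\Theta_\mathcal{X}|=r\cdot|\operatorname{Tors}|$.} We use Proposition \ref{Prop:BTcollnequiv}(3): $\Theta_\mathcal{X}$ is the full preimage, under the map $\operatorname{Cl}(\mathcal{X})\to\operatorname{Cl}(\mathcal{X})_\Q$, of the lattice points of $Z_\mathcal{X}$. Its image is the free quotient $\operatorname{Cl}/\operatorname{Tors}$, which is the natural lattice of $Z_\mathcal{X}$ (the image of $\Z^{\Sigma(1)}$). Each fibre of this map is a $\operatorname{Tors}$-coset, so each lattice point of $Z_\mathcal{X}$ has exactly $|\operatorname{Tors}|$ preimages, all lying in $\Theta_\mathcal{X}$ by (3). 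Summing over the $r$ lattice points gives $|\Theta_\mathcal{X}|=r\cdot|\operatorname{Tors}|$.

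\textbf{Main obstacle.} The delicate point is the well-definedness and injectivity in the first part. One must check that linear equivalence of $-d(v)$ and $-d(w)$, where the difference is $f(m)$ for an integral $m$, really forces a translation of chambers by an element of $M$. The argument above handles this by noting that the chambers are determined by the ceiling vectors, and that translation by $m\in M$ shifts these vectors by $f(m)$.
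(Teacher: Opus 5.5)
Your proposal is correct and follows essentially the same route as the paper: identify isomorphism classes of conic modules with $\Theta_\mathcal{X}$ via $v\mapsto -d(v)$, then use Proposition \ref{Prop:BTcollnequiv}(3) together with the fact that each fibre of $\operatorname{Cl}(\mathcal{X}_\Sigma)\to\operatorname{Cl}(\mathcal{X}_\Sigma)_\Q$ over a lattice point is a coset of the torsion subgroup. You additionally justify the first identification (via Propositions \ref{Prop:ComputeChamofCon} and \ref{Prop:IsoConicModules}), which the paper only asserts, and you replace the paper's clearing-denominators argument with the standard fact $\ker(A\to A\otimes\Q)=\operatorname{Tors}(A)$; the only blemish is a harmless sign slip, since the ceiling vector $(\lceil\langle v,u_\rho\rangle\rceil)_\rho$ is the coefficient vector of $d(v)$, not of $-d(v)$.
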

\begin{proof}
    For each isomorphism class of conic modules, pick a representative $A_{v_i}$, $0\le i\le s-1$ such that $v_i$ lies in the fundamental domain of $M_\R/M$ (which we can usually think of as $(-1,0]^{\dim\sigma}$). 
    Given such an element $v_i$, by definition $-d(v_i)$ lies in the Bondal-Thomsen collection $\Theta_{\mathcal{X}_\Sigma}$. Two vectors $v,w\in M_\R$ give the same isomorphism class of conic module $A_v$ if and only if $-d(v)\sim -d(w)$. Then, however, $-d(v)$ and $-d(w)$ designate the same element in the Bondal-Thomsen collection and so $s$ is simply the size of the Bondal-Thomsen collection. Using Proposition \ref{Prop:BTcollnequiv}, this is equivalent to the subset of divisors in $\operatorname{Cl}(\mathcal{X}_\Sigma)$ whose image in $\operatorname{Cl}(\mathcal{X}_\Sigma)_\Q$ are lattice points of $Z_{\mathcal{X}_\Sigma}$.
    The proof of the lemma thus reduces to showing that each lattice point has exactly $|\operatorname{Tors}(\operatorname{Cl}(\mathcal{X}_\Sigma))|$ corresponding elements in $\operatorname{Cl}(\mathcal{X}_\Sigma)$ mapping to it. 

    \noindent The existence of some divisor mapping to any given lattice point is evident by recalling that the lattice is defined via the image of $\bigoplus_{\rho\in \Sigma(1)}\Z\cdot D_\rho=:\Z^{\Sigma(1)}$. 
    
    \noindent Suppose that two divisors $D_1, D_2\in \operatorname{Cl}(\mathcal{X}_\Sigma)$ map to the same lattice point. We aim to show this happens if and only if $D_1-D_2$ is torsion. Let $D'_i$ be an element of $\Z^{\Sigma(1)}$ mapping to $D_i$. Consider the exact sequence obtained by applying $-\otimes_\Z\Q$ to \eqref{eqn:Coxseqpre}.
    \[
    0\rightarrow M_\Q\rightarrow \Q^{\Sigma(1)}\rightarrow \operatorname{Cl}(\mathcal{X}_\Sigma)_\Q\rightarrow 0.
    \]
    Two divisors $D'_1, D'_2$ mapping to the same lattice point is equivalent to $D'_1-D'_2$ being in the kernel of the map $\Q^{\Sigma(1)}\rightarrow \operatorname{Cl}(\mathcal{X}_\Sigma)_\Q$. By exactness of the sequence, this happens if and only if $D'_1-D'_2=\sum_{\rho\in \Sigma(1)} \langle m_q,u_\rho\rangle D_\rho$ for some $m_q\in M_\Q$. But then there exists some $p\in \Z$ such that $p\cdot m_q\in M$ and $p(D'_1-D'_2)=\sum_{\rho\in \Sigma(1)}\langle p\cdot m_q,u_\rho\rangle D_\rho\in \ker(\Z^{\Sigma(1)}\rightarrow \operatorname{Cl}(\mathcal{X}_\Sigma))$, where the map comes from the original exact sequence \eqref{eqn:Coxseqpre}. Hence, $p(D'_1-D'_2)\sim 0$ which happens precisely if $D_1-D_2$ is torsion.
\end{proof}

For $\rho\in \Sigma(1)$, denote $f_{\sigma}(D_\rho)=:\beta_\rho$. The collection $\{\beta_\rho\mid \rho\in \Sigma(1)\}$ gives a set of ray generators of $\Sigma_{GKZ}$, which may appear with multiplicity (as the fan is a generalised fan). The maximal cones of the fan $\Sigma_{GKZ}$ have dimension $|\sigma(1)|-\dim \sigma$.

Using the same considerations as in the above proof of Lemma \ref{Lem:Number of conic modules}, we also obtain the following result, which will simplify the combinatorics of conic modules later on. 
\begin{lemma}
    \label{Lem:TorsionInvarianceMQ}
    Consider $v, v'\in M_\R$ such that $-d(v)-(-d(v'))$ is a torsion divisor. This happens if and only if $f_{\sigma}(d(v')-d(v))=0$. Then $\Delta_{v'}=\Delta_{v}+m_q$ for some $m_q\in M_\Q$. Furthermore, $m_q\in \frac{1}{|\operatorname{Tors}(\operatorname{Cl}(\mathcal{X}_\Sigma))|}M$.
\end{lemma}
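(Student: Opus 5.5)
The plan has three parts: first the "if and only if" characterisation of torsion, then the existence of $m_q$, then the bound on its denominator.

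\emph{The equivalence.} Write $D=d(v')-d(v)\in\Z^{\Sigma(1)}$, meaning the integer vector with coordinates $\lceil\langle v',u_\rho\rangle\rceil-\lceil\langle v,u_\rho\rangle\rceil$, since $-d(\theta)=\sum\lfloor-\langle\theta,u_\rho\rangle\rfloor D_\rho=-\sum\lceil\langle\theta,u_\rho\rangle\rceil D_\rho$. Tensoring \eqref{eqn:Coxseqpre} with $\Q$ (equivalently $\R$) gives the short exact sequence $0\to M_\Q\to\Q^{\Sigma(1)}\to\operatorname{Cl}(\mathcal{X}_\Sigma)_\Q\to0$ used in the proof of Lemma \ref{Lem:Number of conic modules}. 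The class of $D$ is torsion in $\operatorname{Cl}(\mathcal{X}_\Sigma)$ exactly when its image in $\operatorname{Cl}(\mathcal{X}_\Sigma)_\Q$ vanishes, and that image is $f_\sigma(D)$. This is the same argument as in that proof, so I would simply cite it.

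\emph{Existence of $m_q$.} Assume $f_\sigma(D)=0$. Exactness gives $m_q\in M_\Q$ with $\langle m_q,u_\rho\rangle=\lceil\langle v',u_\rho\rangle\rceil-\lceil\langle v,u_\rho\rangle\rceil$ for every $\rho\in\sigma(1)$. Proposition \ref{Prop:ComputeChamofCon} describes $\Delta_v$ by the inequalities $c_\rho-1<\langle x,u_\rho\rangle\le c_\rho$ with $c_\rho=\lceil\langle v,u_\rho\rangle\rceil$. Translating by $m_q$ turns these into $c_\rho+\langle m_q,u_\rho\rangle-1<\langle x,u_\rho\rangle\le c_\rho+\langle m_q,u_\rho\rangle$, and $c_\rho+\langle m_q,u_\rho\rangle=\lceil\langle v',u_\rho\rangle\rceil$. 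These are exactly the inequalities cutting out $\Delta_{v'}$, so $\Delta_{v'}=\Delta_v+m_q$.

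\emph{Bound on the denominator.} Let $t=|\operatorname{Tors}(\operatorname{Cl}(\mathcal{X}_\Sigma))|$. Since the class of $D$ is torsion, $tD$ is trivial in $\operatorname{Cl}(\mathcal{X}_\Sigma)$. By exactness of \eqref{eqn:Coxseqpre} over $\Z$, there is $m\in M$ with $f(m)=tD$, that is $\langle m,u_\rho\rangle=t\langle m_q,u_\rho\rangle$ for all $\rho$. The map $f$ is injective because $\sigma$ is full-dimensional, so the $u_\rho$ span $N_\R$. Hence $m=tm_q$, and $m_q\in\frac1tM$.

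None of these steps is a real obstacle. The only point needing care is the sign and floor/ceiling bookkeeping in $d(\theta)$, so that the translated inequalities match exactly.
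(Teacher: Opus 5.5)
Your proposal is correct and follows essentially the same route as the paper. You cite the torsion argument from Lemma \ref{Lem:Number of conic modules}, obtain $m_q$ from exactness over $\Q$, translate the inequalities of Proposition \ref{Prop:ComputeChamofCon}, and bound the denominator using Lagrange and exactness over $\Z$. The only cosmetic difference is in that last step: the paper takes the order $k$ of the class of $d(v')-d(v)$ (the minimal $k$ with $km_q\in M$) and notes that $k$ divides $|\operatorname{Tors}(\operatorname{Cl}(\mathcal{X}_\Sigma))|$, whereas you multiply by $t=|\operatorname{Tors}(\operatorname{Cl}(\mathcal{X}_\Sigma))|$ directly and use injectivity of $f$ to get $m=tm_q$.
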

\begin{proof}
    The first part of the lemma was shown above. Given that $f_{\sigma}(d(v')-d(v))=0$, we note that, via the same exact sequence as before (obtained by tensoring \eqref{eqn:Coxseqpre} by $\Q$), we have $d(v')-d(v)=\operatorname{div}(x^{m_q})$ for some $m_q\in M_\Q$. Hence $d(v')_\rho=d(v)_\rho+\langle m_q,u_\rho\rangle$ for all $\rho\in \sigma(1)=\Sigma(1)$. Furthermore, since both $d(v)_\rho, d(v')_\rho$ are integer for all rays $\rho$, we have $\langle m_q,u_\rho\rangle\in \Z$ as well. But then the defining inequalities of $\Delta_v$ are the same as the $\Delta_{v'}$, shifted by the terms $\langle m_q,u_\rho\rangle$. Hence $\Delta_{v'}=\Delta_{v}+m_q$, as required.
    For the last observation, let $k$ be minimal such that $k\cdot m_q\in M$. Then $k$ is minimal such that $k\cdot f_{\sigma}(d(v')-d(v))=0$. Thus, $k$ divides $\vert \operatorname{Tors}(\operatorname{Cl}(\mathcal{X}_\Sigma))|$. 
\end{proof}

\begin{lemma}
    \label{Lem:FacetCondition}
    Let $\sigma$ be a cone and let $v,w\in M_\R$.
    The isomorphism class of the conic module $A_w$ appears in $K_v^\bullet$ if and only if there is some subset $J\subset \sigma(1)$ such that $-d(w)\sim -d(v)-\sum_{\rho\in J}D_\rho$ and there is a non-empty set  of solutions $x\in M_\R$ to the system of (in)equalities.

        \begin{equation}
            \label{eq:FacetCondition}
            \begin{cases} d(w)_\rho-1 <\langle x,u_\rho\rangle <d(w)_\rho \text{ for }\rho \not\in J,\\
             d(w)_\rho-1 =\langle x,u_\rho\rangle\text{ for } \rho \in J.\end{cases}
        \end{equation}

    Furthermore, if $A_w$ appears in $K_v^\bullet$, then it does so in all degrees $l$ such that there is a subset $J$ with $\dim\operatorname{Span}(u_\rho\vert\rho\in J)=l$.
\end{lemma}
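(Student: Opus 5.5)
Work directly with the cell structure of the chamber of constancy $\Delta_v$ from Proposition \ref{Prop:ComputeChamofCon}. Write $d_\rho:=\lceil\langle v,u_\rho\rangle\rceil$, so that $d(v)_\rho=d_\rho$ and
\[
\Delta_v=\{x\mid d_\rho-1<\langle x,u_\rho\rangle\le d_\rho\}.
\]
An open cell $\tau$ of $\Delta_v$ is determined by a subset $J\subset\sigma(1)$. On $\tau$ we have equality $\langle x,u_\rho\rangle=d_\rho$ for $\rho\in J$ and strict inequalities for $\rho\notin J$; the cell is nonempty exactly when this system has a solution. Its codimension is $\dim\operatorname{Span}(u_\rho\mid\rho\in J)$, because the equalities cut out an affine subspace of that codimension and the strict inequalities are open conditions.

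The first step is to identify the open conic module $\opA_\tau$ with a conic module via Proposition \ref{Prop:OpenconicIsNearby}. Take $x\in\tau$, $p\in\operatorname{Int}\sigma^\vee$ and $w=x+\varepsilon p$ with $\varepsilon$ small. Then $\lceil\langle w,u_\rho\rangle\rceil=d_\rho+1$ for $\rho\in J$ and $=d_\rho$ for $\rho\notin J$. By Remark \ref{Rem:ConModBundle} and the definition of $d(\cdot)$, this means $-d(w)=-d(v)-\sum_{\rho\in J}D_\rho$ as divisors, so in particular they are linearly equivalent. Substituting $d(w)_\rho=d_\rho+[\rho\in J]$ turns the cell equations into exactly the system \eqref{eq:FacetCondition}: equality $d(w)_\rho-1=\langle x,u_\rho\rangle$ on $J$ and strict inequalities $d(w)_\rho-1<\langle x,u_\rho\rangle<d(w)_\rho$ off $J$. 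Hence every module appearing in $K_v^\bullet$ arises in this way, and it sits in degree $\operatorname{codim}\tau=\dim\operatorname{Span}(u_\rho\mid\rho\in J)$.

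The converse is the main obstacle. Suppose $A_w$ is such that $-d(w)\sim -d(v)-\sum_J D_\rho$ and the system \eqref{eq:FacetCondition} is solvable. Linear equivalence gives $d(w)=d(v)+\mathbf 1_J+\operatorname{div}(x^m)$ for some $m\in M$. Replace $w$ by $w-m$; by Proposition \ref{Prop:IsoConicModules} this does not change the isomorphism class, and it makes the equality of divisors exact. At the same time it translates the solution set of \eqref{eq:FacetCondition} by $-m$, so the system stays solvable. Now read the argument of the first step backwards: a solution $x$ lies in the cell of $\Delta_v$ indexed by $J$, that cell is therefore nonempty, and its open conic module equals $A_{x+\varepsilon p}$. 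The point to check carefully is that $A_{x+\varepsilon p}=A_{w-m}$. This holds because both modules have the same ceiling vector $d(v)+\mathbf 1_J$, and Remark \ref{Rem:ConModBundle} says the ceiling vector determines the module. Some care is also needed with the convention of whether $\Delta$ is written using $d(w)$ or $d(v)$, so that the equality side really is the one indexed by $J$.

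The final claim about degrees then follows immediately. Each subset $J$ that satisfies the hypotheses produces a cell of codimension $\dim\operatorname{Span}(u_\rho\mid\rho\in J)$, and $A_w$ appears in $K_v^\bullet$ in that homological degree.
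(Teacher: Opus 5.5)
Your proposal is correct and follows essentially the same route as the paper. Both arguments index the open cells of $\Delta_v$ by the subsets $J\subset\sigma(1)$ whose systems are solvable. Both then perturb a point of the cell by $\varepsilon p$ with $p\in\operatorname{Int}\sigma^\vee$ to read off the ceiling vector $d(v)+\mathbf{1}_J$ of $\opA_{\tau_J}$, use that this vector determines the conic module, and obtain the degree as the codimension $\operatorname{rk}(u_\rho\mid\rho\in J)$.

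Your explicit replacement of $w$ by $w-m$, which translates the solution set by $-m$, makes precise a step the paper only dispatches with ``we consider conic modules up to isomorphism''. The same translation is needed in your forward direction, where you implicitly take the representative to be $x+\varepsilon p$ itself.
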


\begin{proof}
For any ray $\rho\in \sigma(1)$, denote by $u_\rho$ its primitive generator.
    $A_w$ appears in $K_v^\bullet$ if and only if it is isomorphic to $\opA_{\tau}$ for some open cell $\tau$ of the chamber of constancy $\Delta_v$. Note that $-d(v)_\rho=\lfloor \langle -v,u_\rho\rangle\rfloor=-\lceil\langle v,u_\rho\rangle\rceil$.

    Any open cell of $\Delta_v$ can be described (cf. \cite[Proposition 4.6]{FMS19}) as set $\tau_J$ of elements
    $x\in M_\R$ for which there exists $J\subset\sigma(1)$ and \begin{equation}
    \label{eq:FacetConditionPf}
    \begin{cases}
          d(v)_\rho-1 <\langle x, u_\rho\rangle <d(v)_\rho \text{ for } u_\rho\not\in J, \\
          \langle x,u_\rho\rangle = d(v)_\rho\text{ for } u_\rho\in J.
    \end{cases}
    \end{equation}
    Note that a given subset $J\subset \sigma(1)$ gives an open cell if and only if this set is non-empty. 

    Pick a point $y\in \operatorname{Int}(\sigma^\vee)$ and a sufficiently small, positive $\varepsilon$. Consider the divisor $D=\sum_{\rho\in \sigma(1)} d_{J,\rho} D_\rho$, where $d_{J,\rho}=\begin{cases}
        d(v)_\rho\text{ for }\rho\not\in J,\\
        d(v)_\rho +1\text{ for } \rho\in J.
    \end{cases}$
    
\noindent If there is an $x\in M_\R$ fulfilling the above restrictions, then for $z=x+\varepsilon \cdot y$  we obtain\begin{equation}
    \label{eq:CellEquivShift}
        d_{J,\rho} -1 <\langle z,u_\rho\rangle \le d_{J,\rho}\text{ for }\rho\in \sigma(1).
    \end{equation}
    Given a subset $J\subset \sigma(1)$, it thus defines an open cell of $\Delta_v$ if and only if the inequalities \eqref{eq:CellEquivShift} define a non-empty set. Furthermore, all open cells of $\Delta_v$ arise this way. Observe now that given an element $w'\in M_\R$ that satisfies the inequalities \eqref{eq:CellEquivShift}, the inequalities define its chamber of constancy $\Delta_{w'}$ (indeed, we necessarily have $d_{J,\rho}=\lceil\langle w',u_\rho\rangle \rceil$). Given an element $x\in \tau_J$, we thus note that $z=x+\varepsilon\cdot y$ lies in $\Delta_{w'}$. This small perturbation by $\varepsilon\cdot y$ is precisely the perturbation that assigns to the open conic module $\opA_{x}$ a conic module $A_{z}$, and thus $\opA_{x}=A_z=A_{w'}$ (see Proposition \ref{Prop:OpenconicIsNearby}).

    So far, we have shown that conic modules appearing in $K_v^\bullet$ satisfy the inequalities \eqref{eq:CellEquivShift}. Given an element $w\in M_\R$, $A_w$ appears in $K_v^\bullet$ if and only if $d(w)_\rho=d_{J,\rho} \forall \rho\in \sigma(1)$, for some subset $J\subset \sigma(1)$ with non-empty open cells $\tau_J$. We consider conic modules up to isomorphism, thus the condition $d(w)_\rho=d_{J,\rho}$ is precisely equivalent to $-d(w)\sim-d(v)-\sum_{\rho\in J}D_\rho$.

    Observe that given such a $w\in M_\R$ with $d(w)_\rho=d_{J,\rho} \forall \rho\in \sigma(1)$ for some $J\subset \sigma(1)$, the open cell $\tau_J$ is non-empty if and only if the system of (in)equalities \eqref{eq:FacetConditionPf} holds, which is equivalent to \eqref{eq:FacetCondition}.

    The degree of the appearance of $A_w$ in $K_v^\bullet$ is the codimension of the open cell, which corresponds to the number of linearly independent hyperplanes where the inequalities become strict, i.e. the dimension of $\operatorname{Span}(u_\rho\mid \rho\in J)$. 
\end{proof}

\begin{remark}
    \label{Rem:FacetComputation}
    On the computational side, to find all isomorphism classes $A_w$ possibly appearing in $K_v^\bullet$, one checks all divisors of the form $-D=-d(v)-\sum_{\rho\in J}D_\rho$ to establish when there is a non-empty set of solutions to 
    \[
        \begin{cases} d(v)_\rho-1 <\langle x,u_\rho\rangle <d(v)_\rho \text{ for }\rho \not\in J,\\
        d(v)_\rho =\langle x,u_\rho\rangle \text{ for } \rho \in J.\end{cases}
    \]
    Once the set of divisors is established, we can pick a representative of each conic module and check which one the divisors are linearly equivalent to.
\end{remark}

It is tempting to assume that $-d(w)\sim -d(v)-\sum_{\rho\in J}D_\rho$ is a sufficient condition for $A_w$ to appear in $K_v^\bullet$, as then one could also determine the codimension of the facet.
Unfortunately, this is false; the system of inequalities \eqref{eq:FacetCondition} may not have a solution. Indeed, consider Example \ref{Exa:710FMS} again, as we did after Proposition \ref{Prop:BTisConic}.

\begin{example}
    \label{Exa:710Again}
    As before, we consider the fan $\Sigma_1$ with maximal cones \[
    \sigma_1=\cone((1,0,0),(0,1,0),(0,-1,1)),\quad \sigma_2=\cone((0,1,0),(-1,0,1),(0,-1,1)).
    \]
    The toric algebra $R$ has the form $k[x,y,z,w]/(xz-yw)$ and the three conic modules are\begin{eqnarray*}
        && A_{v_0}=R,\\
        && A_{v_+}=(x,y)R,\\
        && A_{v_-}=(x,w)R.
    \end{eqnarray*}
    These three conic modules are associated to the points $v_0=(0,0,0),$ $v_{+}=(0,-\frac{1}{4},0),$ and $v_-=(-\frac{1}{4},0,0)$ respectively. We recall that $-d(v_+)=-D_4$, where $D_4$ is the toric divisor associated to the ray $(0,-1,1)$. Then $-d(v_+)\sim-d(v_+)-(D_1+D_2)$ as $D_1+D_2=\operatorname{div}(x^{(1,1,1)})\sim 0$. However, the system on inequalities becomes\begin{eqnarray*}
        &&x_1=0,\\
        && x_2=0,\\
        && -1< x_3-x_1<0,\\
        &&0 < x_3-x_2<1.
    \end{eqnarray*}
    This has no solutions. Analogously, all other sets $J\neq \emptyset$ with $-d(v_+)\sim -d(v_+)-\sum_{\rho\in J}D_\rho$ admit no solutions to their respective sets of inequalities - which is why $K_{v_+}^\bullet$ has no $A_{v_+}$ appearing outside of degree 0.
\end{example}

We now wish to find a combinatorial condition for the form the complexes $K_v^\bullet$ can take, one which can be verified by knowing the structure of the secondaty fan and the halfopen zonotope $Z_{\mathcal{X}}$. Given two points $P_1, P_2$ corresponding to $A_w$ and $A_v$, necessary and sufficient conditions can be formulated to obtain the appearance of $A_w$ in $K_v^\bullet$, at least up to torsion\footnote{By which we mean that there exists some $m_q\in M_\Q$ such that $d(w+m_q)-d(w)$ is torsion and $A_{w+m_q}$ appears in $K_v^\bullet$.}. The combinatorial formulations require us to introduce the notion of \newterm{(viable) paths}, but first we fix the following notation.
We often work with the following exact sequence, induced by \eqref{eqn:Coxseqpre}:
\begin{equation}\label{eq:CoxR}
0\rightarrow M_\R\xrightarrow{g} \bigoplus_{\rho\in\sigma(1)}\R\cdot D_\rho\xrightarrow{f_{\sigma}} \R^{|\sigma(1)|-n}\rightarrow 0.
\end{equation}
The map $g$ can be represented by a $|\sigma(1)|\times n$ matrix $A$, with rows being the primitive generating vectors $u_\rho$. Similarly, $f_{\sigma}$ is represented by a $(|\sigma(1)|-n)\times |\sigma(1)|$ matrix $C$, whose columns are the vectors $\beta_\rho$. 

Fix the following pieces of notation.

\begin{notation}
    \label{Not:AJandCJ}
     For a subset $J\subset\sigma(1)$, denote by $V_J$ the space $\bigoplus_{\rho\in J}\R\cdot D_\rho$. Denote by $C_J$ the restriction $C\vert_{V_J}$ of $C$ to $V_J$. Similarly, given a vector $x\in \bigoplus_{\rho\in \sigma(1)}\R\cdot D_\rho$, denote by $x_J$ the projection of $x$ to $V_J$. 
     We also write $A_J$ for the sub-matrix of $A$ with rows given by $u_\rho, \rho\in J$.
\end{notation}

\begin{definition}
    \label{Def:PathInZonotope}
    Let $P_1, P_2$ be two lattice points\footnote{In an abuse of notation, we also consider $P_i$ to be the vector from the origin to $P_i$.} inside the zonotope $Z_\mathcal{X}$. We say there is a path of length $k$ from $P_1$ to $P_2$ if there is a subset $J\subset\sigma(1)$ with $\operatorname{rk}(A_J)=k$ such that $P_1+\sum_{\rho\in J} \beta_\rho=P_2$. The set $\{\beta_\rho\mid \rho\in J\}$ is called the \newterm{path from $P_1$ to $P_2$} and will be denoted by $\beta_J$ and is said to \newterm{begin} at $P_1$ and \newterm{end} at $P_2$. We will write $P_1\rightsquigarrow P_2$, and do so to indicate the existence of a viable path even when not specifying the subset $J$.
\end{definition}

Here, we emphasize that the notion of paths is only to be applied to lattice points inside the zonotope: Not every linear combination of $\beta_\rho$ applied to any given lattice point will stay within the zonotope and we shall not consider such non-existent paths. 

\begin{proposition}
    \label{Prop:CplxViaPaths1}
    Let $\sigma$ be a cone and $v,w\in M_\R$. If the isomorphism class $A_w$ appears in $K_v^\bullet$ then there is a path from $f_\sigma(-d(w))$ to $f_\sigma(-d(v))$.
\end{proposition}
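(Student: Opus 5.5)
By Lemma \ref{Lem:FacetCondition}, if the isomorphism class of $A_w$ appears in $K_v^\bullet$, then there is a subset $J\subset\sigma(1)$ such that
\[
-d(w)\sim -d(v)-\sum_{\rho\in J}D_\rho ,
\]
and the system \eqref{eq:FacetCondition} has a solution. The first step is to apply the linear map $f_\sigma$ to this relation. Linearly equivalent divisors have the same image in $\operatorname{Cl}(\mathcal{X})_\R$, since they differ by an element of the image of $g$ in \eqref{eq:CoxR}, and torsion dies after tensoring with $\R$. Writing $\beta_\rho=f_\sigma(D_\rho)$, this gives
\[
f_\sigma(-d(w))+\sum_{\rho\in J}\beta_\rho=f_\sigma(-d(v)).
\]
This is precisely the path equation of Definition \ref{Def:PathInZonotope}, with $P_1=f_\sigma(-d(w))$ and $P_2=f_\sigma(-d(v))$. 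The length of the path is $\operatorname{rk}(A_J)=\dim\operatorname{Span}(u_\rho\mid\rho\in J)$, which by the last part of Lemma \ref{Lem:FacetCondition} is the degree in which $A_w$ appears.

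The second step is to check that the definition applies, namely that $P_1$ and $P_2$ are lattice points of $Z_\mathcal{X}$ in its natural lattice. This follows from Proposition \ref{Prop:BTcollnequiv}. Both $-d(v)$ and $-d(w)$ lie in the Bondal-Thomsen collection, because conic modules correspond to elements $-d(\theta)$ of $\Theta$ (Proposition \ref{Prop:BTisConic} and Remark \ref{Rem:ConModBundle}). Their images under $f_\sigma$ are images of integer vectors, hence lie in the lattice $f_\sigma(\Z^{\Sigma(1)})$. Finally, $-d(\theta)$ is linearly equivalent over $\Q$ to a divisor with coefficients $\lfloor\langle-\theta,u_\rho\rangle\rfloor+\langle\theta,u_\rho\rangle\in(-1,0]$, so its image lies in $f_\sigma((-1,0]^{\Sigma(1)})=Z_\mathcal{X}$.

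The step needing most care is that the divisor representing $A_w$ in Lemma \ref{Lem:FacetCondition} is only defined up to linear equivalence. We must therefore make sure the chosen representative $w$ (or the divisor $-d(v)-\sum_{\rho\in J}D_\rho$) still maps into the zonotope. This is harmless: $f_\sigma$ is constant on linear equivalence classes, and $-d(w)$ itself is an element of $\Theta$, so its image is a lattice point of $Z_\mathcal{X}$ regardless of the representative. The solvability of \eqref{eq:FacetCondition} is not needed for this direction; it only ensures that the hypothesis of the proposition is nonvacuous. Presumably it becomes essential for the converse, under the name \emph{valid} path.
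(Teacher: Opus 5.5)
Your argument is correct and matches the paper's proof: take the relation $-d(w)\sim -d(v)-\sum_{\rho\in J}D_\rho$ from Lemma \ref{Lem:FacetCondition} and apply $f_\sigma$, which is constant on linear equivalence classes, to get $f_\sigma(-d(w))+\sum_{\rho\in J}\beta_\rho=f_\sigma(-d(v))$. The paper leaves implicit your extra check that both endpoints are lattice points of $Z_\mathcal{X}$, and that check is sound (strictly, for $\theta\in M_\R$ the shift by $\langle\theta,u_\rho\rangle$ is an $\R$-linear rather than a $\Q$-linear equivalence, which is harmless because $f_\sigma$ vanishes on $g(M_\R)$).
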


\begin{proof}
    By Lemma \ref{Lem:FacetCondition}, if $A_w$ appears in $K_v^\bullet$, $-d(w)\sim -d(v)-\sum_{\rho\in J}D_\rho$ for some $J\subset \sigma(1)$. Applying the map $f_{\sigma}$, we find \[f_\sigma(-d(v))=f_\sigma(-d(w))+\sum_{\rho\in J} f_{\sigma}(D_\rho)=f_\sigma(-d(w))+\sum_{\rho\in J}\beta_\rho.\]
    Thus, there is a path from $f_\sigma(-d(w))$ to $f_\sigma(-d(v))$, as claimed. 

\end{proof}

In our search for lockable subsets, we thus have identified a necessary condition for a module $A_w$ to appear in a complex $K_w^\bullet$;  there needs to be a path from $f_\sigma(-d(w))$ to $f_\sigma(-d(v))$. This is not a sufficient condition, however, as the next example illustrates.
\begin{example}
    \label{Exa:Hexagon}
    Consider the following Gorenstein cone in $\R^4$:
    \[
    \sigma=\cone((1,0,0,1),(0,1,0,1),(0,1,1,1),(0,0,1,1),(1,0,-1,1),(0,0,0,1))=\cone(P\times\{1\}),
    \]
    where lattice polytope $P$ obtained as convex hull \[
    P=\conv((1,0,0),(0,1,0),(0,1,1),(0,0,1),(1,0,-1),(0,0,0)).
    \]
    We consider some simplicial fan $\Sigma$ obtained by regularly triangulation $P$ without adding any additional vertices. The sequence \eqref{eqn:Coxseqpre} becomes\[
    0\rightarrow M\cong \Z^4\rightarrow \Z^6\rightarrow \Z^2\rightarrow 0,
    \] 
    where the first map is given by $\begin{pmatrix}
         1&0&0&1\\
         0&1&0&1\\
         0&1&1&1\\
         0&0&1&1\\
         1&0&-1&1\\
         0&0&0&1
    \end{pmatrix}.$
    The second map, the cokernel, is given by $\begin{pmatrix}
        1&1&-1&0&-1&0\\
        -1&0&0&1&1&-1
    \end{pmatrix}.$
    This gives the collection \[
    \mathcal{B}=\{\beta_\rho\mid \rho\in \sigma(1)\}=\left\{\begin{pmatrix}1\\ -1\end{pmatrix}, \begin{pmatrix}1\\ 0\end{pmatrix}, \begin{pmatrix}-1\\ 0\end{pmatrix}, \begin{pmatrix}0\\ 1\end{pmatrix}, \begin{pmatrix}-1\\ 1\end{pmatrix}, \begin{pmatrix}0\\ -1\end{pmatrix}\right\}.
    \]
    Figure \ref{fig:Hexagon} shows the secondary fan and the zonotope $Z_{\mathcal{X}_\Sigma}$.

\begin{figure}[ht!]
    \centering
\begin{tikzpicture}[scale=1]

\draw[thick] (-3,0) -- (3,0);
\draw[thick] (0,-3) -- (0,3);

\draw[thick] (-3,3) -- (3,-3);

\fill[gray!40, opacity=0.6]
(-2,1) -- (-1,2) -- (1,1) -- (2,-1) -- (1,-2) -- (-1,-1) -- cycle;

\fill (-1,1) circle (2pt) node[above right] {d4};
\fill (0,1) circle (2pt) node[above right] {d6};
\fill (-1,0) circle (2pt) node[above left] {d3};
\fill (0,0) circle (2pt) node[above right] {d0};
\fill (1,0) circle (2pt) node[above right] {d5};
\fill (0,-1) circle (2pt) node[below left] {d2};
\fill (1,-1) circle (2pt) node[below right] {d1};

\end{tikzpicture}
 \caption{Secondary fan and Zonotope for Example \ref{Exa:Hexagon}}
  \label{fig:Hexagon}
\end{figure}
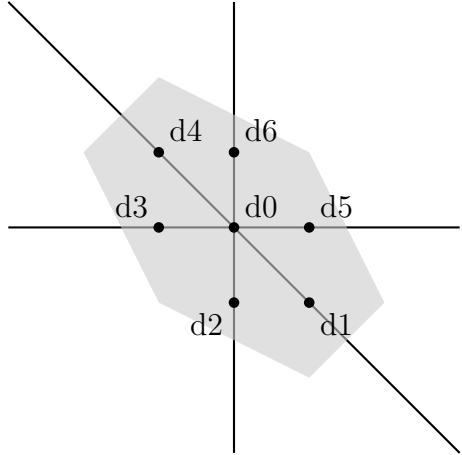
 
    Since the variety has no torsion, each of the points in the zonotope corresponds to a unique conic module and those are the only conic modules for $\sigma$. Denote them by $A_i$, where $A_i$ is associated to the point $d_i$. To compute the complexes associated to the conic modules, we can find for each $d_i$ a vector $v_i\in M_\R$ such that $f_\sigma(-d(v_i))=d_i$, and for each possible subset of rays $J\subset \sigma(1)$ consider the system of (in)equalities \[
    \begin{cases}
        d(v_i)_\rho-1 < \langle x,u_\rho\rangle< d(v_i)_\rho\text{ for }\rho\not\in J,\\
        d(v_i)_\rho=\langle x,u_\rho\rangle\text{ for }\rho\in J.
    \end{cases}
    \] 
    We spare the reader of this tedious process, but point out the chain complex $K_2^\bullet$:
    \[
    K_2^\bullet=A_6\rightarrow A_0\oplus A_4\oplus A_5\rightarrow A_0\oplus A_1\oplus A_3\rightarrow A_2.
    \]

    One can without much effort spot the paths of appropriate lengths that explain the appearance of the corresponding conic modules. However, there is also a path from $d_1$ to $d_2$ of length 3: $d_1+\begin{pmatrix} 1\\-1\end{pmatrix}+\begin{pmatrix} -1\\0\end{pmatrix}+\begin{pmatrix} -1\\1\end{pmatrix}=d_2$. Yet, the conic module $A_1$ does not feature in degree 3 in the complex $K_2^\bullet$, reaffirming that the existence of a path does not guarantee the appearance of the associated conic modules.
\end{example}

A good reason the condition in Proposition \ref{Prop:CplxViaPaths1} is merely necessary and not sufficient is that it combinatorially only expresses half of Lemma \ref{Lem:FacetCondition}; the solubility of the system of (in)equalities \eqref{eq:FacetCondition} is not included. Thus, we aim to find an additional condition to the existence of a path $P_1\rightsquigarrow f_\sigma(-d(v))$ to guarantee the appearance of $A_w$ in $K_v^\bullet$, for some $w$ with $f_\sigma(-d(w))=P_1$. 
To establish such a condition, let us first give an equivalent, yet computationally perhaps more straightforwardly verifiable, formulation to the solubility of \eqref{eq:FacetCondition}, involving the map $f_{\sigma}$.
\begin{notation}
    \label{Not:Matrixsplitting}
    Given a subset $J\subset \sigma(1)$ and a divisor $d=\sum_{\rho\in \sigma(1)} \alpha_\rho D_\rho$, denote by $W_{d,J}$ the open subspace of $\bigoplus_{\rho\in J}\R\cdot D_\rho$ defined by restricting the coefficients to the open intervals $(\alpha_\rho-1,\alpha_\rho)$, i.e. $W_{d,J}=\{\sum_{\rho\in J}\gamma_\rho D_\rho\mid \gamma_\rho\in (\alpha_\rho-1,\alpha_\rho)\}$. If $J=\emptyset$, and so $W_{d,\emptyset}$ would be the empty set $\emptyset$, we instead set $W_{d,\emptyset}=\{0\}$ or simply write $W_{d,\emptyset}=0$.
\end{notation}

\begin{lemma}
    \label{Lem:FacetSufficientMatrix}
    Let $\sigma$ be a cone and let $v,w\in M_\R$. Then the isomorphism class of $A_w$ appears in $K_v^\bullet$ if and only if $-d(w)\sim -d(v)-\sum_{\rho\in J}D_\rho$ for some subset $J\subset \sigma(1)$ and \begin{equation}\label{eq:EquivFacet}
    -C_Jd(v)_{J}\in C_{J^c}W_{d(v),J^c}.
    \end{equation}
    Furthermore, in this case $A_w$ appears in $K_v^\bullet$ in degree $\operatorname{rk}(A_J)$.
\end{lemma}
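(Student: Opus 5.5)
The plan is to reduce to Lemma \ref{Lem:FacetCondition}, which already states that $A_w$ appears in $K_v^\bullet$ (in degree $\dim\operatorname{Span}(u_\rho\mid\rho\in J)=\operatorname{rk}(A_J)$) if and only if $-d(w)\sim -d(v)-\sum_{\rho\in J}D_\rho$ and the open cell $\tau_J$ of $\Delta_v$ is non-empty. In the proof of that lemma, non-emptiness of $\tau_J$ is expressed through the system \eqref{eq:FacetConditionPf}:
\[
d(v)_\rho-1<\langle x,u_\rho\rangle<d(v)_\rho \ (\rho\notin J),\qquad \langle x,u_\rho\rangle=d(v)_\rho\ (\rho\in J).
\]
So the whole task is to show that this system has a solution $x\in M_\R$ if and only if $-C_Jd(v)_J\in C_{J^c}W_{d(v),J^c}$. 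The degree statement is then inherited directly from Lemma \ref{Lem:FacetCondition}.

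To do this, I would reformulate solubility using the exact sequence \eqref{eq:CoxR}. A vector $y=\sum_\rho y_\rho D_\rho\in\R^{\sigma(1)}$ is of the form $g(x)=(\langle x,u_\rho\rangle)_\rho$ for some $x\in M_\R$ if and only if $f_\sigma(y)=Cy=0$, by exactness at the middle term. The system therefore has a solution exactly when there is a vector $y$ satisfying three conditions: $y_J=d(v)_J$, $y_{J^c}\in W_{d(v),J^c}$, and $Cy=0$. Splitting $Cy=C_Jy_J+C_{J^c}y_{J^c}$, the third condition becomes $C_Jd(v)_J+C_{J^c}y_{J^c}=0$.

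Both directions then follow at once. If such an $x$ exists, set $y=g(x)$; then $-C_Jd(v)_J=C_{J^c}y_{J^c}$ with $y_{J^c}\in W_{d(v),J^c}$. Conversely, given $\gamma\in W_{d(v),J^c}$ with $C_{J^c}\gamma=-C_Jd(v)_J$, let $y$ be the vector with $J$-part $d(v)_J$ and $J^c$-part $\gamma$. Then $Cy=0$, so $y=g(x)$ for a unique $x$, and this $x$ solves the system.

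The only delicate points are bookkeeping. First, the degenerate cases: when $J^c=\emptyset$, the convention $W_{d,\emptyset}=\{0\}$ gives exactly the condition $C_Jd(v)_J=0$. Second, I need to check that the sign and normalisation conventions, with $d(v)_\rho=\lceil\langle v,u_\rho\rangle\rceil$ and the open intervals $(\alpha_\rho-1,\alpha_\rho)$ in Notation \ref{Not:Matrixsplitting}, match \eqref{eq:FacetConditionPf} rather than \eqref{eq:FacetCondition}, which is written in terms of $d(w)$. I expect this second point to be the main place to be careful. Surjectivity of $f_\sigma$ is not actually needed; only exactness at the middle term of \eqref{eq:CoxR} is used.
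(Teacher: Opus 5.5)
Your proposal is correct and is essentially the paper's own argument: reduce via Lemma \ref{Lem:FacetCondition} to solubility of the cell system, use $\operatorname{Im} g=\ker C$ from \eqref{eq:CoxR}, and split $Cy=C_Jy_J+C_{J^c}y_{J^c}$. Your choice to work with the $d(v)$-form \eqref{eq:FacetConditionPf} is the right one, since $d(v)_\rho=\lceil\langle v,u_\rho\rangle\rceil$ matches the intervals defining $W_{d(v),J^c}$, and spelling out the converse via exactness makes your version slightly more careful than the paper's write-up.
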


\begin{proof}
 By Lemma \ref{Lem:FacetCondition}, the isomorphism class of $A_w$ appears in $K_v^\bullet$ if and only if $-d(w)\sim -d(v)-\sum_{\rho\in J}D_\rho$ for some $J\subset \sigma(1)$ and \eqref{Lem:FacetCondition} has a solution $x\in M_\R$. It is therefore sufficient to show that the solubility of \eqref{eq:FacetCondition} is equivalent to \eqref{eq:EquivFacet}.

 Solubility of \eqref{eq:FacetCondition} means that there is an $x\in M_\R$ such that \[Ax=\sum_{\rho\in \sigma(1)}\gamma_\rho D_\rho\text{ with }\begin{cases}
     \gamma_\rho\in (d_\rho-1,d_\rho)\text{ for }\rho\in J^c,\\
     \gamma_\rho=d_\rho\text{ for }\rho \in J.
 \end{cases}\]
 Then $y:=Ax\in f_\sigma(A)=\ker(C)$. Note that $y_{J}=d(v)_J$ and $y_{J^c}\in W_{d(v),J^c}$. As $y\in \ker(C)$, we have\[
 0=Cy=C_J y_J+C_{J^c}y_{J^c}\Leftrightarrow -C_Jy_J=C_{J^c}y_{J^c}.
 \]
 Thus, the system \eqref{eq:FacetCondition} has a solution if and only if $-C_Jd(v)_J=-C_Jy_J\in C_{J^c}W_{d(v),J^c}$, as required.

 To obtain the degree of $A_w$ in $K_v^\bullet$, observe that it coincides with the codimension $\operatorname{codim}(\tau)$, where $\tau$ is the face of the chamber of constancy $\Delta_v$ with $\opA_\tau\cong A_w$. 
 The codimension can be determined by considering the dimension of the space intersecting $\Delta_v$ to fix $\tau$, which is the intersection of the hyperplanes perpendicular to $u_\rho, \rho\in J$, in other words the number of linearly independent equalities in \ref{eq:FacetCondition}. Thus, the degree of $A_w$ in $K_v^\bullet$ is precisely the maximal number of linearly independent $u_\rho, \rho\in J$, i.e. $\operatorname{rk}(A_J)$.
\end{proof}

\begin{remark}
\label{Rem:LinEquivFacet}
Observe here that solubility of both conditions \eqref{eq:FacetCondition} and \eqref{eq:EquivFacet} are invariant under linear equivalence. Two torus-invariant Weil divisors $d_1, d_2$ are linearly equivalent iff there is an $m\in M$ such that $d_1=d_2+\operatorname{div}(x^m)=d_2+\sum_{\rho\in\sigma(1)}\langle m,u_\rho\rangle$. But then a solution $x$ to \eqref{eq:FacetCondition} associated to $d_1$ exists if and only if $x+m$ is a solution to the corresponding set of (in)equalities for $d_2$. Similarly, $f_\sigma(A)=\ker(C)$ by the exactness of the exact sequence \eqref{eq:CoxR} and so $d(v)=\sum_{\rho\in \sigma(1)}\alpha_\rho D_\rho$ satisfies \eqref{eq:EquivFacet} if and only if the linearly equivalent divisor $\sum_{\rho\in \sigma(1)}d_\rho D_\rho$ does. Intuitively this was clear, as we always consider isomorphism classes of conic modules.

\noindent Note further that the same argument works for $-d(v)\sim_\Q -d(v')$. That is, if $-d(v)\sim -d(v')=-d(v)+\sum_{\rho\in \sigma(1)}\langle m_q,u_\rho\rangle$ for some $m_q\in M_\Q$ giving torsion, then the solubility of \eqref{Lem:FacetCondition} for $-d(v)$ is equivalent to solubility for $-d(v')$. Hence $A_{w+m_q}$ appears in $K_{v'}^\bullet$ if and only if $A_w$ appears in $K_v^\bullet$. We will prove this again in a more geometrically intuitive way in Lemma \ref{Lem:TorsionCellsPropagate}. 
\end{remark}

\begin{definition}
    Write $P_2$ as $\sum_{\rho\in \sigma(1)}\alpha_\rho\beta_\rho$ for some $\alpha_\rho\in \Q$. For a given ray $\rho$, consider the interval $I_{D,\rho}=\{\gamma_\rho\beta_\rho\mid \gamma_\rho\in (-\alpha_\rho-1,-\alpha_\rho)\}\subset \R\cdot \beta_\rho$.
    A path $\beta_J$ from $P_1$ to $P_2$ corresponding to a subset $J\subset \sigma(1)$ is said to be \newterm{valid} if \begin{equation}
        \label{eq:ValidPath}
        \sum_{\rho\in J}\alpha_\rho\beta_\rho\in \sum_{\rho\in J^c}I_{D,\rho},
    \end{equation}
    where the right hand side denotes the Minkowski-sum of the intervals $I_{D,\rho}$.
\end{definition}
\begin{remark}
    We consider the Minkowski sum of an empty set of intervals to be 0, and so for $J=\sigma(1)$ the condition becomes $\sum_{\rho\in \sigma(1)}\alpha_\rho\beta_\rho=0$, i.e. $P_2=0$. The proof of Proposition \ref{Prop:CplxViaPaths2} will shed light on the computational side, but the reason to consider the Minkowski sum to be 0 is that we consider the set $W_{d,\emptyset}$ as a space, so it is the zero-space $\{0\}$ and not the set $\emptyset$.
\end{remark}

To simplify the computations of validity for given paths, we note here explicitly that if $\beta_{\rho_1}=\beta_{\rho_2}$, then they are interchangeable without affecting validity of the path. This will often be used implicitly.
\begin{lemma}
    \label{Lem:InterchangBeta}
    Fix a cone $\sigma$ and consider the corresponding collection of vectors $\beta_\rho$, $\rho\in \sigma(1)$. Suppose $\beta_{\rho_1}=\beta_{\rho_2}$ for some $\rho_1\neq \rho_2$. Fix a lattice point $P_2\in Z_{\mathcal{X}}$ and its representation as $\sum_{\rho\in \sigma(1)}\alpha_\rho\beta_\rho$. Let $J$ be a valid path from some lattice point $P_1$ to $P_2$ such that $\rho_1\in J$, $\rho_2\not\in J$. Then $J'=J\setminus\{\rho_1\}\cup \rho_2$ is also a valid path from $P_1$ to $P_2$.
\end{lemma}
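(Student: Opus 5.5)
The plan is to verify the three defining properties of a valid path directly, since the swap only changes which of the two equal vectors is used. The path condition is immediate: $J$ and $J'$ differ only by exchanging $\rho_1$ for $\rho_2$, and $\beta_{\rho_1}=\beta_{\rho_2}$, so
\[
P_1+\sum_{\rho\in J'}\beta_\rho=P_1+\sum_{\rho\in J}\beta_\rho-\beta_{\rho_1}+\beta_{\rho_2}=P_2 .
\]
The length also has to be handled. If length is meant as $\operatorname{rk}(A_{J'})$, it could a priori differ from $\operatorname{rk}(A_J)$, because $u_{\rho_1}\neq u_{\rho_2}$. The lemma only asserts that $J'$ is a valid path, however, and length is an attribute of the path rather than part of the definition of validity. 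So I would only remark on this and not claim that the lengths agree.

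The main work is the validity condition \eqref{eq:ValidPath}, with the representation $P_2=\sum\alpha_\rho\beta_\rho$ held fixed. Write $\beta:=\beta_{\rho_1}=\beta_{\rho_2}$ and $L:=J\setminus\{\rho_1\}$, so that $J=L\cup\{\rho_1\}$, $J'=L\cup\{\rho_2\}$, and $K:=J^c\setminus\{\rho_2\}=J'^c\setminus\{\rho_1\}$. Validity of $J$ says there are $\gamma_\rho\in(-\alpha_\rho-1,-\alpha_\rho)$ for $\rho\in J^c$ with
\[
\sum_{\rho\in L}\alpha_\rho\beta_\rho+\alpha_{\rho_1}\beta=\sum_{\rho\in K}\gamma_\rho\beta_\rho+\gamma_{\rho_2}\beta .
\]
We need $\delta_\rho\in(-\alpha_\rho-1,-\alpha_\rho)$ for $\rho\in K\cup\{\rho_1\}$ with $\sum_{L}\alpha_\rho\beta_\rho+\alpha_{\rho_2}\beta=\sum_K\delta_\rho\beta_\rho+\delta_{\rho_1}\beta$. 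The natural choice is $\delta_\rho=\gamma_\rho$ on $K$. Matching the $\beta$ coefficients then forces
\[
\delta_{\rho_1}=\gamma_{\rho_2}+\alpha_{\rho_2}-\alpha_{\rho_1},
\]
and this lies in the right interval: from $\gamma_{\rho_2}\in(-\alpha_{\rho_2}-1,-\alpha_{\rho_2})$ we get $\gamma_{\rho_2}+\alpha_{\rho_2}\in(-1,0)$, hence $\delta_{\rho_1}\in(-\alpha_{\rho_1}-1,-\alpha_{\rho_1})$. In words, the two intervals $I_{D,\rho_1}$ and $I_{D,\rho_2}$ are both unit-length open intervals on the same line $\R\beta$, and the condition only compares the quantity "$\alpha$-coefficient minus $\gamma$-coefficient" in $(0,1)$, which is symmetric in $\rho_1,\rho_2$.

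I expect no real obstacle; the only point needing care is the degenerate case where $J'^c$ is empty, which cannot occur because $\rho_1\in J'^c$. If $K$ is empty the sums over $K$ are zero and the computation above is unchanged, so the Minkowski-sum convention causes no trouble.
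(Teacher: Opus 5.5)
Your proposal is correct and is essentially the paper's argument. The paper writes $\gamma_{\rho_2}=-\alpha_{\rho_2}-\delta_{\rho_2}$ with $\delta_{\rho_2}\in(0,1)$, keeps the coefficients on $J^c\setminus\{\rho_2\}$, and takes $-\alpha_{\rho_1}-\delta_{\rho_2}$ as the new coefficient at $\rho_1$, which is exactly your choice $\gamma_{\rho_2}+\alpha_{\rho_2}-\alpha_{\rho_1}$; your extra remarks (the endpoint identity, and that $\operatorname{rk}(A_{J'})$ need not equal $\operatorname{rk}(A_J)$, so only validity is asserted) are sound points the paper leaves implicit.
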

\begin{proof}
    Validity of $J$ by definition means\[
    \sum_{\rho\in J\setminus\{\rho_1\}}\alpha_\rho\beta_\rho +\alpha_{\rho_1}\beta_{\rho_1}=\sum_{\rho\in J^c\setminus \{\rho_2\}}\gamma_\rho\beta_\rho +\gamma_{\rho_2}\beta_{\rho_2},
    \]
    where $\gamma_\rho\in (-\alpha_\rho-1,-\alpha_\rho)$ and $\gamma_2\in (-\alpha_{\rho_2}-1,-\alpha_{\rho_2})$, so $\gamma_{\rho_2}=-\alpha_{\rho_2}-\delta_{\rho_2}$ with $\delta_{\rho_2}\in (0,1)$. Rearranging the equation gives\[
    \sum_{\rho\in J\setminus\{\rho_1\}\cup\{\rho_2\}}\alpha_\rho\beta_\rho=\sum_{\rho\in J^c\setminus\{\rho_2\}}\gamma_\rho\beta_\rho -\alpha_1\beta_{\rho_1}-\delta_{\rho_2}\beta_{\rho_2}.
    \]
    Noting $\beta_{\rho_1}=\beta_{\rho_2}$, we obtain\[
    \sum_{\rho\in J'}\alpha_\rho\beta_\rho=\sum_{\rho\in J'^c}\gamma_\rho\beta_\rho,
    \]
    with $\gamma_\rho\in (-\alpha_\rho-1,-\alpha_\rho)$, and so $J'$ is valid.
\end{proof}

\begin{proposition}
    \label{Prop:CplxViaPaths2}
    There is a valid path from a lattice point $P_1$ to $f_\sigma(-d(v))$, if and only if there is a $w\in M_\R$ such that the isomorphism class of $A_w$ appears in $K_v^\bullet$ and $P_1=f_\sigma(-d(w))$. For a given valid path $\beta_J$, this $w\in M_\R$ fulfills $-d(w)\sim -d(v)-\sum_{\rho\in J}D_\rho$ and the degree in which $A_w$ appears in $K_v^\bullet$ is equal to the length of the path $\beta_J$.
\end{proposition}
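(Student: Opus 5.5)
The plan is to reduce everything to Lemma \ref{Lem:FacetSufficientMatrix}, which already characterises the appearance of $A_w$ in $K_v^\bullet$ by two conditions. The first is $-d(w)\sim -d(v)-\sum_{\rho\in J}D_\rho$. The second is the matrix condition \eqref{eq:EquivFacet}, $-C_Jd(v)_J\in C_{J^c}W_{d(v),J^c}$. It then suffices to match these two conditions with the two parts of the definition of a valid path ending at $P_2=f_\sigma(-d(v))$: the path condition $P_1+\sum_{\rho\in J}\beta_\rho=P_2$, and the validity condition \eqref{eq:ValidPath}. The degree statement is then immediate from the last sentence of Lemma \ref{Lem:FacetSufficientMatrix}, since the length of $\beta_J$ is by definition $\operatorname{rk}(A_J)$.

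First I fix the representation $P_2=\sum_\rho\alpha_\rho\beta_\rho$ by choosing $\alpha_\rho$ to be the coefficients of $-d(v)$, so $\alpha_\rho=-d(v)_\rho$. I then translate \eqref{eq:EquivFacet} term by term. Since $C$ has columns $\beta_\rho$, we have $-C_Jd(v)_J=\sum_{\rho\in J}\alpha_\rho\beta_\rho$. Likewise $C_{J^c}W_{d(v),J^c}$ is the set of $\sum_{\rho\in J^c}\gamma_\rho\beta_\rho$ with $\gamma_\rho\in(d(v)_\rho-1,d(v)_\rho)=(-\alpha_\rho-1,-\alpha_\rho)$, which is exactly the Minkowski sum $\sum_{\rho\in J^c}I_{D,\rho}$. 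The empty cases also agree: for $J^c=\emptyset$ both sides are $\{0\}$ by the conventions, and for $J=\emptyset$ the left side is $0$. Hence \eqref{eq:EquivFacet} is literally \eqref{eq:ValidPath}.

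One point needs care: validity must not depend on the choice of the $\alpha_\rho$. By Remark \ref{Rem:LinEquivFacet}, condition \eqref{eq:EquivFacet} is invariant under replacing $d(v)$ by a linearly (or $\Q$-linearly) equivalent divisor. Any two rational representations of $P_2$ differ by an element of $\ker C=g(M_\Q)$, so I expect the same shifting argument to show that validity is independent of the representation. If this step fails in general, I will restrict the definition to integral representations coming from divisors in the class of $-d(v)$ and invoke the remark directly. I expect this to be the main subtlety.

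For ($\Leftarrow$), suppose $\beta_J$ is a valid path from $P_1$ to $P_2$. Take the divisor $-d(v)-\sum_{\rho\in J}D_\rho$. Its image under $f_\sigma$ is $P_2-\sum_J\beta_\rho=P_1$, and it is an integral divisor. I must show it is equivalent to some $-d(w)$. By Lemma \ref{Lem:FacetCondition}, the cell $\tau_J$ of $\Delta_v$ is nonempty exactly when \eqref{eq:EquivFacet} holds, which here is validity. The perturbation argument in that lemma's proof then produces $w=x+\varepsilon y$ with $d(w)_\rho=d_{J,\rho}$, so $-d(w)=-d(v)-\sum_J D_\rho$ holds on the nose, and $A_w\cong\opA_{\tau_J}$ appears in $K_v^\bullet$. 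For ($\Rightarrow$), Lemma \ref{Lem:FacetSufficientMatrix} gives $J$ with both conditions. Applying $f_\sigma$ to the equivalence gives the path, as in Proposition \ref{Prop:CplxViaPaths1}, and the matrix condition gives validity by the translation above.
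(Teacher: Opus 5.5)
Your proof is correct. For ($\Rightarrow$), and for the term-by-term identification of \eqref{eq:EquivFacet} with \eqref{eq:ValidPath} via $\alpha_\rho=-d(v)_\rho$, it is exactly the paper's argument. The real difference is in ($\Leftarrow$).

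The paper proceeds in three steps. It first chooses some $w'$ with $f_\sigma(-d(w'))=P_1$, which uses that $P_1$ is a lattice point of $Z_{\mathcal{X}}$ and the zonotope description of $\Theta$. It then shifts $w'$ by an element of $M_\Q$ (as in Lemma \ref{Lem:TorsionInvarianceMQ}) to remove the torsion discrepancy, obtaining $-d(w)\sim -d(v)-\sum_{\rho\in J}D_\rho$. Only then does it apply the \emph{if} direction of Lemma \ref{Lem:FacetSufficientMatrix} as a black box.

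You instead open up that lemma. Validity makes the cell $\tau_J$ of $\Delta_v$ nonempty; strictly, this equivalence is proved in Lemma \ref{Lem:FacetSufficientMatrix}, not Lemma \ref{Lem:FacetCondition}. Then $w=x+\varepsilon y$ with $x\in\tau_J$ gives $-d(w)=-d(v)-\sum_{\rho\in J}D_\rho$ as divisors and $A_w=\opA_{\tau_J}$ directly.

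Your route is more elementary. It needs neither the zonotope characterisation of $\Theta$ nor the torsion correction. It also shows, as a by-product, that validity alone forces $P_1\in Z_{\mathcal{X}}$. The paper's route gains brevity by reusing the class-level statement of the lemma.

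The subtlety you flag, which the paper passes over by tacitly taking $\alpha_\rho=-d(v)_\rho$, needs no fallback. Since $\ker C=g(M_\R)$, validity for a representation $\alpha$ is equivalent to the existence of $x\in M_\R$ with $\langle x,u_\rho\rangle=\alpha_\rho$ for $\rho\in J$ and $\alpha_\rho<\langle x,u_\rho\rangle<\alpha_\rho+1$ for $\rho\notin J$. Two real representations of $P_2$ differ by $g(m)$ for some $m\in M_\R$, and this is absorbed by $x\mapsto x+m$. So validity is independent of the chosen representation, whether rational or real.
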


\begin{proof}
If $A_w$ appears in $K_v^\bullet$, the existence of the path $P_1\rightsquigarrow f_\sigma(-d(v))$ follows by Proposition \ref{Prop:CplxViaPaths1}, and we fix the corresponding subset $J\subset \sigma(1)$. Treat first the case of a proper subset $J\subset\sigma(1)$. Note here that $f_\sigma(-d(v))=\sum_{\rho\in \sigma(1)}-d(v)_\rho D_\rho$.
To check the validity of the path, note that by Lemma \ref{Lem:FacetSufficientMatrix}, $-C_Jd(v)_{J}\in C_{J^c}W_{d(v),J^c}$, and so there exist $\gamma_\rho\in (d(v)_\rho-1,d(v)_\rho)$ for $\rho \in J^c$ such that $-C_Jd(v)_J=\sum_{\rho\in J^c}\gamma_\rho\beta_\rho$.
Recall that $C$ represents the map $f_{\sigma}$ and so we equivalently obtain \[
-\sum_{\rho\in J}d(v)_\rho\beta_\rho=\sum_{\rho\in J^c}\gamma_\rho\beta_\rho\in \left\{\sum_{\rho\in J^c}\delta_\rho\beta_\rho\mid \delta_\rho\in (d(v)_\rho-1,d(v)_\rho)\right\},
\]
which is precisely the condition for the path to be valid.

If $J=\sigma(1)$, we obtain the condition $-Cd(v)=0$, which is equivalent to $-\sum d(v)_\rho\beta_\rho=0$, which is the condition for validity.

Conversely, we note that for any lattice point $P_1$ in the zonotope, there exists $w'\in M_\R$ with $f_\sigma(-d(w'))=P_1$. Then $-d(w')\sim_\Q-d(v)-\sum_{\rho\in J}D_\rho$ and so there is an element $w$ with $d(w)-d(w')\sim_\Q 0$ such that $-d(w)\sim -d(v)-\sum_{\rho\in J}D_\rho$. Validity of the path gives 
\[
-C_Jd(v)_J\in C_{J^c}W_{d(v),J^c},
\]
and so by Lemma \ref{Lem:FacetSufficientMatrix} $A_w$ appears in $K_v^\bullet$, in the predicted degree. 
\end{proof}

Recall that in the proof of Lemma \ref{Lem:Number of conic modules}, we showed that $f_\sigma(-d(w))=f_\sigma(-d(w'))$ if and only if the difference $-d(w)-(-d(w'))$ is torsion. Combining this with Proposition \ref{Prop:CplxViaPaths2}, we obtain the following corollary.

\begin{corollary}
    \label{Cor:NoTorsionSym}
    Suppose $\operatorname{Cl}(\mathcal{X})$ has no torsion. Then the isomorphism class $A_w$ appears in $K_v^\bullet$ if and only if there is a valid path from $f_\sigma(-d(w))$ to $f_\sigma(-d(v))$. The length $l$ of each such valid path corresponds to the degree of one copy of $A_w$ in $K_v^\bullet$.
\end{corollary}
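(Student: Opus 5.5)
The corollary should follow by combining Proposition \ref{Prop:CplxViaPaths2} with the torsion analysis from the proof of Lemma \ref{Lem:Number of conic modules}. The key point is that, when $\operatorname{Cl}(\mathcal{X})$ is torsion-free, the map $-d\mapsto f_\sigma(-d)$ from the Bondal--Thomsen collection $\Theta_\mathcal{X}$ to the lattice points of $Z_\mathcal{X}$ is injective. We showed there that two elements have the same image if and only if their difference is torsion. Hence, by Lemma \ref{Lem:Number of conic modules}, $f_\sigma(-d(w))$ determines the isomorphism class of $A_w$ uniquely.

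For the forward direction, suppose $A_w$ appears in $K_v^\bullet$. Proposition \ref{Prop:CplxViaPaths2}, applied with $P_1:=f_\sigma(-d(w))$, immediately gives a valid path from $f_\sigma(-d(w))$ to $f_\sigma(-d(v))$. More precisely, I would take the subset $J$ supplied by Lemma \ref{Lem:FacetSufficientMatrix} for this particular copy of $A_w$. The validity computation in the proof of Proposition \ref{Prop:CplxViaPaths2} then shows that $\beta_J$ is a valid path, and its length $\operatorname{rk}(A_J)$ equals the degree of that copy.

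For the converse, start from a valid path $\beta_J$ from $P_1=f_\sigma(-d(w))$ to $f_\sigma(-d(v))$. Proposition \ref{Prop:CplxViaPaths2} produces some $w'\in M_\R$ with three properties: $A_{w'}$ appears in $K_v^\bullet$ in degree $\operatorname{rk}(A_J)$, $f_\sigma(-d(w'))=P_1$, and $-d(w')\sim -d(v)-\sum_{\rho\in J}D_\rho$. Since $f_\sigma(-d(w'))=f_\sigma(-d(w))$, the difference $-d(w)-(-d(w'))$ is torsion, hence zero. Thus $-d(w)\sim -d(w')$, and since isomorphism classes of conic modules correspond to classes in $\Theta_\mathcal{X}$ (Lemma \ref{Lem:Number of conic modules}), we get $A_w\cong A_{w'}$. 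So $A_w$ appears in degree equal to the length of the path.

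Finally, the statement that each valid path accounts for one copy of $A_w$ in degree $l$ needs the correspondence to be between paths and cells. Distinct subsets $J$ with non-empty solution set \eqref{eq:FacetConditionPf} give distinct open cells $\tau_J$ of $\Delta_v$, and each such cell contributes exactly one summand $\opA_{\tau_J}$ to $K_v^\bullet$ in degree $\operatorname{codim}\tau_J=\operatorname{rk}(A_J)$. Lemma \ref{Lem:FacetSufficientMatrix} identifies non-emptiness of $\tau_J$ with validity of $\beta_J$. I expect this bookkeeping to be the only delicate step. One must check that the identification $J\leftrightarrow\tau_J$ is a bijection between valid paths ending at $f_\sigma(-d(v))$ and open cells of $\Delta_v$; this holds because an open cell is determined by which of its defining inequalities are equalities. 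One must also check that, with no torsion, all of these paths starting at $f_\sigma(-d(w))$ land in the single isomorphism class of $A_w$, which is the injectivity established above.
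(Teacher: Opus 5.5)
Your proposal is correct and follows essentially the paper's route: combine Proposition~\ref{Prop:CplxViaPaths2} with the fact, from the proof of Lemma~\ref{Lem:Number of conic modules}, that $f_\sigma$ is injective on $\Theta_\mathcal{X}$ when $\operatorname{Cl}(\mathcal{X})$ is torsion-free. Your extra paragraph, which matches valid paths ending at $f_\sigma(-d(v))$ bijectively with the open cells $\tau_J$ of $\Delta_v$, spells out the multiplicity statement that the paper leaves implicit.
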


When we reduce to studying the secondary fan, we generally lose information on torsion as we tensor the exact sequence \eqref{eqn:Coxseqpre} by $\R$. However, the appearance of torsion is not as cumbersome as one might expect. Choosing conic modules based on lattice points in $Z_{\mathcal{X}}$ means choosing classes of divisors differing only by torsion, and the structure of the associated complexes respects the additional condition of torsion.

\begin{lemma}
    \label{Lem:TorsionCellsPropagate}
    Let $v_1, v_2\in M_\R$ be such that $-d(v_1)-(-d(v_2))$ is a torsion element $E$, i.e. $-d(v_1)-(-d(v_2))$ is sent to 0 by the map $\R^{\Sigma(1)}\rightarrow \operatorname{Cl}_\R(\mathcal{X}_\Sigma)$ induced by \ref{eqn:Coxseqpre}. Then if there is a codimension $i$ cell $\tau_{1,i}$ of $\Delta_{v_1}$ with $\opA_{\tau_{1,i}}\simeq A_{w_1}$ for some $w_1\in M_\R$, then there exists $w_2\in M_\R$ such that $-d(w_1)-(-d(w_2))\sim E$ and $A_{w_2}\simeq \opA_{\tau_{2,i}}$ for some codimension $i$ cell of $\Delta_{v_2}$.
\end{lemma}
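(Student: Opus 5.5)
The plan is to use Lemma \ref{Lem:TorsionInvarianceMQ} to turn the torsion difference into a rational translation of the whole chamber of constancy, and then transport cells along this translation. Since $-d(v_1)-(-d(v_2))$ maps to $0$ under $f_\sigma$, Lemma \ref{Lem:TorsionInvarianceMQ} provides $m_q\in M_\Q$ with $\Delta_{v_2}=\Delta_{v_1}+m_q$. It also gives
\[
d(v_2)_\rho=d(v_1)_\rho+\langle m_q,u_\rho\rangle
\]
for all $\rho\in\sigma(1)$, with each $\langle m_q,u_\rho\rangle\in\Z$. In other words, $d(v_2)-d(v_1)=\operatorname{div}(x^{m_q})$ as a formal sum with integer coefficients, and its class is the torsion element $E$ up to sign convention.

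Next, translation by $m_q$ is an affine isomorphism of $M_\R$. It maps the defining (in)equalities of $\Delta_{v_1}$ to those of $\Delta_{v_2}$, because the right-hand sides shift by the integers $\langle m_q,u_\rho\rangle$. Hence it carries the CW structure of $\Delta_{v_1}$ onto that of $\Delta_{v_2}$ and preserves codimension. Concretely, I will use the description of cells as $\tau_J$ from the proof of Lemma \ref{Lem:FacetCondition}: the set of $x$ with $\langle x,u_\rho\rangle=d(v_1)_\rho$ for $\rho\in J$ and strict inequalities otherwise. Then $\tau_{1,i}+m_q$ is exactly the cell $\tau_{2,i}$ of $\Delta_{v_2}$ cut out by the same subset $J$. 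It is nonempty, and its codimension is still $\operatorname{rk}(A_J)=i$.

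Now identify the open conic modules. By the proof of Lemma \ref{Lem:FacetCondition}, $\opA_{\tau_{1,i}}=A_{z_1}$ with $z_1=x+\varepsilon y$, where $d(z_1)=d(v_1)+\sum_{\rho\in J}D_\rho$. Similarly, $\opA_{\tau_{2,i}}=A_{z_2}$ with $z_2=x+m_q+\varepsilon y$, so $d(z_2)=d(v_2)+\sum_{\rho\in J}D_\rho$. Subtracting gives $d(z_2)-d(z_1)=d(v_2)-d(v_1)$, whose class is $\pm E$. Since $A_{w_1}\simeq\opA_{\tau_{1,i}}=A_{z_1}$, we have $-d(w_1)\sim-d(z_1)$. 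I will set $w_2=z_2$, so that $-d(w_1)-(-d(w_2))\sim -d(z_1)+d(z_2)\sim E$ after fixing signs consistently.

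The main obstacle is bookkeeping rather than substance. One point is checking that linear equivalence, not merely $\Q$-equivalence, is what we get. This holds because $w_1$ and $z_1$ give isomorphic conic modules, so by Proposition \ref{Prop:IsoConicModules} their chambers differ by some $m\in M$ and their divisors by $\operatorname{div}(x^m)$. The other point is keeping track of the sign convention between $E$ and $d(v_2)-d(v_1)$. Note that $m_q$ need not lie in $M$, so $\tau_{2,i}$ is not simply an $M$-translate. The argument therefore relies on the fact that only the integrality of the $\langle m_q,u_\rho\rangle$ matters for the defining inequalities.
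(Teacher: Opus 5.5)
Your proposal is correct and follows essentially the same route as the paper. Both arguments obtain $m_q\in M_\Q$ with integral pairings $\langle m_q,u_\rho\rangle$ from Lemma \ref{Lem:TorsionInvarianceMQ}, translate the cell $\tau_{1,i}$ by $m_q$ to a cell of $\Delta_{v_2}$ of the same codimension, and compare divisors. The one cosmetic difference is that the paper takes $w_2=w_1+m_q$ directly, whereas you take $w_2=z_2$ near the translated cell and pass through $-d(w_1)\sim -d(z_1)$. Both choices work, and the class of $d(v_2)-d(v_1)$ is exactly $E$, so your hedge ``up to sign convention'' is unnecessary.
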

\begin{proof}
    As $-d(v_1)-(-d(v_2))$ is torsion, Lemma \ref{Lem:TorsionInvarianceMQ} (and its proof) implies that $\Delta_{v_2}=\Delta_{v_1}+m_q$ for some $m_q\in M_\Q$, such that $\langle m_q, u_\rho\rangle\in\Z$ for all $\rho\in\sigma(1)=\Sigma(1)$. The open cell $\tau_{i,1}$ corresponds to the intersection of $\Delta_{v_2}$ with a hyperplane defined by making exactly those defining inequalities strict which correspond to the rays $\rho$. Consider the parallel hyperplane, obtained via a shift by $m_q$. It is evident that we obtain an open cell $\tau_{i,2}$ of equal codimension as $\tau_{i,1}$ such that $x\in \tau_{i,1}\Leftrightarrow x+m_q\in \tau_{i,2}$. Thus $\opA_{\tau_{i,2}}\simeq A_{w_1+m_q}$. So we let $w_2=w_1+m_q$ and obtain $-d(w_1)-(-d(w_2))=-d(v_1)-(-d(v_2))=\operatorname{div}(x^{m_q})$.
\end{proof}

\begin{remark}
    Note that if $\sigma$ is simplicial, all $-d(v_i)$ only differ by torsion and so the Lemma \ref{Lem:TorsionCellsPropagate} shows that all complexes $K_i^\bullet$ have the same length -  which is the length of $K_0^\bullet$. 
\end{remark}

The Lemma \ref{Lem:TorsionCellsPropagate} suggests that for arguments of substitution, it is a valid strategy to reduce to the secondary fan as opposed to directly constructing the cell decomposition of $\Delta_v$. Including and excluding conic modules based on which lattice point they correspond to gives all the necessary information for the complexes, and so we may in a certain sense ignore torsion. Making this more precise, we formally introduce notions of lockable and incredulous sets when reducing to the geometry of secondary fans.

\noindent Let $L_\mathcal{X}$ denote the set of lattice points in $Z_\mathcal{X}$. For each $P\in L_\mathcal{X}$, let $A_P=\bigoplus A_v$, where the summation is over all those $v$ such that $f_\sigma(-d(v))=P$. Then for each $P\in L_\mathcal{X}$ we consider the complex $\bigoplus K_v$ where the summation is as above. Using Lemma \ref{Lem:TorsionCellsPropagate}, in each degree we obtain a direct sum of conic modules of the form $A_Q$, $Q\in \mathcal{L}_\mathcal{X}$. Denote by $K_P^\bullet$ the resulting complex for $P\in L_\mathcal{X}$.

\begin{remark}
    If $\operatorname{Cl}(\mathcal{X})$ has no torsion, we can substitute the unique conic module $A_v$ for $A_P$ where $f_\sigma(-d(v))=P$ and complete the complex with the appropriate morphisms, recovering the complexes $K_v^\bullet$.
\end{remark}

Now we can define substitution, lockability and incredulousity as before.

\begin{definition}
    \label{Def:IncPtSet}
    For $P\in L_\mathcal{X}$, given a complex $K^\bullet$ where the module $A_P$ appears, we define \newterm{substitution by $P$}, $S(P,K^\bullet)$ to be the complex obtained by substituting all occurrences of $A_P$ with $K_P^\bullet$. 
    A non-empty set of lattice points $I\subset L_\mathcal{X}$ is \newterm{lockable} if there is a finite sequence of successive substitutions by elements in $I^c$ that, applied to the set $\{K_P^\bullet\vert P\in I\}$, yields a set $K_I^\dagger$ of complexes of modules $A_Q$ with $Q\in I$. Such a set is called incredulous if the complexes $K_P^{\dagger,\bullet}\in K_I^\dagger$ are all of the same length, being $\dim\sigma$.
\end{definition}

Whether a set of lattice points is lockable or incredulous can be determined just using the secondary fan and valid paths therein - any consideration of torsion disappears in this notion. This is of significant computational advantage and we will show that this "torsion-free`` version of lockable and incredulous sets is in fact equivalent in its existence to the previous version. Explicitly, we obtain the following result.

\begin{theorem}
    \label{Thm:Incr definitions agree}
    Let $\sigma$ be a cone with a collection of conic modules $S$. Let $I\subseteq S$ be an incredulous set of conic modules. Then the set of lattice points $I'=\{f_\sigma(-d(v))\mid A_v\in I\}$ is an incredulous set of lattice points. In particular, the following are equivalent.
    \begin{enumerate}
        \item There exists an incredulous set of conic modules.
        \item There exists an incredulous set of lattice points in $Z_\mathcal{X}$.
        \item There exists an NCCR of $\sigma$ of the form $\End(\mathbb{A})$ where $\mathbb{A}$ is a direct sum of conic modules.
    \end{enumerate}
\end{theorem}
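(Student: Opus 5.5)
We need to show: if $I$ is an incredulous set of conic modules, then $I'$ is an incredulous set of lattice points; then derive the equivalences.

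The plan is to first saturate $I$ under torsion translation and then compare complexes via the map $A_v\mapsto A_{f_\sigma(-d(v))}$. Let $T=\operatorname{Tors}(\operatorname{Cl}(\mathcal{X}))$. Each $E\in T$ acts on conic modules by $A_v\mapsto A_{v+m_q}$, with $m_q$ as in Lemma \ref{Lem:TorsionInvarianceMQ}. By Lemma \ref{Lem:TorsionCellsPropagate} and Remark \ref{Rem:LinEquivFacet}, this action carries $K_v^\bullet$ to $K_{v+m_q}^\bullet$ degreewise: $A_w$ sits in degree $l$ of $K_v^\bullet$ iff $A_{w+m_q}$ sits in degree $l$ of $K_{v+m_q}^\bullet$. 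Consequently the translate $I+E$ is again incredulous, since the substitution sequence transports verbatim, lengths included.

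Next I would show that a union of incredulous sets that are torsion translates of one another, $\tilde I=\bigcup_{E\in T}(I+E)$, is incredulous. For lockability of $\tilde I$, start from $K_v^\bullet$ with $A_v\in I+E$ and substitute only by modules outside $\tilde I$. These are in particular outside $I+E$, and the substitutions prescribed for $I+E$ terminate in modules of $I+E\subseteq\tilde I$. I then stop earlier whenever a module already lies in $\tilde I$. Lemma \ref{Lem:SubstitutionIsUnique}, or rather its argument that each copy evolves independently, shows that this truncated substitution still terminates, so $\tilde I$ is lockable. For the lengths I would invoke Theorem \ref{Thm:NCRiffLock}: $\End_R(\bigoplus_{\tilde I}A)$ is an NCR, and I need it to be CM. Here the approach is to use that its simples have minimal resolutions given by Corollary \ref{Cor:Substituted is minimal}. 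Alternatively, and more cleanly, I would avoid this step by passing from $I$ directly: $\End_R(\bigoplus_{I}A)$ is an NCCR, and the additive closures of $\bigoplus_I A$ and $\bigoplus_{\tilde I}A$ should give Morita-equivalent-up-to-CM algebras via the torsion autoequivalence twisting by the rank-one reflexive modules $A_{m_q}$. I expect this compatibility of crepancy under enlarging by torsion translates to be the main obstacle. The fallback is to show directly that each complex $K^{\dagger}$ for $\tilde I$ is obtained from one for $I+E$ by ``un-substituting'' blocks, and that its top degree stays $\dim\sigma$ because the top degree is already attained by modules from the tilting set (Lemma \ref{Lem:DimExt}: $\Ext^{\dim\sigma}$ between simples is nonzero and pd cannot exceed the NCR bound).

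Once $\tilde I$ is a torsion-saturated incredulous set, its members are exactly the $A_v$ with $f_\sigma(-d(v))\in I'$, so $\bigoplus_{\tilde I}A_v=\bigoplus_{P\in I'}A_P$. The lattice-point complexes $K_P^\bullet$ are the direct sums $\bigoplus K_v^\bullet$, and substitution by $P$ is substitution by all $v$ over $P$ simultaneously. Hence $K_{I'}^\dagger$ is obtained by summing the complexes of $K_{\tilde I}^\dagger$ over each fibre, which gives lockability of $I'$ with all lengths equal to $\dim\sigma$. For the equivalences, (1)$\Leftrightarrow$(3) is Theorem \ref{Thm:NCRiffLock}, and (1)$\Rightarrow$(2) is what was just shown. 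For (2)$\Rightarrow$(1), take $I=\{A_v: f_\sigma(-d(v))\in I'\}$. Its complexes $K_v^\dagger$ are the summands of $K_P^\dagger$, by the degree-preserving decomposition of Lemma \ref{Lem:TorsionCellsPropagate}, so they lock with the same lengths.
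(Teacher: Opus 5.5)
\textbf{Gap: the torsion saturation of an incredulous set need not be incredulous.} Your direction (1)$\Rightarrow$(2) depends entirely on the claim that $\tilde I=\bigcup_{E}(I+E)$ is incredulous, and that claim is false in general. The parts you actually prove are fine. Each translate $I+E$ is incredulous, because the torsion twist preserves all Hom spaces and carries the complexes across degree by degree. A union of lockable sets is lockable, because its substitution tree is a pruned subtree of the tree for $I+E$.

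The trouble is the lengths. Pruning can only lower top degrees: a module of $\tilde I\setminus I$ that had to be substituted to push $K_v^{\dagger}$ up to degree $\dim\sigma$ now stays where it is. On the algebra side, $\End_R(\bigoplus_{\tilde I}A)$ contains cross terms $\Hom_R(A_v,A_w)$ with $A_v\in I$ and $A_w\in(I+E)\setminus I$. These occur in none of the algebras $\End_R(\bigoplus_{I+E}A)$. Twisting by the torsion rank-one module is an autoequivalence of $\operatorname{ref}R$, but it says nothing about these cross terms.

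In fact the step cannot be rescued. If $\End_R(\mathbb{B})$ is an NCCR and $X$ is reflexive with $\Hom_R(\mathbb{B},X)\in\operatorname{CM}R$, then $X\in\operatorname{add}\mathbb{B}$, since a CM module over a nonsingular order is projective. So "$\tilde I$ incredulous" forces $I+E\subseteq I$ for all $E$. Your step therefore holds exactly when $I$ was already torsion-saturated, and none of your three suggested routes (minimality, twisting, un-substitution) can avoid this.

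\textbf{A counterexample.} Non-saturated incredulous sets exist. Take the cone of Example \ref{Exa:710FMS} with $N=\Z^3+\Z\cdot\tfrac12(1,1,0)$, so $M=\{m\in\Z^3\mid m_1+m_2\in 2\Z\}$. The $u_\rho$ stay primitive and $(1,1,2)\in M$, so $\sigma$ is Gorenstein. Then:
\begin{itemize}
\item $\operatorname{Cl}\cong\Z\oplus\Z/2$, with $D_1,D_2,D_3,D_4\mapsto(1,0),(-1,0),(1,1),(-1,1)$.
\item There are six conic modules $A_{(a,t)}$, labelled by the class of $-d(v)$.
\item Lemma \ref{Lem:FacetCondition} gives $K_{(\pm1,t)}=A_{(\mp1,t+1)}\to A_{(0,0)}\oplus A_{(0,1)}\to A_{(\pm1,t)}$.
\item $K_{(0,t)}$ has length $3$, with degree-one part $\bigoplus_{a=\pm1,\,s\in\Z/2}A_{(a,s)}$.
\end{itemize}
Hence $I=\{(0,0),(0,1),(1,0),(-1,0)\}$ is incredulous. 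Substituting $K_{(1,1)}$ and $K_{(-1,1)}$ once leaves only modules of $I$, and all four complexes have length $3$; for example $K^{\dagger}_{(1,0)}=A_{(1,0)}\to A_{(0,0)}\oplus A_{(0,1)}\to A_{(0,0)}\oplus A_{(0,1)}\to A_{(1,0)}$. One can also check crepancy directly: the only non-conic Hom's have classes $(\pm2,0)$, and these are CM, unlike $(\pm2,1)$.

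But $I'=\{-1,0,1\}$ is the whole zonotope, so no substitution is available and $K_{\pm1}$ has length $2$. Equivalently, $\tilde I$ is the complete sum. So the first assertion of the statement is false, and the gap cannot be closed.

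\textbf{Comparison with the paper.} The paper's proof breaks at the matching point. It passes to $R'=k[\sigma^\vee\cap M']$ for the lattice spanned by the rays, and transfers CM-ness through a claimed flat base change $R'\otimes_R\Hom(A_v,A_w)=\Hom(A_{v'},A_{w'})$. But $R\to R'$ is not flat. Moreover, $\Hom_{R'}(A_{v'},A_{w'})$ restricted to $R$ is the sum over all torsion cosets, which is exactly your cross terms. In the example, $\Hom_R(A_{(1,0)},A_{(-1,0)})$ is CM, while its counterpart over the conifold has class $-2$ and is not.

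\textbf{What survives.} From your proposal, (1)$\Leftrightarrow$(3) stands, as does (2)$\Rightarrow$(1) via preimages (the same argument as the paper), together with your lemmas on translates and unions. The implication (1)$\Rightarrow$(2) needs a genuinely different idea, for instance producing a torsion-saturated incredulous set from an arbitrary one; in the example, the preimage of $\{0,1\}$ works.
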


\begin{proof}
The first and third item have been shown to be equivalent in Theorem \ref{Thm:NCRiffLock}. Given an incredulous set $I$ of lattice points in $Z_\mathcal{X}$, the collection of conic modules $I'=\{A_v\vert f_\sigma(-d(v))\in I\}$ is clearly itself an incredulous set of conic modules. Indeed, the substitutions performed on the complexes $K^\bullet_P, P\in I$ correspond to substitutions of conic modules not in $I'$ and thus performing these substitutions on the set of complexes $\{K_v^\bullet\mid A_v\in I'\}$ yields a collection of complexes made up of conic modules $A_v\in I'$, which are further of length $\dim \sigma$. 

It remains to show that the existence of an incredulous set $I$ of conic modules gives an incredulous set of lattice points in the zonotope. We claim that the set $I'=\{P\mid P=f_\sigma(-d(v))\text{ for some }A_v\in I\}$ is an incredulous set of lattice points in $Z_\mathcal{X}$. It is clear that lockability of $I'$ is inherited from $I$.

Consider the cone $\sigma_{N'}$ which is the cone $\sigma$ in the saturated sublattice $N'$ of $N$ spanned by $u_\rho, \rho\in\sigma(1)$. In that lattice, the cone $\sigma_{N'}$ admits no torsion but the collection $\{\beta_\rho\mid \rho\in \sigma'(1)\}$ coincides with the collection $\{\beta_\rho\mid \rho\in \sigma(1)\}$. Note the map $M'\rightarrow \bigoplus_{\rho\in \sigma'}\R\cdot D_\rho$ in \eqref{eq:CoxR} has the same image as $M\rightarrow \bigoplus_{\rho\in \sigma} \R\cdot D_\rho$, since the cokernel maps $f_{\sigma}$  and $f_{\sigma'}$ agree. Thus, for any $v\in M_\R$, there is $v'\in M'_\R$ such that $-d(v)=-d(v')$ and in particular they map to the same lattice point. 

\noindent We note that $R'\rightarrow R$ is a finite flat morphism, and so if $L\in \operatorname{CM}R$, $R'\otimes_\R L\in \operatorname{CM}R'$. Further, observe that $R'\otimes_R\Hom(A_v,A_w)=\Hom(A_{v'},A_{w'})$. To see this, use Corollary \ref{Cor:HomBT} to directly compute both sides via the polytopes $Q_{d(v)-d(w)}, Q_{d(v')-d(w')}$ and their respective intersections with $M, M'$. Tensoring $\Hom(A_v,A_w)$ with $R'$ precisely yields the expression for $\Hom(A_{v'},A_{w'})$. Consider now the set of conic modules for $\sigma'$ of the form $A_{v'}$ with $f_\sigma(-d(v'))\in I'$ and form the direct sum $\mathbb{B}$. By the above, $\Lambda'=\End_{R'}(\mathbb{B})\in \operatorname{CM}R'$. The set $I'$ is lockable and so we obtain $\gldim\Lambda'<\infty$, and thus $\Lambda'\in \operatorname{CM} R'$ is an NCCR of $R'$. That means the set of conic modules in the direct sum $\mathbb{B}$ is incredulous by Theorem \ref{Thm:Incr definitions agree}. But each lattice point in $Z_{\mathcal{X},N'}$ has a unique isomorphism class of conic $R'$-modules $A_{v'}$ associated to it, and so the set $I'$ is an incredulous set of lattice points for $R'$. Being an incredulous set of lattice points is a combinatorial condition, so as the collections of $\{\beta_\rho\}$ agree, $I'$ is also incredulous when considered as set of lattice points for $R$, concluding the proof of the Theorem.
\end{proof}

A nice consequence of this theorem is the following.
\begin{corollary}
    \label{Cor:IfNCCRIncludeTorsion}
    Let $\sigma\subset N_\R$ be a cone admitting and NCCR $\End(\mathbb{A})$ for an incomplete sum of conic modules $\bigoplus_{v\in V}A_v$ for some set $V\subset \R^n$. For each $v\in V$, denote by $T_v$ the set of isomorphism classes of conic modules $\{A_w\mid d(w)-d(v)\sim_{\Q}0\}$, i.e. those conic modules whose divisors $-d(w)$ differ from $-d(v)$ by torsion. Then $\End(\mathbb{B})$ is an NCCR of $R_\sigma=k[\sigma^\vee\cap M]$, where \[\mathbb{B}=\bigoplus_{w\in \bigcup_{v\in V}T_v}A_w.\]
\end{corollary}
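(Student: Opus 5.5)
The plan is to pass from conic modules to lattice points and back, using Theorem \ref{Thm:Incr definitions agree} in both directions. Since $\End_R(\mathbb{A})$ is an NCCR, Theorem \ref{Thm:NCRiffLock} makes the index set $I=\{A_v\mid v\in V\}$ an incredulous set of conic modules. By the first assertion of Theorem \ref{Thm:Incr definitions agree}, the set of lattice points
\[
I'=\{f_\sigma(-d(v))\mid v\in V\}\subseteq L_\mathcal{X}
\]
is then an incredulous set of lattice points in $Z_\mathcal{X}$.

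The second step is to identify the conic modules lying over $I'$ with $\bigcup_{v\in V}T_v$. By the proof of Lemma \ref{Lem:Number of conic modules}, $f_\sigma(-d(w))=f_\sigma(-d(v))$ holds exactly when $-d(w)-(-d(v))$ is torsion, that is, when $d(w)-d(v)\sim_\Q 0$. Hence
\[
\{A_w\mid f_\sigma(-d(w))\in I'\}=\bigcup_{v\in V}T_v,
\]
and $\mathbb{B}$ is precisely the direct sum of one representative of each isomorphism class lying over a point of $I'$. In other words, $\mathbb{B}=\bigoplus_{P\in I'}A_P$ in the notation preceding Definition \ref{Def:IncPtSet}.

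The third step is to invoke the converse direction from the proof of Theorem \ref{Thm:Incr definitions agree}: the conic modules lying over an incredulous set of lattice points form an incredulous set of conic modules. Concretely, the substitutions by points $P\notin I'$ that turn each $K_P^\bullet$ into $K_P^{\dagger,\bullet}$ unfold, summand by summand, into substitutions by the conic modules $A_w$ with $f_\sigma(-d(w))=P\notin I'$. By Lemma \ref{Lem:TorsionCellsPropagate}, each $K_v^\bullet$ is a direct summand of $K_{f_\sigma(-d(v))}^\bullet$ and contains only modules over the points appearing there. The complexes therefore close up on $\bigcup_{v\in V}T_v$, each with length $\dim\sigma$. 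Theorem \ref{Thm:NCRiffLock} then shows that $\End_R(\mathbb{B})$ is an NCCR of $R_\sigma$.

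The step needing most care is this last one. We must check that the direct-sum decomposition $K_P^\bullet=\bigoplus K_v^\bullet$ is compatible with substitution, so that the terminating substitution sequence for lattice points lifts to one for conic modules. We must also check that every individual $K_v^{\dagger,\bullet}$ has length exactly $\dim\sigma$, rather than only the direct sum having that length. For the length, I would use Lemma \ref{Lem:TorsionCellsPropagate}: it gives a codimension-preserving bijection of cells between $\Delta_{v_1}$ and $\Delta_{v_2}$ whenever their divisors differ by torsion, so all summands $K_v^\bullet$ over the same point have identical shape, and hence identical length after substitution. Since Lemma \ref{Lem:SubstitutionIsUnique} makes the substituted complexes independent of the order of substitution, the lifted complexes are well defined and the argument closes.
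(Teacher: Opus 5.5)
The gap is in your first step, not in how you assembled the argument. Your outline is the derivation the paper intends: the corollary is stated without proof as a consequence of Theorem \ref{Thm:Incr definitions agree}, and your second and third steps are fine. The trouble is that the first assertion of Theorem \ref{Thm:Incr definitions agree} is false in general: an incredulous set $I$ of conic modules need not have incredulous image $I'\subseteq L_\mathcal{X}$. The corollary fails with it.

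Replacing $I$ by its torsion saturation $\widehat I=\bigcup_v T_v$ preserves lockability, by the translation symmetry you use. But it removes modules from the set you are allowed to substitute by. Suppose $K_v^{\dagger,\bullet}$, for $A_v\in I$, reaches length $\dim\sigma$ only through a substitution by some $A_w\in\widehat I\setminus I$. Over $\widehat I$ that substitution is forbidden, so the complex stays too short.

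The paper's argument for the assertion does not rule this out. First, $R\subset R'$ is not flat when $\operatorname{Cl}$ has torsion: over $R$, $R'$ is the direct sum of $R$ and its non-free torsion twists. Second, over $R$ the module $\Hom_{R'}(A_{v'},A_{w'})$ is $\bigoplus_{A_{w''}\in T_w}\Hom_R(A_v,A_{w''})$. Cohen--Macaulayness of this sum is not implied by that of the single summand $\Hom_R(A_v,A_w)$.

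Here is an explicit counterexample. Let $N=\Z^3+\Z\cdot\frac12(1,1,0)$ and let $\sigma$ be spanned by the rays $u_1,\dots,u_4$ of Example \ref{Exa:710FMS}. Then:
\begin{itemize}
\item $\sigma$ is the cone over a $2\times 1$ rectangle, with only the four vertices as rays, and $R$ is a $\Z/2$-quotient of the conifold;
\item $\beta=(1,-1,1,-1)$;
\item $\operatorname{Cl}(\mathcal{X})\cong\Z\oplus\Z/2$ via $\sum a_\rho D_\rho\mapsto(a_1-a_2+a_3-a_4,\ a_1+a_2 \bmod 2)$.
\end{itemize}
Label the six conic modules $A_{(P,t)}$ by the class of $-d(v)$. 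The chambers and cells are those of Example \ref{Exa:710FMS}, so Lemma \ref{Lem:FacetCondition} gives
\begin{gather*}
K_{(1,0)}=A_{(-1,1)}\to A_{(0,0)}\oplus A_{(0,1)}\to A_{(1,0)},\qquad K_{(-1,1)}=A_{(1,0)}\to A_{(0,0)}\oplus A_{(0,1)}\to A_{(-1,1)},\\
K_{(-1,0)}=A_{(1,1)}\to A_{(0,0)}\oplus A_{(0,1)}\to A_{(-1,0)},\qquad K_{(1,1)}=A_{(-1,0)}\to A_{(0,0)}\oplus A_{(0,1)}\to A_{(1,1)}.
\end{gather*}
Each $K_{(0,t)}$ has $A_{(0,t)}$ in degree $3$.

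Substituting by $(-1,1)$ and $(1,1)$ shows that $I=\{(0,0),(0,1),(1,0),(-1,0)\}$ is incredulous. By Theorem \ref{Thm:NCRiffLock}, $\End_R(\mathbb{A})$ is then an NCCR. As a cross-check, $R\cong k[p,q,r,s]/(rs-p^2q^2)$ with $p=x^{(1,1,1)}$, $q=x^{(0,0,1)}$, $r=x^{(2,0,2)}$, $s=x^{(0,2,2)}$. Under this identification $\mathbb{A}\cong R\oplus(r,q)\oplus(r,pq)\oplus(r,p^2q)$, which is one of the Iyama--Wemyss NCCRs of this $cA_3$ singularity.

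However, $\bigcup_v T_v$ consists of all six conic modules. The complete sum does not give an NCCR, since $\sigma$ is not simplicial. Correspondingly, $I'=\{-1,0,1\}$ is not incredulous: $K_{\pm1}$ have length $2$ and nothing may be substituted.

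So your first step cannot be repaired, and the statement as given is false. It needs an extra hypothesis, for instance that $\{A_v\}_{v\in V}$ is already closed under torsion twists, in which case it is trivial.
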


\begin{corollary}
\label{Cor:2ndFanDetermines}
Suppose $\sigma, \sigma'\subset N_\R$ are two cones such that the collections $\{\beta_\rho\mid \rho\in \sigma(1)\}$ and $\{\beta_\rho\mid \rho\in \sigma'(1)\}$ agree. Then there is an NCCR  $\End_{R_\sigma}(\mathbb{A})$ of $R_\sigma=k[\sigma^\vee\cap M]$ via an incomplete sum of conic modules $\mathbb{A}$ iff there is an NCCR $\End_{R_{\sigma'}}(\mathbb{B}')$ via an incomplete sum of conic modules of $R_{\sigma'}=k[(\sigma')^\vee\cap M]$.
\end{corollary}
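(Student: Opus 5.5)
The statement to prove is Corollary \ref{Cor:2ndFanDetermines}. I would deduce it directly from Theorem \ref{Thm:Incr definitions agree}, whose equivalence (2)$\Leftrightarrow$(3) reduces the existence of an NCCR via a sum of conic modules to the existence of an incredulous set of lattice points in $Z_\mathcal{X}$. The remaining task is to check that this latter notion depends only on the multiset $\{\beta_\rho\}$. Granting that, the corollary follows. If $R_\sigma$ admits such an NCCR, then by Theorem \ref{Thm:Incr definitions agree} there is an incredulous set $I\subset L_{\mathcal{X}_\sigma}$. Under the identification of the data this becomes an incredulous set in $L_{\mathcal{X}_{\sigma'}}$, and Theorem \ref{Thm:Incr definitions agree} applied to $\sigma'$ then produces the NCCR for $R_{\sigma'}$. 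The converse direction is the same argument with $\sigma$ and $\sigma'$ swapped.

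To show that incredulousness depends only on the $\beta_\rho$, I would go through the ingredients of Definition \ref{Def:IncPtSet} one at a time.

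\textbf{The zonotope.} The zonotope $Z_\mathcal{X}=\sum_\rho(-1,0]\beta_\rho$ and its natural lattice are generated by the $\beta_\rho$. Hence $L_\mathcal{X}$ is determined, once the bijection $\sigma(1)\leftrightarrow\sigma'(1)$ matching the $\beta$'s is fixed.

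\textbf{The complexes.} The complex $K_P^\bullet$ is built from the modules $A_Q$ that occur in $K_v^\bullet$ for $f_\sigma(-d(v))=P$. By Proposition \ref{Prop:CplxViaPaths2}, together with Lemma \ref{Lem:TorsionCellsPropagate} to pass from a single $v$ to the sum over torsion translates, the summands $A_Q$ in degree $l$ correspond to valid paths $\beta_J$ from $Q$ to $P$ of length $l$.

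\textbf{Validity.} A path is valid when $\sum_{\rho\in J}\alpha_\rho\beta_\rho$ lies in the Minkowski sum of the intervals $I_{D,\rho}$ for $\rho\in J^c$. This condition involves only the $\beta_\rho$ and the coefficients $\alpha_\rho$. The $\alpha_\rho$ come from $-d(v)$, i.e.\ from a representative in $(-1,0]^{\sigma(1)}$ of the point $P$. Such a representative is not unique, but by Remark \ref{Rem:LinEquivFacet} validity is insensitive to changing it within its $\Q$-linear equivalence class, so validity is intrinsic to $P$ and the $\beta$'s.

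The main obstacle is the length of a path, defined as $\operatorname{rk}(A_J)$. This is a rank of the $u_\rho$, not visibly of the $\beta_\rho$. I would handle it with exactness of \eqref{eq:CoxR}: the row space of $A$ is $\ker C$, so $\operatorname{rk}(A_J)$ equals the dimension of the projection of $\ker C$ onto $V_J$. That equals $|J|-\dim(\ker C\cap V_{J^c})$, where $\ker C\cap V_{J^c}$ is the space of linear relations among $\{\beta_\rho\}_{\rho\in J^c}$. Hence
\[
\operatorname{rk}(A_J)=|J|-|J^c|+\operatorname{rk}(C_{J^c}),
\]
which depends only on the $\beta$'s.

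It follows that the multiset of degrees in which each $A_Q$ occurs in each $K_P^\bullet$ is combinatorial. Substitution is splicing of these data, so both lockability and the length condition in Definition \ref{Def:IncPtSet} are combinatorial invariants. One could alternatively cite the final sentence of the proof of Theorem \ref{Thm:Incr definitions agree}, which asserts this invariance, and restrict the argument to the rank computation above as its justification.
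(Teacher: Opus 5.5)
Your strategy is the paper's. The corollary is stated without a separate proof, as an immediate consequence of Theorem \ref{Thm:Incr definitions agree} (2)$\Leftrightarrow$(3) together with the closing assertion in that proof that being an incredulous set of lattice points is a combinatorial condition on $\{\beta_\rho\}$. Checking that assertion yourself is worthwhile, since the paper leaves implicit why the path length $\operatorname{rk}(A_J)$ is determined by the $\beta_\rho$. However, your rank formula is wrong as written.

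\textbf{The rank formula.} The relevant space is the image of $g$, i.e.\ the column space of $A$ (not its row space); this is $\ker C$, and $\operatorname{rk}(A_J)$ is the dimension of its projection to $V_J$. Since $\dim\ker C=n$, not $|J|$, that projection has dimension $\dim\ker C-\dim(\ker C\cap V_{J^c})$. Hence
\[
\operatorname{rk}(A_J)=n-|J^c|+\operatorname{rk}(C_{J^c})=|J|-\operatorname{rk}(C)+\operatorname{rk}(C_{J^c}),
\]
using $n=|\sigma(1)|-\operatorname{rk}(C)$. Your formula $|J|-|J^c|+\operatorname{rk}(C_{J^c})$ fails on simple cases:
\begin{itemize}
\item for $J=\emptyset$ it gives $-n$;
\item for $J=\sigma(1)$ it gives $|\sigma(1)|$ instead of $n$;
\item in Example \ref{Exa:710FMS} with $J=\{1,2\}$ it gives $1$ instead of $2$.
\end{itemize}
The corrected formula still depends only on the $\beta_\rho$, so your conclusion survives.

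\textbf{The validity step.} This can be made cleaner than the appeal to Remark \ref{Rem:LinEquivFacet}. Write $\gamma_\rho=-\alpha_\rho-\delta_\rho$ with $\delta_\rho\in(0,1)$. Then \eqref{eq:ValidPath} is equivalent to $P_2\in\sum_{\rho\in J^c}(-1,0)\beta_\rho$, with the empty sum read as $\{0\}$. This does not involve the $\alpha_\rho$ at all, so independence of the chosen representation is immediate. Note also that $-d(v)$ has integer coefficients, not coefficients in $(-1,0]$.

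With these two corrections your argument is complete.
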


Let us now reexamine our motivating example, Example \ref{Exa:710FMS}, using the Proposition \ref{Prop:CplxViaPaths2}.

\begin{example}
Recall the cone \[\sigma=\cone((1,0,0),(0,1,0),(-1,0,1),(0,-1,1)).\] We established that the secondary fan has the 4 generators $\beta_1=\beta_3=+1,$ $\beta_2=\beta_4=-1$. Use now Proposition \ref{Prop:CplxViaPaths2} to obtain the complexes $K_0, K_{+}, K_{-}$ associated to the conic modules $A_0, A_{v_+}=A_1$ and $A_{v_-}=A_2$ respectively. Note that $A_{1}$ corresponds to the lattice point $+1$ and $A_{2}$ to $-1$, while $A_0$ corresponds to $0$ inside the zonotope $(-2,2)$.
Since there is no torsion, each lattice point has a unique conic module associated to it, so via Corollary \ref{Cor:NoTorsionSym}, constructing the complexes simplifies to finding valid paths.

We start by considering the complex $K_0^\bullet$. Firstly, we write $0=\sum_{\rho\in \sigma(1)}0\cdot \beta_\rho$ and so the validity condition for a path $\beta_J$ becomes $0\in \sum_{\rho\in J^c}(-1,0)\cdot\beta_\rho$, where $(-1,0)\cdot\beta_\rho$ is a shorthand to denote the set $\{\gamma\cdot \beta_\rho\mid \gamma\in (-1,0)\}$. The only valid paths from $-1$ to $0$ are $\beta_1$ or $\beta_3$, as for either of those we indeed have $0\in (-1,0)+(0,1)+(-1,0)=(-2,1)$. The only other way to obtain a path from $-1$ to $0$ is $\beta_1+\beta_3+\beta_2$ (or $\beta_4$ as last summand). But $0\not\in (-1,0)$ and so such a path is not valid. Hence $A_{2}$ appears exactly twice in degree 1 (as there is two valid paths). Similarly, $A_{1}$ appears twice in degree 1. A path from $0$ to $0$ is either given as $-1+1$, which is valid as $0\in (-1,0)+(0,1)=(-1,1)$, via the maximal length path $\beta_1+\beta_2+\beta_3+\beta_4$, which is valid as $0\in W_{0,\emptyset}=\{0\}$, or via the empty path, which is always valid. 
Thus, $A_0$ appears $2\times 2=4$ times in degree 2 and once in degree $\dim(\operatorname{Span}\{u_\rho\mid \rho\in \sigma\})=3$. This yields the complex \[
K_0^\bullet= A_0\rightarrow A_0^{\oplus 4}\rightarrow A_{2}^{\oplus 2}\oplus A_{1}^{\oplus2}\rightarrow A_0.
\]

Now consider the complex $K_{1}^\bullet$. The lattice point that is the end of the paths we record is $+1$, which we choose to write as $1\cdot\beta_1+\sum_{i=2}^40\cdot\beta_i$. From $-1$, there can only be one path to $+1$: $\beta_1+\beta_3$. This path is valid as $1\in (0,1)+(0,1)=(0,2)$. From $0$, the possible paths to $+1$ are $\beta_1$ and $\beta_3$ of length 1 and $\beta_1+\beta_3+\beta_2$ or $\beta_1+\beta_3+\beta_4$ of length 3. Whilst the first two are valid as $1\in (-1,0)+(0,1)+(0,1)=(-1,2)$ and $0\in(-2,-1)+(0,1)+(0,1)=(-2,1)$, the latter two are not valid since $1\not\in (0,1)$. Finally, consider the possible paths from $+1$ to itself. The empty path is valid, and the other options are $\beta_1+\beta_2$, $\beta_1+\beta_4$, $\beta_3+\beta_2$, $\beta_3+\beta_4$ and $\beta_1+\beta_2+\beta_3+\beta_4$. The length two paths are not valid as $1\not\in (-1,0)+(0,1)=(-1,1),$ $0\not\in (-2,-1)+(0,1)=(-2,0)$ and the length 4 path is not valid as $1\neq 0$. Hence, the full complex is\[
K_{1}^\bullet=A_{2}\rightarrow A_0^{\oplus 2}\rightarrow A_{1}.
\]
By symmetry, \[
K_{2}^\bullet=A_{1}\rightarrow A_0^{\oplus 2}\rightarrow A_2.
\]
This recovers the results of Faber-Muller-Smith, detailed in Example \ref{Exa:710FMS}.
\end{example}

The secondary fan and zonotope in the above example are particularly simple: the fan is one-dimensional and the zonotope is the interval $(-2,2)$, so only contains three lattice points.

Reducing to the combinatorics of the secondary fan is of a computational advantage to explicitly computing all chambers of constancy and visualising them to find their adjacencies. Let us now consider a few more examples to illustrate the methodology.

\begin{example}
    \label{Exa:NonAlmlSimInc}
    Consider the four-dimensional Gorenstein cone \[\sigma=\cone((2,1,1,1),(0,-1,-1,1),(2,1,-1,1),(0,-1,1,1),(1,1,-1,1),(1,-1,1,1)).\]
    The map $f_{\sigma}$ can be represented via the matrix $\begin{pmatrix}
         1& 1& 0& 0 &-1 & -1\\
         0 &0 & 1& 1& -1& -1
    \end{pmatrix}.$
    We highlight the presence of torsion in this example: considering an appropriate simplicial subdivision $\Sigma$ of $\sigma$, we find that $\operatorname{Cl}(X_\Sigma)\cong\Z^2\times (\Z/2\Z)^2$. The Lemma \ref{Lem:TorsionCellsPropagate} and Theorem \ref{Thm:Incr definitions agree} allow us to simplify the computations as if no torsion was present, and reduce to the studying the secondary fan.
    
    The generators (each with multiplicity 2) of the secondary fan are thus $\beta_1=\beta_2=\begin{pmatrix}1\\ 0\end{pmatrix}$, $\beta_3=\beta_4=\begin{pmatrix}0\\ 1\end{pmatrix}$ and $\beta_5=\beta_6=\begin{pmatrix}    -1\\-1\end{pmatrix}$.
    The Table \ref{Tab:PathsInRevEngineer} lists all possible paths by type (i.e. by the sum $\sum_{\rho\in J}\beta_\rho$), including the length and the number of such paths. In each row, we include an example of how to obtain said path type.

\begin{table}[ht!]
\begin{center}
\begin{tabular}{ |c|c|c|c||c|c|c|c| } 
\hline
   \text{Type} & \text{Example} & \text{Length}& \text{Number} &\text{Type} & \text{Example}& \text{Length}& \text{Number}\\
\hline
$\begin{pmatrix}0\\0\end{pmatrix}$ & $\emptyset$ & 0 & 1 &$\begin{pmatrix}-1\\0\end{pmatrix}$ & $\beta_3+\beta_5$ & 2 & 4\\
\hline
   & $\beta_1+\beta_3+\beta_5$ & 3 & 8&$\begin{pmatrix}2\\1\end{pmatrix}$ & $\beta_1+\beta_2+\beta_3$ & 3& 2\\
   \hline
   & $\beta_1+\dots+\beta_6$ &  4& 1&$\begin{pmatrix}1\\-1\end{pmatrix}$ & $\beta_1+\beta_2+\beta_5$ & 3 & 2\\
\hline
$\begin{pmatrix} 1\\0\end{pmatrix}$ & $\beta_1$ & 1 & 2 &$\begin{pmatrix}1\\2\end{pmatrix}$ & $\beta_1+\beta_3+\beta_4$ & 3 &2\\
\hline
& $\beta_1+\beta_2+\beta_3+\beta_5$ & 4 & 4 &
$\begin{pmatrix} -1\\1\end{pmatrix}$ & $\beta_3+\beta_4+\beta_5$ & 3 & 2\\
\hline
$\begin{pmatrix}0\\1\end{pmatrix}$ & $\beta_3$ & 1 & 2 &$\begin{pmatrix}-1\\-2\end{pmatrix}$ & $\beta_1+\beta_5+\beta_6$ &  3& 2\\
\hline
& $\beta_1+\beta_3+\beta_4+\beta_5$ &4 & 4 &$\begin{pmatrix}-2\\-1\end{pmatrix}$ & $\beta_1+\beta_5+\beta_6$ & 3 & 2\\
\hline
$\begin{pmatrix}-1\\-1\end{pmatrix}$ & $\beta_5$ & 1 &2 & $\begin{pmatrix}-2\\0\end{pmatrix}$ & $\beta_3+\beta_4+\beta_5+\beta_6$ &  3& 1\\
\hline
& $\beta_1+\beta_3+\beta_5+\beta_6$ & 4& 4 &$\begin{pmatrix}0\\-2\end{pmatrix}$ & $\beta_1+\beta_2+\beta_5+\beta_6$ & 3& 1\\
\hline
$\begin{pmatrix}2\\0\end{pmatrix}$ & $\beta_1+\beta_2$ & 2 &1 &$\begin{pmatrix}2\\2\end{pmatrix}$ & $\beta_1+\beta_2+\beta_3+\beta_4$ & 3& 1\\
\hline
$\begin{pmatrix}0\\2\end{pmatrix}$ & $\beta_3+\beta_4$ & 2 &1& & & &\\
\hline
$\begin{pmatrix}-2\\-2\end{pmatrix}$ & $\beta_5+\beta_6$ & 2 &1& & & &\\
\hline
$\begin{pmatrix}1\\1\end{pmatrix}$ & $\beta_1+\beta_3$ & 2 &4& & & &\\
\hline
 & $\beta_1+\beta_2+\beta_3+\beta_4+\beta_5$ & 4 &2& & & &\\
\hline
$\begin{pmatrix}0\\-1\end{pmatrix}$ & $\beta_1+\beta_5$ & 2 &4& & & & \\
\hline
& $\beta_1+\beta_2+\beta_3+\beta_5+\beta_6$ & 4 & 2& & & & \\
\hline
\end{tabular}
\end{center}
\caption{Possible Paths}
\label{Tab:PathsInRevEngineer}
\end{table}

\begin{figure}
\begin{tikzpicture}[scale=1.2]

\draw[->] (-2.5,0) -- (2.5,0) ;
\draw[->] (0,-2.5) -- (0,2.5) ;

\fill[gray!40, opacity=0.6]
(-2,-2) -- (0,-2) -- (2,0) -- (2,2) -- (0,2) -- (-2,0) -- cycle;

\fill (0,0) circle (1.5pt) node[below right] {$P_0$};
\fill (1,0) circle (1.5pt) node[below] {$P_1$};
\fill (1,1) circle (1.5pt) node[above right] {$P_2$};
\fill (0,1) circle (1.5pt) node[left] {$P_3$};
\fill (-1,0) circle (1.5pt) node[below] {$P_4$};
\fill (-1,-1) circle (1.5pt) node[left] {$P_5$};
\fill (0,-1) circle (1.5pt) node[right] {$P_6$};

\end{tikzpicture}
\label{fig:2Dexample}
\caption{The zonotope and interior lattice points}
\end{figure}
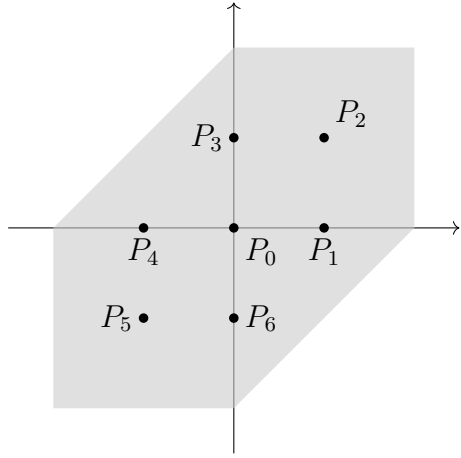

\begin{align*}
    A_0\rightarrow A_0^{\oplus 8}\rightarrow A_1^{\oplus 4}\oplus A_3^{\oplus 4}\oplus A_5^{\oplus 4}\rightarrow A_2^{\oplus 2}\oplus A_4^{\oplus 2}\oplus A_6^{\oplus 2}\rightarrow A_0,&\\
    A_4\rightarrow A_0^{\oplus 2}\rightarrow A_1,&\\
    A_5\rightarrow A_0^{\oplus 4}\rightarrow A_1^{\oplus 2}\oplus A_3^{\oplus 2}\rightarrow A_2,&\\
    A_3^{\oplus 8}\rightarrow A_6\rightarrow A_0^{\oplus 2}\rightarrow A_3,&\\
    A_1\rightarrow A_0^{\oplus 4}\rightarrow A_3^{\oplus 2}\oplus A_5^{\oplus 2}\rightarrow A_4,&\\
    A_2\rightarrow A_6^{\oplus 4}\rightarrow A_5,&\\
    A_3\rightarrow A_0^{\oplus 4}\rightarrow A_1^{\oplus 2}\rightarrow A_6.&\;
\end{align*}

Note now that the set $\{0,1,3,5\}$ is incredulous. The set of complexes after substitution is\begin{align*}
    A_0\rightarrow A_0^{\oplus 8}\oplus A_1^{\oplus 2}\oplus A_3^{\oplus 2}\oplus A_5^{\oplus 2}\rightarrow A_0^{\oplus 24}\oplus A_1^{\oplus 4}\oplus A_3^{\oplus 4}\oplus A_5^{\oplus 4}\rightarrow A_1^{\oplus 8}\oplus A_3^{\oplus 8}\oplus A_5^{\oplus 4}\rightarrow A_0,&\\
    A_1\rightarrow A_0^{\oplus 4}\rightarrow A_3^{\oplus 2}\oplus A_5^{\oplus 2}\rightarrow A_0^{\oplus 2}\rightarrow A_1,&\\
    A_3\rightarrow A_0^{\oplus 4}\oplus A_3^{\oplus 8}\rightarrow A_1^{\oplus 2}\rightarrow A_0^{\oplus 2}\rightarrow A_3,&\\
    A_5\rightarrow A_3^{\oplus 4}\oplus A_0^{\oplus 4}\rightarrow A_0^{\oplus 16}\oplus A_1^{\oplus 2}\oplus A_3^{\oplus 2}\rightarrow A_1^{\oplus 8}\rightarrow A_5.&\;
\end{align*}

Hence, the ring $R_\sigma=k[\sigma^\vee\cap M]$ has an NCCR by $\End_R(\mathbb{B})$, where $\mathbb{B}$ is the direct sum of all conic modules $A_v$ (one direct summand for each isomorphism class) with $f_\sigma(-d(v))\in \{P_0, P_1, P_3, P_5\}$. 
\end{example}

\begin{remark}
In \cite{HM22}, the authors provide NCCRs for edge rings of complete multipartite graphs, reducing to two explicit cases not otherwise covered by previous literature: The edge rings of $K_{2,2,2}$ and $K_{1,1,1,1}=K_4$, respectively. When considering the toric cone associated to the edge ring of $K_4$, one in fact recovers the situation of Example \ref{Exa:NonAlmlSimInc}. However, the other Gorenstein cases of edge rings are computationally much more challenging to construct using conic modules ($K_{2,2,2}$ comes with 167 lattice points inside a 6 dimensional polytope).
\end{remark}

\begin{example}
    Let us once more examine Example \ref{Exa:Hexagon}. Here, the set of generators for the secondary fan is \[
    \mathcal{B}=\{\beta_\rho\mid \rho\in \sigma(1)\}=\left\{\begin{pmatrix}1\\ -1\end{pmatrix}, \begin{pmatrix}1\\ 0\end{pmatrix}, \begin{pmatrix}-1\\ 0\end{pmatrix}, \begin{pmatrix}0\\ 1\end{pmatrix}, \begin{pmatrix}-1\\ 1\end{pmatrix}, \begin{pmatrix}0\\ -1\end{pmatrix}\right\}.
    \]
    When we last considered this example, we mentioned that the path from $d_1$ to $d_2$ given by $\begin{pmatrix}1\\ -1\end{pmatrix}+\begin{pmatrix} -1\\0\end{pmatrix}+\begin{pmatrix}-1\\1\end{pmatrix}$ did not give the module $A_1$ in degree 3 in $K_2^\bullet$. Indeed, we can now see that this path is not valid: represent $d_2$ via $\sum_{\rho\in \sigma(1)}\alpha_\rho\beta_\rho$ with $\alpha_\rho=1$ exactly if $\beta_\rho=\begin{pmatrix} 0\\ -1\end{pmatrix}$ and 0 otherwise. Then validity of the path amounts to the statement\[
    \begin{pmatrix}-1\\0\end{pmatrix}\in (-1,0)\cdot \begin{pmatrix}0\\1\end{pmatrix}+(-1,0)\cdot \begin{pmatrix}1\\0\end{pmatrix}+(-1,0)\cdot \begin{pmatrix}0\\-1\end{pmatrix}.
    \]
    Since $-1\not\in (-1,0)$, this cannot hold and as such the path cannot be valid.
    Let us now examine the complete set of complexes of conic modules for this example.\begin{eqnarray*}
        && K_0^\bullet= A_0^{\oplus4}\rightarrow A_0^{\oplus2}\rightarrow A_0^{\oplus3}\oplus A_1\oplus \dots\oplus A_6\rightarrow A_1\oplus\dots\oplus A_6\rightarrow A_0,\\
        && K_1^\bullet=A_4\rightarrow A_1^{\oplus2}\oplus A_3\oplus A_6\rightarrow A_0\oplus A_2\oplus A_5\rightarrow A_1,\\
        && K^\bullet_2=A_6\rightarrow A_0\oplus A_4\oplus A_5\rightarrow A_0\oplus A_1\oplus A_3\rightarrow A_2,\\
        && K_3^\bullet= A_5\rightarrow A_0\oplus A_1\oplus A_2\oplus A_6\rightarrow A_0\oplus A_4\rightarrow A_4,\\
        && K_4^\bullet= A_1\rightarrow A_2\oplus A_4^{\oplus2}\oplus A_5\rightarrow A_0\oplus A_3\oplus A_6\rightarrow A_4,\\
        && K_5^\bullet=A_3\rightarrow A_0\oplus A_2\oplus A_4\oplus A_6\rightarrow A_0\oplus A_1\rightarrow A_5,\\
        && K_6^\bullet =A_2\rightarrow A_0\oplus A_1\oplus A_3\rightarrow A_4\oplus A_5\oplus A_6.\;
    \end{eqnarray*}

    It is straightforward to see that there is no incredulous set. Indeed, the conic modules come in pairs $A_1\leftrightarrow A_4$, $A_2\leftrightarrow A_6$ and $A_3\leftrightarrow A_5$ and one cannot substitute by both members of the same set (as substitution by both results in a loop - the first one to be substituted reappears). Also, if only one of either pair is substituted, the length of the complex corresponding to the other module in the pair becomes longer than $\dim \sigma=4$. Hence, one can neither substitute 1 nor 2 of either pair. As such, we cannot substitute any of the 6 modules $A_1,\dots, A_6$. Finally, note that $A_0$ appears in degrees $>0$ in $K_0^\bullet$ so substitution of $A_0$ is not useful either, and no incredulous set exists.
\end{example}

\noindent By way of counterexample, we obtain a first partial answer to Question \ref{Qn:IncompleteConMod}. \begin{theorem}
    \label{Thm:Not always exist incomplete NCCR}
    Not every Gorenstein cone admits an incomplete sum of conic modules $\mathbb{A}$ such that $\End\mathbb{A}$ is an NCCR of $R=k[\sigma^\vee\cap M]$.
\end{theorem}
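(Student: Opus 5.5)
The plan is to exhibit the hexagon cone of Example \ref{Exa:Hexagon} as an explicit counterexample. That cone is $\sigma=\cone(P\times\{1\})$ with $P$ the given lattice polytope, so it is Gorenstein, since $(0,0,0,1)$ pairs to $1$ with every ray generator. It has no torsion: the cokernel of the displayed $6\times 4$ matrix is $\Z^2$. By Corollary \ref{Cor:NoTorsionSym}, the complexes $K_0^\bullet,\dots,K_6^\bullet$ are determined exactly by valid paths among the seven lattice points $d_0,\dots,d_6$ of $Z_\mathcal{X}$.

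The first step is to compute these seven complexes via Proposition \ref{Prop:CplxViaPaths2}, checking validity of each candidate path as in the worked examples. This produces the list displayed above. Once the list is in hand, Theorem \ref{Thm:Incr definitions agree} (equivalence of items (1) and (3)) reduces the statement to showing that no subset $I\subseteq\{0,\dots,6\}$ is incredulous.

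The second step is a case analysis on the complement $I^c$, i.e. on which indices we are allowed to substitute by.

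\begin{enumerate}
\item \emph{$0\notin I^c$ is forced.} Otherwise $0\in I^c$ requires substituting $K_0^\bullet$ for every occurrence of $A_0$. But $A_0$ itself appears in positive degrees of $K_0^\bullet$, so either it reappears forever and lockability fails, or $0\notin I$ altogether. In the latter case one still checks that the remaining complexes force an infinite substitution or a complex that is too long.
\item \emph{The pairs $\{1,4\},\{2,6\},\{3,5\}$ cannot both lie in $I^c$.} Each $A_i$ of the pair appears in the complex of its partner, so substituting by both creates a loop: after substituting $K_4$ into $K_1$, the module $A_1$ reappears in positive degree and must again be substituted. This never terminates, contradicting lockability; compare the loop argument in Proposition \ref{Prop:fingldim implies lockable}.
\item \emph{Exactly one member of a pair cannot lie in $I^c$.} If, say, $4\in I^c$ and $1\in I$, then $A_4$ occurs in top degree $3$ of $K_1^\bullet$. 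Splicing $K_4^\bullet$, which has length $3$, there produces a complex $K_1^\dagger$ of length $\geq 3+3>4=\dim\sigma$. Lengths only grow under further substitution, so $I$ is not incredulous. The same computation applies to the other pairs, using the top degrees read off from the list.
\end{enumerate}

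Consequently $I^c=\emptyset$, i.e. $I=\{0,\dots,6\}$, the complete sum. This fails crepancy by \cite{FMS19} because $\sigma$ is not simplicial. Concretely, $K_1^\bullet$ has length $3\neq 4$, so $I$ is not incredulous.

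The main obstacle is the first step: verifying the seven complexes, in particular which top-degree modules occur and that the pairing structure is as claimed. This requires checking validity of all paths in a 2-dimensional secondary fan with six generators. I would organise it by path type as in Table \ref{Tab:PathsInRevEngineer}, using Lemma \ref{Lem:InterchangBeta} to cut down cases. A secondary care point is justifying that lengths never decrease under splicing, so the length bound in the third case is robust regardless of substitution order; this follows from Lemma \ref{Lem:SubstitutionIsUnique}.
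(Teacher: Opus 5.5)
Your proposal is correct and is essentially the paper's own argument: the hexagon cone of Example~\ref{Exa:Hexagon}, with the pairs $\{1,4\},\{2,6\},\{3,5\}$ kept out of $I^c$ by the loop argument and the length argument, and $A_0$ kept out because it recurs in positive degrees. Two small fixes. First, splicing $K_4^\bullet$ in place of the degree-$3$ copy of $A_4$ puts $K_4^{1}$ in degree $3$, so the new top degree is $3+3-1=5$ rather than $6$; this is still $>4$, so your conclusion stands. Second, your step (1) is cleaner stated as ``$A_0$ occurs in positive degree in every $K_i^\bullet$, so $0\notin I$ forces an endless substitution by $K_0^\bullet$.''
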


\section{Gorenstein cones}\label{sec:Gor}
For the remainder of the paper, we shall focus on the case of Gorenstein cones, at the heart of Conjecture \ref{Conj:affineGor}.

The combinatorics in the setting of Gorenstein cones gives rise to some symmetry that is going to be helpful. Consider the Gorenstein element $\mathfrak{m}_\sigma$ of the cone $\sigma$. Then \[
0\sim \operatorname{div}(x^{\mathfrak{m}_\sigma})=\sum_{\rho\in \sigma(1)}\langle \mathfrak{m}_\sigma,u_\rho\rangle D_\rho=\sum_{\rho\in\sigma(1)}D_\rho,
\]
where the last equality follows by the definition of the Gorenstein element. This shows that for all subsets $J\subset \sigma(1)$, we have\begin{equation}
    \label{eq:SymmetryGorJDIV}
    \sum_{\rho\in J}D_\rho\sim -\sum_{\rho\in J^c}D_\rho,
\end{equation}
where $J^c$ is the complement of $J$ in $\sigma(1)$. Similarly, on the level of the $\beta_\rho$, we obtain\begin{equation}
\label{eq:GorSymBeta}
    \sum_{\rho\in J}\beta_\rho=-\sum_{\rho\in J^c}\beta_\rho.
\end{equation}

\begin{lemma}
    \label{Lem:ZonoIsSymmetric}
    Consider a Gorenstein cone $\sigma$ with an associated simplicial fan $\Sigma$ (obtained by regularly subdividing the cone without adding additional rays), giving a smooth toric DM stack $\mathcal{X}$. Then the zonotope $Z_{\mathcal{X}}$ is symmetric, i.e. $p\in Z_\mathcal{X}\Leftrightarrow -p\in Z_\mathcal{X}$. Thus, for any $v\in M_\R$, there is a $w\in M_\R$ such that $f_\sigma(-d(w))=-f_\sigma(-d(v))$.
\end{lemma}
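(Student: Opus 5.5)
The plan is to use the Gorenstein identity \eqref{eq:GorSymBeta} in its extreme case $J=\sigma(1)$, namely $\sum_{\rho\in\sigma(1)}\beta_\rho=0$, which is the image under $f_\sigma$ of $\operatorname{div}(x^{\mathfrak{m}_\sigma})=\sum_\rho D_\rho\sim 0$. Recall from Definition \ref{Def:Zonotope} that $Z_\mathcal{X}=\{\sum_\rho a_\rho\beta_\rho\mid a_\rho\in(-1,0]\}$. The key step is the affine involution of $\R^{\Sigma(1)}$ given by $a_\rho\mapsto -1-a_\rho$. It maps the half-open cube $(-1,0]^{\Sigma(1)}$ onto $[-1,0)^{\Sigma(1)}$, and under $f_\sigma$ it sends $p=\sum a_\rho\beta_\rho$ to $-p-\sum_\rho\beta_\rho=-p$. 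Consequently $-Z_\mathcal{X}=f_\sigma([-1,0)^{\Sigma(1)})$. By itself this does not show $-Z_\mathcal{X}=Z_\mathcal{X}$, because the half-open boundary behaviour has to be matched.

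To handle the boundary, I would compare $f_\sigma((-1,0]^{\Sigma(1)})$ with $f_\sigma([-1,0)^{\Sigma(1)})$ directly. Take a point $p=\sum a_\rho\beta_\rho$ in which some coordinates $a_\rho$ are equal to $-1$. Adding the relation $t\sum_\rho\beta_\rho=0$ for a small $t>0$ moves every coordinate by $t$, so it pushes those coordinates into $(-1,0)$ without changing $p$. The only coordinates that would now exceed $0$ are those that were already close to $0$. The idea is to use a kernel vector of $f_\sigma$ to slide all coordinates simultaneously into the open interval. This requires an element of $\ker f_\sigma=g(M_\R)$ that is positive in every coordinate, and one exists: take $g(p')$ for any $p'$ in the interior of $\sigma^\vee$. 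Indeed, $g(\mathfrak{m}_\sigma)=(1,\dots,1)$ itself serves.

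So the approach is as follows. Given $a\in[-1,0)^{\Sigma(1)}$, I first replace it by $a+tg(\mathfrak{m}_\sigma)=a+t(1,\dots,1)$ with $t>0$ smaller than $\min_\rho(-a_\rho)$. Since every $a_\rho<0$, this gives a point of $(-1,0)^{\Sigma(1)}\subset(-1,0]^{\Sigma(1)}$ with the same image under $f_\sigma$. This proves $-Z_\mathcal{X}\subseteq Z_\mathcal{X}$. Applying the same inclusion to $-p$ gives the reverse inclusion.

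The second claim follows from lattice bookkeeping. The point $P=f_\sigma(-d(v))$ is a lattice point of $Z_\mathcal{X}$ by Proposition \ref{Prop:BTcollnequiv}, and $-P=f_\sigma(d(v))$ is the image of an integral divisor, so it is a lattice point. Since $-P\in Z_\mathcal{X}$ by the first part, Proposition \ref{Prop:BTcollnequiv} (or the surjectivity argument in Lemma \ref{Lem:Number of conic modules}) gives an element $-d(w)$ of $\Theta_\mathcal{X}$, with $w\in M_\R$, that maps to $-P$. The main obstacle I expect is only the half-open boundary issue, and the diagonal shift by $\operatorname{div}(x^{\mathfrak{m}_\sigma})$ is what resolves it.
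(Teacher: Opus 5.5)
Your proposal is correct and is essentially the paper's argument. Both rest on the same two ingredients: the involution $a_\rho\mapsto -1-a_\rho$, which is compatible with $f_\sigma$ up to sign because $\sum_\rho\beta_\rho=0$, and a small diagonal shift along $(1,\dots,1)\in\ker f_\sigma$ to handle the half-open boundary. The only difference is order: the paper shifts first into the open cube and then applies the involution, while you apply the involution and then shift. Your lattice-point argument for the second claim, via Proposition \ref{Prop:BTcollnequiv}, fills in a step the paper's proof leaves implicit.
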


\begin{proof}
Note that by \eqref{eq:GorSymBeta}, $\sum_{\rho\in\sigma(1)}\beta_\rho=0$.
The zonotope is given by $f_{\sigma}((-1,0]^{|\Sigma(1)|})$, and thus $Z_{\mathcal{X}_\Sigma}=\{\sum_{\rho\in \sigma(1)}\alpha_\rho \beta_\rho\mid \alpha\in (-1,0]\}$. Write $p$ in this form as $\sum_{\rho\in \sigma(1)}\alpha_\rho\beta_\rho$. We distinguish two cases. If none of the $\alpha_\rho$ are zero, then \[
-p=0+\sum_{\rho\in \sigma(1)}-\alpha_\rho\beta_\rho=-\sum_{\rho\in \sigma(1)}\beta_\rho+\sum_{\rho\in \sigma(1)}-\alpha_\rho\beta_\rho=\sum_{\rho\in\sigma(1)}(-1-\alpha_\rho)\beta_\rho.
\]
Since $\alpha_\rho\in (-1,0)$, $-1-\alpha_\rho\in (-1,0)$ for all $\rho$, and so $-p\in Z_{\mathcal{X}_\Sigma}$. Now suppose some $\alpha_\rho$ are 0. Then\[
p=\sum_{\rho\in\sigma(1)}\alpha_\rho\beta_\rho -r\cdot 0=\sum_{\rho\in\sigma(1)}\alpha_\rho\beta_\rho -r\cdot \sum_{\rho\in\sigma(1)}\beta_\rho=\sum_{\rho\in\sigma(1)}(\alpha_\rho -r)\beta_\rho.
\]
This holds for any $r\in \R$, so choose $0<r\ll 1$. Then $\alpha_\rho\in(-1,0)$ for all $\rho\in \sigma(1)$ and we reduce to the first case, obtaining $-p\in Z_{\mathcal{X}_\Sigma}$.
\end{proof}

\noindent From now on, we leave the simplicial fan $\Sigma$ refining a given cone $\sigma$ implicit and denote by $Z_\sigma$ the zonotope. The symmetry of the zonotope reflects in the complexes $K_v^\bullet$. 
Let us via the next lemma, that also functions as a definition, fix a piece of notation regarding conic modules.
\begin{lemma}
    \label{Lem:NegativeModuleExists}
    Let $\sigma$ be a Gorenstein cone and $v\in M_\R$. There exists a $v'\in M_\R$ such that $-d(v')\sim d(v)$. We define\footnote{up to isomorphism} $B_v$ to be the conic module $A_{v'}$. Similarly, denote the complex $K^\bullet_{v'}$ by $L_v^\bullet$.
\end{lemma}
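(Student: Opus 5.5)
We need to show: for Gorenstein $\sigma$ and $v\in M_\R$, there is $v'\in M_\R$ with $-d(v')\sim d(v)$. The plan is to write $d(v)$ directly in Bondal-Thomsen form, using the Gorenstein element to absorb a shift; this makes the construction explicit and needs no zonotope symmetry or torsion bookkeeping. Recall $d(v)_\rho=\lceil\langle v,u_\rho\rangle\rceil$, so $-d(v)=\sum_\rho\lfloor\langle -v,u_\rho\rangle\rfloor D_\rho$, and every element of the form $\sum_\rho\lfloor\langle -\theta,u_\rho\rangle\rfloor D_\rho$ is, by Definition \ref{Def:BTcolln}, some $-d(\theta)$. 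So we must find $v'$ with
\[
\sum_{\rho}\lfloor\langle -v',u_\rho\rangle\rfloor D_\rho\;\sim\;\sum_\rho\lceil\langle v,u_\rho\rangle\rceil D_\rho .
\]

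The natural candidate is $v'=v-\varepsilon\,\mathfrak{m}_\sigma$ for small $\varepsilon>0$, or more cleanly $v'=v+\mathfrak{m}_\sigma-\varepsilon\mathfrak{m}_\sigma$. Using $\langle\mathfrak{m}_\sigma,u_\rho\rangle=1$ for every ray, we get $\langle -v',u_\rho\rangle=-\langle v,u_\rho\rangle-1+\varepsilon$. For any real $a$, the identity $\lfloor -a+\varepsilon\rfloor=-\lceil a\rceil$ holds for $0<\varepsilon\ll1$; if $a\in\Z$ this is clear, and otherwise it holds since $-a$ is not an integer. There are only finitely many rays, so one $\varepsilon$ works for all $\rho$ simultaneously. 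Then
\[
\lfloor\langle -v',u_\rho\rangle\rfloor=-\lceil\langle v,u_\rho\rangle\rceil-1,
\]
which gives $-d(v')=-d(v)-\sum_\rho D_\rho$.

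This has the wrong sign, so the key step is to choose $v'$ with $\langle -v',u_\rho\rangle$ close to $+\langle v,u_\rho\rangle$ instead. Take $v'=-v+\varepsilon\mathfrak{m}_\sigma$. Then $\lfloor\langle v,u_\rho\rangle-\varepsilon\rfloor=\lceil\langle v,u_\rho\rangle\rceil-1$ for all $\rho$ when $\varepsilon$ is small. This holds in both cases: for integer values the floor drops by one, and for non-integer values $\lfloor a\rfloor=\lceil a\rceil-1$. Hence
\[
-d(v')=d(v)-\sum_{\rho\in\sigma(1)}D_\rho=d(v)-\operatorname{div}(x^{\mathfrak{m}_\sigma})\sim d(v).
\]
This is exactly where the Gorenstein hypothesis enters, through \eqref{eq:SymmetryGorJDIV} with $J=\sigma(1)$, that is $\sum_\rho D_\rho=\operatorname{div}(x^{\mathfrak{m}_\sigma})\sim0$.

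Finally, we record that $B_v:=A_{v'}$ is well-defined up to isomorphism. If $v''$ also satisfies $-d(v'')\sim d(v)$, then $-d(v'')\sim-d(v')$, so $A_{v''}\simeq A_{v'}$ by Remark \ref{Rem:ConModBundle} and Proposition \ref{Prop:IsoConicModules}; this is the same argument used in the proof of Lemma \ref{Lem:Number of conic modules}. Consequently $L_v^\bullet:=K_{v'}^\bullet$ is well-defined up to isomorphism as well. The only delicate point in the whole argument is the uniform choice of $\varepsilon$ for the floor/ceiling identity, and that is harmless because $\sigma(1)$ is finite.
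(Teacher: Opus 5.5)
Your argument is correct, but it takes a different and more direct route than the paper.

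The paper first uses the symmetry of the zonotope (Lemma~\ref{Lem:ZonoIsSymmetric}) to find some $w$ with $f_\sigma(-d(w))=f_\sigma(d(v))$. Since $d(v)$ and $-d(w)$ then agree only up to an element of $\ker f_\sigma$, i.e.\ up to torsion, it translates $w$ by a suitable $m_q\in M_\Q$ with integral pairings, as in Lemma~\ref{Lem:TorsionInvarianceMQ}, to obtain an honest linear equivalence.

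You bypass both the zonotope and the torsion correction. With $v'=-v+\varepsilon\mathfrak{m}_\sigma$, the identity $\lfloor a-\varepsilon\rfloor=\lceil a\rceil-1$ gives
\[
-d(v')=d(v)-\sum_\rho D_\rho=d(v)-\operatorname{div}(x^{\mathfrak{m}_\sigma})
\]
on the nose. What this buys:
\begin{itemize}
\item An explicit, canonical representative. Since $\mathfrak{m}_\sigma\in\operatorname{Int}(\sigma^\vee)$, Proposition~\ref{Prop:OpenconicIsNearby} identifies $B_v$ with the open conic module $\opA_{-v}$.
\item Applying $f_\sigma$ and using $\sum_\rho\beta_\rho=0$ gives $f_\sigma(-d(v'))=-f_\sigma(-d(v))$ at once. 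So you recover the symmetry of the lattice points of $Z_\sigma$ rather than relying on it.
\item It shows the Gorenstein hypothesis is used only for $\sum_\rho D_\rho\sim 0$. The perturbation direction could be any interior point of $\sigma^\vee$.
\end{itemize}

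The paper's route stays inside the secondary-fan/zonotope picture in which the rest of Section~\ref{sec:Gor} is phrased, but it is more roundabout. Your opening paragraph with the discarded candidate $v+\mathfrak{m}_\sigma-\varepsilon\mathfrak{m}_\sigma$ is harmless and can simply be cut.
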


\begin{proof}
    Note $f_\sigma(d(v))=-f_\sigma(-d(v))$. Since the point $f_\sigma(-d(v))$ is in the zonotope $Z_{\sigma}$, by Lemma \ref{Lem:ZonoIsSymmetric}, so is $f_\sigma(d(v))$. Hence, there is $w\in M_\R$ such that $f_\sigma(-d(w))=f_\sigma(d(v))$ as there is a bijection between points in the zonotope and elements of $f_{\sigma}(\Theta_{\mathcal{X}})$ (see Lemma \ref{Lem:Number of conic modules}).

    But then $d(v)-(-d(w))\in \ker f_{\sigma}=\operatorname{Im}(M_\R\rightarrow \bigoplus_{\rho\in \sigma(1)}\R\cdot D_\rho)$. Thus, $d(v)$ and $-d(w)$ differ only by torsion over $\Z$ and there is $m_q\in M_\Q$ and $v'=w+m_q$ such that $d(v)\sim -d(v')$ and $f_\sigma(-d(v'))=-f_\sigma(-d(v))$.
\end{proof}

\noindent The symmetry of lattice points carries over to a symmetry in the associated complexes, simplifying computations.

\begin{proposition}
    \label{Prop:GorSymDoubleswitch}
    Let $v,w\in M_\R$ such that $A_w$ appears in $K_v^\bullet$. Then $B_v$ appears in $L_{w}^\bullet$ in the same degree.
\end{proposition}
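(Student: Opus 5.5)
We work directly with the criterion of Lemma \ref{Lem:FacetCondition}, using the Gorenstein symmetry \eqref{eq:SymmetryGorJDIV}. Write $d=d(v)$, $e=d(w)$. Since $A_w$ appears in $K_v^\bullet$, Lemma \ref{Lem:FacetCondition} gives a subset $J\subset\sigma(1)$ with $-e\sim -d-\sum_{\rho\in J}D_\rho$, and a point $x\in M_\R$ solving
\[
d_\rho-1<\langle x,u_\rho\rangle<d_\rho\ (\rho\notin J),\qquad \langle x,u_\rho\rangle=d_\rho\ (\rho\in J).
\]
The degree is $\operatorname{rk}(A_J)$. By Remark \ref{Rem:LinEquivFacet} we may replace $d$ by the integer representative itself, since solubility is invariant under linear equivalence.

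Next we set up the dual data. By Lemma \ref{Lem:NegativeModuleExists}, $L_w^\bullet=K_{w'}^\bullet$ with $-d(w')\sim e$, and $B_v=A_{v'}$ with $-d(v')\sim d$. We must produce $J'\subset\sigma(1)$ with $-d(v')\sim -d(w')-\sum_{\rho\in J'}D_\rho$, i.e.\ $d\sim e-\sum_{J'}D_\rho$, together with a solution of the corresponding system for the divisor $d(w')\sim -e$. The natural choice is $J'=J$. Indeed, $e\sim d+\sum_J D_\rho$ gives $d\sim e-\sum_J D_\rho$ directly, so the divisor condition holds, and then $\operatorname{rk}(A_{J'})=\operatorname{rk}(A_J)$ gives the same degree. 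An alternative is $J'=J^c$ via \eqref{eq:SymmetryGorJDIV}, but that changes the rank. So we take $J'=J$, and the Gorenstein element enters only in producing $w'$ and $v'$.

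It remains to solve the system for $-e$ with index set $J$. Using the representative $-e_\rho=-d_\rho-\mathbf{1}_J(\rho)$, the required system reads
\[
-d_\rho-1<\langle y,u_\rho\rangle<-d_\rho\ (\rho\notin J),\qquad \langle y,u_\rho\rangle=-d_\rho-1\ (\rho\in J).
\]
Taking $y=-x$ gives the strict inequalities off $J$, but on $J$ it gives $-d_\rho$ rather than $-d_\rho-1$. To repair this, I would use instead $y=-x-\mathfrak m_\sigma$. This shifts every coordinate by $-1$, so off $J$ we get $-d_\rho-2<\cdot<-d_\rho-1$, which is still off by one. So instead we pick the representative $-e\sim -d-\sum_J D_\rho+\sum_{\sigma(1)}D_\rho=-d+\sum_{J^c}D_\rho$, which is legitimate since $\operatorname{div}(x^{\mathfrak m_\sigma})=\sum D_\rho$. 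For this divisor the target system becomes
\[
-d_\rho<\langle y,u_\rho\rangle<-d_\rho+1\ (\rho\notin J),\qquad \langle y,u_\rho\rangle=-d_\rho\ (\rho\in J),
\]
and $y=-x$ solves it. This bookkeeping of representatives is the main obstacle, and it is exactly where the Gorenstein hypothesis is used.

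Finally, we conclude. Lemma \ref{Lem:FacetCondition} (via Remark \ref{Rem:LinEquivFacet}) shows that $A_{v''}$ appears in $K_{w'}^\bullet$ in degree $\operatorname{rk}(A_J)$, where $-d(v'')\sim -d(w')-\sum_J D_\rho\sim d$. Hence $A_{v''}\cong B_v$. Torsion ambiguity in the choice of $w'$ and $v'$ is handled by Lemma \ref{Lem:TorsionCellsPropagate}, since we fixed $w'$ with $-d(w')\sim e$ exactly over $\Z$.
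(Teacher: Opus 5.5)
Your final argument is correct and is essentially the paper's proof. You keep the same $J$, negate the solution, and absorb the off-by-one with the Gorenstein element $\mathfrak{m}_\sigma$. You put that shift on the divisor representative ($-d+\sum_{\rho\in J^c}D_\rho$ with $y=-x$), while the paper puts it on the solution ($y=-x-\mathfrak{m}_\sigma$ against $-d-\sum_{\rho\in J}D_\rho$); these are equivalent by Remark \ref{Rem:LinEquivFacet}.

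Your intermediate bookkeeping is wrong, though. Off $J$, $y=-x$ gives $-d_\rho<\langle y,u_\rho\rangle<-d_\rho+1$, which is not the target interval. So the route you discarded, $y=-x-\mathfrak{m}_\sigma$, actually works directly, and it is exactly what the paper does.
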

\begin{proof}
    By Lemma \ref{Lem:FacetCondition}, $A_w$ appearing in $K_v^\bullet$ is equivalent to $-d(w)\sim -d(v)-\sum_{\rho\in J}D_\rho$ for some subset $J\subset \sigma(1)$ and existence of a solution to \[
            \begin{cases} d(v)_\rho-1 <\langle x,u_\rho\rangle <d(v)_\rho \text{ for }\rho \not\in J,\\
             d(v)_\rho =\langle x,u_\rho\rangle\text{ for } \rho \in J.\end{cases}
        \]
    This is equivalent to $d(v)\sim d(w)-\sum_{\rho\in J}D_\rho$ and the existence of a solution to\[
    \begin{cases}
        & -d(v)_\rho <\langle x,u_\rho\rangle < -d(v)_\rho+1\text{ for }\rho\not\in J,\\
        &-d(v)_\rho =\langle x,u_\rho\rangle \text{ for }\rho\in J.
    \end{cases}
    \]
    Note now that (up to linear equivalence) $d(w)_\rho=\begin{cases}
        & d(v)_\rho\text{ for }\rho\not\in J,\\
        & d(v)_\rho+1 \text{ for }\rho\in J.
    \end{cases}$
    
    \noindent Therefore, we have a solution $x$ to the system of (in)equalities \[
    \begin{cases}
        & -d(w)_\rho <\langle x,u_\rho\rangle <-d(w)_\rho+1\text{ for }\rho\not\in J,\\
        & 1- d(w)_\rho =\langle x,u_\rho\rangle\text{ for }\rho\in J.
    \end{cases}
    \]

    Consider now $y=x-\mathfrak{m}_\sigma$ for such a solution $x$ to obtain a solution to the system of inequalities\[
    \begin{cases}
        & -d(w)_\rho-1<\langle y,u_\rho\rangle <-d(w)_\rho\text{ for }\rho\not\in J,\\
        & -d(w)_\rho=\langle y,u_\rho\rangle\text{ for }\rho\in J.
    \end{cases}
    \]
    By definition, $B_v$ is the conic module associated to $v'$ such that $-d(v')\sim d(v)$ and similarly $L_w^\bullet$ is the complex $K_{w'}$ where $d(w)\sim -d(w')$. Using Lemma \ref{Lem:FacetCondition}, we obtain that $B_v$ appears in $L_w^\bullet$, as required.
\end{proof}

One conic module that naturally takes on a special role in light of the above result is the conic module associated to $0\in M_\R$, as $A_0=B_0$.
\begin{corollary}
    \label{Cor:A0PseudoSym}
    \begin{enumerate}
        \item If $A_0\in K_v^l$, then $B_v\in K_0^l$.
        \item If $A_v\in K_0^l$, then $A_0\in L_v^l$.
    \end{enumerate}
\end{corollary}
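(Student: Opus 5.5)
Both parts follow by specialising Proposition \ref{Prop:GorSymDoubleswitch} to the case where one of the two vectors is $0\in M_\R$. The first step is to check that $B_0\cong A_0$ (equivalently $L_0^\bullet=K_0^\bullet$). Since $d(0)=\sum_{\rho}\lceil 0\rceil D_\rho=0$, the defining condition of Lemma \ref{Lem:NegativeModuleExists}, namely $-d(v')\sim d(0)=0$, is met by $v'=0$. Because conic modules are only considered up to isomorphism, and by Proposition \ref{Prop:IsoConicModules} and Lemma \ref{Lem:FacetCondition} the isomorphism class of $A_{v'}$ and the shape of $K_{v'}^\bullet$ are determined by the linear equivalence class of $-d(v')$, we get $B_0\cong A_0$ and $L_0^\bullet\cong K_0^\bullet$.

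For (1), suppose $A_0$ appears in $K_v^l$. Apply Proposition \ref{Prop:GorSymDoubleswitch} with $w=0$: then $B_v$ appears in $L_0^\bullet$ in the same degree $l$. By the first step $L_0^\bullet=K_0^\bullet$, so $B_v\in K_0^l$.

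For (2), suppose $A_v\in K_0^l$. Apply the same proposition with the roles $(v,w)$ replaced by $(0,v)$: the hypothesis says $A_v$ appears in $K_0^\bullet$ in degree $l$, so $B_0$ appears in $L_v^\bullet$ in degree $l$. Since $B_0\cong A_0$, this gives $A_0\in L_v^l$.

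The only point needing care is the first step: $B_v$ and $L_v^\bullet$ are defined only up to the choice of $v'$ in its linear-equivalence (and, a priori, torsion) class. For $v=0$ the class of $d(0)=0$ is already represented by $v'=0$ on the nose, so there is no torsion ambiguity. I expect this to be a one-line remark rather than a real obstacle.
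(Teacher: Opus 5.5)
Your proposal is correct and is the paper's own argument: the corollary is Proposition \ref{Prop:GorSymDoubleswitch} specialised once to $w=0$ and once to $v=0$. It uses the observation $B_0=A_0$, hence $L_0^\bullet=K_0^\bullet$, which the paper states just before the corollary. Your check that $d(0)=0$ lets you take $v'=0$, with no torsion ambiguity up to isomorphism, supplies the one detail the paper leaves implicit.
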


It thus makes sense to start our study of conic modules for Gorenstein modules by establishing which conic modules appear in $K_0^\bullet$ and thus which complexes $K_v^\bullet$ the conic modules $A_0$ appears in.

\begin{lemma}
    \label{Lem:Gor0isself}
    $A_0$ appears in degree $\dim \sigma$ in $K_0^\bullet$. 
\end{lemma}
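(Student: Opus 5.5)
We prove the statement with Lemma \ref{Lem:FacetCondition} (equivalently Proposition \ref{Prop:CplxViaPaths2}), applied to $v=w=0$ and the full set $J=\sigma(1)$. For $v=0$ we have $d(0)_\rho=\lceil\langle 0,u_\rho\rangle\rceil=0$ for every ray, so $-d(0)=0$.

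First we check the divisor condition. The Gorenstein element $\mathfrak{m}_\sigma$ satisfies $\operatorname{div}(x^{\mathfrak{m}_\sigma})=\sum_{\rho\in\sigma(1)}D_\rho$, so
\[
-d(0)-\sum_{\rho\in\sigma(1)}D_\rho=-\sum_{\rho\in\sigma(1)}D_\rho\sim 0=-d(0).
\]
Hence the linear equivalence required in Lemma \ref{Lem:FacetCondition} holds with $w=0$.

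Next we check that the system of (in)equalities has a solution. Since $J^c=\emptyset$, the strict inequalities are vacuous and only the equalities for $\rho\in J=\sigma(1)$ remain. In the form used in the proof of Lemma \ref{Lem:FacetCondition}, these read $\langle x,u_\rho\rangle=d(0)_\rho=0$ for all $\rho$, and $x=0$ solves them. In the shifted form \eqref{eq:FacetCondition}, taken with the representative $d(w)_\rho=d(0)_\rho+1=1$, they read $\langle x,u_\rho\rangle=0$, again solved by $x=0$. So the cell $\tau_{\sigma(1)}=\{0\}$ of $\Delta_0$ is nonempty; it is the vertex $0$ of $\Delta_0$.

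Finally we identify the degree. By the last part of Lemma \ref{Lem:FacetCondition}, the degree is $\dim\operatorname{Span}(u_\rho\mid\rho\in\sigma(1))$. This equals $\dim\sigma$, because $\sigma$ is full-dimensional and strongly convex, so the $u_\rho$ span $N_\R$.

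Alternatively, in path language, the full path $\beta_{\sigma(1)}$ goes from $0$ to $0$ because $\sum_\rho\beta_\rho=0$ by \eqref{eq:GorSymBeta}. It is valid by the convention $W_{d,\emptyset}=\{0\}$, since $P_2=0$ can be written with all $\alpha_\rho=0$. Its length is $\operatorname{rk}(A)=\dim\sigma$.

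The only delicate point is matching the sign and shift conventions between \eqref{eq:FacetCondition} and \eqref{eq:FacetConditionPf}. Up to linear equivalence by $\operatorname{div}(x^{\mathfrak{m}_\sigma})$ both reduce to $\langle x,u_\rho\rangle=0$ for all $\rho$, and we would state this explicitly. The module in degree $\dim\sigma$ is then $\opA_{\{0\}}$, which is the conic module $A_{\varepsilon p}$ for small $\varepsilon>0$ and $p$ in the interior of $\sigma^\vee$. This module equals $\operatorname{Span}\{x^m\mid \langle m,u_\rho\rangle\ge 1\ \forall\rho\}=x^{\mathfrak{m}_\sigma}R\cong R=A_0$, which confirms the conclusion directly.
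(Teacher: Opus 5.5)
Your proposal is correct and follows the paper's proof. The paper argues exactly as in your path-language paragraph: the full path $\beta_{\sigma(1)}$ from $0$ to $0$ exists since $\sum_\rho\beta_\rho=0$, it is valid because the condition reduces to $0=0$ when $0$ is written with all $\alpha_\rho=0$, and its length is $\operatorname{rk}(A)=\dim\sigma$. Your main argument via Lemma~\ref{Lem:FacetCondition} with $J=\sigma(1)$ is the same check one level down, and your closing computation is a useful self-contained confirmation that avoids the linear-equivalence and torsion bookkeeping: the relevant cell is the vertex $\{0\}$ of $\Delta_0$, and $\opA_{\{0\}}=x^{\mathfrak{m}_\sigma}R\cong A_0$.
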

\begin{proof}
    We know that $0\sim \sum_{\rho\in \sigma(1)}D_\rho$ and that $\sum_{\rho\in \sigma(1)}\beta_\rho=0$, and so there is a path of length $|\sigma(1)|$ from 0 to 0. This path is valid as $0=\sum_{\rho\in \sigma(1)}0\cdot \beta_\rho$ and so the validity condition becomes the tautological statement $0=0$. The degree is the rank of the collection of primitive generators of $\rho\in \sigma(1)$ and so $A_0$ appears in degree $\dim\sigma$.
\end{proof}

\begin{remark}
The Lemma \ref{Lem:Gor0isself} together with Lemma \ref{Lem:TorsionCellsPropagate} and Theorem \ref{Thm:NCRiffLock}  reprove that Gorenstein simplicial cones admit an NCCR, a special case of \cite[Proposition 7.5]{FMS19}. Indeed, any $-d(v)\in \Theta_{\mathcal{X}_\Sigma}$ differs from $0$ by torsion, and so Lemma \ref{Lem:TorsionCellsPropagate} implies that all complexes $K_v^\bullet$ have the same length. Since this length is $\dim \sigma$, a complete sum of conic modules $\mathbb{A}$ gives an NCCR.
\end{remark}

\subsection{Almost simplicial Gorenstein cones}

A special case that we treat in more detail is the case of \newterm{almost simplicial cones}.
\begin{definition}
    A (full-dimensional) cone $\sigma\subset N_\R$ is \newterm{almost simplicial} if $|\sigma(1)|=\dim \sigma+1$.
\end{definition}

In the case of almost simplicial cones, the resulting secondary fan and zonotope are one-dimensional, which makes computations much easier. 
The remainder of this paper is dedicated to investigating which toric algebras of almost simplicial Gorenstein cones have NCCRs obtained as endomorphism algebras of incomplete sums of conic modules. We begin by setting up some notation.

\begin{notation}
    \label{Not:AlmSim}
    Let $\sigma$ be an almost simplicial cone. Since the secondary fan we consider is one-dimensional for this case, all lattice points can be represented by integers. Note that each $\beta_\rho$ is either negative, 0 or positive. Given a subset $J\subseteq\sigma(1)$, write $J_-$ for the set $\{\rho\in J^c\vert \beta_\rho<0\}$ and $J_+$ for the set $\{\rho\in J^c\vert \beta_\rho>0\}$. Write $S_-=\{\rho\in \sigma(1)\mid \beta_\rho<0\}$ and $S_+=\{\rho\in \sigma(1)\mid \beta_\rho>0\}$. Where clear, we will abuse notation and write $S_+, S_-$ for the index set as well as the set of $\beta_\rho$ themselves. Similarly, for a lockable set $I$ we will, where clear, also refer to $I$ as the set of conic modules $A_i, i\in I$.
\end{notation}

Using this notation, we can formulate the following criterion of validity for paths.
\begin{lemma}
    \label{Lem:AlmSimValidity}
    Let $\sigma$ be an almost simplicial Gorenstein cone and $\beta_J:l-k\rightsquigarrow l$ be a path with $\sum_{\rho\in J}\beta_\rho=k\in \Z$. Then $\beta_J$ is valid if and only if \begin{equation}
    \label{eq:AlmSimValidity}
    \sum_{\rho\in J_-}\beta_\rho<l-k<\sum_{\rho\in J_+}\beta_\rho.
    \end{equation}
\end{lemma}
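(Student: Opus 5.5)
The plan is to unwind the definition of validity in the one-dimensional setting and then use the Gorenstein relation $\sum_{\rho\in\sigma(1)}\beta_\rho=0$ from \eqref{eq:GorSymBeta} to rewrite the resulting interval condition in the form \eqref{eq:AlmSimValidity}.

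\emph{Step 1: remove the auxiliary coefficients.} Write the endpoint as $l=\sum_{\rho\in\sigma(1)}\alpha_\rho\beta_\rho$, as in the definition of a valid path. Validity says that $\sum_{\rho\in J}\alpha_\rho\beta_\rho=\sum_{\rho\in J^c}\gamma_\rho\beta_\rho$ for some $\gamma_\rho\in(-\alpha_\rho-1,-\alpha_\rho)$. Add $\sum_{\rho\in J^c}\alpha_\rho\beta_\rho$ to both sides and substitute $\delta_\rho=\gamma_\rho+\alpha_\rho\in(-1,0)$. Validity then becomes equivalent to
\[
l\in\sum_{\rho\in J^c}(-1,0)\cdot\beta_\rho .
\]
This is the same manipulation as in the proof of Lemma \ref{Lem:InterchangBeta}. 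In particular, validity does not depend on the chosen representation of $l$.

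\emph{Step 2: compute the one-dimensional Minkowski sum.} In $\R$ the set $(-1,0)\cdot\beta_\rho$ is the open interval $(-\beta_\rho,0)$ if $\beta_\rho>0$ and $(0,-\beta_\rho)$ if $\beta_\rho<0$. A term with $\beta_\rho=0$ contributes $\{0\}$ and can be dropped. A Minkowski sum of open intervals is again open, so whenever $J_+\cup J_-\neq\emptyset$ the condition reads
\[
-\sum_{\rho\in J_+}\beta_\rho<l<-\sum_{\rho\in J_-}\beta_\rho .
\]

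\emph{Step 3: use the Gorenstein relation.} By \eqref{eq:GorSymBeta}, $\sum_{\rho\in J^c}\beta_\rho=-\sum_{\rho\in J}\beta_\rho=-k$. Hence $\sum_{\rho\in J_+}\beta_\rho+\sum_{\rho\in J_-}\beta_\rho=-k$. Substituting $-\sum_{J_+}\beta_\rho=k+\sum_{J_-}\beta_\rho$ into the left inequality gives $\sum_{J_-}\beta_\rho<l-k$. Substituting $-\sum_{J_-}\beta_\rho=k+\sum_{J_+}\beta_\rho$ into the right inequality gives $l-k<\sum_{J_+}\beta_\rho$. Together these are exactly \eqref{eq:AlmSimValidity}, and both substitutions are reversible.

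\emph{Main obstacle: the degenerate case $J_+\cup J_-=\emptyset$.} This happens, for example, when $J=\sigma(1)$. By the convention $W_{d,\emptyset}=\{0\}$, the Minkowski sum is then $\{0\}$ rather than an open interval, and validity means $l=0$. In this case \eqref{eq:AlmSimValidity} would read $0<l-k<0$, which fails even though the path can be valid. I would therefore state the lemma for paths with $J_+\cup J_-\neq\emptyset$, and treat the full path separately: it is valid exactly when $l=0$ (here $k=0$ automatically), consistent with Lemma \ref{Lem:Gor0isself}.

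Apart from this, I need to check that zero $\beta_\rho$ play no role. If $J_+\cup J_-\neq\emptyset$, they drop out of both Step 2 and Step 3. If such $\beta_\rho$ exist but $J_+\cup J_-=\emptyset$, the set $J^c$ is nonempty yet the sum is still $\{0\}$, so this falls into the degenerate case above.
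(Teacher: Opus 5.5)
Your proof is correct and follows the paper's route: reduce validity to $l\in\sum_{\rho\in J^c}(-1,0)\cdot\beta_\rho$, identify this Minkowski sum with the open interval $\bigl(-\sum_{\rho\in J_+}\beta_\rho,\,-\sum_{\rho\in J_-}\beta_\rho\bigr)$, and shift by $\sum_{\rho\in J^c}\beta_\rho=-k$. Your representation-independent Step~1 replaces the paper's case split on whether $k=0$, where it instead fixes a particular choice of $\alpha_\rho$ supported on $J$.

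Your caveat is also correct and identifies a genuine defect in the statement as written. If every $\rho\in J^c$ has $\beta_\rho=0$ (e.g.\ $J=\sigma(1)$ and $l=0$, the valid path of Lemma~\ref{Lem:Gor0isself}), then validity means $l=0$, while \eqref{eq:AlmSimValidity} reads $0<l<0$. The paper's proof passes over this case silently and only handles the full path separately in Lemma~\ref{Lem:AlmGorFullLength}.
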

\begin{proof}
Assume first that $k\neq 0$. Note that $\sum_{\rho\in J}\beta_\rho=k$ and so $\sum_{\rho\in J^c}\beta_\rho=-k$. Write now $l=\sum_{\rho\in J}\frac{l}{k}\beta_\rho$. Then the path $\beta_J$ is valid if and only if
    \[
    l\in \sum_{\rho\in J^c}(-1,0)\beta_\rho.
    \]
    This is equivalent to \begin{eqnarray*}
        && -\sum_{\rho\in J_+}\beta_\rho<l<-\sum_{\rho\in J_-}\beta_\rho\\
        &&\Leftrightarrow \sum_{\rho\in J_-}\beta_\rho<l+\sum_{\rho\in J^c}\beta_\rho<\sum_{\rho\in J_+}\beta_\rho\\
        && \Leftrightarrow \sum_{\rho\in J_-}\beta_\rho<l-k<\sum_{\rho\in J_+}\beta_\rho,
    \end{eqnarray*}
    as required.

If $k=0$, we reduce to the case $l\ge 0$ as the case $l<0$ is analogous. There is now some $\beta_\rho>0$ in $J$, as otherwise $J$ consists only of $\beta_\rho=0$. In that case, however, the inequality \eqref{eq:AlmSimValidity} reduces to $l\in Z_\sigma$ so always holds. Similarly, we can choose to write $l= l/\beta_{\rho_+}\cdot \beta_{\rho_+}$ for some $\beta_{\rho_+}>0$ and the validity condition becomes $0\in \sum_{\rho\in S_+\setminus\{\rho_+\}}(-1,0)\beta_\rho+(-l-1,-l)+\sum_{\rho\in S_-}(-1,0)\beta_\rho$, which always holds.

Write $l=\sum_{\rho\in J\cap S_+}l/p \cdot \beta_\rho$ where $p=\sum_{\rho\in J\cap S_+}\beta_\rho$. Validity is equivalent to \[
l\in \sum_{\rho\in J^c}(-1,0)\beta_\rho,
\]
which is equivalent to \[
-\sum_{\rho\in J_+}\beta_\rho <l<-\sum_{\rho\in J_-}\beta_\rho.
\]
As noted before, this is equivalent to \eqref{eq:AlmSimValidity}.
\end{proof}

The following Theorem, which is the main result in this section of the paper, classifies which almost simplicial Gorenstein cones admit NCCRs via endomorphism algebras of incomplete sums of conic modules.

\begin{theorem}
    \label{Thm:AlmSimplConicNCCR}
    Let $\sigma$ be an almost simplicial  Gorenstein cone. Then there exists an incomplete sum $\mathbb{B}$ of conic modules such that $\End_{R}(\mathbb{B})$  is an NCCR of $R=k[\sigma^\vee\cap M]$ if and only if any of the following holds:
    \begin{itemize}
        \item The collection of $\beta_\rho$ associated to $\sigma$ is, up to flipping all signs, $\{2,1,-1,-1,-1\}$.
        \item The collection of $\beta_\rho$ associated to $\sigma$ is $\{1,1,1,-1,-1,-1\}$.
        \item $\sigma$ is a 3-dimensional Gorenstein cone lattice equivalent to $\sigma=\cone(P\times\{1\})$, where $P$ is a trapezoid. 
    \end{itemize}
\end{theorem}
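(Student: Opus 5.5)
By Theorem \ref{Thm:Incr definitions agree} and Corollary \ref{Cor:2ndFanDetermines}, the question depends only on the multiset of integers $\{\beta_\rho\}$. Here $\sum_\rho\beta_\rho=0$ by \eqref{eq:GorSymBeta}, the $\beta_\rho$ are primitive up to a common factor, and the zonotope is the interval $Z_\sigma=\sum(-1,0]\beta_\rho$, which contains the lattice points $-B+1,\dots,B-1$, where $B=\sum_{\rho\in S_+}\beta_\rho=-\sum_{\rho\in S_-}\beta_\rho$. The plan is therefore purely combinatorial:
\begin{enumerate}
\item compute every complex $K_l^\bullet$ for $l\in Z_\sigma$ via valid paths, using the explicit criterion of Lemma \ref{Lem:AlmSimValidity};
\item decide for which multisets an incredulous set of lattice points exists.
\end{enumerate}
Zero entries $\beta_\rho=0$ correspond to rays that can be split off as a simplicial factor. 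I would first show that they only add a free summand to every path, and so raise all degrees uniformly. This reduces us to multisets without zeros, in which $\dim\sigma=|S_+|+|S_-|-1$.

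For step (1), a path $l-k\rightsquigarrow l$ is indexed by $J$ with $\sum_J\beta_\rho=k$, and its degree is $|J|$ for $J\neq\sigma(1)$, or $\dim\sigma$ when $J=\sigma(1)$. By Lemma \ref{Lem:AlmSimValidity} it is valid iff $-\sum_{S_-\setminus J}|\beta_\rho|<l-k<\sum_{S_+\setminus J}\beta_\rho$. So $K_l^\bullet$ has top degree $\dim\sigma$ iff some $J$ of size $\dim\sigma$ (all rays but one, say $\rho_0$) gives a valid path. That path starts at $l+\beta_{\rho_0}$ and is valid iff $l+\beta_{\rho_0}$ lies strictly between $0$ and $\beta_{\rho_0}$, reading the interval with the appropriate sign. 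Lemma \ref{Lem:Gor0isself} handles the case $l=0$ via $J=\sigma(1)$. This gives an explicit description of which $l$ have full-length complexes and which have short ones, together with all the arrows $l'\to l$ (meaning $A_{l'}\in K_l^\bullet$). Proposition \ref{Prop:GorSymDoubleswitch} says this digraph is symmetric under $l\mapsto -l$ with arrows reversed, which halves the casework.

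For step (2), an incredulous $I$ must contain $0$. The reason is that $K_0^\bullet$ contains $A_0$ in top degree, so substituting $0$ can never terminate, which is the loop argument of Proposition \ref{Prop:fingldim implies lockable}. The complement $I^c$ must satisfy two conditions.
\begin{itemize}
\item It must be acyclic in the arrow graph restricted to $I^c$; otherwise substitution loops.
\item Substituting a short complex $K_j^\bullet$ at a module sitting in degree $e$ of $K_i^\bullet$ must land exactly in degree $\dim\sigma$: the degrees add, since the new top degree is $e$ plus the length of $K_j^\bullet$.
\end{itemize}
For the sufficiency direction I would exhibit $I$ explicitly. For $\{2,1,-1,-1,-1\}$ and $\{1,1,1,-1,-1,-1\}$, I would take $I$ to consist of $0$ together with the nonnegative lattice points, modelled on Example \ref{Exa:710FMS}. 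For the trapezoid, $\{2,-1,-1,0\}$ or its equivalent, the cone splits off and reduces to the square case. In each case I would verify the substituted lengths by hand.

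For the necessity direction I would argue from the extreme points $\pm(B-1)$. Their complexes are the shortest, so they must lie in $I^c$. Tracking where they appear, I would show that the degree-addition condition forces $|S_\pm|$ and the individual $\beta_\rho$ to be small: essentially, one must have $\max|\beta_\rho|\le 2$ and $B\le 3$. I would then enumerate the few remaining multisets and rule out those not in the list, for instance $\{1,1,-1,-1\}$ or $\{1,-1\}$ with padding zeros, and $\{3,-1,-1,-1\}$, by exhibiting for each either a forced loop or a forced wrong length, as in the hexagon example.

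This necessity bound is the main obstacle. Turning the length-matching condition into an inequality on $B$ and the $\beta_\rho$ that holds for every candidate $I$ requires a careful general description of the degrees in which $A_{l'}$ occurs in $K_l^\bullet$. My first attempt would be to prove that if $|l|$ is large, then $K_l^\bullet$ has length $<\dim\sigma$ and contains only points of the same sign, which forces a chain of substitutions whose accumulated length overshoots $\dim\sigma$ unless $B$ is small.
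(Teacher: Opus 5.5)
You have the paper's framework: reduce to incredulous sets of lattice points via Theorem~\ref{Thm:Incr definitions agree}, test validity with Lemma~\ref{Lem:AlmSimValidity}, and use path length $=|J|$. But several of your concrete steps are false, and the necessity argument, which is the real content, is missing.

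\emph{Zeros.} A zero $\beta_{\rho_0}$ does not shift degrees. The path $\beta_{\rho_0}\colon l\rightsquigarrow l$ is valid of length one for every $l\in Z_\sigma$. So every valid $J\not\ni\rho_0$ is duplicated by $J\cup\{\rho_0\}$ one degree higher, and in particular $A_l$ sits in degree $1$ of its own $K_l^\bullet$. Hence no lattice point can be substituted, and $I$ would have to be everything, which is never crepant for a non-simplicial cone. This is Proposition~\ref{Prop:AlmGor0isbad}. Your reduction would instead put, e.g., $\{1,1,-1,-1,0\}$ on the list, contradicting the statement you are proving. (Algebraically, for a genuine product $\sigma_0\times\R_{\ge 0}$ one has $\End(\mathbb{B}[t])=\End(\mathbb{B})[t]$. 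So the square cone times a ray does carry a conic NCCR, which casts doubt on the zero case of the statement and on the NCR~$\Rightarrow$~lockable step behind it. That phenomenon is invisible to incredulous sets, though, so your combinatorial argument cannot use it.)

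\emph{Degrees.} Splicing deletes the degree-$0$ term $A_j$ of $K_j^\bullet$. A degree-$m$ term of $K_j^\bullet$ therefore lands in degree $e+m-1$, not $e+m$, and each substitution only has to stay $\le\dim\sigma$, not land exactly there. With your rule, the incredulous set $\{0,1\}$ of Example~\ref{Exa:710FMS} would be rejected: in lattice-point labels, $A_{-1}$ sits in degree $2$ of $K_{1}^\bullet$ and $K_{-1}^\bullet$ has length $2$.

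\emph{Sufficiency.} Your witness sets fail. For $\{2,1,-1,-1,-1\}$ and $\{1,1,1,-1,-1,-1\}$ the point $-1$ cannot be left out of $I$. It occurs in degree $1$ of $K_0^\bullet$ and in degree $2$ of $K_{-1}^\bullet$ (via paths $\beta_\rho+\beta_{\rho'}$ with $\beta_\rho=1$, $\beta_{\rho'}=-1$), so substituting it loops. The paper's incredulous set is $\{-1,0,1\}$ in both cases, substituting only $\pm 2$. A trapezoid has $\beta=\{\beta_1,\beta_2,-\beta_2,-\beta_1\}$, with no zero and no splitting; $\{2,-1,-1,0\}$ even violates Lemma~\ref{Lem:AlmGorAtleast2eachset}. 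The paper takes $I=(\beta_4,\beta_1)\cap\Z$, or $(-\beta_1,\beta_1]\cap\Z$ when $\beta_1=\beta_2$. It also proves separately, geometrically, that $\beta_1=-\beta_4$, $\beta_2=-\beta_3$ characterizes trapezoids.

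\emph{Necessity.} Your target bound $\max|\beta_\rho|\le 2$, $B\le 3$ is false, since $\{a,b,-b,-a\}$ admits an incredulous set for all coprime $a\ge b\ge 1$. Extreme points can also lie in $I$: for the square, $B-1=1\in\{0,1\}$. The missing mechanism has four parts.
\begin{enumerate}
\item $0\in I$ because every lattice point is reachable from $0$ by chains of valid paths. Your loop remark alone does not force $A_0$ to appear.
\item $K_l^\bullet$ has length $n$ iff $l\in(-\beta_1,-\beta_{n+1})$ (Lemma~\ref{Lem:AlmGorFullLength}).
\item A point is forced into $I$ when it lies on a concatenation of valid paths of lengths $|S_+|$ and $|S_-|+1$, since substituting it would create a term in degree $n+1$ (Proposition~\ref{Prop:AlmGorUnsubinterval}).
\item After arranging $|\beta_k|\le|\beta_{k+1}|$, the forced point $l=\beta_1+\dots+\beta_{k-1}-1$ has a short complex. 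All its modules lie in $I$ except possibly $p=\beta_1+\dots+\beta_k-1$ in degree one, whose own complex is short and involves only points of $I$. So $K_l^{\dagger,\bullet}$ never reaches length $n$.
\end{enumerate}
The leftover boundary cases are then settled by the explicit computations in Propositions~\ref{Prop:3DAlmostSimp}, \ref{Prop:SpecialCasesWithNCCR} and \ref{Prop:SpecialCasesWithoutNCCR}.
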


Before we dive into the details, let us here outline the ideas and structure behind the proof. Firstly, note that we completely shift focus to the geometry and combinatorics of the secondary fans. In other words, we use Theorem \ref{Thm:Incr definitions agree} to reduce to deciding the existence of incredulous sets of lattice points. 

Given an almost simplicial Gorenstein cone $\sigma$,  we establish some basic facts about the collection of $\beta_\rho$ associated to it: for instance, $|S_\pm|\ge 2$ and none of the $\beta_\rho$ vanish. We show that the length of a path corresponds to the size of the corresponding set of $J\subsetneq \sigma(1)$, allowing us to construct the complexes associated to the conic modules simply by classifying the valid paths from the collection of $\beta_\rho$ alone. We then provide a collection of lattice points that need to be contained in any potential incredulous set $I$. To prove Theorem \ref{Thm:AlmSimplConicNCCR} we use this set of points that "cannot be substituted'' to reduce the statement to a handful of explicit cases, which we verify in Propositions \ref{Prop:SpecialCasesWithNCCR} and \ref{Prop:SpecialCasesWithoutNCCR}.

\begin{lemma}
    \label{Lem:AlmGorAtleast2eachset}
    Let $\sigma$ be an almost simplicial Gorenstein cone. Then $|S_-|,|S_+|\neq 0,1$.
\end{lemma}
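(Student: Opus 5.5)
The plan is to use two facts about the $\beta_\rho$: they sum to zero, and the Gale dual of an almost simplicial Gorenstein cone has a rigid structure. Since $|\sigma(1)|=\dim\sigma+1$, the map $f_\sigma$ in \eqref{eq:CoxR} has one-dimensional target. Its kernel is the image of $M_\R$, which is the set of all $(\langle m,u_\rho\rangle)_\rho$. By the Gorenstein condition this image contains $(1,\dots,1)$, and applying $C$ to it gives \eqref{eq:GorSymBeta} with $J=\sigma(1)$, namely $\sum_{\rho\in\sigma(1)}\beta_\rho=0$.

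\textbf{Step 1: no $\beta_\rho$ vanishes.} Suppose $\beta_{\rho_0}=0$. Then $D_{\rho_0}$ lies in $\ker f_\sigma=\operatorname{Im}(g)$, so there is $m\in M_\R$ with $\langle m,u_{\rho_0}\rangle=1$ and $\langle m,u_\rho\rangle=0$ for all $\rho\neq\rho_0$. The $u_\rho$ with $\rho\ne\rho_0$ would then lie in the hyperplane $m^\perp$. But there are $\dim\sigma$ of them, and they must span $N_\R$, since any $\dim\sigma$ of the rays of an almost simplicial full-dimensional cone are linearly independent. The key point here is that the single linear relation among the $u_\rho$ has all coefficients nonzero, which is the real content of $\beta_{\rho}\neq 0$. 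So I first establish that the unique relation $\sum_\rho \lambda_\rho u_\rho=0$ has every $\lambda_\rho\neq0$, and that $\beta_\rho$ is proportional to $\lambda_\rho$ up to a global scalar.

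For this, note that if some $\lambda_{\rho_0}=0$, then the remaining $\dim\sigma$ vectors are dependent. The cone would then be the sum of the ray $\rho_0$ and a lower-dimensional cone, so it could not be full-dimensional with $\dim\sigma+1$ rays and $u_{\rho_0}$ outside that span. That is a contradiction. Hence every $\beta_\rho\neq 0$, and $\sigma(1)=S_+\sqcup S_-$.

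\textbf{Step 2: $S_+$ and $S_-$ are both nonempty.} This follows from $\sum_\rho\beta_\rho=0$ together with all $\beta_\rho\neq0$: both signs must occur.

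\textbf{Step 3: neither has size one.} This is the main step. Suppose $S_+=\{\rho_0\}$. Then the linear relation reads $u_{\rho_0}=\sum_{\rho\in S_-}c_\rho u_\rho$ with all $c_\rho>0$, since $\lambda_{\rho_0}$ and the $\lambda_\rho$ for $\rho\in S_-$ have opposite signs. So $u_{\rho_0}$ lies in the cone spanned by the other rays. This contradicts $\rho_0$ being a ray of $\sigma$, i.e. $u_{\rho_0}$ generating an extremal ray. The same argument with signs swapped handles $|S_-|=1$.

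The only care needed is the sign bookkeeping between the relation coefficients $\lambda_\rho$ and the $\beta_\rho$. I would justify this by writing the cokernel map $f_\sigma$ explicitly as $e_\rho\mapsto\lambda_\rho$, up to a global rescaling to make the image primitive. This works because $\ker f_\sigma$ is the annihilator of $(\lambda_\rho)_\rho$ under the standard pairing, which holds exactly because $(\lambda_\rho)$ spans the relations among the $u_\rho$. A global sign flip only swaps $S_+$ and $S_-$ and does not affect the statement.

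Note that the Gorenstein hypothesis is used only for the symmetry, which is mostly cosmetic here. Nonemptiness of both $S_+$ and $S_-$ already follows from strong convexity.
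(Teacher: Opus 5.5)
\textbf{Step 1 is false, and Step 2 as written depends on it.} It is not true that any $\dim\sigma$ of the rays of an almost simplicial cone are linearly independent. Take the cone over a square from Example \ref{Exa:710FMS} and form its product with a ray:
\[
\sigma=\cone(e_1,\ e_2,\ -e_1+e_3,\ -e_2+e_3,\ e_4)\subset\R^4 .
\]
This cone is full-dimensional and strongly convex. It has $5=\dim\sigma+1$ rays, and it is Gorenstein with $\mathfrak{m}_\sigma=(1,1,2,1)$. Yet the unique relation is $u_1-u_2+u_3-u_4=0$, so $\beta=\pm(1,-1,1,-1,0)$, and the ray $e_4$ has $\beta_\rho=0$.

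Your pyramid argument breaks exactly here. Writing $\sigma$ as $\cone(u_{\rho_0})$ plus a cone $\tau$ lying in a hyperplane is no contradiction. The cone $\tau$ can be a non-simplicial cone with $\dim\sigma$ rays as soon as $\dim\tau\ge 3$; the argument only works when $\dim\sigma=3$. The paper is aware of this possibility. Lemma \ref{Lem:AlmGorPathLength} explains how vanishing $\beta_\rho$ arise, and Proposition \ref{Prop:AlmGor0isbad} treats that case separately. The condition $\beta_\rho\neq 0$ is imposed afterwards as an extra assumption; it is not a consequence of almost simpliciality.

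\textbf{The repair is to delete Step 1}, since the lemma does not need it.
\begin{itemize}
\item For Step 2: because $|\sigma(1)|>\dim\sigma$, the $u_\rho$ satisfy a nontrivial relation, so $\beta\neq 0$. If $S_+=\emptyset$, then all $\beta_\rho\le 0$ and $\sum_\rho\beta_\rho=0$, which forces $\beta=0$, a contradiction. Alternatively, use strong convexity, as you remark.
\item For Step 3: if $S_+=\{\rho_0\}$, the relation $\sum_\rho\beta_\rho u_\rho=0$ gives $u_{\rho_0}=\sum_{\rho\neq\rho_0}c_\rho u_\rho$ with $c_\rho=-\beta_\rho/\beta_{\rho_0}\ge 0$, where $c_\rho=0$ exactly when $\beta_\rho=0$. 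This still places $u_{\rho_0}$ in the cone of the other rays and contradicts extremality.
\end{itemize}
With Step 1 removed and $c_\rho>0$ weakened to $c_\rho\ge 0$, your argument is the paper's proof.
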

\begin{proof}
    Note $\sum_{\rho\in \sigma(1)}\beta_\rho=0$ for Gorenstein cones, and so $|S_+|=0$ implies $\beta_\rho=0$ for all $\rho$. But then there is no linear relation between the primitive generators $u_\rho, \rho\in \sigma(1)$. This is not possible as $|\sigma(1)|=n+1>\dim \sigma$. If $|S_+|=1$, then without loss of generality $\beta_{\rho_1}>0$ for $\rho_1\in \sigma(1)$ and $\beta_\rho\le 0$ for all other $\rho\in \sigma(1)\setminus\{\rho_1\}$. Thus we obtain a linear relationship $\beta_1 u_1=\sum_{\rho\in\sigma(1)\setminus\{\rho_1\}}-\beta_\rho u_\rho$. This, however, implies that $u_{\rho_1}\in \cone(u_\rho\vert\rho\in \sigma(1)\setminus \{\rho_1\})$. But then $\rho$ is not an extremal ray of $\sigma$ and the cone is in fact simplicial. Therefore, $|S_+|\ge 2$ and analogously $|S_-|\ge 2$.
\end{proof}

\begin{lemma}
    \label{Lem:AlmGor0isUnsub}
    Let $\sigma$ be an almost simplicial Gorenstein cone of dimension $n$ and $I$ a lockable set. Then $0\in I$.
\end{lemma}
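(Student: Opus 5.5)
Suppose for contradiction that $0\notin I$, so the lattice point $0$ (equivalently $A_0$) must at some stage be substituted by $K_0^\bullet$. By Lemma \ref{Lem:Gor0isself}, $A_0$ appears in $K_0^\bullet$ in degree $\dim\sigma=n\geq 1$, coming from the valid path $\beta_{\sigma(1)}$ from $0$ to $0$. The aim is to show that substituting by $0$ can never terminate. This is the same loop phenomenon used in Proposition \ref{Prop:fingldim implies lockable} and Lemma \ref{Lem:SubstitutionIsUnique}.

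The argument has three steps.

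\textbf{Step 1: $A_0$ appears in some $K_P^\bullet$ with $P\in I$.} If $I$ is non-empty and $0\notin I$, pick $P\in I$, write $P=f_\sigma(-d(v))$, and take $w$ with $f_\sigma(-d(w))=-P$ (Lemma \ref{Lem:ZonoIsSymmetric}). I would produce a valid path from $0$ to $P$ directly from Lemma \ref{Lem:AlmSimValidity}. We need $J$ with $\sum_{\rho\in J}\beta_\rho=P$ and
\[
\sum_{\rho\in J_-}\beta_\rho<0<\sum_{\rho\in J_+}\beta_\rho,
\]
that is, $J^c$ must contain at least one positive and one negative $\beta_\rho$. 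Since $P$ is a lattice point of $Z_\sigma$, the values $\beta_\rho$ for $\rho\in J$ range over subset sums. Using $|S_\pm|\ge 2$ from Lemma \ref{Lem:AlmGorAtleast2eachset}, I expect one can choose $J$ leaving one element of each sign in $J^c$. If that is awkward, there is a fallback: by Proposition \ref{Prop:GorSymDoubleswitch} and Corollary \ref{Cor:A0PseudoSym}, $A_0$ lies in $L_v^\bullet$ whenever $A_v\in K_0^\bullet$. It would then suffice to know that every nonzero lattice point occurs in $K_0^\bullet$, which is a path from $P$ to $0$ and is checked the same way.

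\textbf{Step 2: every substitution sequence eventually uses $0$.} Because $A_0$ appears in a complex $K_P^\bullet$ with $P\in I$ and $0\notin I$, any admissible sequence of substitutions must at some point substitute by $0$.

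\textbf{Step 3: substituting by $0$ never terminates.} Each time we substitute by $0$, the splice of $K_0^\bullet$ reintroduces a copy of $A_0$ in strictly higher degree, because it sits in degree $n>0$ of $K_0^\bullet$. By the uniqueness argument of Lemma \ref{Lem:SubstitutionIsUnique}, that copy can only be removed by substituting by $0$ again. So no finite sequence of substitutions by elements of $I^c$ removes every occurrence of $A_0$, and $I$ is not lockable. This contradiction forces $0\in I$.

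The main obstacle is Step 1: guaranteeing that $A_0$ actually occurs in the complex of some element of $I$, using only the one-dimensional combinatorics of Lemma \ref{Lem:AlmSimValidity}. If a given $P$ has no valid path from $0$, I would instead chain through intermediate substituted points. Lockability forces every substituted point to have its own complex spliced in, so it is enough that $0$ is reachable by valid paths from some chain of complexes starting at $I$. The symmetry $P\leftrightarrow -P$ should make this reachability straightforward to verify.
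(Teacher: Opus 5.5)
\textbf{There is a genuine gap, and it sits in Step 1, which is the actual content of the lemma.} Your Steps 2 and 3 are fine and match the paper's mechanism. If a point of $I$ is linked to $0$ by valid paths through points outside $I$, then $A_0$ must be substituted, and since $A_0$ sits in degree $n$ of $K_0^\bullet$ this never terminates.

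Step 1 fails as stated. Your primary claim is that for $P\in I$ one can choose $J$ with $\sum_{\rho\in J}\beta_\rho=P$ and with $J^c$ containing $\beta$'s of both signs. This is false in general. Take $\{2,1,1,-2,-2\}$ and $P=-3$. The only subsets with sum $-3$ are of the form $\{1,-2,-2\}$, whose complement $\{2,1\}$ has no negative element. Indeed $K_{-3}^\bullet=A_1\to A_{-1}^{\oplus 2}\to A_{-3}$ contains no $A_0$. Your fallback through Corollary \ref{Cor:A0PseudoSym} is the same condition in disguise: it would need $A_3$ in $K_0^\bullet$, which fails.

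Since $I$ is an arbitrary nonempty set, you cannot pick a convenient $P$. You need \emph{every} lattice point of $Z_\sigma$ to be reachable from $0$ by a chain of valid paths. The symmetry $P\leftrightarrow -P$ does not supply this. It only converts a chain $0\rightsquigarrow\cdots\rightsquigarrow P$ into a chain $-P\rightsquigarrow\cdots\rightsquigarrow 0$ and back; it produces no chain.

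\textbf{The missing idea is a walk by one-element paths, as in the paper.}
\begin{enumerate}
\item Put $s=\sum_{\rho\in S_+}\beta_\rho=-\sum_{\rho\in S_-}\beta_\rho$, so $Z_\sigma=(-s,s)$. By Lemma \ref{Lem:AlmSimValidity} with $J=\{\rho\}$, the path $\beta_{\{\rho\}}$ starting at $M\in Z_\sigma$ is valid exactly when $M+\beta_\rho\in Z_\sigma$.
\item Since $|S_\pm|\ge 2$ (Lemma \ref{Lem:AlmGorAtleast2eachset}), every $|\beta_\rho|<s$. So for any $\beta_i>0>\beta_j$, at least one of $M+\beta_i$, $M+\beta_j$ lies in $Z_\sigma$: if $M+\beta_i\ge s$ then $M>0$, hence $M+\beta_j\in(-s,s)$.
\item Write $l=\sum_\rho x_\rho\beta_\rho$ with all $x_\rho\in\Z_{\ge 0}$. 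This is possible because the $\beta_\rho$ have gcd $1$ (the row of $C$ is primitive), and $\sum_\rho\beta_\rho=0$ lets you raise all coefficients simultaneously.
\item Perform these steps, at each stage choosing a remaining step whose endpoint stays in $Z_\sigma$. Once only steps of one sign remain, the walk is monotone towards $l$ and stays in $Z_\sigma$.
\end{enumerate}
This gives a chain of valid paths $0\rightsquigarrow\cdots\rightsquigarrow l$ for every $l$. Your Steps 2 and 3, applied inductively along such chains, then show that no point reachable from $0$ lies in $I$, so $I=\emptyset$, a contradiction.

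One caution if you compare with the paper. It runs this walk with a single pair $\beta_i>0>\beta_j$ assumed coprime, but such a pair need not exist; for example $\{6,10,15,-6,-10,-15\}$ has none. Using all the $\beta_\rho$, as above, avoids this.
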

\begin{proof}
    If $0\not\in I$, neither are any $l$ such that there is a valid path $0\rightsquigarrow l$ as otherwise substituting $A_0$ simply reintroduces another copy of $A_0$ in a higher degree (as $A_0$ appears in degree $n$ in $K_0^\bullet$ by Lemma \ref{Lem:Gor0isself}). A path $\beta_J:0\rightsquigarrow \beta_J$ is valid (confounding notations by denoting the endpoint of the path with $\beta_J$) if and only if $J_+\cap S_+$ and $J_-\cap S_-$ are both non-empty (using Lemma \ref{Lem:AlmSimValidity}). So no such $\beta_J$ is in the set $I$. But substituting $A_{\beta_J}$ will introduce an $A_0$ into any complex, which cannot be substituted away. Hence, no complex where $A_{\beta_J}$ appears can be valid either. Continuing this line of argument, if there is a concatentation of valid paths $0\rightsquigarrow M_1\rightsquigarrow\dots\rightsquigarrow M_k\rightsquigarrow l$, then $l\not \in I$.
    It is thus sufficient to prove that there is a concatenation of valid paths from $0$ to any $l\in I$.
    
    Without loss of generality let $l>0$. Note that there exists $\beta_i\in S_+$ and $\beta_j\in S_-$ such that $\gcd{(\beta_i,\beta_j)}=1$. 
    So there exist $r,s\in \Z$ such that $r|\beta_i|+s|\beta_j|=1$.
    Not both $r,s$ can be $>0$ so either $r\ge0, s\le0$ or $r\le0, s\ge0$. Then $1=r\beta_i+(-s)\beta_j$ with $-s\ge 0$ and similarly $l=rl\beta_i+(-s)l\beta_j$. Since $l>0$, we cannot have $r\le 0$ and $s\ge 0$, and thus we know that $r\ge0$ and $s\le 0$.
    
    Note that for all $M\in Z_\sigma$, either the path $\beta_i$ or the path $\beta_j$ is valid. Indeed, $\beta_i$ is a valid path from $M$ to $M+\beta_i$ if and only if \[
    \sum_{\beta_\rho\in S_-}\beta_\rho<M<\sum_{\beta_\rho\in S_+}\beta_\rho - \beta_i.
    \]
    Thus, if $\beta_i$ is not a valid path, $\sum_{\beta_\rho\in S_+}\beta_\rho>M\ge\sum_{\beta_\rho\in S_+}\beta_\rho - \beta_i>\sum_{\beta_\rho\in S_-}\beta_\rho$ where the last inequality follows as $\sum_{\beta_\rho\in S_-}\beta_\rho+\sum_{\beta_\rho\in S_+}\beta_\rho=0$ and $|S_+|\ge 2$ and so $\beta_i<|\sum_{\beta_\rho\in S_-}\beta_\rho|$. This chain of inequalities however implies $\beta_j$ is valid, and so we have that either $\beta_i$ or $\beta_j$ has to be a valid path from $M$.

    Begin thus at 0 and move along a valid choice of path, either $\beta_i$ or $\beta_j$. Repeat this process, always landing at a point of the form $k_1\beta_i+k_2\beta_j$ and at each step increasing the sum $k_1+k_2$ by one. If $k_1=r l$, but $k_2<(-s)l$, note that subsequently the path $\beta_j$ is valid, as \[\sum_{\rho\in S_+}\beta_\rho>rl\beta_i+k_2\beta_j>rl\beta_i+(-s)l\beta_j=l>0>\sum_{\rho\in S_-}\beta_\rho-\beta_j.\]
    Hence, we can keep picking the valid path $\beta_j$ until we reach the point $rl\beta_i+(-s)l\beta_j=l$. Similarly if we reach $k_2=(-s)l$ before $k_1$ becomes $rl$, the path $\beta_i$ remains valid. Thus, we have found a concatentation of valid paths from 0 to $l$.

    For $l<0$, we note that $l'=l+\sum_{\beta_\rho\in S_+}\beta_\rho>0$ and so there is a concatenation of valid paths from 0 to $l'$. Using the path $\sum_{\rho\in S_-}\beta_\rho$, we have \[
    0<l'<\sum_{\rho\in S_+}\beta_\rho,
    \]
    and so there is a valid path from $l'$ to $l$, i.e. there exists a concatenation of valid paths starting at 0 and ending at $l$, as required. 

    \noindent In conclusion, all $l\in Z_\sigma$ admit concatenations of valid paths from $0$ to $l$, and so $l\not\in I$ for $l\in Z_\sigma$, which means $I=\emptyset$, a contradiction. Hence, $0\in I$.
\end{proof}

\begin{proposition}
    \label{Prop:AlmGorEverythingissubbed}
    Let $\sigma$ be an almost simplicial Gorenstein cone. Given a lockable set $I$, for every $l\in Z_\sigma$ either $l\in I$ or there is an $i\in I$ such that the sequence of substitutions $S(j,-), j\in I^c$ performed on $K_i^\bullet$ to obtain $K_i^{\dagger,\bullet}$ contains the substitution $S(l,-)$. 
\end{proposition}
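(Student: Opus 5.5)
The plan is to reduce the statement to a reachability property of valid paths starting at $0$, reusing the walk built in the proof of Lemma \ref{Lem:AlmGor0isUnsub}. First, by Lemma \ref{Lem:AlmGor0isUnsub}, any lockable $I$ contains $0$, so the complex $K_0^\bullet$ belongs to the initial set $\{K_P^\bullet\mid P\in I\}$. The key observation I intend to use is the following propagation principle. Suppose $A_M$ appears in a complex that arises during the substitution process applied to some $K_i^\bullet$ with $i\in I$, and suppose $M\notin I$. Lockability forces us to substitute $S(M,-)$ at some stage, and this splices in $K_M^\bullet$. By Proposition \ref{Prop:CplxViaPaths2}, every $A_{M'}$ with a valid path $M'\rightsquigarrow M$ then appears in the resulting complex. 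Hence the set of points that are either in $I$ or substituted during the processing of some $K_i^\bullet$ is closed under the operation ``if $M$ is reached and $M\notin I$, then every $M'$ with a valid path $M'\rightsquigarrow M$ is reached''.

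The second step is to show that every lattice point $l\in Z_\sigma$ is reached starting from $0$. Here I will use that the existence of valid paths behaves symmetrically under the Gorenstein symmetry. Proposition \ref{Prop:GorSymDoubleswitch} and Lemma \ref{Lem:ZonoIsSymmetric} show that a valid path $M'\rightsquigarrow M$ exists if and only if one exists from $-M$ to $-M'$. The proof of Lemma \ref{Lem:AlmGor0isUnsub} builds a chain of valid paths $0\rightsquigarrow M_1\rightsquigarrow\dots\rightsquigarrow l$ for every $l\in Z_\sigma$. Applying this to $-l$ and then negating gives a chain $l\rightsquigarrow\dots\rightsquigarrow 0$, which is exactly the direction the propagation principle needs. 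Walking backwards along this chain from $0$, at each point $M_k$ there are two possibilities. Either $M_k\in I$, in which case $K_{M_k}^\bullet$ is itself an initial complex and the argument restarts from it. Or $M_k\notin I$, in which case it must be substituted, and then its predecessor in the chain appears. By induction along the chain, $l$ is either in $I$ or substituted in the processing of some $K_i^\bullet$.

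An alternative for the second step is to bypass the symmetry. One would redo the argument of Lemma \ref{Lem:AlmGor0isUnsub} with the inequality of Lemma \ref{Lem:AlmSimValidity} read as a condition on the starting point $l-k$. The same choice of coprime $\beta_i\in S_+$, $\beta_j\in S_-$ shows that from any point one of $-\beta_i,-\beta_j$ steps backwards validly. One then walks directly from $0$ backwards towards $l$, in the same Bezout fashion.

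The main obstacle is making the propagation precise when a point on the chain lies in $I$. In that case the chain is broken, since $A_{M_k}$ is not substituted. My fix is that each such $M_k$ is itself an index $i\in I$, and I restart the argument from $K_{M_k}^\bullet$. The only remaining care is to check that the intermediate steps are genuine valid paths, with endpoints inside $Z_\sigma$, so that Proposition \ref{Prop:CplxViaPaths2} applies. For this I intend to use the boundedness estimates $\sum_{S_-}\beta_\rho<M<\sum_{S_+}\beta_\rho$ already established in the proof of Lemma \ref{Lem:AlmGor0isUnsub}.
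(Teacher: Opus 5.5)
Your proposal is correct and follows the paper's own argument. You take the chain $0\rightsquigarrow M_1\rightsquigarrow\dots\rightsquigarrow M_k\rightsquigarrow -l$ from the proof of Lemma \ref{Lem:AlmGor0isUnsub} and reflect it via Proposition \ref{Prop:GorSymDoubleswitch} into $l\rightsquigarrow -M_k\rightsquigarrow\dots\rightsquigarrow -M_1\rightsquigarrow 0$. You then propagate the forced substitutions down this chain from the last point lying in $I$: your ``restart'' is the paper's maximal index $i_m$, and $0\in I$ covers the case where no $-M_i$ lies in $I$.

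Like the paper, you depend on the chain construction in the proof of Lemma \ref{Lem:AlmGor0isUnsub}. Your alternative second step repeats that proof's claim that some coprime pair $\beta_i\in S_+$, $\beta_j\in S_-$ exists. This can fail, e.g.\ for $\{6,10,15,-6,-10,-15\}$, where every such pair has a common factor, so that variant would need a different reachability argument.
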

\begin{proof}
    By the proof of Lemma \ref{Lem:AlmGor0isUnsub}, there exists a concatenation of valid paths $0\rightsquigarrow M_1\rightsquigarrow M_2\rightsquigarrow \dots \rightsquigarrow M_k \rightsquigarrow-l$ for any $l\in Z_\sigma$. But then by Proposition \ref{Prop:GorSymDoubleswitch}, there is a concatenation of valid paths $l\rightsquigarrow -M_k\rightsquigarrow\dots\rightsquigarrow-M_1\rightsquigarrow 0$. Either no $-M_i$ is in $I$ or there is a maximal $i_m$ such that $-M_{i_m}\in I$. In the former case, to obtain $K_0^{\dagger,\bullet}$, all $-M_i$ need to be substituted and then so does $l$. In the latter case, the same holds true for all $-M_i$ with $i>i_m$ as well as $l$.
\end{proof}

\begin{proposition}
    \label{Prop:AlmGor0isbad}
    Let $\sigma$ be an almost simplicial Gorenstein cone. Suppose there is a $\rho\in \sigma(1)$ with $\beta_\rho=0$. Then there is no incredulous set $I$.
\end{proposition}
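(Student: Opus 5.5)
The plan is to show that a ray with $\beta_{\rho_0}=0$ produces a ``self-loop'' at every lattice point of $Z_\sigma$. That forces any lockable set to contain all lattice points, and the complete sum is then excluded because $\sigma$ is not simplicial. Throughout, I would work with incredulous sets of lattice points, which is legitimate by Theorem \ref{Thm:Incr definitions agree}.

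\textbf{Step 1: the zonotope is an open interval.} Since $\sigma$ is Gorenstein, \eqref{eq:GorSymBeta} gives $\sum_{\rho\in S_+}\beta_\rho=-\sum_{\rho\in S_-}\beta_\rho$. The summands with $\beta_\rho=0$ contribute nothing to $Z_\sigma=\sum(-1,0]\beta_\rho$. Hence
\[
Z_\sigma=\Bigl(\textstyle\sum_{\rho\in S_-}\beta_\rho,\ \sum_{\rho\in S_+}\beta_\rho\Bigr),
\]
the open interval. To see this, apply the shifting trick from the proof of Lemma \ref{Lem:ZonoIsSymmetric}, which removes the half-open endpoints. So every lattice point $l\in Z_\sigma$ satisfies $\sum_{S_-}\beta_\rho<l<\sum_{S_+}\beta_\rho$.

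\textbf{Step 2: every lattice point loops to itself in degree $1$.} Take $J=\{\rho_0\}$. Then $\beta_J$ is a path $l\rightsquigarrow l$ with $k=0$. Its complement satisfies $J_+=S_+$ and $J_-=S_-$, because $\rho_0$ lies in neither set. The criterion \eqref{eq:AlmSimValidity} of Lemma \ref{Lem:AlmSimValidity} then reads exactly $\sum_{S_-}\beta_\rho<l<\sum_{S_+}\beta_\rho$, which holds by Step 1. The length is $\operatorname{rk}(A_J)=1$, since $u_{\rho_0}\neq 0$. By Proposition \ref{Prop:CplxViaPaths2}, for every lattice point $l$ the complex $K_l^\bullet$ contains $A_l$ in degree $1$.

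\textbf{Step 3: lockability forces $I$ to be everything.} Let $I$ be a lockable set of lattice points and suppose some $l\notin I$. By Proposition \ref{Prop:AlmGorEverythingissubbed}, the substitution $S(l,-)$ must occur in producing some $K_i^{\dagger,\bullet}$ with $i\in I$. By Step 2, every such substitution reintroduces a copy of $A_l$, one degree higher. Since $l\notin I$, that copy must itself be substituted again. The process therefore never terminates, contradicting the finiteness required by lockability. The same mechanism is behind Proposition \ref{Prop:fingldim implies lockable}, and Lemma \ref{Lem:NoPureDegree1Loop} is not needed here because the loop sits in degree $1$, not $0$. Hence $I=L_\mathcal{X}$.

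\textbf{Step 4: the complete set is not incredulous.} If $I=L_\mathcal{X}$ were incredulous, then Theorem \ref{Thm:Incr definitions agree} would make the set of all conic modules incredulous. Theorem \ref{Thm:NCRiffLock} would then make the complete sum an NCCR. By \cite{FMS19}, that happens only for simplicial $\sigma$, but an almost simplicial cone has $\dim\sigma+1$ rays and so is not simplicial. Incomplete sums are already ruled out by Step 3. Hence no incredulous set exists.

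The main point to check carefully is Step 1, namely that zero $\beta_\rho$ do not enlarge the zonotope and that lattice points are strictly interior. A further check is that Lemma \ref{Lem:AlmSimValidity} applies with $k=0$ and $J$ containing only the zero vector; its proof explicitly covers the case ``$J$ consists only of $\beta_\rho=0$''.
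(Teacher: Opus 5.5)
Your proposal is correct and follows the paper's own argument. The zero vector $\beta_{\rho_0}$ gives a valid length-one path $l\rightsquigarrow l$ at every lattice point of $Z_\sigma$, so no point can ever be substituted away, and Proposition \ref{Prop:AlmGorEverythingissubbed} then forces a lockable set to be all of $L_\mathcal{X}$; that set is excluded by \cite[Proposition 7.9]{FMS19} because $\sigma$ is not simplicial. Your Steps 1 and 2 spell out what the paper leaves implicit: the computation of the zonotope as an open interval, and the fact that the loop occurs in degree $1$.
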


\begin{proof}
    Suppose $\rho_1$ has the property that $\beta_{\rho_1}=0$. Then for any $l\in Z_\sigma$, we have $\beta_{\rho_1}:l\rightsquigarrow l$ is valid, as $l\in Z_\sigma\Leftrightarrow \sum_{\rho\in S_-}\beta_\rho <l<\sum_{\rho\in S_+}\beta_\rho$. So none of the complexes can be substituted without immediately reappearing. But Proposition \ref{Prop:AlmGorEverythingissubbed} implies that any $l\in Z_\sigma$ for which $A_l\in K_l^\bullet$ needs to be in $I$ itself and thus $I$ is the full set $\{1,\dots,n+1\}$. But \cite[Proposition 7.9]{FMS19} shows that no almost simplicial cones $\sigma$ exist for which this set is incredulous.
\end{proof} 

 From now on, we focus on those almost simplicial Gorenstein cones for which incredulous sets can, a priori, exist, i.e. where no $\beta_\rho$ is zero. Thus, we fix some notation.
\begin{notation}
We may order the $\beta_\rho, \rho\in \sigma(1)$, and do so. Without loss of generality let \[\beta_1\ge \beta_2\ge\dots\ge\beta_k >0>\beta_{k+1}\ge \beta_{k+2}\geq\dots\geq \beta_{n+1}.\]
Thus, $S_+=\{\beta_1,\dots,\beta_k\}$ and $S_-=\{\beta_{k+1},\dots,\beta_{n+1}\}$ (abusing notation, we refer to $S_+$ and $S_-$ as the index sets $\{1,\dots,k\}$ and $\{k+1,\dots,n+1\}$ respectively).
\end{notation}

\begin{lemma}
    \label{Lem:AlmGorPathLength}
    Let $\sigma$ be an almost simplicial Gorenstein cone such that $\beta_\rho\neq 0$ for all $\rho\in \sigma(1)$. Then $\beta_J$ is a path of length $|J|$.
\end{lemma}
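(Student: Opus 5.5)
The length of $\beta_J$ is by Definition \ref{Def:PathInZonotope} the rank $\operatorname{rk}(A_J)=\dim\operatorname{Span}(u_\rho\mid\rho\in J)$. So the statement amounts to showing that, when all $\beta_\rho\neq0$, every proper subset of the $u_\rho$ is linearly independent. For $J=\sigma(1)$ the length is $n=|J|-1$, since $\sigma$ is full-dimensional. I therefore read the statement as concerning $J\subsetneq\sigma(1)$, in line with the outline preceding Lemma \ref{Lem:AlmGorAtleast2eachset}. I will state this caveat explicitly and treat $J=\sigma(1)$ separately: it has length $\dim\sigma$, as in Lemma \ref{Lem:Gor0isself}.

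The key input is the exact sequence \eqref{eq:CoxR}. Since $|\sigma(1)|=n+1$, the cokernel has dimension one, so $\ker(A^T)$ is one-dimensional. Here $A^T$ is the map $\R^{\sigma(1)}\to N_\R$, $e_\rho\mapsto u_\rho$, the dual of $g$. The space of linear relations $\sum_\rho c_\rho u_\rho=0$ is precisely the image of the dual map $f_\sigma^\ast:\R\to\R^{\sigma(1)}$. This map sends $1$ to the row of $C$, that is, to $(\beta_\rho)_\rho$. Concretely, $CA=0$ gives $\sum_\rho\beta_\rho\langle m,u_\rho\rangle=0$ for all $m$, hence $\sum_\rho\beta_\rho u_\rho=0$. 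Moreover, dimension counting shows the relation space is exactly $\R\cdot(\beta_\rho)_\rho$, because $\operatorname{rk}A=n$.

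Now let $J\subsetneq\sigma(1)$ and suppose $\sum_{\rho\in J}c_\rho u_\rho=0$. Extending by zero gives a relation on all of $\sigma(1)$, so $c=t\beta$ for some $t\in\R$. Pick $\rho_0\notin J$. Then $0=c_{\rho_0}=t\beta_{\rho_0}$, and since $\beta_{\rho_0}\neq0$ we get $t=0$, hence $c=0$. Thus $\{u_\rho\}_{\rho\in J}$ is linearly independent and $\operatorname{rk}(A_J)=|J|$.

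The only step needing care is identifying the relation module with $\R\beta$. This requires matching the matrix $C$ with $f_\sigma$ and observing that it has exactly one row, so the kernel of $A^T$ is spanned by that row. Rank-nullity for the $(n+1)\times n$ matrix $A$ of rank $n$ handles this. Everything else is immediate.
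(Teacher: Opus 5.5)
Your proposal is correct and follows the same argument as the paper: the space of linear relations among the $u_\rho$ is one-dimensional and spanned by $(\beta_\rho)_\rho$, so a relation supported on a proper subset $J$ would force $\beta_\rho=0$ for every $\rho\notin J$. Your caveat that the statement must be read for $J\subsetneq\sigma(1)$ (with $J=\sigma(1)$ having length $n$) matches the paper, whose proof also treats only proper subsets; you additionally spell out the rank--nullity step identifying the relation space with $\R\cdot\beta$, which the paper leaves implicit.
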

\begin{proof}
    If there was a $J\subsetneq \sigma(1)$ with $\operatorname{rk}(A_J)\neq |J|$, then the primitive generators $u_\rho, \rho\in J$ are linearly dependent. But then the set of linear relations between the primitive generators of $\sigma(1)$ are spanned by this linear relation between the generators of $J$, and so for $\rho\not\in J$, $\beta_\rho=0$.
\end{proof}

\begin{lemma}
\label{Lem:AlmGorFullLength}
Let $\sigma$ be an almost simplicial Gorenstein cone of dimension $n$. For $l\in Z_\sigma= (\sum_{\beta_\rho\in S_-}\beta_\rho,\sum_{\beta_\rho\in S_+}\beta_\rho)$, the complex $K_l^\bullet$ associated to the conic modules $A_l$ has length $n$ if and only if $l\in (-\beta_1,-\beta_{n+1})$.
\end{lemma}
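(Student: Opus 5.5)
The plan is to read off the length of $K_l^\bullet$ from valid paths ending at $l$, using Proposition \ref{Prop:CplxViaPaths2}. By Theorem \ref{Thm:Incr definitions agree} we may ignore torsion. The degree of a module appearing in $K_l^\bullet$ is the length $\operatorname{rk}(A_J)$ of a valid path $\beta_J$ ending at $l$. Since $\operatorname{rk}(A_J)\le n$ always, $K_l^\bullet$ has length $n$ if and only if some valid path of length exactly $n$ ends at $l$. We work under the standing assumption that no $\beta_\rho$ vanishes. Then Lemma \ref{Lem:AlmGorPathLength} says a proper subset $J\subsetneq\sigma(1)$ has length $|J|$, while $J=\sigma(1)$ has length $\operatorname{rk}(A)=n$. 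So the paths of length $n$ are exactly $J=\sigma(1)$ and the sets $J=\sigma(1)\setminus\{\rho_0\}$, and I will treat these two kinds separately.

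\emph{The case $J=\sigma(1)$.} Here $\sum_{\rho\in J}\beta_\rho=0$ by \eqref{eq:GorSymBeta}, so this is a path from $l$ to $l$. By the remark following the definition of validity, the Minkowski sum over $J^c=\emptyset$ is $0$, so the path is valid if and only if $l=0$ (compare Lemma \ref{Lem:Gor0isself}). Since $\beta_1>0>\beta_{n+1}$, the point $0$ lies in $(-\beta_1,-\beta_{n+1})$.

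\emph{The case $J=\sigma(1)\setminus\{\rho_0\}$.} Here $k=\sum_{\rho\in J}\beta_\rho=-\beta_{\rho_0}$, so the path starts at $l-k=l+\beta_{\rho_0}$. I apply Lemma \ref{Lem:AlmSimValidity}, where $J^c=\{\rho_0\}$.
\begin{itemize}
\item If $\beta_{\rho_0}>0$, then $J_+=\{\rho_0\}$ and $J_-=\emptyset$. Condition \eqref{eq:AlmSimValidity} becomes $0<l+\beta_{\rho_0}<\beta_{\rho_0}$, that is, $-\beta_{\rho_0}<l<0$.
\item If $\beta_{\rho_0}<0$, then $J_-=\{\rho_0\}$ and $J_+=\emptyset$. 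The condition becomes $\beta_{\rho_0}<l+\beta_{\rho_0}<0$, that is, $0<l<-\beta_{\rho_0}$.
\end{itemize}
In both cases the starting point $l+\beta_{\rho_0}$ is an integer strictly between $0$ and $\beta_{\rho_0}$. Since $|S_\pm|\ge2$ by Lemma \ref{Lem:AlmGorAtleast2eachset}, this point lies in the open zonotope $Z_\sigma$, so it is a genuine lattice point and the path exists.

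Taking the union over $\rho_0$ and adding the point $0$ from the first case gives $(-\beta_1,0)\cup\{0\}\cup(0,-\beta_{n+1})=(-\beta_1,-\beta_{n+1})$. Here I use that the largest positive $\beta$ is $\beta_1$ and the most negative is $\beta_{n+1}$. Hence a valid path of length $n$ into $l$ exists exactly when $l\in(-\beta_1,-\beta_{n+1})$, which proves both directions.

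The main delicate point is the bookkeeping at the ends of the argument. First, Lemma \ref{Lem:AlmSimValidity} was proved for proper $J$, so $J=\sigma(1)$ must be handled directly from the definition of validity, as above. Second, one must check that the starting points of the length-$n$ paths really are lattice points of $Z_\sigma$. The rest is the routine inequality manipulation.
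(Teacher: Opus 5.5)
Your proposal is correct and follows essentially the same route as the paper's proof. Both restrict to valid paths of length $n$ ending at $l$, which by Lemma~\ref{Lem:AlmGorPathLength} come only from $J=\sigma(1)$ (valid only for $l=0$) or $J=\sigma(1)\setminus\{\rho_0\}$, and then use Lemma~\ref{Lem:AlmSimValidity} to get the interval $(-\beta_{\rho_0},0)$ or $(0,-\beta_{\rho_0})$ according to the sign of $\beta_{\rho_0}$, whose union together with $\{0\}$ is $(-\beta_1,-\beta_{n+1})$.

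Your extra bookkeeping only makes explicit what the paper leaves implicit: you handle $J=\sigma(1)$ directly from the definition of validity and check that the start points lie in $Z_\sigma$. For that check, $|S_\pm|\ge 2$ is not actually needed.
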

\begin{proof}
    By Lemma \ref{Lem:AlmGorPathLength}, the paths of length $n$ come from subsets of $\sigma(1)$ of precisely size $n$ or $n+1$. The unique subset of size $n+1$ contains all $\beta_\rho$ and is only valid if it starts at 0. Otherwise, each path of length $n$ is of the form $\sum_{\rho\neq \rho_1}\beta_\rho$ for some $\rho_1$, i.e. coming from the set $J=\sigma(1)\setminus\{\rho_1\}$. This path is valid if and only if it starts at $l$ such that \[
    \sum_{\rho\in J_-}\beta_\rho<l<\sum_{\rho\in J_+}\beta_\rho.
    \]
    If $\beta_{\rho_1}>0$, this is equivalent to $0<l<\beta_{\rho_1}$ and if $\beta_{\rho_1}<0$ we instead obtain $\beta_{\rho_1}<l<0$. The path leads to $l-\beta_{\rho_1}$ and so the indices $p$ of the complexes which have a degree $n$ component  via the path $J$ fulfill $\begin{cases}
        0<p<-\beta_{\rho_1} \text{ if }\beta_{\rho_1}<0;\\
       -\beta_{\rho_1}<p<0 \text{ if }\beta_{\rho_1}>0.
    \end{cases}$
    
    Consequently, the set of complexes $K_l^\bullet$ of length $n$ is the set such that \[l\in \left(\min(-\beta_\rho\vert \beta_\rho\in S_+),\max(-\beta_\rho\vert\beta_\rho\in S_-)\right)=(-\beta_1,-\beta_{n+1}).\]
\end{proof}

\begin{proposition}
    \label{Prop:AlmGorUnsubinterval}
    Let $\sigma$ be an almost simplicial Gorenstein cone and $I$ an incredulous set. Then for $l\in (\beta_{n+1}+\dots+\beta_{k+2}+1,\beta_{k-1}+\dots+\beta_{1}-1)$, $l\in I$.
\end{proposition}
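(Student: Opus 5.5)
The plan is to argue by contradiction. Suppose $I$ is incredulous and some $l$ in the interval $(\beta_{n+1}+\dots+\beta_{k+2}+1,\beta_{k-1}+\dots+\beta_1-1)$ is not in $I$. By Proposition \ref{Prop:AlmGorEverythingissubbed}, the substitution $S(l,-)$ is then performed while computing $K_i^{\dagger,\bullet}$ for some $i\in I$. Since $A_l$ only ever sits in degree $\ge 1$ there, every module of $K_l^\bullet$ gets shifted up by at least one degree. Two ingredients must then be combined. The first is the incredulous length constraint: no complex of $\mathcal{K}_I^\dagger$ may exceed length $n=\dim\sigma$. The second is lockability: the substitution process cannot cycle through points of $I^c$, as in the proof of Proposition \ref{Prop:fingldim implies lockable}. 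The goal is to exhibit, for each such $l$, valid paths that force one of these two to fail.

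The concrete tool is Lemma \ref{Lem:AlmSimValidity}, combined with Lemma \ref{Lem:AlmGorPathLength} (a path $\beta_J$ has length $|J|$). I will look at the two "smallest" generators $\beta_k>0$ and $\beta_{k+1}<0$.
\begin{itemize}
\item For the single-step path $\{\rho_k\}\colon l\rightsquigarrow l+\beta_k$, the validity condition reads $\beta_{k+1}+\dots+\beta_{n+1}<l<\beta_1+\dots+\beta_{k-1}$.
\item For $\{\rho_{k+1}\}\colon l\rightsquigarrow l+\beta_{k+1}$, it reads $\beta_{k+2}+\dots+\beta_{n+1}<l<\beta_1+\dots+\beta_k$.
\item For $\{\rho_k,\rho_{k+1}\}\colon l\rightsquigarrow l+\beta_k+\beta_{k+1}$, it reads $\beta_{k+2}+\dots+\beta_{n+1}<l<\beta_1+\dots+\beta_{k-1}$.
\end{itemize}
The hypothesis on $l$, including the extra margin of $1$ on each side, is exactly what makes these conditions, and their analogues at the neighbouring lattice points $l\pm1$, hold simultaneously. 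From these paths I want to build a chain of valid paths that starts at a point whose module appears in $K_l^\bullet$ and returns to $l$. Using the Gorenstein symmetry of Proposition \ref{Prop:GorSymDoubleswitch} reverses paths, which helps produce return paths. The resulting chain is $l\rightsquigarrow l'\rightsquigarrow l$, with the step into $l$ of positive length.

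Given such a chain, the case split is as follows. If the intermediate points lie in $I^c$, substituting $l$ eventually reintroduces $A_l$, so the process cycles. This contradicts Lemma \ref{Lem:SubstitutionIsUnique} and the finiteness argument of Proposition \ref{Prop:fingldim implies lockable}, with Lemma \ref{Lem:NoPureDegree1Loop} ruling out degree-zero cycles. If instead some intermediate point $l'$ lies in $I$, then $A_{l'}$ survives in $K_l^\bullet$ in a degree I will arrange to be large. I will try to arrange degree $n-1$ or $n$ by using the complementary sets $\sigma(1)\setminus\{\rho_k\}$ and $\sigma(1)\setminus\{\rho_k,\rho_{k+1}\}$, whose sums are $-\beta_k$ and $-\beta_k-\beta_{k+1}$. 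After the shift by $d\ge1$, this gives a complex of length exceeding $n$, contradicting incredulity.

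The main obstacle is producing the return step with the right length for every $l$ in the interval, not only for $l$ near $0$. For example, the complement path $\sigma(1)\setminus\{\rho_k\}$ ending at $l$ is valid only when $-\beta_k<l<0$, which is far too restrictive. I expect to need a case split by the sign of $l$, each with a short chain alternating between $\beta_k$-type and $\beta_{k+1}$-type steps. I will first try to show that $A_l$ already appears in $K_l^\bullet$ in degree $2$ through some combination of these two steps, and fall back on the length-overflow argument where that fails.
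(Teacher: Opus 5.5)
You have a genuine gap: you never exhibit the paths that produce the contradiction, and the degree bookkeeping you rely on is off by one. Suppose $A_l$ sits in degree $d$ of a host complex and $A_q$ sits in degree $e\ge 1$ of $K_l^\bullet$. After $S(l,-)$, the module $A_q$ sits in degree $d+e-1$, not $d+e$, because the copy of $A_l$ is replaced by $K_l^1$ in degree $d$. You can check this against Example \ref{Exa:FMSviaLockable}: the $A_1$ in degree $2$ of $K_2^\bullet$ lands in degree $2$ of $K_0^{\dagger,\bullet}$.

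Hence an appearance of some $A_{l'}$, $l'\in I$, in degree $n$ of $K_l^\bullet$ produces no overflow when $d=1$. Proposition \ref{Prop:AlmGorEverythingissubbed}, applied to $l$ itself, gives you no control over $d$. The cycle branch is not available either. Nothing guarantees that the intermediate points lie in $I^c$. And, as you observe, long paths into $l$ such as $\sigma(1)\setminus\{\rho_k\}$ are valid only on very short ranges. What you flag as the main obstacle is exactly the missing step.

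The idea you need is to fix the host complex explicitly and split the total length between one path into $l$ and one path out of $l$. Choose each path so that its complement $J^c$ consists of generators of a single sign; by Lemma \ref{Lem:AlmSimValidity}, this makes the validity range wide. For $l>0$, take the following two paths (lengths by Lemma \ref{Lem:AlmGorPathLength}).
\begin{itemize}
\item The path $\beta_{S_+}\colon l-\sum_{\rho\in S_+}\beta_\rho\rightsquigarrow l$ has length $|S_+|$ and is valid iff $0<l<\sum_{\rho\in S_+}\beta_\rho$.
\item The path $\beta_{S_-\cup\{\rho_k\}}\colon l\rightsquigarrow l':=l-(\beta_1+\dots+\beta_{k-1})$ has length $|S_-|+1$ and is valid iff $0<l<\beta_1+\dots+\beta_{k-1}$. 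Here you need $|S_+|\ge 2$, which is Lemma \ref{Lem:AlmGorAtleast2eachset}.
\end{itemize}
If $l\notin I$, the substitution $S(l,K_{l'}^\bullet)$ puts $A_{l-\sum_{\rho\in S_+}\beta_\rho}$ in degree $(|S_-|+1)+|S_+|-1=n+1$. A module in degree $n+1$ can only be replaced by modules in degree $\ge n+1$.

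Now apply Proposition \ref{Prop:AlmGorEverythingissubbed} to $l'$ rather than to $l$. Either $l'\in I$, or $K_{l'}^\bullet$ is spliced into some $K_i^\bullet$ in positive degree. In both cases some complex of $\mathcal{K}_I^\dagger$ has length $>n$, contradicting incredulity.

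For $l<0$, use the mirror pair: $S_-$ into $l$ and $S_+\cup\{\rho_{k+1}\}$ out of $l$, both valid for $\beta_{k+2}+\dots+\beta_{n+1}<l<0$. The case $l=0$ is Lemma \ref{Lem:AlmGor0isUnsub}. This argument works on all of $(\beta_{k+2}+\dots+\beta_{n+1},\beta_1+\dots+\beta_{k-1})$, so the margins of $1$ in the interval are not what drives it.
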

\begin{proof}
   Lemma \ref{Lem:AlmGor0isUnsub} shows 0 is "unsubbable'', i.e. $0$ is necessarily in $I$. Let $l>0$ lie in the above interval. Then there is a valid path $l-\sum_{\rho\in S_+}\beta_\rho\rightsquigarrow l$ as $\sum_{\rho\in S_-}\beta_\rho<l-\sum_{\rho\in S_+}\beta_\rho<0$. Similarly, there is a valid path $l\rightsquigarrow l+\sum_{\rho\in S_-}\beta_\rho+\beta_k$ as $l+\sum_{\rho\in S_-}\beta_\rho+\beta_k<\beta_1+\dots+\beta_{k-1}$ (as $|\sum_{\rho\in S_-}\beta_\rho|>|\beta_k|$, given that $|S_+|\ge 2$). This gives a concatenation of valid paths $l-\sum_{\rho\in S_+}\beta_\rho\rightsquigarrow l\rightsquigarrow l+\sum_{\rho\in S_-}\beta_\rho+\beta_k$, the two paths having length $|S_+|$ and $|S_-|+1$ respectively. Substitution by $l$ would therefore lead to an occurence of $l-\sum_{\rho\in S_+}\beta_\rho$ in degree $|S_-|+|S_+|+1-1=n+1>n$ in $K_{l+\sum_{\rho\in S_-}\beta_\rho+\beta_k}^{\bullet}$ after substitution -  and thus no incredulous set can exist as the complex is longer than $n$ and thus the associated conic module can neither be in $I$ nor can it be substituted into any other complex.
\end{proof}

We now have all the tools necessary to prove Theorem \ref{Thm:AlmSimplConicNCCR}. The proof will show that, in the most general cases, NCCRs constructed via conic modules do not exist -  but to do so, a few assumptions will need to be made. We thus first treat the cases not covered by these assumptions via Propositions \ref{Prop:3DAlmostSimp}, \ref{Prop:SpecialCasesWithNCCR} and \ref{Prop:SpecialCasesWithoutNCCR}.

\begin{proposition}
    \label{Prop:3DAlmostSimp}
    Let $\sigma$ be an almost simplicial Gorenstein cone of the form $\cone(P\times\{1\})\subset \R^3$ where $P$ is a quadrilateral. Then there is an incredulous set of conic modules for $\sigma$, and hence an NCCR of $R=k[\sigma^\vee\cap M]$ constructed via conic modules, if and only if $P$ is a trapezoid.
\end{proposition}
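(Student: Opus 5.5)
The plan is to translate the geometry of the quadrilateral $P$ into the four numbers $\beta_\rho$, and then decide the existence of an incredulous set of lattice points directly, using Theorem \ref{Thm:Incr definitions agree}.

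\textbf{Setting up the $\beta_\rho$.} Label the vertices of $P$ cyclically as $p_1,\dots,p_4$. Since $P$ is a quadrilateral, the unique linear relation among the $u_\rho=(p_\rho,1)$ has the form
\[
a\,u_1+c\,u_3=b\,u_2+d\,u_4,\qquad a,b,c,d>0.
\]
The two diagonals of $P$ meet in an interior point, and the coefficients are the barycentric weights of that point. Comparing last coordinates gives $a+c=b+d$. After dividing by the gcd, the collection $\{\beta_\rho\}$ is $\{a,c,-b,-d\}$. By Lemma \ref{Lem:AlmGorAtleast2eachset}, and since no $\beta_\rho$ vanishes, we are in the case $|S_+|=|S_-|=2$.

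\textbf{Trapezoid criterion.} Next I would show that $P$ is a trapezoid if and only if the multiset has the shape $\{a,a,-b,-b\}$ up to ordering, i.e. one of the two sign classes has equal entries. Indeed, if $p_1p_2\parallel p_3p_4$, then the diagonals divide each other in the same ratio, and this forces $a=d$, $b=c$ or an analogous equality. For primitive data with $a+c=b+d$, the trapezoid case gives $\{\beta\}=\{x,y,-x,-y\}$ with $\gcd(x,y)=1$. The non-trapezoid case gives four values in which neither sign class pairs off with the other.

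\textbf{Computing the complexes.} The zonotope is $Z_\sigma=(-(b+d),a+c)$. Lemma \ref{Lem:AlmSimValidity} and Lemma \ref{Lem:AlmGorPathLength} then give all complexes $K_l^\bullet$ explicitly: each $J\subsetneq\sigma(1)$ yields a path of length $|J|$, and validity reduces to a one-dimensional inequality. Lemma \ref{Lem:AlmGorFullLength} says that the length-$3$ complexes are exactly those with $l\in(-\beta_1,-\beta_4)$.

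Proposition \ref{Prop:AlmGorUnsubinterval} forces every $l$ in $(\beta_4+1,\beta_1-1)$ into $I$. By Lemma \ref{Lem:AlmGor0isUnsub} we also have $0\in I$, and Proposition \ref{Prop:AlmGorEverythingissubbed} constrains the remaining points.

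\textbf{Trapezoid direction.} With $\{x,y,-x,-y\}$ and $x\ge y$, I would propose the incredulous set $I=(-x,x)\cap\Z$. The points outside $I$ are $\pm x,\dots,\pm(x+y-1)$. I would check two things:
\begin{itemize}
\item each $K_l^\bullet$ with $|l|\ge x$ has length $<3$ and involves only points closer to $0$, so the substitutions terminate;
\item after substitution every complex reaches degree exactly $3$.
\end{itemize}
The second point mirrors Example \ref{Exa:710FMS}, which is the case $x=y=1$, and the Gorenstein symmetry of Proposition \ref{Prop:GorSymDoubleswitch} halves the verification.

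\textbf{Non-trapezoid direction.} Here I would argue by contradiction, exactly as in the proof of Proposition \ref{Prop:AlmGorUnsubinterval}. I would locate a point $l\notin(-\beta_1,-\beta_4)$, so that $K_l^\bullet$ has length less than $3$ and $l\notin I$ is forced. I would then exhibit a chain of valid paths $m\rightsquigarrow l\rightsquigarrow l'$ whose lengths sum to at least $4$, with $l'\in I$ forced. Substituting $l$ would then produce a complex of length greater than $3$.

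The asymmetry $\beta_1\neq-\beta_4$, or $\beta_2\ne-\beta_3$, is what makes such an $l$ exist. The candidate is $l$ just inside $-\beta_1$ or $-\beta_4$, on whichever side is longer.

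\textbf{Main obstacle.} I expect the hardest step to be the casework on the relative sizes of $a,b,c,d$ in the non-trapezoid case: showing that in every configuration one of these bad chains exists, including small degenerate values where the forced interval of Proposition \ref{Prop:AlmGorUnsubinterval} is empty.
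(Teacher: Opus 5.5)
Your proposal has genuine gaps in both directions.

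\textbf{Trapezoid direction.} Your construction fails in the parallelogram case. For $x>y$ your set $I=(-x,x)\cap\Z$ is exactly the paper's choice, and both of your checks go through. But for $x=y$ primitivity forces $\{\beta_\rho\}=\{1,1,-1,-1\}$, which includes Example~\ref{Exa:710FMS}, and then $I=\{0\}$. The two outside points have
$K_1^\bullet=A_{-1}\to A_0^{\oplus 2}\to A_1$ and $K_{-1}^\bullet=A_1\to A_0^{\oplus 2}\to A_{-1}$.
So your claim that these complexes involve only points closer to $0$ is false. Each substitution reintroduces the other point, the process never terminates, and $\{0\}$ is not lockable. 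In the torsion-free case this is just the fact that $\End_R(R)=R$ has infinite global dimension. Example~\ref{Exa:710FMS} does not mirror your construction: its incredulous set is $\{0,1\}$ (or $\{0,-1\}$), never $\{0\}$.

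You have to take the asymmetric set $(-1,1]\cap\Z=\{0,1\}$. Then check that splicing $K_{-1}^\bullet$ into the degree-$2$ copy of $A_{-1}$ in $K_1^\bullet$ brings $K_1^{\dagger,\bullet}$ to length exactly $3$.

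Separately, your first formulation of the criterion (``one sign class has equal entries'') is false. The collection $\{2,2,-1,-3\}$ arises from $\conv\{(0,0),(1,1),(4,0),(1,-1)\}$, which has no parallel sides. Only your later form $\{x,y,-x,-y\}$ is correct.

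\textbf{Non-trapezoid direction.} This part is aimed at the wrong object; the problem is more than missing casework. Substituting $K_l^\bullet$ for an occurrence of $A_l$ in degree $d$ puts its degree-$e$ term in degree $d+e-1$. So path lengths summing to $4$ only reach degree $3$; you need a sum of at least $5$.

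If $K_l^\bullet$ has length at most $2$, that requires a valid path of length $3$ out of $l$. By Lemma~\ref{Lem:AlmSimValidity}, such a path is either $\sigma(1)\setminus\{\rho\}$ starting strictly between $0$ and $\beta_\rho$, or the full path from $0$. Either way it starts in $(\beta_4,\beta_1)$. Every point there is forced \emph{into} $I$: $0$ by Lemma~\ref{Lem:AlmGor0isUnsub}, and the rest by the chain argument in the proof of Proposition~\ref{Prop:AlmGorUnsubinterval}, which for $n=3$ covers all of $(\beta_4,\beta_1)$. So the two-step chain you describe cannot overshoot. Moreover, your candidate $l=-\beta_1$ (normalising $|\beta_1|<|\beta_4|$) lies in $I$, not outside it.

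The missing idea is that the contradiction comes from a point that must belong to $I$ but whose complex is too short.
\begin{itemize}
\item $K_{-\beta_1}^\bullet=A_{\beta_2}\to A_{\beta_2+\beta_3}\oplus A_{\beta_2+\beta_4}\to A_{-\beta_1}$ has length $2$.
\item Its degree-$1$ terms must lie in $I$. They occur in degree $2$ of $K_0^\bullet$ and have length-$3$ complexes, so substituting them there would overshoot.
\item Hence $A_{\beta_2}$ must be substituted to lengthen $K_{-\beta_1}^\bullet$.
\item But $0<\beta_2<-\beta_4$ gives $K_{\beta_2}^\bullet$ length $3$ by Lemma~\ref{Lem:AlmGorFullLength}. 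Splicing it in degree $2$ creates a term in degree $4$.
\end{itemize}
These are the ingredients of the paper's argument, and they require no casework on $a,b,c,d$.
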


\begin{proof}
Consider the collection of $\beta_\rho, \rho\in \sigma(1)$. Note first and foremost that by Theorem \ref{Thm:Incr definitions agree}, the existence of an NCCR is equivalent to the existence of an incredulous set of lattice points in $Z_\mathcal{X}=(\beta_3+\beta_4,\beta_1+\beta_2)$, which are precisely the integers inside the zonotope. By Lemma \ref{Lem:AlmGorAtleast2eachset} and Proposition \ref{Prop:AlmGor0isbad}, we may write $\beta_1\ge \beta_2>0>\beta_3\ge \beta_4$. Without loss of generality, we may assume $|\beta_2|\ge |\beta_3|$. There are 16 possible paths, and we list them in Table \ref{Tab:3Dcase}, using Lemma \ref{Lem:AlmSimValidity} to exhibit when the path is valid. We use $\beta_1+\beta_2=-(\beta_3+\beta_4)$ repeatedly to simplify expressions (e.g. $\beta_1+\beta_2+\beta_3=-\beta_4$).

\begin{figure}[ht!]
   \begin{tabular}{|r|r|r|r|}
     \hline
          Path & Valid Start & Valid End & Length of path \\ \hline
          $\emptyset$ & $\beta_3+\beta_4<P<\beta_1+\beta_2$ & $\beta_3+\beta_4<Q<\beta_1+\beta_2$ & 0 \\ \hline
          $\beta_1$ & $\beta_3+\beta_4<P<\beta_2$ & $-\beta_2<Q<\beta_1+\beta_2$& 1 \\ \hline
          $\beta_2$ & $\beta_3+\beta_4<P<\beta_1$& $-\beta_1<Q<\beta_1+\beta_2$& 1\\ \hline
          $\beta_3$ & $\beta_4<P<\beta_1+\beta_2$& $\beta_3+\beta_4<Q<-\beta_4$ & 1 \\ \hline
          $\beta_4$ & $\beta_3<P<\beta_1+\beta_2$ &  $\beta_3+\beta_4<Q<-\beta_3$& 1\\ \hline
          $\beta_1+\beta_2$ & $\beta_3+\beta_4<P<0$ & $0<Q<\beta_1+\beta_2$ & 2\\ \hline
          $\beta_1+\beta_3$ &  $\beta_4<P<\beta_2$&  $-\beta_2<Q<-\beta_4$& 2\\ \hline
          $\beta_1+\beta_4$ & $\beta_3<P<\beta_2$&  $-\beta_2<Q<-\beta_3$& 2\\ \hline
          $\beta_2+\beta_3$ & $\beta_4<P<\beta_1$ & $-\beta_1<Q<-\beta_4$ & 2\\ \hline
          $\beta_2+\beta_4$ & $\beta_3<P<\beta_1$& $-\beta_1<Q<\beta_3$& 2\\ \hline
          $\beta_3+\beta_4$ & $0<P<\beta_1+\beta_2$&$\beta_3+\beta_4<Q<0$ & 2\\ \hline
          $\beta_1+\beta_2+\beta_3$ & $\beta_4<P<0$ & $0<Q<-\beta_4$ & 3\\ \hline
          $\beta_1+\beta_2+\beta_4$ & $\beta_3<P<0$ & $0<Q<-\beta_3$& 3\\ \hline
          $\beta_1+\beta_3+\beta_4$ & $0<P<\beta_2$& $-\beta_2<Q<0$ & 3 \\ \hline
          $\beta_2+\beta_3+\beta_4$ & $0<P<\beta_1$& $-\beta_1<Q<0$& 3\\ \hline 
          $\beta_1+\dots+\beta_4$ &0 & 0& 3\\ \hline
   \end{tabular}
   \caption{Valid paths for 3 dimensional almost simplicial Gorenstein cones.}
   \label{Tab:3Dcase}
\end{figure}

Note $|\beta_2|=|\beta_3|\Leftrightarrow |\beta_1|=|\beta_4|$. Consider first the situation where this is not the case, i.e. $|\beta_2|>|\beta_3|$ and consequently $|\beta_1|<|\beta_4|$. Suppose we have an incredulous set of lattice points $I$. By Lemma \ref{Lem:AlmGor0isUnsub}, $0\in I$ and by Proposition \ref{Prop:AlmGorUnsubinterval} $-\beta_1\in I$ as $0>-\beta_1\ge \beta_4+1$.
The complex associated to $A_{-\beta_1}$ is \[
A_{\beta_2}\rightarrow A_{\beta_2+\beta_4}\oplus A_{\beta_2+\beta_3}\rightarrow A_{-\beta_1}. \]
Note that the complex associated to $A_0$ contains, in degree 2, the modules $A_{\beta_2+\beta_3}, A_{\beta_2+\beta_4}$. 
But $\beta_2+\beta_3,\beta_2+\beta_4>-\beta_1$ (as $\beta_1+\beta_2=-(\beta_3+\beta_4)>-\beta_j$, $j=3,4$) and thus the conic modules $\beta_2+\beta_3, \beta_2+\beta_4$  are in $I$, as their complexes are of length 3 (since a valid path to them of length 3 exists) and so we cannot substitute them into $K_0^\bullet$ without creating a complex of length superior to $\dim \sigma=3$. For the set $I$ to be incredulous, we thus need to substitute $A_{\beta_2}$ as otherwise $K_{-\beta_1}^{\dagger,\bullet}=K_{-\beta_1}^\bullet$, which is not of sufficient length. Hence, we cannot have $\beta_2\in I$. The complex $K_{\beta_2}^\bullet$ has length 3 as $\beta_2<-\beta_4$. Thus, any $l$ such that $A_{\beta_2}$ appears in degree $\ge 2$ can also not be in $I$, as otherwise the substitution $S(K_l^\bullet,\beta_2)$ yields a complex longer than $\dim \sigma$. There is a valid path of length 2 from $\beta_2$ to $2\beta_2+\beta_4\in Z_\mathcal{X}$ and so by this argumentation, $2\beta_2+\beta_4\not \in I$. If $2\beta_2+\beta_4=0$, this is a contradiction as $0\in I$, and so we may assume $2\beta_2+\beta_4\neq 0$.  Note $\beta_4<2\beta_2+\beta_4<\beta_1$ as $\beta_1-\beta_4>\beta_2+\beta_2$ since $|\beta_4|>|\beta_1|\ge|\beta_2|$. Thus $\beta_4<2\beta_2+\beta_4<\beta_1$. If $\beta_4<2\beta_2+\beta_4<0$, then the path (of length 3) $\beta_1+\beta_3+\beta_4=-\beta_2$ to $\beta_2+\beta_4$ is valid, and $\beta_2+\beta_4\in I$, a contradiction. If $0<2\beta_2+\beta_4<\beta_1$, then $2\beta_2+\beta_4\le \beta_1-1$ and so $2\beta_2+\beta_4\in I$ by Proposition \ref{Prop:AlmGorUnsubinterval}, a contradiction. Hence, if $|\beta_2|\neq |\beta_3|$, there cannot be an incredulous set and thus no NCCR via conic modules. 

We are now left to consider the situation $\beta_1=-\beta_4, \beta_2=-\beta_3$. 
If $\beta_1\neq\beta_2$, we claim the set $I=(\beta_4,\beta_1)\cap \Z$ is incredulous. All the complexes associated to $l\in I$ have length 3 (see Table \ref{Tab:3Dcase}). For any $l\ge \beta_1$ (and by symmetry we may reduce to this case), $A_l$ appears in degree 1 in $K_{l-\beta_1}^\bullet, K_{l-\beta_2}^\bullet$ and in degree 2 in $K_{l-(\beta_1+\beta_2)}^\bullet$ and nowhere in degree 3. The complex associated to $l$ is
\[A_{l-(\beta_1+\beta_2)}\rightarrow A_{l-\beta_1}\oplus A_{l-\beta_2}\rightarrow A_l.\] 
But since $\beta_2\le \beta_1\le l<\beta_1+\beta_2$, we have $0\le l-\beta_i<\beta_j$ for $\{i,j\}=\{1,2\}$. Thus, $l-\beta_1,l-\beta_2\in I$. Similarly, $\beta_2\neq \beta_1$ means that $l-(\beta_1+\beta_2)\ge -\beta_2>\beta_4$ and so $l-(\beta_1+\beta_2)\in I$. Substituting the length 2 complex $K_l^\bullet$ thus does not change the length of the complexes where $A_l$ appears, as these are already of length 3. Hence, $I$ is indeed incredulous. 

If $\beta_1=\beta_2$, we no longer have $l-(\beta_1+\beta_2)\in I$. So consider the set $I'=(\beta_4,\beta_1]\cap \Z=(-\beta_1,\beta_1]\cap \Z$. For $l>\beta_1$, $l-(\beta_1+\beta_2)>-\beta_1=\beta_4$ and the previous argument applies. For $l=\beta_4$, $l-(\beta_3+\beta_4)=\beta_1\in I'$ and so the same argument applies as well. Hence, $I'$ is incredulous provided substitution increases the length of the complex $K_{\beta_1}^\bullet$ to 3. But $A_{-\beta_1}$ appears in degree 2 in $K_{\beta_1}^\bullet$ and has length 2, so $K_{\beta_1}^{\dagger,\bullet}$ is indeed of length 3 and the set $I'$ is incredulous.

Finally, it suffices to show that $\beta_1=-\beta_4, \beta_2=-\beta_3$ arises exactly when the quadrilateral $P$ is a trapezoid. Note that $\beta_\rho$ give the linear relation between the four vertices of $P$, and so we have $\beta_1v_1+\beta_2v_2=\beta_2v_3+\beta_1v_4$. Thus, the line segments $v_1v_2$ and $v_3v_4$ meet such that the intersection point divides both lines in the same proportion. The quadrilateral is convex and so, as no $\beta_i$ here is zero, $v_1v_2$ and $v_3v_4$ are the diagonals, i.e. $v_1$ and $v_2$ are not adjacent and neither are $v_3v_4$. Denote the intersection point of the diagonals by $x$ and consider the triangles $xv_1v_4$ and $xv_2v_3$. The triangles are similar to each other since $x$ divides either diagonal in the same ratio and the internal angle at $x$ is of the same magnitude. Thus, the sides $v_1v_4$ and $v_2v_3$ are in fact parallel since the angles $\angle xv_4v_1$ and $\angle xv_3v_2$ agree. The quadrilateral thus has two parallel sides, i.e. it is a trapezoid.
\end{proof}

\begin{remark}
    Example \ref{Exa:710FMS} concerns a cone over a square, thus the Proposition \ref{Prop:3DAlmostSimp} implies the existence of an NCCR - predicting the incredulous set $(-1,1]\cap \Z=\{0,1\}$.
\end{remark}

\begin{proposition}
    \label{Prop:SpecialCasesWithNCCR}
    Let $\sigma$ be an almost simplicial Gorenstein cone such that either of the following is true:
    \begin{enumerate}
        \item The collection of $\beta_\rho$ associated to $\sigma$ is, up to flipping all signs, $\{2,1,-1,-1,-1\}$.
        \item The collection of $\beta_\rho$ associated to $\sigma$ is $\{1,1,1,-1,-1,-1\}$.
    \end{enumerate}
    Then there exists an incredulous set of conic modules, and thus there is an incomplete direct sum $\mathbb{B}$ of conic modules such that $\End_{R_\sigma}(\mathbb{B})$ is an NCCR of $R_\sigma$.
\end{proposition}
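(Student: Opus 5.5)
Both cases reduce, via Theorem \ref{Thm:Incr definitions agree}, to exhibiting an incredulous set of lattice points in the one-dimensional zonotope $Z_\sigma$. By Lemma \ref{Lem:AlmGorPathLength}, a path $\beta_J$ with $J\subsetneq\sigma(1)$ has length $|J|$, and Lemma \ref{Lem:AlmSimValidity} decides validity. So each complex $K_l^\bullet$ is computable by listing, for every $J$ with endpoint $l$, whether $\sum_{\rho\in J_-}\beta_\rho< l-\sum_J\beta_\rho<\sum_{\rho\in J_+}\beta_\rho$. Lemma \ref{Lem:InterchangBeta} lets me count only path \emph{types} (multisets of values) and attach multiplicities, as in Table \ref{Tab:3Dcase}.

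\textbf{Case (1): $\{2,1,-1,-1,-1\}$.} Here $n=4$ and $Z_\sigma=(-3,3)$, so the lattice points are $-2,\dots,2$. By Lemma \ref{Lem:AlmGorFullLength}, the complexes of length $4$ are exactly those with $l\in(-2,1)=\{-1,0\}$. I will tabulate $K_l^\bullet$ for all five points. Lemma \ref{Lem:AlmGor0isUnsub} and Proposition \ref{Prop:AlmGorUnsubinterval} force $0$ (and whatever that interval gives) into $I$. I would then try the candidate $I=\{-1,0,1\}$, or failing that $\{-2,-1,0\}$, chosen in analogy with the half-open intervals of Proposition \ref{Prop:3DAlmostSimp}. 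For the chosen $I$, the check has two parts:
\begin{itemize}
\item the complexes $K_j^\bullet$ for $j\notin I$ do not contain $j$, and do not contain other excluded points in a way that creates a loop, so the set is lockable;
\item after splicing, every $K_i^{\dagger,\bullet}$ for $i\in I$ has length exactly $4$.
\end{itemize}
For the second part, it suffices to check two things: a substituted module never sits in a degree $d$ with $d+\operatorname{len}(K_j^\bullet)>4$, and the complexes in $I$ of length $<4$ acquire a degree-$4$ term through some substitution.

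\textbf{Case (2): $\{1,1,1,-1,-1,-1\}$.} Here $n=5$ and $Z_\sigma=(-3,3)$. Lemma \ref{Lem:AlmGorFullLength} gives full length only for $l\in(-1,1)=\{0\}$. Proposition \ref{Prop:AlmGorUnsubinterval} gives the interval $(-1,1)$, so it adds nothing beyond $0$. By the $\pm$ symmetry (Proposition \ref{Prop:GorSymDoubleswitch}) a symmetric guess fails for the length reason, so I would try $I=\{0,1\}$ or $\{-1,0,1\}$ in the spirit of $(-\beta_1,\beta_1]$. All paths have a type determined by the counts $(a,b)$ of $+1$'s and $-1$'s used. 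The multiplicity of a type is $\binom{3}{a}\binom{3}{b}$, its length is $a+b$ (or $5$ when $a=b=3$), and validity reads $-(3-b)<l-(a-b)<3-a$. Each $K_l^\bullet$ is then a finite binomial count, and the substitution check proceeds exactly as in case (1).

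The main obstacle is finding the right $I$ and confirming the lengths: excluded modules must always appear in degrees low enough that splicing stays within length $n$, while the short complexes in $I$ still reach degree $n$. I would first compute all the complexes explicitly and then read off $I$, rather than guess.
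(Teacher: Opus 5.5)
Your framework is the paper's: reduce to lattice points via Theorem~\ref{Thm:Incr definitions agree}, compute every $K_l^\bullet$ with Lemmas~\ref{Lem:AlmSimValidity} and~\ref{Lem:AlmGorPathLength}, and exhibit $I$. The paper's proof just lists the five complexes in each case and the spliced complexes for $I=\{-1,0,1\}$.

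The gap is that you never produce $I$. Moreover, the length test you set up is off by one, so carrying out your plan would reject the correct set. When $A_j$ sits in degree $d$ and is spliced with $K_j^\bullet$ of length $\ell$, the degree-$0$ term $A_j$ is what gets replaced. The degree-$e$ term of $K_j^\bullet$ therefore lands in degree $d+e-1$, and the spliced complex reaches degree $d+\ell-1$, not $d+\ell$. Check this against Example~\ref{Exa:710FMS}: $A_2$ sits in degree $2$ of $K_1^\bullet$, $K_2^\bullet$ has length $2$, and $K_1^{\dagger,\bullet}$ has length $3$. The correct conditions are $d+\ell-1\le n$ for every splice, with equality for some splice into each complex of $I$ of length $<n$. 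Every splice your test $d+\ell\le n$ allows ends in degree at most $n-1$. So whenever $I$ contains a short complex, your two requirements contradict each other.

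This bites in both cases, and your fallback candidates do not rescue it.
\begin{itemize}
\item In case (1), $K_1^\bullet=A_{-1}^{\oplus3}\to A_0^{\oplus3}\oplus A_1^{\oplus3}\oplus A_{-2}\to A_{-1}\oplus A_0\oplus A_2^{\oplus3}\to A_1$ has length $3$. It reaches length $4$ only because $A_{-2}$, whose complex has length $3$, sits in degree $2$; your test rejects this since $2+3>4$. Your alternative $\{-2,-1,0\}$ is not lockable, because $A_1^{\oplus3}$ occurs in degree $2$ of $K_1^\bullet$.
\item In case (2), $K_{\pm1}^\bullet$ have length $4$ and reach length $5$ only through $A_{\mp2}$ in degree $3$, with $K_{\mp2}^\bullet$ of length $3$. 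So your remark that ``a symmetric guess fails for the length reason'' is wrong: the incredulous set is the symmetric $\{-1,0,1\}$. Your alternative $\{0,1\}$ is not lockable, because $A_{-1}^{\oplus9}$ occurs in degree $2$ of $K_{-1}^\bullet$ (type $(a,b)=(1,1)$).
\end{itemize}
In fact $\{-1,0,1\}$ is forced in both cases. Each of $0,\pm1$ occurs in its own complex in positive degree, so none of them can be substituted. Each $K_{\pm2}^\bullet$ is too short and involves only $A_0,A_{\pm1}$, so it can never be lengthened. With your bookkeeping you would therefore conclude that no incredulous set exists.

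A smaller point: the inequality of Lemma~\ref{Lem:AlmSimValidity}, and your version $-(3-b)<l-(a-b)<3-a$, is never satisfied by $J=\sigma(1)$. The full path must be added by hand; it is valid exactly at $l=0$ (Lemma~\ref{Lem:Gor0isself}). In case (1) it is the only source of the degree-$4$ term of $K_0^{\dagger,\bullet}$.
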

\begin{proof}
We will leave out the computations of valid paths and simply list the complexes before and after substitution.

\underline{$(1)$:} The five complexes corresponding to the lattice points $-2,-1,0,1,2$ are:\begin{align*}
 A_1\longrightarrow A_0^{\oplus 3}\longrightarrow A_{-1}^{\oplus 3}\longrightarrow&A_{-2},\\
A_{-1}\longrightarrow A_2\oplus A_0^{\oplus 3}\longrightarrow A_1^{\oplus 3}\oplus A_{-1}^{\oplus 3}\longrightarrow A_{-2}\oplus A_0^{\oplus 3}\longrightarrow&A_{-1},\\
A_0\longrightarrow A_0^{\oplus 3}\oplus A_1^{\oplus 3}\longrightarrow A_2^{\oplus 3}\oplus A_0^{\oplus 3}\oplus A_{-1}^{\oplus 3}\longrightarrow A_{-2}\oplus A_{-1}\oplus A_1^{\oplus 3}\longrightarrow&A_0,\\
 A_{-1}^{\oplus 3}\longrightarrow A_1^{\oplus 3}\oplus A_0^{\oplus 3}\oplus A_{-2}\longrightarrow A_{-1}\oplus A_0\oplus A_2^{\oplus 3}\longrightarrow&A_1,\\
 A_{-1}\longrightarrow A_0\oplus A_1\longrightarrow&A_2.
\end{align*} 
The set $\{-1,0,1\}$ is lockable, after substituting giving the complexes:
\begin{align*}
   A_{-1}\oplus A_1\longrightarrow A_0^{\oplus 4}\oplus A_1^{\oplus 2}\longrightarrow A_{-1}^{\oplus 3}\oplus A_0^{\oplus 3}\oplus A_1^{\oplus 3}\longrightarrow A_0^{\oplus 3}\oplus A_{-1}^{\oplus 3}\longrightarrow& A_{-1},\\
   A_0\longrightarrow A_{-1}^{\oplus 3}\oplus A_0^{\oplus 3}\oplus A_1^{\oplus 4}\longrightarrow A_{-1}^{\oplus 3}\oplus A_0^{\oplus 9}\oplus A_1^{\oplus 3}\longrightarrow A_{-1}^{\oplus 4}\oplus A_1^{\oplus 3} \longrightarrow&A_0,\\
  A_1\longrightarrow A_{-1}^{\oplus 3}\oplus A_0^{\oplus 3}\longrightarrow A_{-1}^{\oplus 6}\oplus A_0^{\oplus 3}\oplus A_1^{\oplus 3}\longrightarrow A_{-1}\oplus A_0^{\oplus 4}\oplus A_1^{\oplus 4\longrightarrow}&A_1.
\end{align*}
Thus, the set $\{-1,0,1\}$ is in fact incredulous, hence giving an NCCR for the toric algebra by Theorem \ref{Thm:Incr definitions agree}.

\underline{$(2)$:}
Here, the five complexes corresponding to the lattice points inside the zonotope $(-3,3)$ are:
\begin{align*}
    A_1\longrightarrow A_0^{\oplus 3}\longrightarrow A_{-1}^{\oplus 3}\longrightarrow & A_{-2},\\
    A_1\longrightarrow A_0^{\oplus 8}\oplus A_2\longrightarrow A_{-1}^{\oplus 9}\oplus A_1^{\oplus 3}\longrightarrow A_{-2}^{\oplus 3}\oplus A_0^{\oplus 3}\longrightarrow & A_{-1},\\
    A_0\longrightarrow A_0^{\oplus 9}\longrightarrow A_{-1}^{\oplus 9}\oplus A_1^{\oplus 9}\longrightarrow A_{-2}^{\oplus 3}\oplus A_0^{\oplus 9}\oplus A_2^{\oplus 3}\longrightarrow A_{-1}^{\oplus 3}\oplus A_1^{\oplus 3}\longrightarrow & A_0,\\
    A_-1^{\oplus 3}\longrightarrow A_{-2}\oplus A_0^{\oplus 9}\longrightarrow A_{-1}^{\oplus 3}\oplus A_1^{\oplus 9}\longrightarrow A_0^{\oplus 3}\oplus A_2^{\oplus 3}\longrightarrow &A_1,\\
    A_{-1}\longrightarrow A_0^{\oplus 3}\longrightarrow A_1^{\oplus 3}\longrightarrow & A_2.
\end{align*}
The set $\{-1,0,1\}$ is incredulous, after substitution giving the complexes:
\begin{align*}
    A_{-1}\longrightarrow A_0^{\oplus 3}\oplus A_1^{\oplus 3}\longrightarrow A_{-1}^{\oplus 9}\oplus A_0^{\oplus 9}\oplus A_1^{\oplus 3}\longrightarrow A_{-1}^{\oplus 9}\oplus A_0^{\oplus }9\oplus A_1^{\oplus 3}\longrightarrow A_{-1}^{\oplus 9}\oplus A_0^{\oplus 3}\longrightarrow & A_{-1},\\
    A_0\longrightarrow A_{-1}^{\oplus 3}\oplus A_0^{\oplus 9}\oplus A_1^{\oplus 3}\longrightarrow A_{-1}^{\oplus 9}\oplus A_0^{\oplus 18}\oplus A_1^{\oplus 9}\longrightarrow A_{-1}^{\oplus 9}\oplus A_0^{\oplus 9}\oplus A_1^{\oplus 9}\longrightarrow A_{-1}^{\oplus 3}\oplus A_1^{\oplus 3}\longrightarrow & A_0,\\
    A_{1}\longrightarrow A_0^{\oplus 3}\oplus A_{-1}^{\oplus 3}\longrightarrow A_{1}^{\oplus 9}\oplus A_0^{\oplus 9}\oplus A_{-1}^{\oplus 3}\longrightarrow A_{1}^{\oplus 9}\oplus A_0^{\oplus }9\oplus A_{-1}^{\oplus 3}\longrightarrow A_{1}^{\oplus 9}\oplus A_0^{\oplus 3}\longrightarrow & A_{1}.
\end{align*}
Hence, we obtain the required NCCR via Theorem \ref{Thm:Incr definitions agree}.
\end{proof}

\begin{proposition}
    \label{Prop:SpecialCasesWithoutNCCR}
    Let $\sigma$ be an almost simplicial Gorenstein cone such that either of the following is true:
    \begin{enumerate}
        \item The collection of $\beta_\rho$ associated to $\sigma$ is, up to flipping all signs, $\{2,1,1,-2,-2\}$.
        \item The collection of $\beta_\rho$ associated to $\sigma$ is, up to flipping all signs, $\{2,2,2,-3,-3\}$.
        \item The collection of $\beta_\rho$ associated to $\sigma$ is, up to flipping all signs, $\{2,2,-1,-1,-1,-1\}$.
    \end{enumerate}
    Then there does not exist an incredulous set of conic modules, and thus there is no incomplete direct sum $\mathbb{B}$ of conic modules such that $\End_{R_\sigma}(\mathbb{B})$ is an NCCR of $R_\sigma$.
\end{proposition}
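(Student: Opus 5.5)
The plan is to handle each of the three collections by an explicit finite computation on the lattice points of the one-dimensional zonotope. By Theorem \ref{Thm:Incr definitions agree} it suffices to rule out incredulous sets of lattice points, so torsion can be ignored. By the sign symmetry (Lemma \ref{Lem:ZonoIsSymmetric} and Proposition \ref{Prop:GorSymDoubleswitch}) I may fix the signs as written.

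For each collection I first list the lattice points: $\{-3,\dots,3\}$ in cases (1) and (3), and $\{-5,\dots,5\}$ in case (2). I then tabulate all valid paths, as in Table \ref{Tab:3Dcase}. Validity of $\beta_J:l-k\rightsquigarrow l$ is read off from Lemma \ref{Lem:AlmSimValidity}, and the length of a path is $|J|$ by Lemma \ref{Lem:AlmGorPathLength}. Since equal $\beta_\rho$ are interchangeable (Lemma \ref{Lem:InterchangBeta}), paths only need to be recorded by type and multiplicity, which keeps the tables small. From the tables I write down the complexes $K_l^\bullet$, exactly as was done in Proposition \ref{Prop:SpecialCasesWithNCCR}.

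Next I extract the rigid constraints on a hypothetical incredulous set $I$.
\begin{enumerate}
\item By Lemma \ref{Lem:AlmGor0isUnsub}, $0\in I$.
\item By Proposition \ref{Prop:AlmGorUnsubinterval}, the integer points in $(\beta_{n+1}+\dots+\beta_{k+2}+1,\ \beta_1+\dots+\beta_{k-1}-1)$ lie in $I$. For instance, this gives $\{0,1\}$ in case (1).
\item By Lemma \ref{Lem:AlmGorFullLength}, exactly the points of $(-\beta_1,-\beta_{n+1})$ have complexes of full length $n$. Any $l\in I$ outside this interval therefore needs some substitution to lengthen $K_l^\bullet$.
\item By Proposition \ref{Prop:AlmGorEverythingissubbed}, every point not in $I$ must actually be substituted in producing some $K_i^{\dagger,\bullet}$ with $i\in I$.
\end{enumerate}

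The core of the argument is the same propagation used in the proof of Proposition \ref{Prop:3DAlmostSimp}, using two exclusion rules. First, if $K_m^\bullet$ has length $n$ and $A_m$ occurs in degree $\ge1$ in some $K_i^\bullet$ with $i\in I$, then $m\in I$, since substituting it would give length $>n$. Second, if $m\notin I$ and $A_m$ occurs in degree $d$ in $K_l^\bullet$ with $d+\operatorname{len}(K_m^\bullet)>n$, then $l\notin I$; in addition, no loop of substitutions outside $I$ is allowed (Lemma \ref{Lem:NoPureDegree1Loop}, Proposition \ref{Prop:fingldim implies lockable}). Starting from the forced members above, I apply these rules alternately. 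The aim is to reach a point that must both lie in $I$ and be excluded from it. Alternatively, the aim is to find a forced member whose complex has length $<n$ while every module that could lengthen it is excluded, which is the mechanism used for the point $-\beta_1$ in the 3D proof. I expect the candidate contradiction in case (1) to come from the points $\pm2$ and $-1$, and in case (3) from the points $\pm1$ together with those near the edge of $(-\beta_1,-\beta_{n+1})$. In each case I will also check the few remaining candidate sets directly, since the lattice is small.

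The main obstacle is case (2), $\{2,2,2,-3,-3\}$. There the zonotope has eleven points and the asymmetry between $\beta_1=2$ and $\beta_{n+1}=-3$ makes the full-length interval $(-2,3)$ off-centre. For this case I will first fix which of $\pm3,\pm4,\pm5$ must be substituted and compute the lengths of their complexes. I expect the chain $l-\sum_{S_+}\beta_\rho\rightsquigarrow l\rightsquigarrow l+\sum_{S_-}\beta_\rho+\beta_k$ from the proof of Proposition \ref{Prop:AlmGorUnsubinterval}, applied at boundary points, to produce a complex of length $n+1$ that cannot be avoided. Failing that, I will finish by exhaustive case analysis over the remaining subsets containing the forced points, which is finite and mechanical.
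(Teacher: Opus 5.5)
Your overall route is the paper's: you pass to lattice points via Theorem \ref{Thm:Incr definitions agree}, write down every $K_l^\bullet$, and check that no subset is incredulous. The paper lists the complexes and simply asserts that last step. The gap is in the two exclusion rules meant to carry out the check: both use the wrong splicing degree. If $A_m$ sits in degree $d$ of $K_l^\bullet$, substitution replaces that copy of $A_m$ by $K_m^\bullet$ minus its degree-$0$ term. The degree-$j$ term of $K_m^\bullet$ therefore lands in degree $d+j-1$, not $d+j$. Compare the count $|S_-|+|S_+|+1-1$ in the proof of Proposition \ref{Prop:AlmGorUnsubinterval}, or $K_1^{\dagger,\bullet}$ in Example \ref{Exa:710FMS}.

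So the new length is $d+\operatorname{len}(K_m^\bullet)-1$. Your first rule is false for $d=1$: a full-length complex spliced into degree $1$ gives length exactly $n$. Your second rule needs $d+\operatorname{len}(K_m^\bullet)>n+1$. As stated, both rules are too strong, which is exactly the wrong direction for a non-existence claim. In Example \ref{Exa:710FMS}, $A_2$ sits in degree $2$ of $K_1^\bullet=A_2\to A_0^{\oplus 2}\to A_1$ and $\operatorname{len}K_2^\bullet=2$. Your second rule would then give $1\notin I$, yet $\{0,1\}$ is incredulous. Similarly, $A_2$ in degree $3$ of $K_{-1}^\bullet$ would expel $-1$ from the incredulous set $\{-1,0,1\}$ of Proposition \ref{Prop:SpecialCasesWithNCCR}(1). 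Any pruning done with this convention, including your exhaustive fallback, proves too much.

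With the corrected count, your plan works and the contradictions sit where you predicted. Each case closes in a few lines from the complexes the paper lists. If $A_l$ occurs in positive degree of $K_l^\bullet$, then $l$ cannot be substituted, so $l\in I$ by Proposition \ref{Prop:AlmGorEverythingissubbed}.

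In case (1), this puts $-1,0,1$ in $I$. Next, $A_2$ occurs in degree $3$ of $K_{-1}^\bullet$ and $\operatorname{len}K_2^\bullet=3$, so $2\in I$. But $K_2^\bullet=A_{-2}\to A_{-1}^{\oplus2}\oplus A_0\to A_0\oplus A_1^{\oplus2}\to A_2$ has length $3$. Its only possible non-member is $A_{-2}$ in degree $3$, with $\operatorname{len}K_{-2}^\bullet=3$. So $K_2^{\dagger,\bullet}$ has length $3$ or at least $5$, never $4$.

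In case (2), $0\in I$. Then $A_{-1}$ and $A_1$ occur in degrees $3$ and $2$ of $K_0^\bullet$ with full-length complexes, which forces $\pm1\in I$. Next, $A_3$ occurs in degree $3$ of $K_{-1}^\bullet$ with $\operatorname{len}K_3^\bullet=3$, so $3\in I$; this is your anticipated chain $-3\rightsquigarrow 3\rightsquigarrow -1$. Now $K_3^\bullet=A_{-3}\to A_{-1}^{\oplus3}\to A_1^{\oplus3}\to A_3$ again ends with length $3$ or at least $5$.

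In case (3), $-1,0,1\in I$ by self-occurrence. Then $A_{-2}$ occurs in degree $3$ of $K_1^\bullet$ with $\operatorname{len}K_{-2}^\bullet=4$, so $-2\in I$. But $K_{-2}^\bullet=A_2\to A_1^{\oplus4}\to A_0^{\oplus6}\to A_{-1}^{\oplus4}\to A_{-2}$ has length $4$, and only $A_2$ in degree $4$ (with $\operatorname{len}K_2^\bullet=3$) could be substituted. That gives length $4$ or at least $6$, never $5$.
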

\begin{proof}
    \underline{(1):} The zonotope is $(-4,4)$, with the seven associated complexes being:
    \begin{align*}
        A_1\longrightarrow A_{-1}^{\oplus 2}\longrightarrow & A_{-3},\\
        A_1^{\oplus 2}\longrightarrow A_{-1}^{\oplus 4}\oplus A_2\longrightarrow A_{-3}^{\oplus 2}\oplus A_0^{\oplus 2}\longrightarrow & A_{-2},\\
        A_1 \longrightarrow A_{-1}^{\oplus 2}\oplus A_1\oplus A_2^{\oplus 2}\longrightarrow A_{-3}\oplus A_{-1}^{\oplus 2}\oplus A_0^{\oplus 4}\oplus A_3\longrightarrow A_{-3}\oplus A_{-2}^{\oplus 2}\oplus A_1^{\oplus 2}\longrightarrow & A_{-1},\\
        A_0\longrightarrow A_{-1}^{\oplus 4}\oplus A_0^{\oplus 2}\longrightarrow A_{-3}^{\oplus 2}\oplus A_{-2}\oplus A_0^{\oplus 2}\oplus A_1^{\oplus 4}\longrightarrow A_{-2}\oplus A_{-1}^{\oplus 2}\oplus A_{2}^{\oplus 2}\longrightarrow & A_0,\\
        A_{-1}^{\oplus 2}\longrightarrow A_{-3}\oplus A_0^{\oplus 4}\oplus A_1^{\oplus 2}\longrightarrow A_{-2}^{\oplus 2}\oplus A_{-1}\oplus A_1^{\oplus 2}\oplus A_2^{\oplus 4}\longrightarrow A_{-1}\oplus A_0^{\oplus 2}\oplus A_3^{\oplus 2}\longrightarrow & A_1,\\
        A_{-2}\longrightarrow A_{-1}^{\oplus 2}\oplus A_0\longrightarrow A_0\oplus A_1^{\oplus 2}\longrightarrow & A_2,\\
        A_{-1}\longrightarrow A_0^{\oplus 2}\oplus A_1\longrightarrow A_1\oplus A_2^{\oplus 2}\longrightarrow &  A_3.
    \end{align*}

    Observe that no subset is incredulous, hence no NCCR is obtained as endomorphism algebra of an incomplete direct sum of conic modules. 

    \underline{(2):} In this case, the zonotope is $(-6,6)$, with the 11 complexes associated to the lattice points being:

    \begin{align*}
        A_{1}\longrightarrow A_{-2}^{\oplus 2}\longrightarrow & A_{-5},\\
        A_2\longrightarrow A_{-1}^{\oplus 2}\longrightarrow & A_{-4},\\
        A_1^{\oplus 3}\longrightarrow A_{-2}^{\oplus 6}\oplus A_3\longrightarrow A_0^{\oplus 2}\oplus A_{-5}^{\oplus 3}\longrightarrow & A_{-3},\\
        A_2^{\oplus 3}\longrightarrow A_{-1}^{\oplus 6}\oplus A_4\longrightarrow A_1^{\oplus 2}\oplus A_{-4}^{\oplus 3}\longrightarrow & A_{-2},\\
        A_1^{\oplus 3}\longrightarrow A_{-2}^{\oplus 6}\oplus A_3^{\oplus 3}\longrightarrow A_{-5}^{\oplus 3}\oplus A_0^{\oplus 6}\oplus A_5\longrightarrow A_2^{\oplus 2}\oplus A_{-3}^{\oplus 3}\longrightarrow & A_{-1},\\
        A_0\longrightarrow A_{-1}^{\oplus 6}\longrightarrow A_{-4}^{\oplus 3}\oplus A_1^{\oplus 6}\longrightarrow A_3^{\oplus 2}\oplus A_{-2}^{\oplus 3}\longrightarrow & A_0,\\
        A_{-2}^{\oplus 2}\longrightarrow A_{-5}\oplus A_0^{\oplus 6}\longrightarrow A_{-3}^{\oplus 3}\oplus A_2^{\oplus 6}\longrightarrow A_4^{\oplus 2}\oplus A_{-1}^{\oplus3}\longrightarrow & A_1,\\
        A_{-1}^{\oplus 2}\longrightarrow A_{-4}\oplus A_1^{\oplus 6}\longrightarrow A_{-2}^{\oplus 3}\oplus A_3^{\oplus 6}\longrightarrow A_5^{\oplus 2}\oplus A_0^{\oplus 3}\longrightarrow & A_2,\\
        A_{-3}\longrightarrow A_{-1}^{\oplus 3}\longrightarrow A_1^{\oplus 3}\longrightarrow & A_3,\\
        A_{-2}\longrightarrow A_0^{\oplus 3}\longrightarrow A_2^{\oplus 3}\longrightarrow & A_4,\\
        A_{-1}\longrightarrow A_1^{\oplus 3}\longrightarrow A_3^{\oplus 3}\longrightarrow & A_5.
    \end{align*}

    No incredulous subset exists and hence no NCCR can be obtained via conic modules.
    
    \underline{(3):} The zonotope here is $(-4,4)$ and the associated complexes are:
    \begin{align*}
        A_1\longrightarrow A_0^{\oplus 4}\longrightarrow A_{-1}^{\oplus 6}\longrightarrow A_{-2}^{\oplus 4}\longrightarrow &A_{-3},\\
        A_2\longrightarrow A_1^{\oplus 4}\longrightarrow A_0^{\oplus 6}\longrightarrow A_{-1}^{\oplus 4}\longrightarrow& A_{-2},\\
        A_1^{\oplus 2}\longrightarrow A_0^{\oplus 8}\oplus A_3\longrightarrow A_{-1}^{\oplus 12}\oplus A_2^{\oplus 4}\longrightarrow A_{-2}^{\oplus 8}\oplus A_1^{\oplus 6}\longrightarrow A_{-3}^{\oplus 2}\oplus A_0^{\oplus 4}\longrightarrow & A_{-1},\\
        A_0\longrightarrow A_1^{\oplus 8}\longrightarrow A_0^{\oplus 12}\oplus A_3^{\oplus 4}\longrightarrow A_{-1}^{\oplus 8}\oplus A_2^{\oplus 6} \longrightarrow A_{-2}^{\oplus 2}\oplus A_1^{\oplus 4}\longrightarrow & A_0,\\
        A_{-1}^{\oplus 6}\longrightarrow A_{-2}^{\oplus 4}\oplus A_1^{\oplus 12}\longrightarrow A_{-3}\oplus A_0^{\oplus 8}\oplus A_3^{\oplus 6}\longrightarrow A_{-1}^{\oplus 2}\oplus A_2^{\oplus 4}\longrightarrow & A_1,\\
        A_{-1}^{\oplus 4}\longrightarrow A_{-2} \oplus A_1^{\oplus 8}\longrightarrow A_0^{\oplus 2}\oplus A_3^{\oplus 4}\longrightarrow & A_2,\\
        A_{-1}\longrightarrow A_1^{\oplus 2}\longrightarrow&  A_3.
    \end{align*}

    Again, no incredulous subset exists, hence we have no NCCR via conic modules.
\end{proof}

Now that the exceptional cases are covered, we will be able to make the assumptions necessary to proceed with the proof for the general case of almost simplicial Gorenstein toric algebras in Theorem \ref{Thm:AlmSimplConicNCCR}.

\begin{proof}[Proof of Theorem \ref{Thm:AlmSimplConicNCCR}]
Proposition \ref{Prop:SpecialCasesWithNCCR} shows that the listed cones admit NCCRs constructed via incomplete sums of conic modules. We shall now show that if $\sigma$ fits in neither of those cases, no NCCR exists.

We may without loss of generality assume that $|\beta_k|\le |\beta_{k+1}|$. Note that $-\beta_{n+1}\le \beta_1+\dots+\beta_{k-1}$. Otherwise, we would have
    \[-\beta_{n+1}>\beta_1+\dots+\beta_{k-1}
    \Leftrightarrow -\beta_{n+1}+\beta_k>\sum_{\beta_\rho\in S_+}\beta_\rho. \]
    But $-\beta_{n+1}+\beta_k\le -\beta_{n+1}-\beta_{k-1}\le -\sum_{\beta_\rho\in S_-}\beta_\rho=\sum_{\rho\in S_+}\beta_\rho$, a contradiction.

Two cases arise. If $-\beta_{n+1}=\beta_1+\dots+\beta_{k-1}$ then we have equalities in the non-strict inequalities above, which happens if and only if $\beta_k=-\beta_{k+1}$ and $|S_-|=2$. Note that $\beta_k=-\beta_{k+1}$ in particular implies $|\beta_k|\ge|\beta_{k+1}|$. The same argument as above then leaves us with two possibilities: either $-\beta_1>\beta_{n+1}+\dots+\beta_{k+2}+1$, which is a situation that will run analogous to the arguments in the remainder of the proof, or $|S_+|=|S_-|=2$ and $\beta_1=-\beta_4,$ $\beta_2=-\beta_3$. This however is the case of a cone over a trapezoid, and Proposition \ref{Prop:3DAlmostSimp} provides the existence of an incredulous set and thus an NCCR.

For the remainder of the proof, we have $-\beta_{n+1}\le \beta_1+\dots+\beta_{k-1}-1=:l$. Then $l\in I$ and $K_l^\bullet$ is a complex of length $\le n-1$ by Lemma \ref{Lem:AlmGorFullLength} and Proposition \ref{Prop:AlmGorUnsubinterval}. Valid paths ending at $l$ necessarily start at $p\in [\beta_1+\dots+\beta_{k-1}-1-\sum_{\rho\in S_+}\beta_\rho,\sum_{\rho\in S_+}\beta_\rho)$, noting that the left side of the interval is simply $-\beta_k-1$. 

By Proposition \ref{Prop:AlmGorUnsubinterval}, $-\beta_k-1\in I$ so long as $-\beta_k-1\ge \beta_{k+2}+\dots+\beta_{n+1}+1$. We will first deal with the case where this inequality holds, leaving the other cases for later.  

Then the only conic modules $A_p$ appearing in $K_l^\bullet$ that potentially have $p\not\in I$ have $p\in [\beta_1+\dots+\beta_{k-1},\sum_{\beta_\rho\in S_+}\beta_\rho)$. For a path $\beta_J:k\rightsquigarrow l$ to be valid is equivalent to $k<\sum_{\rho\in J_+}\beta_\rho$, i.e. $J_+=S_+$ which means $J\subseteq S_-$. But since $|\beta_k|\le|\beta_{k+1}|$ and $p<\sum_{\rho\in S_+}\beta_\rho$, the only possibility for such a valid path to exist is when $|\beta_k|=|\beta_{k+1}|$ so $p=\beta_1+\dots+\beta_k-1$ and the path is $\beta_J=\beta_{k+1}$. This path has length 1. We can now study the complex $K_p^\bullet$ itself. Any valid path $\beta_J:q\rightsquigarrow p$ has $q+\beta_J=\beta_1+\dots+\beta_k-1$ and $\sum_{\rho\in J_-}\beta_\rho<q<\sum_{\rho\in J_+}\beta_\rho$.

Thus\[
\sum_{\rho\in J_-}\beta_\rho +\beta_J<\beta_1+\dots+\beta_k-1<\sum_{\rho\in J_+}\beta_\rho+\beta_J.
\]
Write $\beta_J=\beta_{J\cap S_-}+\beta_{J\cap S_+}$, and so we obtain\[
\sum_{\rho\in J_-}\beta_\rho+\beta_{J\cap S_-}+\beta_{J\cap S_+}<\beta_1+\dots+\beta_{k}-1<\beta_{J\cap S_+}+\sum_{\rho\in J_+}\beta_\rho+\beta_{J\cap S_-}.
\]
 But $\beta_{J_+}+\beta_{J\cap S_+}=\beta_1+\dots+\beta_k$ and therefore $\beta_{J\cap S_-}=0$, implying $J\subseteq S_+$. Thus, $q\in[-1,p]$, with a valid path starting at $q>l$ necessarily being a path $p\rightsquigarrow p$ not using any elements of $S_-$. This is not possible outside of degree 0, as the only such path is the empty path. The only conic modules appearing in $K_p^\bullet$ are thus of the form $q\in [-1,l]$. 
 
 As $[0,l]\cap \Z\subseteq I$, the only case where the complex potentially $K_p^\bullet$ contains modules not in $I$ is thus when $-1\not\le \beta_{k+2}+\dots+\beta_{n+1}+1$. This can only happen when $|S_-|=2$ and $\beta_{n+1}=\beta_n=-1$, i.e. $n=3$ and the full collection of $\beta_\rho$ is $\beta_1=\beta_2=-\beta_3=-\beta_4=1$, a situation covered by Proposition \ref{Prop:3DAlmostSimp}.

 But if all conic modules in $K_p^\bullet$ are of the form $A_q, q\in I$, then as the length of $K_p^\bullet$ is $\le n-1$ and $A_p$ appears exactly in degree 1 in $K_l^\bullet$, we have that $K_l^{\dagger,\bullet}$ has length $\le n-1$ and can thus not be of length $n=\dim\sigma$. Therefore, the set $I$ is not incredulous, i.e. no NCCR via direct sums of conic modules exists.

It now remains to consider the cases excluded earlier, i.e. $-\beta_k-1<\beta_{k+2}+\dots+\beta_{n+1}+1$. If $|S_-|\ge 3$, note that $-\beta_k-1\ge\beta_{k+1}-1\ge \beta_{k+2}-1\ge \beta_{k+2}+\beta_{n+1}$, with equality only if $\beta_{n+1}=\dots=\beta_{k+1}=-\beta_k=-1$. Further, this is $\ge \beta_{k+2}+\dots+\beta_{n+1}+1$ if $|S_-|\ge 4$. Thus, the remaining cases are $|S_-|=3$ with $k=n-2$ and $-\beta_{n-2}=\beta_{n-1}=\beta_{n}=\beta_{n+1}=-1$ or $|S_-|=2$. In the first case, since $\beta_1+\dots+\beta_{n+1}=0$ and $|S_+|\ge 2$ we obtain either $n=4$ and $\beta_1=2$ or $n=5$ and $\beta_i=1=-\beta_j$ for $i=1,2,3$ and $j=4,5,6$, both cases part of Proposition \ref{Prop:SpecialCasesWithNCCR}.

If $|S_-|=2$, then we have $-\beta_k-1\le \beta_{n+1}$. If
$|\beta_k|=|\beta_{k+1}|$ this would give $\beta_{n+1}=\beta_{k+1}$ or $\beta_{n+1}=\beta_{k+1}-1$. The former gives, as $\beta_1\ge \beta_k$, $n=3$ and $\beta_1=-\beta_4$, a case covered by Proposition \ref{Prop:3DAlmostSimp}.

For the latter, we have $\beta_k=-\beta_{k+1}, \beta_{n+1}=\beta_{k+1}-1$ which has possibilities $n=3$ and $\beta_1=-\beta_4$ or $n=4$ and $\beta_1=\beta_2=\beta_3=1,$ $\beta_4=-1$ and $\beta_5=-2$, covered by Propositions Proposition \ref{Prop:3DAlmostSimp} and \ref{Prop:SpecialCasesWithNCCR}.

Finally, if $|\beta_k|<|\beta_{k+1}|$, then $-\beta_k-1\ge \beta_{k+1}\ge \beta_{n+1}$. For $-\beta_k-1<\beta_{n+1}+1$, the equality needs to hold, i.e. $\beta_{k+1}=\beta_{n+1}=-\beta_k-1$. Since $\beta_1+\dots+\beta_{n+1}=0$, this means $\beta_1+\dots+\beta_{k-1}=\beta_k+2$. If $n=3$, i.e. $|S_+|=2$, we get $\beta_1=\beta_2+2,$ $\beta_3=\beta_4=-\beta_2-1$. This case is covered by Proposition \ref{Prop:3DAlmostSimp}. 

If $|S_+|=3$, then $2\beta_3\le \beta_1+\beta_2=\beta_3+2$ which gives
$\beta_3=1=\beta_2$ and $\beta_1=2$ with $\beta_4=\beta_5=-2$ or $\beta_1=\beta_2=\beta_3=2$ and $\beta_4=\beta_5=-3$, both cases covered by Proposition \ref{Prop:SpecialCasesWithoutNCCR}.
Finally, for $|S_+|\ge 4$, we have $\beta_k+2=\beta_1+\beta_2+\beta_3\ge 3\beta_k$ and so $\beta_1=\dots=\beta_4=1$ and $\beta_5=\beta_6=-2$, which is covered by Proposition \ref{Prop:SpecialCasesWithoutNCCR}. 

With this, we have shown that an almost simplicial Gorenstein cone admits an incredulous set, and thus an NCCR as endomorphism algebra of an incomplete sum of conic modules, exactly in the cases stipulated by Theorem \ref{Thm:AlmSimplConicNCCR}.
\end{proof}

\bibliography{Bib}

\end{document}